\newtheorem{theorem}{Theorem}[section]
\newtheorem{definition}[theorem]{Definition}
\newtheorem{proposition}[theorem]{Proposition}
\newtheorem{lemma}[theorem]{Lemma}
\newtheorem{remark}[theorem]{Remark}
\newtheorem{corollary}[theorem]{Corollary}
\newtheorem{prop}[theorem]{Proposition}
\newtheorem{corol}[theorem]{Corollary}
\newcommand{\NN}{{\mathcal{N}}}
\newcommand{\E}{{\mathbb E}}
\newcommand{\N}{{\mathbb N}}
\newcommand{\Z}{{\mathbb Z}}
\renewcommand{\P}{{\mathbb P}}
\newcommand{\R}{{\mathbb R}}
\newcommand{\x}{{\mathbf x}}
\newcommand{\y}{{\mathbf y}}
\renewcommand{\u}{{\mathbf u}}
\renewcommand{\v}{{\mathbf v}}
\newcommand{\w}{{\mathbf w}}
\newcommand{\WW}{{\mathcal W}}
\definecolor{lightblue}{rgb}{0.9,0.9,1}
\title{Scaling limit of a drainage network model on perturbed lattice }
\author[1]{Rahul Roy}
\author[2]{Kumarjit Saha}
\author[1]{Anish Sarkar}
\affil[1]{Indian Statistical Institute, Delhi}
\affil[2]{Ashoka University, Sonipat}
\begin{document}

\maketitle  

\begin{abstract}
Study of random networks generally requires
the nodes to be independently and uniformly distributed such as a Poisson point process. In this
work, we venture beyond this standard paradigm and investigate a stochastic 
forest obtained from a drainage network model constructed on a randomly perturbed
subset of $\Z^2$, where both horizontal and vertical  
perturbations are given by exponentially decaying unbounded discrete random variables 
and vertical perturbations are allowed in the upward direction only. 
We show that the resultant stochastic
network is a single tree a.s. We further establish that as a collection of paths, 
under diffusive scaling the resultant network converges to the Brownian web.
\end{abstract}

\section{Introduction and main results}

Directed random networks, where edges have a preferred direction of propagation,
 have long been an important class of models for understanding the large
scale behaviour of systems in a wide array of applications. These include, but are not limited
to, transport networks, power grids, various kinds of social networks, different types
of communication networks including wireless sensor networks, multicast communication
networks, peer-to-peer networks and drainage networks. 

It has been empirically observed that river networks satisfy various scaling laws
 and studying drainage networks is a 
statistical approach to understand the unifying behaviour of river basins. 
Leopold and Langbein \cite{LL62} first carried out studies of drainage basins by simulating
drainage networks through random walks in a rectangular region. 
Scheidegger \cite{S67} was the first to introduce a directed network by imposing a preferential
flow condition where each source emptying to one of it’s two neighbours in a
preferred direction. Howard \cite{H71} removed the restriction of drainage to 
neighbouring sites only. Iturbe and Rinaldo \cite{RR97} presented an excellent survey
of development in this field.

Informally drainage network models can be described in the following way.
A random subset of $\Z^d$ or $\R^d$ is selected as a collection of ‘source’ vertices.
From each source $\x = (\x(1), \cdots , \x(d)) \in \R^d$ exactly one directed edge is
drawn to another source vertex $\y$ with $\y(d) > \x(d)$, representing the flow 
from $\x$ to $\y$. We briefly describe the Howard’s model here. 
Each vertex in $\Z^d$ is a source vertex independently with probability
$p \in (0, 1)$ and a source vertex $u\in \Z^d$ connects to the nearest source at the next level
$\{ \w \in \Z^d: \w(d) = u(d) + 1 \}$. If there are more than one closest source, one of them
is chosen uniformly. For this model Gangopadhyay et.al. [GRS04] showed that
the random graph is a (connected) tree a.s. for $d = 2, 3$ and it is a forest with infinitely
many disjoint tree components for $d \geq 4$. Since then various other drainage network 
models with complex dependencies in constructing edges have been proposed
and the tree-forest dichotomy problem depending on dimensions has been studied for these models
(see \cite{FLT04}, [ARS08], [CT13],\cite{RSS16A}).

In the mathematical study of such networks, 
an important modelling hypothesis is the random distribution of their
nodes. Generally speaking, the distribution of the random nodes in stochastic networks is 
often taken to be uniform over space and independently distributed over disjoint regions, 
like, the Poisson point process and its variants. Such distribution of nodes is comparatively 
easily amenable to rigorous mathematical treatment, 
but is often limited in its effectiveness to model the reality, e.g., on a global scale the
homogeneous Poisson process exhibits clusters of points interspersed with vacant spaces,
whereas a more spatially uniform distribution might be a closer representation of the
reality (see, e.g., \cite{GL17}). However, little is understood about stochastic geometry of
networks arising from such strongly correlated point processes, principally because the tools
and techniques for studying the Poisson model heavily rely on its exact spatial independence
property.

This motivates the authors of \cite{GS20} to study a drainage network model 
constructed on a point process obtained from perturbations of lattice points where 
perturbations are restricted to compact domains. The question of tree-forest dichotomy depending 
on dimensions has been explored in \cite{GS20} for the resultant network and 
it's scaling limit has also been studied. This work has initiated the analysis of directed 
networks constructed  on disordered lattice points, though working with 
perturbations restricted to compact domain is certainly a limitation of \cite{GS20}. 
In this paper, we remove this limitation and examine a set of disordered lattice 
points for $d=2$ generated due to `unbounded' i.i.d. perturbations  along $x$-axis and $y$ axis
with exponentially decaying tails. The generated set of perturbed lattice points 
exhibit much greater measure of spatial homogeneity 
compared to the Poisson process. It is important to observe that the generated 
point process is hyperuniform.
Hyperuniformity of point processes have attracted a lot of interest in recent years, especially
in the statistical physics literature (see, e.g., \cite{GL17}, 
\cite{T02}, \cite{TS03} and the references therein). A point
process is said to be hyperuniform if the variance of the number of points in an expanding
domain scales like its surface area (or slower), rather than its volume, which is the case for
Poisson or any other extensive system that exhibits FKG-type properties. In fact, hyperuniformity
is closely related to negative association at the spatial level, which precludes the
application of many arguments that are ordinarily staple in stochastic geometry. 
In the subsequent paragraphs, we lay out the details of the model and give an account of our principal
results.

We are going to define our model now.  
In what follows, for $\x \in \R^2$, the notation $\x(i)$
 denotes the $i$-th coordinate of $\x$ for $1 \leq i \leq 2$. 
 We consider an i.i.d. family of random vectors 
 \begin{align}
\label{eq:PerturbRV}
\{ \Gamma_\u := (B_\u, R_\u, \Lambda_\u) : \u \in \Z^2 \}
\end{align}
such that the following holds:
\begin{itemize}
\item[(i)] $B_\u$ is a Bernoulli r.v. with success probability $p \in (0,1)$ 
which indicates whether a lattice point $\u$ is an open (source) vertex or not;
\item[(ii)] $R_\u$ is a Rademacher r.v. associated to $\u$ which helps to resolve a tie 
(in case there is one) for deciding the Howard step;
\item[(iii)] $\Lambda_\u := (X_\u, Y_\u)$ denotes the perturbation random vector 
associated to $\u$ where $X_\u$ and $Y_\u$ denote the $x$ coordinate perturbation  
and $y$ coordinate perturbation r.v.'s respectively. Set $\theta_x, \theta_y \in (0,1)$
and the respective p.m.f.'s for $X_\u$ and $Y_\u$ are given by: 
\end{itemize}   
\begin{align}
\label{eq:ExpTailPerturbRV}
P(X_\u = j) & = \theta_x  ( (1-\theta_x)/2  )^{|j|} 
\text{ for }j \in \Z  \text{ and }\nonumber\\
P(Y_\u = j) & = \theta_y  (  1-\theta_y  )^{j} 
\text{ for } j \geq 0.
\end{align}
Let $\{\u \in \Z^2 : B_\u = 1\}$ denote the collection of open (source) vertices. Using 
(random) perturbation vector $\Lambda_\w = (X_\w, Y_\w)$ a lattice point $\w \in \Z^2$ 
gets perturbed to a new location $\w + \Lambda_\w$ and this perturbed version
 is denoted by $\tilde{\w}$.
Note that two vertices $\u$ and $\w$ may perturb to the same 
location (i.e., $\tilde{\u } = \tilde{\w} $) and in that case,
 we don't distinguish between them. We consider the set of 
 \textit{perturbed} open vertices denoted by $V$ and defined as  
\begin{align}
\label{def:Vertex_Set_V}
V := \{ \tilde{\u} = \u + \Lambda_\u : \u \in \Z^2, B_\u = 1 \}.
\end{align}
Based on the point process $V$ we construct the Howard's network as follows.
For $\u \in \Z^2$ we define the non-negative integer valued random variable $J(\u)$ as
\begin{align}
\label{def:NearestPerturbedOpen}
J(\u) := \min \{ |\w(1) - \u(1)| : \w \in V, \w(2) = \u(2) + 1\}. 
\end{align} 
In other words, $J(\u)$ denotes the distance of the {\it nearest} 
point in $V$ from $\u$ having $y$ coordinate as $\u(2) + 1$.

Starting from $\u \in \Z^2$, based on the set $V$ we define
the `perturbed Howard' (PH) step $h(\u)=h(\u, V)$ as the almost 
surely (a.s.) unique point in $V$ with $h(\u)(2) = \u(2) + 1$ such that:
\begin{align}
h(\u, V) = h(\u) := 
\begin{cases}
\u + (J(\u), 1) & \text{ if }\u + (J(\u), 1) \in V \text{ and } \u + (-J(\u), 1) \notin V  \\
\u + (- J(\u), 1) & \text{ if }\u + (- J(\u), 1) \in V\text{ and } \u + (J(\u), 1) \notin V  \\
\u + (R_\u J(\u), 1) & \text{ if }\u + (- J(\u), 1), \u +  (J(\u), 1) \in V.
\end{cases}
\end{align}
 We drop the point set $V$ from the notation $h(\u, V)$ when it is clear from the 
context and denote it simply as $h(\u)$.
We consider the random graph $G := (V, E)$ with vertex set $V$ 
and edge set $E := \{\langle \u , h(\u)\rangle : \u \in V\}$ and call
 it the perturbed Howard (PH) model. 
We observe that each $\u \in V$ has exactly one outgoing  edge and therefore, 
the generated random graph does not have any cycle or loop a.s.

In what follows, we assume that the i.i.d. collections $\{B_\u : \u \in \Z^2\}$, 
$\{R_\u : \u \in \Z^2\}$ and $\{\Lambda_\u : \u \in \Z^2\}$ are 
independent of each other. We do not require independence of $x$ coordinate 
and $y$ coordinate perturbation random variables. Rather, we assume that the joint 
distribution of the perturbation random vector $\Lambda_\u = (X_\u, Y_\u)$ 
is such that we have $\P(X_\u = Y_\u = 0 ) > 0$ and $X_\u, Y_\u$
respectively follow marginal distributions as specified in (\ref{eq:ExpTailPerturbRV}).
Based on these assumptions our first result shows that the random graph
$G$ is connected a.s.
\begin{theorem}
\label{thm:PerturbedHoward_Tree}
The perturbed Howard network $G = (V, E)$ is connected and consists of a single tree a.s.
Further, there is no bi-infinite path in $G$ a.s.
\end{theorem}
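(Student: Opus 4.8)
The plan is to prove the two assertions of Theorem~\ref{thm:PerturbedHoward_Tree} by a fairly standard two-step strategy used for drainage networks: first show that there is (at most, and then exactly) one semi-infinite path emanating ``upward'' from each vertex and that these paths must coalesce, and second rule out bi-infinite paths by a ``downward'' counting/ergodic argument.

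\medskip

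\noindent\textbf{Step 1: every vertex has a unique forward path, so $G$ is a forest of trees, and the number of trees is translation invariant.}
Since each $\u\in V$ has exactly one outgoing edge $\langle \u, h(\u)\rangle$ and $h(\u)(2)=\u(2)+1$, iterating $h$ produces from every $\u\in V$ a unique semi-infinite directed path $\pi_\u = (\u, h(\u), h^{(2)}(\u),\dots)$ going strictly up in the $y$-coordinate; hence $G$ has no cycles and is automatically a forest. The number of distinct tree components is a measurable function of the i.i.d.\ field $\{\Gamma_\u\}$ that is invariant under horizontal lattice shifts, hence a.s.\ constant by ergodicity; I would show this constant is $1$.

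\medskip

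\noindent\textbf{Step 2: coalescence of forward paths.}
The crux is to show that for any two vertices $\u,\v\in V$ the paths $\pi_\u$ and $\pi_\v$ meet a.s. I would do this by a renewal/regeneration argument along the vertical direction. For a fixed level $n$, consider the pair of ``descendant'' horizontal positions of $\u$ and $\v$ after climbing $n$ levels, i.e.\ the horizontal gap process $D_n := h^{(n)}(\u)(1) - h^{(n)}(\v)(1)$ (with the convention that once the paths meet $D_n\equiv 0$). Using the exponential tails in \eqref{eq:ExpTailPerturbRV} and the fact that $\P(X_\u=Y_\u=0)>0$, one shows that at each level there is a uniformly positive probability (bounded below, depending only on $p,\theta_x,\theta_y$ and on the \emph{current} gap being not too large) of a ``good'' event that strictly decreases the gap, and also a uniformly positive probability that the gap does not grow by more than a constant; combined, $D_n$ behaves like a mean-zero random walk with bounded-in-probability increments that is absorbed at $0$, and such a walk hits $0$ a.s. A cleaner route: identify regeneration levels $n$ at which the configuration of $V$ near the relevant paths ``renews'' (e.g.\ a level where a deterministic local pattern of open/closed and zero-perturbation sites occurs, which by independence has positive density of occurrence), so that the gap process observed at successive regeneration levels is a genuine i.i.d.-increment random walk on $\Z$ absorbed at $0$; being recurrent (mean-zero, finite variance thanks to exponential tails), it is absorbed a.s. Since any two forward paths coalesce and every tree component contains a forward path, all of $V$ lies in a single tree, giving connectedness.

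\medskip

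\noindent\textbf{Step 3: no bi-infinite path.}
A bi-infinite path would require some vertex $\u\in V$ to have an infinite ``backward'' line of ancestors: an infinite sequence $\dots \to \u_{-2}\to\u_{-1}\to\u$ with $h(\u_{-k-1}) = \u_{-k}$. I would rule this out by a first/second-moment or mass-transport argument on the number of vertices at level $0$ that have an ancestor at level $-n$. Let $N_n$ be the number of vertices in $V$ with $y$-coordinate $0$ that are connected downward to level $-n$; by translation invariance $\E[N_n \mid \text{unit horizontal interval}]$ is constant in $n$ only if the backward tree is infinite, but one can show the per-column expected ``in-flow'' from level $-n$ decays, or alternatively apply the mass-transport principle: send mass $1$ from each open vertex at level $-n$ up its forward path to its image at level $0$; the expected mass received per unit horizontal length at level $0$ equals the expected mass sent per unit length at level $-n$, which is $p$ (up to the a.s.\ density of $V$ per column, a quantity computable from the perturbation law and independent of the level). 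If bi-infinite paths existed with positive probability, translation invariance and coalescence would force the forward image of level $-n$ to concentrate its mass on a sparse (vanishing-density) set of columns as $n\to\infty$, contradicting the constant expected received mass per column unless that mass is $0$ — but it is $p>0$. Making this rigorous requires care, and this is where I expect the main obstacle: controlling the long-range dependence introduced by the unbounded perturbations (the point $\tilde\u$ can be far from $\u$, so the value of $h$ at a column depends on perturbation variables of many columns), which breaks the clean finite-range independence used in the classical Howard model of [GRS04]. The exponential tail in \eqref{eq:ExpTailPerturbRV} is exactly what should let one truncate this dependence: with overwhelming probability no perturbation in a box of side $L$ exceeds $C\log L$, so on such high-probability events the network restricted to the box depends only on $\Gamma_\u$ for $\u$ in a slightly enlarged box, and one can run the renewal/mass-transport arguments on these truncated, finite-range events and pass to the limit. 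I would therefore structure the proof so that Steps~2 and~3 are first carried out for the truncated model and then transferred to $G$ via a Borel--Cantelli estimate on the rare large-perturbation events.
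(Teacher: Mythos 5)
Your overall strategy (coalescence of forward paths, then rule out bi-infinite paths separately) is the right shape, but two of your key steps either differ substantively from what the paper does or have gaps that would block a rigorous argument.

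\textbf{Step 2 (coalescence).} The ``cleaner route'' you sketch --- find a level where a deterministic local pattern of special (zero-perturbation, open) sites occurs and declare a regeneration --- does not actually produce a regeneration in this model. The reason is the one-sided, \emph{unbounded} $y$-perturbations: a lattice point arbitrarily far below level $n$ can perturb into the region above level $n$ with positive probability, so no finite-range ``local pattern'' at level $n$ can screen off the past from the future. The paper resolves exactly this with the combination of an ``$\text{In}$'' event (the forward path stays in a parabolic envelope) and an ``$\text{Out}$'' event $\text{Out}(\u) = \{V^-_{\u(2)} \cap \nabla(\u) = \emptyset\}$, which is $\mathcal{F}_{\u(2)}$-measurable and certifies that \emph{no} already-explored lower-level point has invaded the envelope (Sections~2--3); only the joint occurrence of both is a renewal. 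Your proposed truncation of the perturbations to $O(\log L)$ inside boxes of size $L$ is a different idea that in principle can control the long-range dependence, but you would still need something playing the role of $\text{Out}$ to make the successive pieces genuinely i.i.d., and you haven't sketched how. Also note that the paper actually proves a quantitative tail bound $\P(T(\x_1,\x_2)>t)\leq C(\x_2(1)-\x_1(1))/\sqrt t$ (Theorem~\ref{thm:CoalescingTimetail}, via the Coupier et al.\ drift/moment criteria applied to the gap process at renewal steps) rather than just a.s.\ coalescence, because it needs this later for the Brownian web convergence; for Theorem~\ref{thm:PerturbedHoward_Tree} alone your weaker qualitative goal would suffice, which is a legitimate simplification.

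\textbf{Step 3 (no bi-infinite path).} The paper does not use mass transport; it defers this to Remark~\ref{rem:Biinfinite}, where it is deduced from the construction of the dual graph and the joint convergence of primal and dual paths to $(\mathcal{W},\widehat{\mathcal{W}})$: a bi-infinite primal path would force a pair of non-coalescing dual paths whose scaling limit would have to spend positive Lebesgue time with a forward Brownian path, a contradiction. The paper also notes a Burton--Keane argument would work. Your mass-transport sketch is a third, plausible route (it is close in spirit to the approach in the GRS-type Howard-model literature), but as written the key inference --- ``translation invariance and coalescence would force the forward image of level $-n$ to concentrate its mass on a sparse set of columns'' --- is asserted, not proved. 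Coalescence of forward paths tells you they merge; it does not by itself control the width of the set of level-$(-n)$ ancestors of a level-$0$ vertex, which is what you need to bound to get the vanishing density. Turning this into a proof requires an additional estimate (on the spread of the backward tree, or on the decay of the probability of depth-$n$ backward ancestry), and again the unbounded perturbations make the naive finite-range arguments unavailable.

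In short: Step~1 is fine; Step~2 needs the paper's In/Out machinery (or a fully worked-out truncation scheme) rather than a local-pattern regeneration; Step~3 is a genuinely different approach from the paper's dual-graph argument but is currently an incomplete sketch with its crucial inequality unproved.
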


Our next main result is that the graph $G$ observed as a collection of paths,
converges to the Brownian web under a suitable diffusive scaling. The standard Brownian
web is originated in the work of Arratia \cite{A79} as the scaling limit of 
the voter model on $\Z$. Intuitively,
the Brownian web can be thought of as a collection of one-dimensional coalescing Brownian
motions starting from every point in the space time plane $\R^2$. 
Later \cite{FINR04} provided
a framework in which the Brownian web is realized as a random variable taking values in a
Polish space. In Subsection \ref{subsec:BwebIntro} 
we present the relevant topological details from \cite{FINR04}.

Set $h^0(\u) = \u$ and for $k \geq 1$, let $h^k(\u) = h(h^{k-1}(\u))$ denote 
the $k$-th step starting from $\u$.  
Joining successive steps $\langle h^{k-1}(\u), h^k(\u) \rangle$
for all $k \geq 1$ linearly, we get the PH path 
$\pi^\u$ starting from $\u$ constructed using the point set $V$.
Sometimes we call this path as the process $\{ h^k(\u) : k \geq 0\}$ of successive 
steps also. We consider this two-dimensional PH network as a 
collection of paths and want to study it's scaling limit under diffusive scaling.
We need some notations.

Let ${\cal X} := \{\pi^{\u}:\u \in V\}$ denote the collection of all PH paths.
For given  $\gamma , \sigma > 0$ and for any $n \in \N$, the scaled path
 is given by 
\begin{align}
\label{eq:PathScale}
\pi_n(\gamma,\sigma) (t) := \pi(n \gamma t)/(\sqrt{n} \sigma) \text{ and }
{\cal X}_n(\gamma,\sigma) := \{\pi_n^{\u}(\gamma,\sigma):\u \in V\}
\end{align} 
denotes the collection of the scaled paths.
Let $\bar{{\cal X}}_n(\gamma,\sigma)$ denote the closure of ${\cal X}_n(\gamma,\sigma)$
w.r.t. certain metric which is explained in detail in SubSection \ref{subsec:BwebIntro}. 
Now we are ready to state our second theorem regarding the convergence of the diffusively 
scaled PH network to the Brownian web.
\begin{theorem}
\label{thm:PerturbedHoward_Bweb}
There exist $\sigma  = \sigma(p, \theta_x, \theta_y) > 0$ and $\gamma = \gamma(p, \theta_x, \theta_y) > 0$
such that $\bar{{\cal X}}_n(\gamma,\sigma)$ converges in distribution to the 
Brownian web ${\cal W}$ as $n \to \infty$.
\end{theorem}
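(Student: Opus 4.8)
The plan is to verify the now-standard convergence criteria for the Brownian web as formulated by Fontes, Isopi, Newman and Ravishankar in \cite{FINR04} (conditions $(I)$, $(B_1)$, $(B_2)$, or the more convenient Newman--Ravishankar--Sarkar reformulation $(T)$ together with tightness and convergence of finite-dimensional distributions of single paths). The first and most basic ingredient is an invariance principle for a single PH path: starting from a vertex $\u \in V$, the increments of the horizontal displacement of the process $\{h^k(\u): k \geq 0\}$ form a sequence that is not i.i.d. but is a functional of an underlying stationary, finite-range-in-$y$ but \emph{not} finite-range-in-$x$ field. I would first show that the process $h^k(\u)$, suitably regenerated, admits a renewal structure: define regeneration levels at $y$-coordinates where the past configuration to the left and right of the current path position has been ``revealed'' and decoupled. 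Because the perturbation random variables $X_\u, Y_\u$ have exponentially decaying tails, the amount of randomness that must be examined to determine one PH step has exponential tails, so the inter-regeneration times and the horizontal displacements between regenerations have finite (in fact exponential) moments. A Donsker-type theorem for the resulting i.i.d.\ sums then yields that $\pi_n^\u(\gamma,\sigma)$ converges to a Brownian motion, which identifies the constants $\gamma = \gamma(p,\theta_x,\theta_y)$ and $\sigma = \sigma(p,\theta_x,\theta_y)$ as the reciprocal mean regeneration height and the asymptotic variance, respectively.

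Next I would establish coalescence and the joint behaviour of finitely many paths. Two PH paths started from distinct vertices, once they occupy the same $y$-level at horizontal distance $O(1)$, have a positive probability (bounded below uniformly) of meeting at the next level, and once they meet they coincide forever by construction of $h(\cdot)$; combined with the single-path invariance principle, the difference of two not-yet-coalesced paths behaves like a random walk with increments of finite variance that is absorbed at $0$, so after diffusive scaling a finite family of PH paths converges to a family of coalescing Brownian motions. This gives condition $(I)$ (convergence of finite-dimensional distributions from deterministic starting points, after noting that $V$ is a.s.\ dense enough in the relevant sense and using the a.s.\ statements of Theorem \ref{thm:PerturbedHoward_Tree}). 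Here one must be slightly careful that the starting points are themselves random (they lie in $V$), but since $V$ has positive density and the perturbations are tight, a coupling/approximation argument lets one start paths from deterministic lattice-type points and control the error.

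For tightness I would use the criterion of Fontes--Isopi--Newman--Ravishankar in the form that controls the number of distinct paths crossing a space-time rectangle: it suffices to bound, uniformly in $n$, the probability that there exist three paths in ${\cal X}_n(\gamma,\sigma)$ that are ``$\epsilon$-separated'' at some time and all pass near a given point, i.e.\ to establish the Brownian-web tightness condition $(B_2)$ bounding $\E[\eta(t_0,t;a,a+\epsilon)]$, the expected number of distinct crossing points. This reduces, via the coalescing structure, to a hitting/meeting estimate for the difference of two PH paths, which follows from the random-walk comparison above together with the exponential-tail control of steps, using an FKG- or Markov-inequality argument on the left/right ``influence'' regions of paths. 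I expect the main obstacle to be precisely the lack of spatial independence: unlike the Poisson or i.i.d.\ Bernoulli drainage models, determining $J(\u)$ and hence $h(\u)$ may depend on arbitrarily distant lattice points (a far-away vertex can be perturbed close to $\u(1)$), so the renewal/regeneration construction and the decorrelation estimates must be done carefully, quantifying how the exponential tails of $X_\u$ translate into exponential decay of the dependence range of a single step, and ensuring that the (negatively associated, hyperuniform) structure of $V$ does not destroy the lower bounds on meeting probabilities needed for coalescence and for $(B_2)$. Once these dependence-range estimates are in place, the remaining steps follow the template established for related drainage and navigation models such as \cite{RSS16A} and \cite{GS20}.
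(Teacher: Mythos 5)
Your overall architecture (renewal/regeneration structure giving exponentially tight inter-renewal times $\Rightarrow$ Donsker for a single path, then coalescence estimates, then the FINR criteria) is the right skeleton and matches the paper's plan at a high level. However, there are two places where the approach you sketch would run into genuine trouble, and in each case the paper does something materially different.

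First, and most seriously, you propose to verify criterion $(B_2)$ by "an FKG- or Markov-inequality argument on the left/right influence regions of paths." The paper explicitly avoids this: it notes that while $(B_2)$ is classically handled by FKG plus a coalescence-time bound, the perturbed-lattice field $V$ is hyperuniform and spatially negatively associated, and no useful FKG property is available for events expressed in terms of $V$ itself. (The only FKG used in the paper, Lemma \ref{def:FKG}, is for the i.i.d.\ Bernoulli field of \emph{special} vertices $V_{\mathrm{sp}}=\{\u: B_\u=1, X_\u=Y_\u=0\}$ — a trick you do not invoke — and that FKG is deployed only to get the lower bound $p_{\mathrm{in}}$ in Lemma \ref{lem:InEventProb_bd}, not for $(B_2)$.) Instead of $(B_2)$, the paper constructs an explicit dual forest $\widehat G$ with non-crossing dual paths and appeals to Theorem 6.3 of \cite{CSST20}, which replaces $(B_2)$ by the "no positive-Lebesgue-time intersection of primal and dual" condition; this is the wedge-type criterion of \cite{SSS19} in joint-convergence form, and it is verified via the coalescence-time estimate and a counting argument (Lemma \ref{lem:coalescenceDSF}) that needs no FKG for $V$. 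You have identified the obstacle correctly in your last paragraph, but you have not identified a workaround, and the one you float (FKG for $V$) does not hold.

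Second, your regeneration scheme is stated vaguely as "levels where the past configuration ... has been revealed and decoupled." In this model a vertex arbitrarily far below and to the side of the current position can be perturbed into the immediate future of the path, so decoupling at a deterministic window does not happen. The paper resolves this with a two-tier renewal: an `In' event (both paths confined forever inside translated parabolic regions $\nabla(\cdot)$, engineered by a shield of unperturbed special vertices, which is what makes the FKG-for-special-points usable) and an `Out' event (no open lattice point from the lower half-plane perturbs into those parabolic regions). Only when both hold does one get true independence of the post-renewal evolution from the past (Corollary \ref{cor:Rwalk_IncrDistEquality_2} and the distributional identity (\ref{eq:InformationUpperPlaneatRenewal})). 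Showing that these compound renewal events occur with exponentially tight waiting times is itself a sizable part of the paper (Propositions \ref{prop:TauExpTail}, \ref{prop:Gamma_Tail}, \ref{prop:Sigma_Tail}), requiring a height process $\{L_j\}$ dominated by a geometrically ergodic chain. Your proposal would need all of this made explicit; the assertion that exponential tails of the perturbations directly give exponential inter-regeneration tails skips over the non-Markovianity caused by the unbounded upward perturbations.

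Finally, for the coalescence-time bound you say the difference process "behaves like a random walk absorbed at $0$." This is essentially right, but the difference process is not Markov and not a martingale, and the paper needs the precise hypotheses of Corollary 5.6 of \cite{CSST20} (Proposition \ref{prop:ZprocessRwalkProperties}): a zero-drift identity only on a high-probability event $F_\ell$, with $\P(F_\ell^c)\lesssim Z_\ell^{-3}$, plus second/third moment controls. Establishing the symmetry of the increment on $F_\ell$ requires a reflection/swap construction on two disjoint rectangles — again not something one gets from generic "random walk" intuition. So the proposal is directionally sound on the single-path and coalescence pieces, but the $(B_2)$ plan would fail as written, and the renewal/decoupling step is underspecified in a way that hides the main technical work of the paper.
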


We should mention here that though we have assumed particular distributions for $x$ coordinate 
and $y$ coordinate perturbation random variables, our arguments hold in much generality as mentioned in 
the following remark.
\begin{remark}
\label{rem:GenExpTailAssumptions} 
All our arguments hold and we have Theorem \ref{thm:PerturbedHoward_Tree} and Theorem 
\ref{thm:PerturbedHoward_Bweb} as long as the i.i.d. collection of 
perturbation random vectors $\{\Lambda_\u = (X_\u, Y_\u) : \u \in \Z^2\}$ 
satisfies the following assumptions:
\begin{itemize}
\item[(i)] $\P(X_\u = Y_\u = 0 ) > 0$ and  $\P(Y_\u < 0 ) = 0$; 
\item[(ii)] $\P(X_\u = j ) = \P(X_\u = -j )$ for all $j \in \N$;
\item[(iii)] There exist $C_0, C_1 > 0$ such that 
$$
\P(|X_\u| \geq  n)\vee \P(Y_\u \geq  n)\leq C_0 \exp{(-C_1 n)} \text{ for all }n.
$$
\end{itemize}
In particular, if we consider an i.i.d. collection of 
Gaussian random vectors $\{\Psi_\u = (\Psi_\u(1), \Psi_\u(2))   : \u \in \Z^2\}$ and 
take the perturbation random vector $\Lambda_\u = (X_\u, Y_\u)$ defined as  
$$
X_\u := \lfloor \Psi_\u(1) \rfloor \text{ and }Y_\u := \lfloor | \Psi_\u(2) | \rfloor ,
$$
still Theorem \ref{thm:PerturbedHoward_Tree} and Theorem 
\ref{thm:PerturbedHoward_Bweb} hold for the Howard's network 
constructed on the point process of perturbed lattice points.

We should mention here that the point process of perturbed lattice points under
Gaussian perturbations have attracted a lot of interests in recent years. 
In particular Holroyd and Soo \cite{HS13} showed that in two dimensions, 
the resulting Gaussian perturbed lattice point process is `rigid' in the 
sense that for this strongly correlated point process, for any bounded domain ${\cal D}$ 
point process configuration on ${\cal D}^c$ almost surely determines the number of points 
inside ${\cal D}$. In three dimensions,  Peres and Sly \cite{PS14}  examined rigidity properties of this point process in greater detail and establish a phase transition.     
\end{remark}

\subsection{The Brownian web}
\label{subsec:BwebIntro}

 Fontes \textit{et. al.} \cite{FINR04} provided a suitable framework so that the Brownian 
web (BW) can be regarded as a random variable taking values in a Polish space.
In this section, we recall the relevant topological details from \cite{FINR04}.

Let $\R^{2}_c$ denote the completion of the space time plane $\R^2$ with
respect to the metric
\begin{equation*}
\rho((x_1,t_1),(x_2,t_2))=|\tanh(t_1)-\tanh(t_2)|\vee \Bigl| \frac{\tanh(x_1)}{1+|t_1|}
-\frac{\tanh(x_2)}{1+|t_2|} \Bigr|.
\end{equation*}
As a topological space $\R^{2}_c$ can be identified with the
continuous image of $[-\infty,\infty]^2$ under a map that identifies the line
$[-\infty,\infty]\times\{\infty\}$ with the point $(\ast,\infty)$, and the line
$[-\infty,\infty]\times\{-\infty\}$ with the point $(\ast,-\infty)$.
A path $\pi$ in $\R^{2}_c$ with starting time $\sigma_{\pi}\in [-\infty,\infty]$
is a mapping $\pi :[\sigma_{\pi},\infty]\rightarrow [-\infty,\infty]$ such that
$\pi(\infty)= \pi(-\infty)= \ast$ and $t\rightarrow (\pi(t),t)$ is a continuous
map from $[\sigma_{\pi},\infty]$ to $(\R^{2}_c,\rho)$.
We then define $\Pi$ to be the space of all paths in $\R^{2}_c$ with all possible starting times in $[-\infty,\infty]$ equipped with the following metric, 
\begin{equation*}
d_{\Pi} (\pi_1,\pi_2)= |\tanh(\sigma_{\pi_1})-\tanh(\sigma_{\pi_2})|\vee\sup_{t\geq
\sigma_{\pi_1}\wedge
\sigma_{\pi_2}} \Bigl|\frac{\tanh(\pi_1(t\vee\sigma_{\pi_1}))}{1+|t|}-\frac{
\tanh(\pi_2(t\vee\sigma_{\pi_2}))}{1+|t|}\Bigr|,
\end{equation*}
for $\pi_1,\pi_2\in \Pi$. This metric 
makes $\Pi$ a complete, separable metric space. Convergence in this
metric can be described as locally uniform convergence of paths as
well as convergence of starting times.
Let ${\cal H}$ be the space of compact subsets of $(\Pi,d_{\Pi})$ equipped with
the Hausdorff metric $d_{{\cal H}}$ given by,
\begin{equation*}
d_{{\cal H}}(K_1,K_2)= \sup_{\pi_1 \in K_1} \inf_{\pi_2 \in
K_2}d_{ \Pi} (\pi_1,\pi_2)\vee
\sup_{\pi_2 \in K_2} \inf_{\pi_1 \in K_1} d_{\Pi} (\pi_1,\pi_2).
\end{equation*}
The space $({\cal H},d_{{\cal H}})$ is a complete separable metric space. Let
$B_{{\cal H}}$ be the Borel  $\sigma-$algebra on the metric space $({\cal H},d_{{\cal H}})$.
The Brownian web ${\mathcal W}$ is then defined and characterized as an 
$({\cal H}, B_{{\cal H}})$ valued random variable by the following result:

\begin{theorem}[Theorem 2.1 of \cite{FINR04}]
\label{theorem:Bwebcharacterisation}
There exists an $({\mathcal H}, {\mathcal B}_{{\mathcal H}})$-valued random variable
${\mathcal W}$ whose distribution is uniquely determined by
the following properties:
\begin{itemize}
\item[$(a)$] from any deterministic point $\x\in\R^2$, there is  almost surely a unique path $\pi^{\x}\in {\mathcal W}$  starting from $\x$;
\item[$(b)$] for a finite set of deterministic points $\x_1,\dotsc, \x_k \in \R^2$, the collection $(\pi^{\x_1},\dotsc,\pi^{\x_k})$ is distributed as coalescing 
Brownian motions starting from $\x_1,\dotsc,\x_k$;
\item[$(c)$] for any countable deterministic dense set ${\mathcal D}$ of $\R^2$, ${\mathcal W}$ is the closure of $\{\pi^{\x}: \x\in {\mathcal D} \}$ in $(\Pi, d_{\Pi})$  almost surely.
\end{itemize}
\end{theorem}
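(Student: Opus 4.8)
The plan is to prove existence and uniqueness separately, following the construction of Fontes, Isopi, Newman and Ravishankar \cite{FINR04}. For uniqueness, suppose ${\mathcal W}$ and ${\mathcal W}'$ both satisfy $(a)$--$(c)$ and fix a countable dense set ${\mathcal D}=\{\x_1,\x_2,\dots\}\subset\R^2$. By $(a)$ there is a.s.\ a unique path $\pi^{\x_i}\in{\mathcal W}$ from each $\x_i$, and by $(c)$ we have ${\mathcal W}=\overline{\{\pi^{\x_i}:i\geq1\}}$ a.s., the closure being taken in $(\Pi,d_\Pi)$. The map sending a sequence of paths to the closure of the set of its terms is Borel measurable on the set of sequences with relatively compact range (using the relative-compactness criterion for subsets of $\Pi$ from \cite{FINR04}), so the law of ${\mathcal W}$ is the image under this map of the law of $(\pi^{\x_1},\pi^{\x_2},\dots)$; by $(b)$ the latter is the law of a family of coalescing Brownian motions started from $\x_1,\x_2,\dots$, whose finite-dimensional distributions --- hence whose law on $\Pi^{\N}$ --- are uniquely determined. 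Thus ${\mathcal W}$ and ${\mathcal W}'$ have the same distribution.

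For existence, again fix a countable dense ${\mathcal D}=\{\x_1,\x_2,\dots\}$ and build on one probability space a consistent family $\{W^{\x_i}:i\geq1\}$ of coalescing Brownian motions: having constructed $W^{\x_1},\dots,W^{\x_{n-1}}$, run an independent Brownian motion from $\x_n$ and let it stick to the union of the earlier paths from the first time (if any) at which it meets them. Set ${\mathcal W}:=\overline{\{W^{\x_i}:i\geq1\}}\subset\Pi$. The first substantial step is to show that ${\mathcal W}$ is a.s.\ a compact subset of $\Pi$, so that it is genuinely an $({\mathcal H},{\mathcal B}_{{\mathcal H}})$-valued random variable; this is done by verifying the relative-compactness criterion of \cite{FINR04}, which amounts to an a.s.\ uniform control on the oscillation of all the paths $W^{\x_i}$ over each compact space-time region, together with the fact that only finitely many of them are macroscopically distinct inside such a region because coalescing Brownian motions coalesce.

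It then remains to verify $(a)$, $(b)$ and $(c)$ for this ${\mathcal W}$. For $(b)$, given deterministic points $\y_1,\dots,\y_k$, pick $\x_{i_j(m)}\in{\mathcal D}$ with $\x_{i_j(m)}\to\y_j$; by the compactness just established a subsequence of $(W^{\x_{i_1(m)}},\dots,W^{\x_{i_k(m)}})$ converges in $\Pi^k$, the limiting paths start from $\y_1,\dots,\y_k$, they lie in ${\mathcal W}$ by definition of the closure, and passing to the limit in the weakly converging finite coalescing systems shows that the limit is itself a system of coalescing Brownian motions. The existence half of $(a)$ is the case $k=1$; its uniqueness half follows because any two paths of ${\mathcal W}$ from a deterministic point $\x$ are limits of $W^{\x_{i(m)}}$ and $W^{\x_{i'(m')}}$ with both index sequences converging to $\x$, and a reflection/non-crossing estimate shows that paths started from points of ${\mathcal D}$ near $\x$ stay uniformly close with high probability, forcing $d_\Pi$-distance $0$ in the limit. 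Finally, $(c)$ for an arbitrary countable dense ${\mathcal D}'$ follows by the same approximation together with the order-preserving (non-crossing) structure of the coalescing family, which lets one sandwich each $W^{\x_i}$ between paths started from points of ${\mathcal D}'$ lying on either side of $\x_i$; this also shows the construction does not depend on the initial choice of ${\mathcal D}$.

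The main obstacle is the compactness step: proving the a.s.\ uniform modulus-of-continuity bound and the a.s.\ finiteness of the number of distinct paths crossing a box. This needs quantitative estimates for coalescing Brownian motions --- for instance a good bound on the probability that two Brownian motions started a small distance apart remain separated by a fixed amount up to a given time --- together with care near the ``points at infinity'' $(\ast,\pm\infty)$ of the compactified space $\R^2_c$, where the metric $\rho$ degenerates. Once this uniform control is available, the verification of $(a)$--$(c)$ reduces to comparatively routine limiting arguments.
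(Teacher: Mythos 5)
The paper does not prove this statement: it is quoted verbatim as Theorem~2.1 of \cite{FINR04} and used purely as the definition/characterization of the Brownian web, so there is no in-paper proof to compare your attempt against. Your sketch is a recognizable outline of the original argument in \cite{FINR04} --- sequential coalescing construction from a countable dense ${\mathcal D}$, closure in $\Pi$, a.s.\ compactness via the characterization of compact subsets of $(\Pi,d_\Pi)$, then verification of $(a)$--$(c)$, with uniqueness of the law read off from $(b)$ and $(c)$ via the measurability of the closure map --- and you correctly flag the compactness step and the ${\mathcal D}$-independence of the construction as the places where the real work lies. The one imprecise point is the uniqueness half of $(a)$: a bare non-crossing observation does not by itself force $d_\Pi$-distance zero, since nearby paths can remain distinct for a positive time. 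The estimate actually used is that coalescing Brownian motions started at distance $\varepsilon$ apart coalesce before any fixed $\delta>0$ with probability tending to one as $\varepsilon\to 0$; combined with non-crossing this sandwiches every path of ${\mathcal W}$ from a fixed deterministic $\x$ between two paths from ${\mathcal D}$ that merge instantly, giving a.s.\ uniqueness. This is a sharpening of what you wrote rather than a different idea, and the overall strategy matches \cite{FINR04}.
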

The above theorem shows that the collection is almost surely determined by countably many coalescing Brownian motions.

We present a short introduction on the dual Brownian web $\widehat{\WW}$. 
As in case of forward paths, one can consider a similar metric space of collection of backward paths 
denoted by $(\widehat{\Pi}, d_{\widehat{\Pi}})$. 
The notation $(\widehat{{\mathcal H}}, d_{\widehat{{\mathcal H}}})$
denotes the corresponding Polish space of compact collections of backward paths with the induced 
Hausdorff metric. The Brownian web and its dual denoted 
by $({\mathcal W},\widehat{{\mathcal W}})$ is a $({\mathcal H}\times \widehat{{\mathcal H}}, {\mathcal B}_{{\mathcal H}}\times {\mathcal B}_{\widehat{{\mathcal H}}})$-valued random variable such that:
\begin{itemize}
\item[$(i)$] $\widehat{{\mathcal W}}$ is distributed as the Brownian web
 rotated $180^0$ about the origin;
\item[$(ii)$] ${\mathcal W}$ and $\widehat{{\mathcal W}}$ uniquely determine each other
in the sense that the paths of ${\cal W}$ a.s. do not cross with (backward) paths in 
$\widehat{{\cal W}}$. The interaction between the paths in ${\mathcal W}$ and $\widehat{\mathcal W}$ is that of Skorohod reflection (see \cite{STW00}).
\end{itemize}

Before concluding this section, we explain the notion of `non-crossing' paths
as this notion will be frequently used in this of the paper. 
 Two paths $\pi_1 , \pi_2 \in \Pi$
are said to be non-crossing 
if there does not exist any $s_1, s_2 \in [\sigma_{\pi_1}\vee \sigma_{\pi_2}, 
\infty)$ such that 
\begin{align}
\label{eq:NonCrossing}
(\pi_1(s_1) - \pi_2(s_1))(\pi_1(s_2) - \pi_2(s_2)) > 0.
\end{align}
It follows that for the PH model, paths are a.s. 
non-crossing. For any $n \geq 1$, clearly 
${\cal X}_n(\gamma, \sigma)$ a.s. forms a family of non-crossing paths and it's 
closure in $\Pi$ denoted by $\overline{{\cal X}}_n(\gamma, \sigma)$ which is 
a $({\cal H}, {\cal B}_{\cal H})$-valued random variable a.s. We will show that as $n\to \infty$, 
the $({\cal H}, {\cal B}_{\cal H})$-valued random variable 
$\overline{{\cal X}}_n(\gamma, \sigma)$ converges 
in distribution to the Brownian web ${\cal W}$ (see Theorem \ref{thm:PerturbedHoward_Bweb}).

\section{The joint exploration process and a sequence of `In' steps}
\label{sec:Expreg}

Fix $k \geq 1$. In this section we start from $k$ many \textit{lattice} 
points $\x_1, \cdots, \x_k \in \Z^2$ such that 
$\x_1(2) = \cdots = \x_k(2) $ and we consider the joint exploration process 
of successive steps $\{h^n(\x_1), \cdots, 
h^n(\x_k) : n \geq 0\}$. Note that the starting points $\x_1, 
\cdots, \x_k$ are not necessarily in $V$ and $h^n(\x_i) = h^n(\x_i, V)$ for all $1 \leq i \leq k$. 
Without loss of generality 
we consider $\x_1(2) = \cdots = \x_k(2) = 0$. 
For $n \in \Z$, let ${\cal F}_n$ denote the $\sigma$ field
\begin{align}
\label{def:F_j_SigmaField}
{\cal F}_n := \sigma \bigl( \Gamma_\w  : \w(2) \leq n \bigl).
\end{align} 
We observe that the joint exploration process $\{h^n(\x_1), \cdots, h^n(\x_k) : n \geq 0\}$
 is measurable w.r.t. the filtration $\{{\cal F}_n : n \geq 0\}$.
We note that the filtration $\{{\cal F}_n : n \geq 0\}$ is {\it not} the
natural or minimal filtration for the joint exploration process.   
Nevertheless, we continue to work with this filtration.     

We introduce some notations. For $l \in \Z$, let 
$$
\mathbb{H}^+(l) := \{\w \in \Z^2 : \w(2) > l\}
\text{ and }\mathbb{H}^-(l) := \{\w \in \Z^2 : \w(2) \leq  l\}
$$ respectively denote the (open) upper and (closed) lower 
half-planes w.r.t. the line $\{y = l\}$. 
For any $n \geq 0$, we observe that the $\sigma$-field  ${\cal F}_n$
has `some' information about certain points in the set $V \cap \mathbb{H}^+(n)$, viz., 
those vertices in $V$ obtained as perturbations of open lattice points in
$\mathbb{H}^-(n)$. The $\sigma$-field has information about all 
$\tilde{\w}  \in V \cap \mathbb{H}^+(n)$ with $\w \in \mathbb{H}^-(n)$ and $B_\w = 1$. 
Such points can affect the distribution of subsequent steps and consequently, 
the joint exploration process $\{h^n(\x_1), \cdots, 
h^n(\x_k) : n \geq 0\}$ is {\it not} Markov. In the next subsection we show that, 
together with this information about ${\cal F}_n$ `explored' points in the upper half-plane 
$\mathbb{H}^+(n)$, the joint exploration process exhibits Markov property.

\subsection{Markov property of the joint exploration process}
\label{subsec:JtExp_Markov}

We recall that we are studying the joint exploration process of $k$ paths starting 
from $\x_1, \cdots  , \x_k$ with $\x_1(2) = \cdots  = \x_k(2) = 0$.  
We need to introduce some more notations. For $l \in \Z$ we partition the set $V$ as 
\begin{align}
\label{def:V_Subsets}
V^+_l & := \{ \tilde{\w} = \w + \Lambda_\w : \w  \in \mathbb{H}^+(l), B_\w = 1\} \text{ and }\nonumber\\ 
V^-_l & := \{ \tilde{\w} = \w + \Lambda_\w : \w  \in \mathbb{H}^-(l), B_\w = 1\}.
\end{align} 
We observe that $V^+_l \subset \mathbb{H}^+(l)$ and 
the $\sigma$-field ${\cal F}_l$ does not have any 
 information about the set $V^+_l$. On the other hand, the set $V^-_l$ has been completely 
explored by ${\cal F}_l$ and the set $V^-_l$ is not necessarily contained in the lower 
half-plane $\mathbb{H}^-(l)$. As discussed earlier, 
the information that the $\sigma$-field ${\cal F}_l$ has 
about the point set $V \cap \mathbb{H}^+(l)$, is given by the set 
\begin{align}
\label{def:InformationSet}
I_l := V^-_l \cap \mathbb{H}^+(l) = \{ \tilde{\w} = \w + \Lambda_\w 
\in \mathbb{H}^+(l) : \w  \in \mathbb{H}^-(l), B_\w = 1\}.
\end{align} 
In other words, $I_l$ represents the information that the $\sigma$-field
${\cal F}_l$ has about the point set $V\cap \mathbb{H}^+_l$. 
The next proposition shows that, together with this information set, 
the joint exploration process is Markov.   
\begin{proposition}
\label{prop:Markov}
The process $\{ (h^{n}(\x_1), \cdots, h^{n}(\x_k), I_n) : n \geq 0\}$
is Markovian.
\end{proposition}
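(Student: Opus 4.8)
The plan is to exhibit the new state as a deterministic function of the old state together with a packet of randomness that is independent of the relevant part of the past, and then to invoke the elementary ``fresh randomness'' identity: if $Y$ is ${\cal A}$-measurable and $Z$ is independent of ${\cal A}$, then $\E[\phi(Y,Z)\mid{\cal A}]=\psi(Y)$ with $\psi(y):=\E[\phi(y,Z)]$. Abbreviate $S_n:=(h^n(\x_1),\ldots,h^n(\x_k),I_n)$, and for $m\in\Z$ put $\Theta_m:=\{(B_\w,\Lambda_\w):\w(2)=m\}$ and $\mathbf{R}_m:=\{R_\w:\w(2)=m\}$. Splitting a perturbed open vertex $\tilde\w$ on the line $\{y=n+1\}$ according to whether $\w(2)\le n$ or $\w(2)=n+1$ yields the two identities
\begin{align*}
V\cap\{y=n+1\} &= \bigl(I_n\cap\{y=n+1\}\bigr)\cup\bigl\{\w+(X_\w,0):\w(2)=n+1,\,B_\w=1,\,Y_\w=0\bigr\},\\
I_{n+1} &= \bigl(I_n\cap\mathbb{H}^+(n+1)\bigr)\cup\bigl\{\w+\Lambda_\w:\w(2)=n+1,\,B_\w=1,\,Y_\w\ge1\bigr\}.
\end{align*}
Moreover, since each $h^n(\x_i)$ lies on $\{y=n\}$, the definition of the PH step shows $h^{n+1}(\x_i)=h(h^n(\x_i),V)$ to be a function of $h^n(\x_i)$, of the set $V\cap\{y=n+1\}$ above, and --- only when there is a tie --- of the single variable $R_{h^n(\x_i)}$. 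Combining, $S_{n+1}=F(S_n,\mathbf{R}_n,\Theta_{n+1})$ for a fixed measurable $F$, where the $S_n$-slot also serves to pick out the coordinates $R_{h^n(\x_1)},\ldots,R_{h^n(\x_k)}$ inside $\mathbf{R}_n$. The subtlety to keep track of is precisely this: the tie-breakers consumed by the $(n+1)$-st step all live on the line $\{y=n\}$, a line the exploration has \emph{not} inspected during its first $n$ steps.

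To convert this into the Markov property, introduce ${\cal G}_n:=\sigma({\cal F}_{n-1},\Theta_n)$ and verify two points. First, $S_0,\ldots,S_n$ are all ${\cal G}_n$-measurable, whence $\sigma(S_0,\ldots,S_n)\subseteq{\cal G}_n$: for each $m\le n$, the set $I_m$ depends only on the configuration $\{(B_\w,\Lambda_\w):\w(2)\le m\}$, while $h^m(\x_i)$ is produced by $m$ iterations of $h$, and these read only the sets $V\cap\{y=j\}$ with $1\le j\le m$ --- each ${\cal G}_n$-measurable by the first displayed identity with $n+1$ replaced by $j$ --- together with the variables $R_{h^0(\x_i)},\ldots,R_{h^{m-1}(\x_i)}$, which sit on lines $\{y\le n-1\}$ and are hence ${\cal F}_{n-1}$-measurable. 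Second, $(\mathbf{R}_n,\Theta_{n+1})$ is independent of ${\cal G}_n$: this is immediate from the standing hypothesis that the i.i.d.\ families $\{B_\w\}$, $\{R_\w\}$, $\{\Lambda_\w\}$ are mutually independent, since ${\cal G}_n$ involves the $R$-field only on lines $\{y\le n-1\}$ and the $(B,\Lambda)$-field only on lines $\{y\le n\}$, none of which meet the sites occurring in $\mathbf{R}_n$ or $\Theta_{n+1}$.

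Feeding these two facts and the recursion into the fresh-randomness identity gives, for every bounded measurable $g$, that $\E[g(S_{n+1})\mid{\cal G}_n]=\Psi(S_n)$ with $\Psi(s):=\E[g(F(s,\mathbf{R}_n,\Theta_{n+1}))]$. Since $\Psi(S_n)$ is $\sigma(S_n)$-measurable and $\sigma(S_n)\subseteq\sigma(S_0,\ldots,S_n)\subseteq{\cal G}_n$, two uses of the tower property yield $\E[g(S_{n+1})\mid\sigma(S_0,\ldots,S_n)]=\Psi(S_n)=\E[g(S_{n+1})\mid\sigma(S_n)]$, which is exactly the claimed Markov property (and exhibits $\Psi$ as the one-step transition kernel). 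I expect the only genuine work to be the bookkeeping in the first check --- certifying exactly which coordinates of the driving families each of the first $n$ PH steps inspects --- together with the routine a.s.\ remark that every line $\{y=l\}$ carries at least one point of $V$, so that $J(\cdot)$ and $h(\cdot)$ are a.s.\ well defined and the three cases defining $h$ are a.s.\ exhaustive; conceptually there is no obstacle once the ``fresh line of Rademachers'' has been isolated.
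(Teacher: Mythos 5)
Your proof is correct, and it is in essence the same random--mapping argument the paper uses: exhibit the next state as $S_{n+1}=F(S_n,\text{fresh randomness})$ and conclude. The difference is in how the fresh randomness is isolated. The paper conditions on ${\cal F}_n=\sigma(\Gamma_\w:\w(2)\le n)$ and then re-samples the whole upper half-plane with an independent copy $\{\Gamma^{\mathrm{ind}}_\w\}$, arguing the conditional law of $S_{n+1}$ equals that of a fixed measurable $f$ applied to the state and the independent copy; this silently replaces the tie-breakers $R_{h^n(\x_i)}$ (which are already ${\cal F}_n$-measurable, since $h^n(\x_i)(2)=n$) by their independent counterparts $R^{\mathrm{ind}}_{h^n(\x_i)}$, a swap that is legitimate because those particular Rademachers have not yet been consulted, but which is not made explicit. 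Your choice of ${\cal G}_n:=\sigma({\cal F}_{n-1},\Theta_n)$---which deliberately omits the $R$-field on the line $\{y=n\}$---makes this point rigorous: you verify that the first $n$ PH steps only read $R$ on lines $\{y\le n-1\}$ and $(B,\Lambda)$ on lines $\{y\le n\}$, so $\sigma(S_0,\ldots,S_n)\subseteq{\cal G}_n$ while $(\mathbf{R}_n,\Theta_{n+1})$ is genuinely independent of ${\cal G}_n$, and the fresh-randomness identity then applies without any auxiliary copy. The two displayed set identities you use to express $V\cap\{y=n+1\}$ and $I_{n+1}$ in terms of $(I_n,\Theta_{n+1})$ are exactly where the hypothesis $Y_\w\ge 0$ enters, and they are the concrete content of the paper's informal sentence ``the future evolution has the same distribution as starting with the point process $\Psi$ on $\mathbb{H}^+(n)$.'' Net effect: same strategy, but your bookkeeping of the Rademacher layer is more careful than the paper's and dispenses with the independent-copy device entirely.
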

\begin{proof}
We consider independent collection of i.i.d. random vectors 
$$
\bigl \{ \Gamma^{\text{ind}}_\w := \bigl (  B^{\text{ind}}_\w, R^{\text{ind}}_\w, \Lambda^{\text{ind}}_\w
\bigr ) : \w \in \Z^2 \bigr \}
$$
independent of the collection $\{ \Gamma_\w : \w \in \Z^2\}$. 
Fix $n \geq 1$. Conditional on the event $\{(h^n(\x_1), 
\cdots, h^n(\x_k), I_n) = (\w_1, \cdots, \w_k, \Delta)\}$, we consider the point process 
\begin{align*}
\Psi = \Psi(\w_1, \cdots, \w_k, \Delta):= \{ \w + \Lambda^{\text{ind}}_\w : \w \in \mathbb{H}^+(n), B^{\text{ind}}_\w = 1\} \cup \Delta
\end{align*} 
in the upper half-plane $\mathbb{H}^+(n)$. 

We observe that starting from $\w_1, \cdots, \w_k$ together with
the information set $\Delta$, the future evolution of the process has the same
distribution as starting with the point process $\Psi$ on the upper half-plane $\mathbb{H}^+(n)$.  
In other words, we have the following 
\begin{align*}
(h^{n+1}(\x_1), \cdots, h^{n+1}(\x_k), I_{n+1}) \mid & \bigl( 
(h^{n}(\x_1), \cdots, h^{n}(\x_k), I_n) = (\w_1, \cdots, \w_k, \Delta), {\cal F}_n \bigr )\\
& \stackrel{d}{=} f \bigl( (\w_1, \cdots, \w_k, \Delta), 
\{ \Gamma^{\text{ind}}_\w  : \w \in \Z^2\} \bigr ),
\end{align*}
for some measurable function $f$. 
Hence, by the random mapping theorem (see \cite{LPW08}) Proposition \ref{prop:Markov} follows.
\end{proof}
In the next subsection we define a sequence of random steps such that 
starting from these steps, future evolution of each of these $k$ paths stays 
inside a specific region. Later  we will use this sequence to construct
a (random) subsequence which will give us sequence of renewal steps.   

\subsection{Sequence of `In' steps}
\label{subsec:In_steps}
We first define a specific region of our interest in the upper-half plane.
We consider the parabolic curve $\Upsilon := \{( \pm y^2, y) : y \geq 0\}$.
The region `inside' this parabolic curve is denoted as 
\begin{align*}
\nabla = \nabla(\mathbf{0}) := \{( x , y) \in \R^2 : y \geq 0, x \in [- y^2, y^2] \} .
\end{align*}
For $\x \in \R^2$ and for any subset $O \subset \R^2$, the notation $\x \oplus O$ denotes the set 
$\{\x + \y : \y \in O \}$, i.e, the set $O$ translated by $\x$.
For $\u \in \R^2$, let $\nabla(\u) := \u \oplus \nabla$ denote
 the translated version of $\nabla$ translated by $\u$. 
 Below we list an important {\it nesting} property between these parabolic regions.

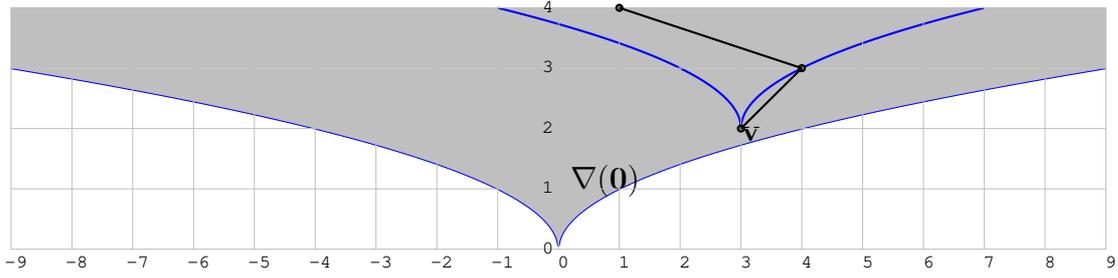
\begin{figure}
\begin{center}
\psset{unit=.8 cm}
\begin{pspicture}(-9,0)(9,4)

\psplot[plotpoints=2000,linecolor=blue]{-9}{9}{ x abs sqrt } 

\pscustom[linestyle=none,fillstyle=solid,fillcolor=lightgray]{%
\psline(-9,3)(-9, 4)
\psline(-9,4)(9,4)
\psline(9, 4)(9,3)
 \psplot[plotpoints=2000,linecolor=blue]{-9}{9}{ x abs sqrt }
  
        }

\psplot[plotpoints=2000, linecolor=blue]{-1}{7}{ x 3 sub abs sqrt 2 add }
\put(3,1.8){$\mathbf{v}$}
\put(0.2,1){$\nabla(\mathbf{0})$}

\psgrid[gridcolor=lightgray, gridwidth=0.25pt, subgriddiv=1, gridlabels=7pt](0,0)(-9,0)(9,4.0)

\pscircle[fillcolor=gray,fillstyle=solid](3,2){.05}
\pscircle[fillcolor=gray,fillstyle=solid](4,3){.05}
\pscircle[fillcolor=gray,fillstyle=solid](1,4){.05}
\psline(3,2)(4,3)
\psline(4,3)(1,4)

\end{pspicture}
\caption{ The parabolic curve $\{( \pm y^2, y) : y \geq 0\}$ is represented in this figure. $\nabla(\mathbf{0})$ is the gray region inside
this curve. Observe that for any $\v \in \nabla(\mathbf{0}) \cap \Z^2$ we have 
$\nabla(\v) \subseteq \nabla(\mathbf{0})$ as mentioned in Remark \ref{rem:Nesting}. 
`In' event occurs at $\v$ as the PH path $\{h^j(\v) : j \geq 1\}$
starting from $\v$ stays inside the region $\nabla(\v)$ throughout.}
\label{fig:InEvent_1}
\end{center}
\end{figure}

\begin{remark}[{\bf Nesting property:}]
\label{rem:Nesting}
For  any $\u, \v \in \R^2$ with $\v \in \nabla(\u) $, 
the regions $\nabla(\u)$ and $\nabla(\v)$ are nested in the sense that $
\nabla(\v) \subseteq \nabla(\u)$ (refer to Figure \ref{fig:InEvent_1}  for an illustration). 
\end{remark}
For $\v \in \Z^2$, we define the event $\text{In}(\v)$ as 
\begin{equation}
\label{def:InEvent}
\text{In}(\v) := \{ h^n(\v) = h^n(\v, V) \in \nabla(\v) \text{ for all }n \geq 1 \}.
\end{equation}
We refer the reader to Figure \ref{fig:InEvent_1} for an example of `In' event.  

For the joint exploration process of $k$ paths starting from $\x_1, \cdots , \x_k$, 
we say that the `In' event occurs
at the $n$-th step if the event $\cap_{i=1}^{k} \text{In}(h^n(\x_i))$ occurs. 
In other words, occurrence of the `In' event at the $n$-th step ensures 
that for each $1 \leq i \leq k$, the PH path
starting from $h^n(\x_i)$ stays inside the region $\nabla(h^n(\x_i))$. 
Now we are ready to define our sequence of `In' steps.

Set $\tau_0 = \tau_0(\x_1, \x_2, \cdots, \x_k) = 0$. For $j \geq 1$ we
define the random step $\tau_j$ as 
\begin{align}
\label{def:Tau_Step}
\tau_j = \tau_j(\x_1, \x_2, \cdots, \x_k) &  
:= \inf\{ n > \tau_{j -1 } : \text{ `In' event occurs}\} \nonumber\\
& = \inf\{ n > \tau_{j -1 } : \text{ event }
\cap_{i=1}^{k} \text{In}(h^n(\x_i)) \text{ occurs}\}.
\end{align}
First we need to show that the r.v. $\tau_j$ is a.s. finite for all $j \geq 1$.
We will do that shortly (in Proposition \ref{prop:TauExpTail}). For the moment, we 
assume that $\tau_j$ is well defined for all $j \geq 1$ and proceed.  
We observe that the r.v. $\tau_j$ is {\it not} a stopping time 
w.r.t. our filtration $\{{\cal F}_n : n \geq 0\}$. We need to extend our filtration  
to make it a stopping time. For $n \geq 1$, we define the $\sigma$-field
\begin{align}
\label{def:F_Tilde_Sigma_Field}
\overline{{\cal F}}_n := \sigma \bigl ( {\cal F}_n, \text{In}(h^m(\x_i)) \text{ for }
0 \leq m \leq n , 1 \leq i \leq k \bigr ).
\end{align}
For each $j \geq 1$, the r.v. $\tau_j$ is  a stopping time 
w.r.t. the extended filtration $\{ \overline{{\cal F}}_n : n \geq 0\}$. This allows us to define 
the filtration 
\begin{align}
\label{def:G_Sigma_Field}
\{ {\cal G}_j := \overline{{\cal F}}_{\tau_j} : j \geq 1 \}.
\end{align}
We observe that for all $j \geq 1$, the r.v. $\tau_j$ is ${\cal G}_j$ measurable.  
The next proposition implies that for all $j \geq 1$, the stopping time $\tau_j$
is a.s. finite. Before we proceed further, it is important 
to mention that several results of this paper involve constants. 
For the sake of clarity, we will use $C_0$ and $C_1$ to denote two positive constants, whose exact values may change from one line to the other. The important thing is that both $C_0$ and $C_1$ are universal constants whose values will depend {\it only} 
on parameters of the process, viz., $p, \theta_x, \theta_y$ and  $k$ (the number of trajectories
considered). We are now ready to state our result which would imply $\tau_j$ 
is a.s. finite for all $j \geq 1$.
\begin{proposition}
\label{prop:TauExpTail}
For any $j \geq 0$ there exist constants $C_0, C_1 > 0$, which do not depend on
 $j$,  such that for all $n \in \N$ we have 
 $$
 \P(\tau_{j+1} - \tau_j > n \mid {\cal G}_j) \leq C_0 \exp{(-C_1 n)}.
 $$
\end{proposition}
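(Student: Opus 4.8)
The plan is to bound $\P(\tau_{j+1}-\tau_j > n \mid {\cal G}_j)$ by showing that, starting from the configuration explored at time $\tau_j$, the `In' event at any single fixed later step has probability bounded below by a constant, uniformly over the conditioning. By the strong Markov property (Proposition \ref{prop:Markov} applied at the stopping time $\tau_j$, using the extended filtration $\{\overline{{\cal F}}_n\}$), conditionally on ${\cal G}_j$ the process $\{(h^n(\x_1),\dots,h^n(\x_k),I_n) : n \geq \tau_j\}$ evolves as a fresh copy driven by an independent family $\{\Gamma^{\text{ind}}_\w\}$ together with the information set $I_{\tau_j}$. So it suffices to prove a statement of the form: there is a constant $q_0 = q_0(p,\theta_x,\theta_y,k) \in (0,1)$ and an integer $m_0$ such that, for every admissible configuration $(\w_1,\dots,\w_k,\Delta)$ with $\w_1(2)=\dots=\w_k(2)=l$, the probability that the `In' event $\cap_{i=1}^k \text{In}(h^{m}(\x_i))$ occurs for some specific $m \in \{l+1,\dots,l+m_0\}$ is at least $q_0$. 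Then a standard geometric-trials argument over disjoint blocks of length $m_0$ gives $\P(\tau_{j+1}-\tau_j > n\mid {\cal G}_j) \leq (1-q_0)^{\lfloor n/m_0\rfloor}$, which is the claimed exponential bound with $C_0 = (1-q_0)^{-1}$ and $C_1 = -m_0^{-1}\log(1-q_0)$.

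The core step is the lower bound on the probability of a single `In' event, and this is where the exponential-tail hypotheses on the perturbations enter. Fix the current positions $\w_1,\dots,\w_k$ at level $l$ and let $W$ be the maximum horizontal spread $\max_{i,i'}|\w_i(1)-\w_{i'}(1)|$; note $W$ is ${\cal G}_j$-measurable but a priori unbounded, so the bound must be uniform in $W$ — this is the subtlety. The strategy is: (1) with probability bounded below by a constant not depending on $W$, the $k$ paths coalesce into a single path within $O(1)$ steps — here one uses that at each level there is a positive-probability event that all $k$ current steps choose the same target vertex (the perturbation distributions put mass $\geq \theta_x(1-\theta_x)^{|j|}/2$ etc. on each nearby site, and one can force, say, a common open vertex directly above to be selected); actually coalescence of paths that are horizontally far apart takes $\Omega(W)$ steps, so instead one should first argue that after $O(1)$ steps the spread becomes $O(1)$ with probability bounded below — but that too costs a $W$-dependent factor. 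The cleaner route: choose the candidate `In' step to be the current step itself scaled appropriately. Concretely, since $\nabla(\v)$ has width $2y^2$ at height $y$ above $\v$, a single path started from any $\v$ stays in $\nabla(\v)$ with probability bounded below by a universal constant (this is essentially the statement that a mean-zero, exponentially-tailed random walk increment process stays within a parabola, provable by a union bound over heights $n$ of $\P(|h^n(\v)-\v| > n^2) \leq C_0 e^{-C_1 n}$ after controlling the increment tails, which follow from the perturbation tails together with the geometry of $J(\cdot)$). For $k$ paths we need them \emph{simultaneously} inside their respective parabolas; by non-crossing and monotone coupling of the $k$ exploration paths, and by the nesting property (Remark \ref{rem:Nesting}), it is enough that the leftmost and rightmost paths each stay inside their own parabola, and this is a product of $O(1)$ universal lower bounds.

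The main obstacle I expect is controlling the increment tails of the exploration step $h$ \emph{uniformly over the information set $I_l$} — the set $\Delta$ of already-placed points can in principle sit close to the current path and bias the next step, and the next step's horizontal displacement depends on $J(\u)$, the distance to the nearest point of $V$ one level up, whose tail one must show is exponentially small irrespective of $\Delta$. The key observations making this work are: adding points to $V$ (i.e., enlarging $\Delta$) only \emph{decreases} $J(\u)$, so the upper tail $\P(J(\u) \geq t)$ is maximized when $\Delta = \emptyset$, where it decays exponentially because with density $p$ of open vertices and perturbations of exponentially bounded size, the probability that no point of $V$ lands within horizontal distance $t$ of $\u$ at the next level is at most $e^{-C_1 t}$; and the horizontal displacement of a single step is then $\pm J(\u)$, with the sign symmetric given the tie structure, so it has exponential tails and a symmetric (hence mean-zero after the Rademacher randomization in the tie case) distribution, enough to push it into the parabola-confinement estimate. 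I would carry out: first the uniform exponential tail bound on $J(\u)$ given ${\cal F}_l$ and any $\Delta$; then the parabola-confinement lemma for one path via a Borel–Cantelli-type sum; then the simultaneous version for $k$ paths using nesting and non-crossing; then assemble the geometric-trials conclusion.
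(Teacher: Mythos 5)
Your proposal identifies some real ingredients (exponential tails of $J(\u)$, the fact that adding points to $V$ only decreases $J$, the nesting property), but the overall structure has two gaps that are exactly the points the paper's proof is built to handle, plus a third soft spot.

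\textbf{The geometric-trials argument does not close.} You reduce the claim to ``a standard geometric-trials argument over disjoint blocks of length $m_0$,'' but the event $\text{In}(\v)$ is not local to any finite block: it requires $h^n(\v)\in\nabla(\v)$ for \emph{all} $n\ge 1$. If a trial ``fails,'' the conditional law going forward has been tilted by the knowledge that the path will eventually exit the parabola, i.e.\ by information about the infinite future of $V$. You therefore cannot simply retest in the next block and multiply by $(1-q_0)$; you must first let the process actually discover the failure. The paper's proof does exactly this by introducing the exit time $\beta(\u)=\min\{n: h^n(\u)\notin\nabla(\u)\}$, observing that on $\{\tau_1>1\}=\{\beta_1<\infty\}$ the $\sigma$-field ${\cal F}_{\beta_1}$ carries no information about $V^+_{\beta_1}$, and retesting only at $\beta_1+1$. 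The same concern makes your invocation of Proposition~\ref{prop:Markov} at $\tau_j$ invalid as stated: ${\cal G}_j=\overline{{\cal F}}_{\tau_j}$ contains the indicators of the $\text{In}$ events at the previous $\tau$ steps, and these depend on the whole future, so conditioning on ${\cal G}_j$ does \emph{not} reset the process to a fresh copy. For $j\ge 1$ the paper needs the decomposition in Lemma~\ref{lem:EventDecomposition_1} and Corollary~\ref{cor:IncrDistEquality_1} to factor ${\cal G}_j$-information into an ${\cal F}_{\tau_j}$-measurable part times a single residual $\text{In}(\v_j)$, precisely to make the retesting argument rigorous.

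\textbf{The confinement lower bound is not established.} You propose to prove $\P(\text{In}(\v))>0$ uniformly by a union bound: $\P(\text{escape})\le\sum_n \P(|h^n(\v)(1)-\v(1)|>n^2)\le\sum_n C_0 e^{-C_1 n}$. This sum is finite but has no reason to be below $1$; for small $p$ the low-order terms (e.g.\ $\P(|h^1(\v)(1)-\v(1)|>1)$) are not small, and the bound degenerates. The paper avoids this entirely with the shield event $A_{\text{sp}}$ of (\ref{def:A_sp}): placing at least one unperturbed open vertex in each of the blocks $I^R_m,I^L_m$ along the parabola boundary forces the path inside by a deterministic comparison, and $\P(A_{\text{sp}})=\prod_{m\ge1}(1-(1-p_0)^{2m-1})^2>0$ is explicitly positive. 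Crucially, $A_{\text{sp}}(h^n(\x_i))$ depends only on special points in $\mathbb{H}^+(n)$ and is therefore independent of ${\cal F}_n$, which is what makes the lower bound usable after conditioning. Relatedly, your claim that for $k$ paths ``it is enough that the leftmost and rightmost paths each stay inside their own parabola'' is not justified: the middle path has to stay in $\nabla(h^n(\x_i))$ centred at its \emph{own} location, and non-crossing plus nesting of the outer two parabolas does not imply this. The paper instead uses the FKG inequality (Lemma~\ref{def:FKG}) applied to the increasing events $A_{\text{sp}}(h^n(\x_i))$ to get the simultaneous lower bound $p_{\text{in}}\ge\P(A_{\text{sp}}(\mathbf{0}))^k$.

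In short, the proposal correctly anticipates where the exponential tails of the perturbations enter, but misses the two structural devices that make the argument go through: the $\beta$ stopping times (to discharge future information revealed by a failed $\text{In}$ test, and to fix the Markov-at-$\tau_j$ issue), and the explicit shield construction $A_{\text{sp}}$ (which replaces the inconclusive union bound and enables FKG for the $k$-path case).
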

We need other results to prove Proposition \ref{prop:TauExpTail}. 
First we need to introduce certain notions of exponential tails for a general family
of random variables. We require these notions to prove Proposition \ref{prop:TauExpTail}
and they will be used repeatedly in later part of this paper . 
\begin{definition}
\label{def:Exp_Tail}
We say that a family of r.v.'s $\{X_i : i \geq 1\}$ has uniform exponential tail 
decay if uniformly for all $i \geq 1$ there exist constants $C_0, C_1 > 0$ such that 
$$
\P(X_i > n) \leq C_0\exp{ (- C_1 n) } \text{ for all }n, i \in \N.
$$

We say that a family of r.v.'s $\{X_i : i \geq 1\}$ has strong uniform exponential tail 
decay if uniformly for all $i \geq 1$ there exist constants $C_0, C_1 > 0$ such that 
\begin{align}
\label{eq:StrongExpDecay_1}
\P \bigl ( X_i > n \mid (X_{i-1},\cdots, X_1) \bigr )
\leq C_0 \exp{(-C_1 n)} \text{ for all }n \in \N.
\end{align}
\end{definition} 
We need Proposition \ref{cor:SubExpDecay_1} regarding tail decay of a random sum of r.v.'s with
exponential tail decay to prove Proposition \ref{prop:TauExpTail}.  
\begin{corollary}  
\label{cor:SubExpDecay_1} 
Consider a family of r.v.'s $\{ X_i : i \geq 1\}$ with strong uniform exponential 
tail decay and a exponentially decaying positive integer valued r.v. $Y$.
Then there exist $C_0, C_1 > 0$ such that for all $ n \in \N$ we have
$$
\P( \sum_{i=1}^Y X_i > n) \leq  C_0 \exp{(-C_1 n)}.
$$
\end{corollary}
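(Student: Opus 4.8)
The plan is to run a Chernoff bound at deterministic length and then truncate the random index $Y$; notably, no independence between $Y$ and $(X_i)_{i\ge 1}$ is needed, as I will only ever use the trivial inequality $\P(A\cap B)\le \P(B)$. Let $C_0,C_1>0$ witness the strong uniform exponential tail decay of $\{X_i\}$, and fix $\lambda\in(0,C_1)$. Summing by parts (equivalently, integrating the conditional tail bound $\P(X_i>t\mid X_1,\dots,X_{i-1})\le C_0 e^{-C_1 t}$) produces a finite constant $M=M(\lambda,C_0,C_1)$, independent of $i$ and of the conditioning values, with $\E[e^{\lambda X_i}\mid X_1,\dots,X_{i-1}]\le M$ for every $i\ge 1$.

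Writing $S_m:=\sum_{i=1}^m X_i$ for deterministic $m$, repeated conditioning on $X_1,\dots,X_{m-1}$ together with the tower property gives $\E[e^{\lambda S_m}]\le M^m$, hence by Markov's inequality $\P(S_m>n)\le M^m e^{-\lambda n}$ for all $n$. Now fix a small $\delta>0$ and split according to whether $Y$ is large:
\[
\P\Bigl(\sum_{i=1}^Y X_i>n\Bigr)\le \P(Y>\delta n)+\sum_{m=1}^{\lfloor \delta n\rfloor}\P\bigl(\{Y=m\}\cap\{S_m>n\}\bigr)\le C_0 e^{-C_1\delta n}+\sum_{m=1}^{\lfloor \delta n\rfloor}\P(S_m>n),
\]
where the first term uses $\P(Y>\delta n)\le C_0 e^{-C_1\delta n}$. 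By the bound on $\P(S_m>n)$ the remaining sum is at most $\delta n\,(M\vee1)^{\delta n}e^{-\lambda n}=\delta n\exp\bigl(-(\lambda-\delta\log(M\vee1))n\bigr)$.

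It then remains to choose $\delta$ small enough that $\lambda-\delta\log(M\vee1)>0$; with such a $\delta$ both terms decay exponentially in $n$, and absorbing the polynomial prefactor $\delta n$ into a marginally smaller exponential rate yields constants $C_0,C_1>0$ with $\P(\sum_{i=1}^Y X_i>n)\le C_0 e^{-C_1 n}$ for all $n\in\N$, as claimed. The only genuinely delicate point is this balancing in the truncation step: since $M$ may exceed $1$, the estimate $M^m e^{-\lambda n}$ is useful only once $m$ is forced below order $n$, which is exactly why one truncates $Y$ at level $\delta n$ with $\delta<\lambda/\log(M\vee1)$. I also stress that the \emph{strong} (conditional) form of the exponential-tail hypothesis, as opposed to the plain uniform one, is essential above: without it, $S_m$ with $m$ up to $\delta n$ need not have exponentially small tail at level $n$ (think of perfectly correlated $X_i$), so the deterministic-length estimate in the second paragraph would fail.
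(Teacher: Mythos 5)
Your argument is correct, and it genuinely differs from the paper's. You go through the conditional moment generating function: the conditional tail bound gives a uniform bound $\E[e^{\lambda X_i}\mid X_1,\dots,X_{i-1}]\le M$ for any $\lambda\in(0,C_1)$, the tower property yields $\E[e^{\lambda S_m}]\le M^m$, and a Chernoff bound plus truncation of $Y$ at level $\delta n$ with $\delta<\lambda/\log(M\vee 1)$ finishes the estimate. The paper instead couples: from the conditional tail bound it manufactures an i.i.d. sequence $W_1,W_2,\dots$ dominating the $X_i$'s, proves by induction that $\P(\sum_{i=1}^j X_i>n)\le \P(\sum_{i=1}^j W_i>n)$ for each fixed $j$, and then invokes Lemma~2.7 of \cite{RSS16A} for the exponential tail of the random i.i.d. sum $\sum_{i=1}^Y W_i$. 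Both exploit exactly the strong (conditional) form of the hypothesis, and both avoid assuming independence between $Y$ and the $X_i$'s (you via $\P(\{Y=m\}\cap\{S_m>n\})\le\P(S_m>n)$, the paper via the same after reducing to $W_i$). The trade-off is that your route is fully self-contained and arguably more transparent about why the ``strong'' form is needed (the $M^m$ growth is harmless only because $Y$ is truncated at $O(n)$), whereas the paper's route isolates the conceptually useful step of stochastic domination by an i.i.d. family, then outsources the rest to an existing lemma.
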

\begin{proof}
Firstly, we observe that because of (\ref{eq:StrongExpDecay_1}) in Definition 
\ref{def:Exp_Tail} there exists a
r.v. $W$ such that for all $i \geq 1$ we have
\begin{align}
\label{eq:StrongExpDecay_2}
\P \bigl ( X_i > n \mid (X_{i-1},\cdots, X_1) \bigr )
 \leq  \P(W > n)  \leq C_0 \exp{(-C_1 n)} \text{ for all }n \in \N.
\end{align} 
Next, independent of the family $\{X_i  : i \geq 1\}$, 
we generate i.i.d. copies $W_1, W_2, \dotsc, $ of $W$ where $W$ is as in (\ref{eq:StrongExpDecay_2}). 
Because of Lemma of 2.7 \cite{RSS16A} to prove Corollary \ref{cor:SubExpDecay_1}
it is enough to show that for all $j \geq 1$ we have 
\begin{align}
\label{eq:StrongExpDecay}
\P( \sum_{i=1}^j X_i > n) \leq \P( \sum_{i=1}^j W_i > n).
\end{align}  
We prove (\ref{eq:StrongExpDecay}) using the method of induction.  
Clearly, the choice of $W_1$ in (\ref{eq:StrongExpDecay}) holds for $j=1$.  
Assuming that (\ref{eq:StrongExpDecay}) holds for $j \geq 1$ we obtain
\begin{align*}
& \P (\sum_{ i=1}^{j+1} X_i > n )  \\
 & =  \sum_{ m } P (\sum_{ i=1}^{j} X_i = m ) P ( X_{j+1} > n - m | X_1, X_2, \dotsc, X_{j} )\\
 & \leq  \sum_{ m } P (\sum_{ i=1}^{j} X_i = m ) P (W_{j+1} \geq  n-m) \\
 & \leq  \sum_{ m } P (\sum_{ i=1}^{j} W_i = m ) P (W_{j+1} \geq  n-m) \\
 & =  P (\sum_{ i=1}^{j+1} W_i  \geq n ) . 
\end{align*}
The last inequality follows from the induction hypothesis. 
This completes the proof.  
\end{proof} 
\begin{remark}
\label{rem:UnifSubExpTail}
We note that Proposition \ref{cor:SubExpDecay_1} does not require $\{X_i : i \in \N\}$
to be a family of i.i.d. r.v.'s. Further, the above corollary does not assume independence of 
$Y$ and the family $\{X_ i : i \geq 1\}$. We only require strong uniform exponential tail decay 
for the family $\{X_ i : i \geq 1\}$ as mentioned in Definition (\ref{def:Exp_Tail}).
It should be observed that Proposition \ref{prop:TauExpTail} actually states  
strong uniform exponential tail decay for the family $\{ \tau_j  : j \geq 1\}$
where the decay constants depend only on parameters of the process
and on $k$ (number of paths considered).     
\end{remark}


Proposition \ref{prop:TauExpTail} will be proved through a sequence of lemmas.
In the next lemma, we show that given ${\cal F}_n$, the probability of occurrence of `In' event 
at the $n$-th step $(h^n(\x_1), \cdots , h^n(\x_k))$
has a strictly positive lower bound which does not depend on $n \geq 1$ or on the 
choice of starting points $\x_1, \cdots, \x_k$. 
\begin{lemma}
\label{lem:InEventProb_bd}
There exists  $p_{\text{in}} = p_{\text{in}}(p,\theta_x, \theta_y, k) > 0$, 
which does not depend on $n$ and the starting points $\x_1, \cdots, \x_k$, such that 
for any $n \geq 0$ we have  
\begin{equation*}
\P(\cap_{i=1}^k \text{In}(h^n(\x_i)) \mid {\cal F}_n ) \geq p_{\text{in}}.
\end{equation*}
\end{lemma}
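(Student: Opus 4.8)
The plan is to reveal the environment level by level in the upper half-plane and to exploit the superlinear growth of the parabolic region $\nabla(\cdot)$: since the horizontal ``budget'' available $m$ levels above the starting point is $m^2$, it will suffice to control the increments of the $k$ paths in a very crude way.

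First I would set up notation. Fix $n\ge 0$; since $\x_1,\dots,\x_k$ lie on level $0$, the points $\w_i:=h^n(\x_i)$ are ${\cal F}_n$-measurable and sit on level $n$. Write $Z^{(i)}_m:=h^m(\w_i)=h^{n+m}(\x_i)$, so that $Z^{(i)}_0=\w_i$, the point $Z^{(i)}_m$ lies on level $n+m$, and, directly from the definition of the PH step, $Z^{(i)}_m$ is ${\cal F}_{n+m}$-measurable. Since $\nabla(\w_i)=\{(x,y):y\ge \w_i(2),\ |x-\w_i(1)|\le (y-\w_i(2))^2\}$, the event $\text{In}(\w_i)$ is exactly $\{\,|Z^{(i)}_m(1)-\w_i(1)|\le m^2\text{ for all }m\ge 1\,\}$.

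The heart of the argument will be a one-step estimate. Set $q:=\P(B_{\mathbf{0}}=1,\ \Lambda_{\mathbf{0}}=(0,0))=p\,\P(X_{\mathbf{0}}=Y_{\mathbf{0}}=0)>0$. I claim that for every $\u\in\Z^2$ lying on a level $l$ and every $m\ge 1$,
\[
\P\bigl(J(\u)>m\mid {\cal F}_l\bigr)\le (1-q)^{2m+1}.
\]
Indeed, each of the $2m+1$ lattice points $(\u(1)+t,l+1)$ with $|t|\le m$ belongs to $V$ whenever it is open and carries zero perturbation; this has probability $q$, depends only on $\Gamma_{(\u(1)+t,\,l+1)}$, and these events are therefore independent over $t$ and independent of ${\cal F}_l$; the occurrence of any one of them forces $J(\u)\le m$. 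I would then define $G_m:=\bigcap_{i=1}^k\{J(Z^{(i)}_{m-1})\le m\}$, an ${\cal F}_{n+m}$-measurable event. On $\bigcap_{j=1}^m G_j$ one has $|Z^{(i)}_m(1)-\w_i(1)|\le\sum_{j=1}^m J(Z^{(i)}_{j-1})\le\sum_{j=1}^m j=m(m+1)/2\le m^2$ for every $i$, so that $\bigcap_{m\ge1}G_m\subseteq\bigcap_{i=1}^k\text{In}(\w_i)$. The one-step estimate applied with $\u=Z^{(i)}_{m-1}$ (which is ${\cal F}_{n+m-1}$-measurable), together with a union bound over $i$, gives $\P(G_m^c\mid{\cal F}_{n+m-1})\le k(1-q)^{2m+1}$. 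Since $G_1,\dots,G_{m-1}$ are ${\cal F}_{n+m-1}$-measurable, chaining these conditional estimates yields
\[
\P\Bigl(\textstyle\bigcap_{i=1}^k\text{In}(h^n(\x_i))\,\Big|\,{\cal F}_n\Bigr)\ \ge\ \P\Bigl(\textstyle\bigcap_{m\ge1}G_m\,\Big|\,{\cal F}_n\Bigr)\ \ge\ \prod_{m=1}^\infty\bigl(1-k(1-q)^{2m+1}\bigr)\ =:\ p_{\text{in}}.
\]
The series $\sum_m k(1-q)^{2m+1}$ converges, so $p_{\text{in}}>0$, and it depends only on $q$ and $k$, hence only on $p,\theta_x,\theta_y$ and $k$, and neither on $n$ nor on the starting points $\x_1,\dots,\x_k$.

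The only genuinely non-routine step is realizing that the superlinear growth of $\nabla$ makes the crude increment bound $J(Z^{(i)}_{m-1})\le m$ at level $m$ already sufficient, so that the favourable event decomposes as a convergent product of high-probability events whose probability stays bounded away from $0$; the remainder is bookkeeping about which $\sigma$-field determines which quantity, together with the observation that the lower bound $q$ for membership in $V$ survives conditioning on the lower levels because ``open and unperturbed at level $l+1$'' is genuinely fresh randomness. A minor point is that some of the $\w_i$ may coincide (coalesced paths), but that only helps, and in any event the union bound over $i$ costs at most a factor $k$.
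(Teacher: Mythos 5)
Your approach differs structurally from the paper's: instead of building a ``shield'' of special (open, unperturbed) points along the parabolic boundary (the event $A_{\text{sp}}$) and then invoking FKG for $k\ge 2$, you directly control the horizontal increments $J(\cdot)$ level by level and exploit $\sum_{j\le m} j\le m^2$. Your one-step estimate, the measurability bookkeeping, and the chaining via ${\cal F}_{n+m-1}$-measurability of $G_1,\dots,G_{m-1}$ are all correct and clean, and for $k=1$ the argument works.

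However, there is a genuine gap at the final step. The union bound gives $\P(G_m^c\mid{\cal F}_{n+m-1})\le k(1-q)^{2m+1}$, and you conclude $p_{\text{in}}:=\prod_{m=1}^\infty\bigl(1-k(1-q)^{2m+1}\bigr)>0$ on the grounds that $\sum_m k(1-q)^{2m+1}<\infty$. Convergence of this series is necessary but not sufficient for positivity of the infinite product: one also needs each factor to be strictly positive, i.e.\ $k(1-q)^{2m+1}<1$ for all $m\ge 1$, equivalently $k(1-q)^3<1$. Since $q=p\,\P(X_\u=Y_\u=0)$ can be arbitrarily small and the lemma must hold for all $k$, this fails in general (already $k=2$ requires $q>1-2^{-1/3}\approx 0.21$). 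When some $a_m:=k(1-q)^{2m+1}\ge 1$, the chaining actually yields $\P(\cap_m G_m\mid{\cal F}_n)\ge\prod_m\max(0,1-a_m)=0$, which gives no information. So the proof as written does not establish the lemma for general $k$.

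This is exactly the point the paper handles with the FKG inequality (Lemma~\ref{def:FKG}): the per-path events are increasing in the i.i.d.\ special-point indicators in the upper half-plane, so $\P(\cap_{i=1}^k A_{\text{sp}}(\w_i)\mid{\cal F}_n)\ge\P(A_{\text{sp}}(\mathbf{0}))^k>0$, without any union-bound factor of $k$. Your argument can be repaired in the same way: conditional on ${\cal F}_{n+m-1}$ the points $Z^{(i)}_{m-1}$ are fixed, and the events $\{\exists\text{ a special point within distance }m\text{ of }Z^{(i)}_{m-1}\text{ at level }n+m\}$ are increasing in the fresh Bernoulli indicators at level $n+m$; FKG then gives $\P(G_m\mid{\cal F}_{n+m-1})\ge\bigl(1-(1-q)^{2m+1}\bigr)^k$, and the resulting product $\prod_m\bigl(1-(1-q)^{2m+1}\bigr)^k$ is genuinely positive for every $k$. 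Without some such device the union bound over $i$ is insufficient.
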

\noindent 
In order to prove the above lemma we need to introduce a `special' 
subset of $V$ denoted by $V_{\text{sp}}$ and defined as 
\begin{align}
\label{def:Sp_vertices}
V_{\text{sp}} := \{ \u  \in \Z^2 : B_\u = 1, X_\u = Y_\u = 0\}.
\end{align}
In other words, $V_{\text{sp}}$ represents the collection of open points in $V$
with \textit{no} perturbations at all. The distribution of the random vector $\Gamma_\u$,
defined as (\ref{eq:PerturbRV}), ensures that the set $V_{\text{sp}}$ must be non-empty a.s.
and for any $\u \in \Z^2$ we have 
\begin{align}
\label{eq:P0}
\P(\u \in V_{\text{sp}}) = \P(B_\u = 1)
\P( X_\u =  Y_\u = 0) = p_0 > 0.
\end{align}
It is not difficult to see that for any $n \in \N$, the set of special points in 
the upper half-plane $\mathbb{H}^+(n)$, given by $V_{\text{sp}} \cap \mathbb{H}^+(n)$, 
is independent of the $\sigma$-field ${\cal F}_n$. This observation enables 
us to obtain exponential decay for the increments of the PH process as follows: 
given ${\cal F}_n$ for any $\w \in \mathbb{H}^+(n)$ we have 
$$
\P((h(\w) - \w)(1) > m \mid {\cal F}_n ) \leq (1-p_0)^{2m + 1} \text{ for all }m \in \N.
$$
This readily gives us the following corollary:  
\begin{corollary}
\label{cor:PH_decay}
Fix any $\alpha, \beta > 0$ with $\alpha > \beta > 0$. Then 
we have 
$$
\P \bigl ( \max\{ |h^j(\mathbf{0})(1)| : 1 \leq j \leq n^\beta \}  > n^\alpha \mid {\cal F}_0 \bigr)
\leq C_0 \exp{(-C_1 n^{\alpha - \beta})}.
$$
\end{corollary}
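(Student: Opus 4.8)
\medskip
\noindent\textbf{Proof proposal.}
The plan is to dominate the running maximum by a partial sum of the per-step horizontal increments, to establish a uniform conditional geometric tail for a single increment using the independence of the unperturbed (``special'') vertices above a level from the $\sigma$-field of that level, and then to finish with a Chernoff estimate. Concretely, for $j\ge 1$ set $\xi_j:=J(h^{j-1}(\mathbf{0}))=\bigl|h^{j}(\mathbf{0})(1)-h^{j-1}(\mathbf{0})(1)\bigr|$, the absolute horizontal displacement at the $j$-th step. Since $h^{0}(\mathbf{0})=\mathbf{0}$ and each step changes the first coordinate by $\pm\xi_j$, the triangle inequality gives $|h^{j}(\mathbf{0})(1)|\le\sum_{i=1}^{j}\xi_i$ for every $j$, so that, with $N:=\lceil n^{\beta}\rceil$,
$$
\max_{1\le j\le n^{\beta}}|h^{j}(\mathbf{0})(1)|\;\le\;S_N:=\sum_{i=1}^{N}\xi_i ,
$$
and it suffices to bound $\P(S_N>n^{\alpha}\mid{\cal F}_0)$.

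The key input is the observation recorded just before the statement: $V_{\text{sp}}\cap\mathbb{H}^{+}(m)$ is independent of ${\cal F}_m$ for every $m$, and each lattice point of $\mathbb{H}^{+}(m)$ lies in $V_{\text{sp}}$ with probability $p_0$, independently of the others. Since $h^{j-1}(\mathbf{0})$ is ${\cal F}_{j-1}$-measurable and sits on the line $\{y=j-1\}$, the event $\{\xi_j\ge m\}$ forces the $2m-1$ distinct lattice points $\bigl(h^{j-1}(\mathbf{0})(1)+i,\,j\bigr)$, $|i|\le m-1$, all of which lie in $\mathbb{H}^{+}(j-1)$, to avoid $V_{\text{sp}}$; hence
$$
\P\bigl(\xi_j\ge m\,\big|\,{\cal F}_{j-1}\bigr)\le(1-p_0)^{2m-1},\qquad m\ge1,\ j\ge1 .
$$
Fixing $\lambda>0$ small enough that $e^{\lambda}(1-p_0)^{2}<1$, this tail bound yields a finite constant $M=M(p_0)$ with $\E\bigl[e^{\lambda\xi_j}\mid{\cal F}_{j-1}\bigr]\le M$ for all $j$.

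Then I would run the standard exponential-moment iteration along $\{{\cal F}_n\}$: as $S_{N-1}$ is ${\cal F}_{N-1}$-measurable, conditioning successively gives $\E[e^{\lambda S_N}\mid{\cal F}_0]\le M\,\E[e^{\lambda S_{N-1}}\mid{\cal F}_0]\le\cdots\le M^{N}$, so Markov's inequality yields
$$
\P\bigl(S_N>n^{\alpha}\mid{\cal F}_0\bigr)\le e^{-\lambda n^{\alpha}}M^{N}=\exp\bigl(-\lambda n^{\alpha}+N\log M\bigr).
$$
Since $N\le 2n^{\beta}$ and $\alpha>\beta$, for all large $n$ the exponent is at most $-\tfrac{\lambda}{2}n^{\alpha}\le-\tfrac{\lambda}{2}n^{\alpha-\beta}$; enlarging $C_0$ to absorb the finitely many remaining small values of $n$, where the probability is trivially at most $1$, gives the asserted bound $C_0\exp(-C_1 n^{\alpha-\beta})$ (in fact the stronger $C_0\exp(-C_1 n^{\alpha})$).

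I do not expect a genuine obstacle here: this is a one-step conditional geometric tail followed by a Chernoff bound. The points that require care are (i) verifying that $h^{j-1}(\mathbf{0})$ is ${\cal F}_{j-1}$-measurable and that the candidate sites entering the tail estimate lie strictly above level $j-1$, so that the independence of $V_{\text{sp}}\cap\mathbb{H}^{+}(j-1)$ from ${\cal F}_{j-1}$ is legitimately invoked; and (ii) the conditioning bookkeeping --- the exploration process is not Markov for $\{{\cal F}_n\}$, but the one-step estimate $\E[e^{\lambda\xi_j}\mid{\cal F}_{j-1}]\le M$ is uniform in $j$, which is exactly what the iteration needs. Alternatively, one may phrase $\{\xi_j\}$ as a family with strong uniform exponential tail decay in the sense of Definition~\ref{def:Exp_Tail} and apply a Chernoff bound to such a family directly.
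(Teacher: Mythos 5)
Your proposal is correct. The paper itself leaves this proof implicit --- it records the one-step geometric tail $\P((h(\w)-\w)(1) > m \mid {\cal F}_n) \leq (1-p_0)^{2m+1}$ and then asserts that the corollary ``readily'' follows --- so there is no explicit argument to compare against, but the rate $\exp(-C_1 n^{\alpha-\beta})$ in the statement suggests the authors had in mind the simplest route: dominate the running maximum by $S_N := \sum_{j=1}^{N}\xi_j$ with $N = \lceil n^\beta\rceil$, note that $S_N > n^\alpha$ forces some single increment $\xi_j > n^\alpha/N \asymp n^{\alpha-\beta}$, and apply a union bound over $j$ together with the uniform conditional geometric tail. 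Your Chernoff/exponential-moment route is a genuine alternative: it conditions iteratively through the filtration $\{{\cal F}_j\}$ to bound $\E[e^{\lambda S_N}\mid{\cal F}_0]\leq M^N$, and because $\alpha > \beta$ makes the linear-in-$N$ cost subdominant, it actually yields the stronger rate $\exp(-C_1 n^\alpha)$, which trivially implies the stated bound since $n^\alpha \geq n^{\alpha-\beta}$. Both approaches hinge on exactly the same structural fact --- that $h^{j-1}(\mathbf{0})$ is ${\cal F}_{j-1}$-measurable and sits at height $j-1$, so the $2m-1$ candidate special sites at height $j$ lie in $\mathbb{H}^+(j-1)$ and are independent of ${\cal F}_{j-1}$ --- and you verify this carefully. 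The union bound is shorter and gives exactly the advertised rate; the Chernoff argument is a bit longer but strictly stronger and avoids the small bookkeeping of choosing the threshold $n^\alpha/N$. Either is a legitimate fleshing out of the paper's one-line deduction.
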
   
Next we prove Lemma \ref{lem:InEventProb_bd} for $k=1$, i.e., for the marginal process 
$\{h^n(\x_1) : n \in \N\}$. Later for general $k\geq 2$, Lemma \ref{lem:InEventProb_bd} 
is proved by invoking the FKG inequality for certain `increasing' events (see Lemma \ref{def:FKG}).

\noindent {\bf Proof of Lemma \ref{lem:InEventProb_bd} for $k = 1$ : } 
We first explain the heuristics. The idea is to create a `shield' of special points 
which would keep the PH path inside the parabolic region. 
This shield should be sufficiently spread out to ensure that the PH path is
enclosed in the parabolic region has a positive probability of occurrence.
This motivates our choice of parabolic regions in the definition of `In' event. 
To make everything rigorous, we need to introduce some notations.
   
For $m \in \N$, let $I^R_m, I^L_m \subset \Z^2$ denote the sets
\begin{align}
\label{def:I_m_set}
I^R_m = I^R_m(\mathbf{0}) &:= \{(u,m) \in \Z^2 : u \in [(m-1)^2 + 1, m^2] \} 
\text{ and }\nonumber\\
I^L_m = I^L_m(\mathbf{0}) &:= \{(u,m) \in \Z^2 : u \in [- m^2, - (m-1)^2 - 1] \}. 
\end{align}
We make certain observations about the above defined sets and we refer the reader to 
Figure \ref{fig:InEvent_2}. 
For any $m \geq 1$, by construction of the sets $I^R_m $ and $I^L_m$ we have:     
\begin{itemize}
\item[(i)]  The right endpoint of the set $I^R_m$ and the 
left endpoint of the set $I^L_m$ lie on the 
parabolic curve $\Upsilon = \{ (\pm y^2, y) : y \geq 0\}$.

\item[(ii)] The $x$ coordinate of the left end point of the set $I^R_{m+1}$ on the line $y = m + 1$ is 
(strictly) larger than the coordinate of the right end point of $I^R_{m}$ on the line $y = m $. 
On the other hand, the coordinate of the right end point of the set $I^L_{m+1}$ on the line $y = m + 1$ 
is (strictly) smaller than the left end point of $I^L_{m}$ on the line $y = m $.
\end{itemize}

%

\begin{figure}
\begin{center}
\psset{unit = .8 cm}
\begin{pspicture}(-9,-0.5)(9,4)

\psplot[plotpoints=2000,linecolor=blue]{-9}{9}{ x abs sqrt }

\psgrid[gridcolor=lightgray, gridwidth=0.25pt, subgriddiv=1, gridlabels=7pt](0,0)(-9,0)(9,3)

\pscircle[fillcolor=black,fillstyle=solid](1,1){.1}
\pscircle[fillcolor=black,fillstyle=solid](-1,1){.1}

\pscircle[fillcolor=gray,fillstyle=solid](4,2){.1}
\pscircle[fillcolor=black,fillstyle=solid](3,2){.1}
\pscircle[fillcolor=black,fillstyle=solid](2,2){.1}
\pscircle[fillcolor=black,fillstyle=none](-2,2){.1}
\pscircle[fillcolor=black,fillstyle=solid](-4,2){.1}
\pscircle[fillcolor=gray,fillstyle=solid](-3,2){.1}

\psline[linecolor=blue](0,0)(1,1)
\psline[linecolor=blue](1,1)(2,2)
\psline[linecolor=blue](2,2)(5,3)

\pscircle[fillcolor=gray,fillstyle=solid](5,3){.1}
\pscircle[fillcolor=black,fillstyle=solid](6,3){.1}
\pscircle[fillcolor=gray,fillstyle=solid](7,3){.1}
\pscircle[fillcolor=gray,fillstyle=none](8,3){.1}
\pscircle[fillcolor=gray,fillstyle=none](9,3){.1}

\pscircle[fillcolor=gray,fillstyle=none](-5,3){.1}
\pscircle[fillcolor=black,fillstyle=none](-6,3){.1}
\pscircle[fillcolor=gray,fillstyle=none](-7,3){.1}
\pscircle[fillcolor=black,fillstyle=solid](-8,3){.1}
\pscircle[fillcolor=black,fillstyle=solid](-9,3){.1}

\end{pspicture}
\caption{The event $A_{\text{sp}}(\mathbf{0})$ creates a shield of `special' points which ensures that the PH path $\{ h^{j}(\mathbf{0}) : j \geq 1\}$ stays inside the region $\nabla(\mathbf{0})$. Black points represent special points and gray points are perturbed versions of open points from some other locations.
There might be some points of $V$ in the intermediate region too. }
\end{center}
\label{fig:InEvent_2}
\end{figure}
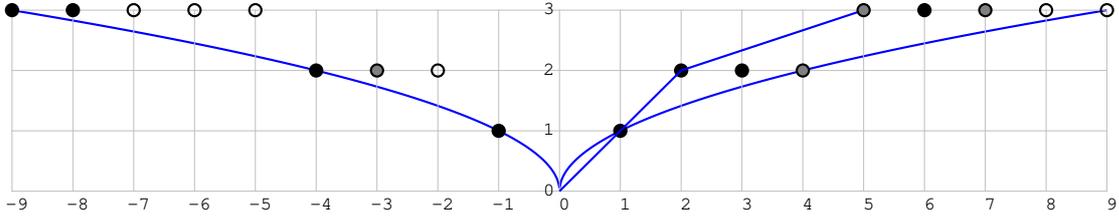


Next, we define the event $A_{\text{sp}}= A^{\text{sp}}(\mathbf{0})$ as
\begin{align}
\label{def:A_sp}
A_{\text{sp}}(\mathbf{0}) := \bigcap_{m=1}^\infty  \{ I^R_m\cap V_{\text{sp}}\neq \emptyset \}\cap \{ I^L_m\cap V_{\text{sp}}\neq \emptyset \}. 
\end{align}
In other words, the event $A_{\text{sp}}(\mathbf{0})$ ensures that for all $m \geq 1$, both the sets $I^R_m$ and $I^L_m$ must have at least one special point each. 
Heuristically, the event $A_{\text{sp}}(\mathbf{0})$ prepares a {\it `shield'} using special points which 
ensures that the path $\{h^j(\mathbf{0}) : j \geq 1\}$
 can not cross the parabolic curve $\Upsilon$
and must stay inside the region $\nabla(\mathbf{0})$ throughout. Figure \ref{fig:InEvent_2}
presents an illustration of the event $A_{\text{sp}}(\mathbf{0})$. Hence, 
we have $A_{\text{sp}}(\mathbf{0}) \subseteq \text{In}(\mathbf{0})$.
For $\u \in \Z^2$, the notation $A_{\text{sp}}(\mathbf{u})$ denotes the 
event $A_{\text{sp}}(\mathbf{0})$ translated to the point $\u$.

Similarly, the occurrence of the event  $A_{\text{sp}}(h^j(\x_1))$ implies the
occurrence of the event $\text{In}(h^j(\x_1))$.
Also $A_{\text{sp}}(h^j(\x_1))$ involves points from the set 
$V^{+}_{j}$ only (because $\x_1(2)=0 $). 
Therefore, it is {\it independent} of the $\sigma$-field  ${\cal F}_j$. This gives us  that
\begin{align*}
\P(\text{In}(h^j(\x_1)) \mid {\cal F}_j) & \geq \P(A_{\text{sp}}(h^j(\x_1)) \mid {\cal F}_j)\\
& = \P(A_{\text{sp}}(h^j(\x_1)) )\\
& = \P(A_{\text{sp}}(\mathbf{0})) \\
& = \prod_{m=1}^{\infty} (1 - (1- p_0)^{2m-1})^2 > 0. 
\end{align*}
where $p_0$ is as in (\ref{eq:P0}). In the penultimate equality we have used the
translation invariance nature of our model. This completes the proof.  
\qed

For proving Lemma \ref{lem:InEventProb_bd} for $k \geq 2$
 we need an FKG property among `increasing' events expressed in terms of special vertices only. 
We consider a natural partial order relation on the space $\{0,1\}^{\Z^2}$
given as :  for $\omega, \omega^\prime \in \{0,1\}^{\Z^2}$
$$
\omega \leq \omega^\prime \text{ if }\omega(\w)\leq \omega^\prime(\w) \text{ for all }
\w \in \Z^2.
$$
\begin{definition} 
\label{def:IncreasingEvent}
An event $A$ measurable with respect to the $\sigma$-field generated by the collection of indicator 
r.v.'s $\bigl \{ \mathbf{1}_{\{\w \in V_{\text{sp}}\}} : \w \in \Z^2 \bigr \}$
is said  to be increasing if 
$$
\omega \in A \Rightarrow \omega^\prime \in A \text{ for all } \omega^\prime \geq \omega. 
$$
\end{definition}  
We observe that the collection of indicator r.v.'s
$\{ \mathbf{1}_{\{\w \in V_{\text{sp}}\}} : \w \in \Z^2 \}$
gives a collection of i.i.d. Bernoulli r.v.'s with success probability $p_0$ where $p_0$
is as in (\ref{eq:P0}). Hence, the following lemma regarding FKG property 
between increasing events (expressible in terms of special vertices only)
follows directly:

\begin{lemma}[FKG property] 
\label{def:FKG}
For any two increasing events $A, B \in \sigma \bigl (\{ \mathbf{1}_{\{\w \in V_{\text{sp}}\}} : \w \in \Z^2 \} \bigr )$
we have 
$$
\P(A \cap B) \geq \P(A) \P(B). 
$$
\end{lemma}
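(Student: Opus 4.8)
The plan is to recognize this as the classical Harris--FKG inequality for a product measure. By the observation preceding the lemma, the random field $\{\mathbf{1}_{\{\w \in V_{\text{sp}}\}} : \w \in \Z^2\}$ is i.i.d.\ $\mathrm{Bernoulli}(p_0)$, so its law $\mu$ on $\{0,1\}^{\Z^2}$ is a product measure. One could simply invoke the FKG theorem, but a self-contained argument runs as follows. First I would reduce to finitely many coordinates. Fix an enumeration $\w_1, \w_2, \ldots$ of $\Z^2$ and let $\FF_n := \sigma(\mathbf{1}_{\{\w_i \in V_{\text{sp}}\}} : 1 \le i \le n)$. Since $A$ is increasing and $\mu$ is a product measure, $\P(A \mid \FF_n)$ has a version which is an increasing function of $(\mathbf{1}_{\{\w_1 \in V_{\text{sp}}\}}, \ldots, \mathbf{1}_{\{\w_n \in V_{\text{sp}}\}})$ (write it as the integral of $\mathbf{1}_A$ over the remaining coordinates against the product conditional law, and use that $A$ is increasing), and likewise for $B$. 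By the martingale convergence theorem $\P(A \mid \FF_n) \to \mathbf{1}_A$ and $\P(B \mid \FF_n) \to \mathbf{1}_B$ almost surely; as both sequences are bounded by $1$, dominated convergence gives $\E[\P(A \mid \FF_n)\,\P(B \mid \FF_n)] \to \P(A \cap B)$, $\E[\P(A\mid\FF_n)] \to \P(A)$ and $\E[\P(B\mid\FF_n)] \to \P(B)$. Hence it suffices to prove $\E[fg] \ge \E[f]\E[g]$ whenever $f,g$ are bounded increasing functions of finitely many of the i.i.d.\ coordinates.

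Next I would establish this finite-dimensional statement by induction on the number $n$ of coordinates on which $f$ and $g$ depend, with respect to i.i.d.\ coordinates $\xi_1, \ldots, \xi_n$. For $n = 1$, if $\xi_1'$ is an independent copy of $\xi_1$ then $(f(\xi_1) - f(\xi_1'))(g(\xi_1) - g(\xi_1')) \ge 0$ pointwise because $f$ and $g$ are monotone in the same direction, and taking expectations yields $2(\E[fg] - \E[f]\E[g]) \ge 0$. For the inductive step, condition on $\xi_1, \ldots, \xi_{n-1}$: for fixed values of these, $f$ and $g$ are increasing functions of the single variable $\xi_n$, so the case $n=1$ gives $\E[fg \mid \xi_1, \ldots, \xi_{n-1}] \ge \E[f \mid \xi_1, \ldots, \xi_{n-1}]\,\E[g \mid \xi_1, \ldots, \xi_{n-1}]$ almost surely. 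Moreover, because the coordinates are independent and $f,g$ are increasing, the conditional expectations $\E[f \mid \xi_1, \ldots, \xi_{n-1}]$ and $\E[g \mid \xi_1, \ldots, \xi_{n-1}]$ are themselves increasing functions of $\xi_1, \ldots, \xi_{n-1}$; applying the induction hypothesis to these two functions and taking expectations over $\xi_1, \ldots, \xi_{n-1}$ gives $\E[fg] \ge \E[f]\E[g]$, completing the induction.

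Applying this with $f = \P(A \mid \FF_n)$ and $g = \P(B \mid \FF_n)$ and letting $n \to \infty$ as above yields $\P(A \cap B) \ge \P(A)\,\P(B)$, which is the claim. The argument is entirely standard; there is no real obstacle, the only point requiring a little care being the reduction to finitely many coordinates — specifically, the fact that for an increasing event $A$ and a product measure the conditional probability $\P(A \mid \FF_n)$ may be taken to be a monotone function of the conditioning coordinates.
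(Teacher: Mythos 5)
Your proof is correct. The paper itself offers no argument here: it observes that $\{\mathbf{1}_{\{\w\in V_{\text{sp}}\}}:\w\in\Z^2\}$ is an i.i.d.\ Bernoulli family and then simply states that the FKG inequality ``follows directly,'' implicitly invoking the classical Harris--FKG theorem for product measures. You supply precisely the standard self-contained proof of that theorem --- reduction to finitely many coordinates via the martingale $\P(A\mid\FF_n)$ together with monotonicity of this conditional probability, then induction on the number of coordinates with the single-variable Chebyshev correlation inequality as the base case --- so the content is the same; you have just made explicit what the paper treats as folklore.
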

We are now ready to prove Lemma \ref{lem:InEventProb_bd} for general $k \geq 2$.

\noindent{\bf Proof of Lemma \ref{lem:InEventProb_bd} for $k \geq 2$:}
For any $j \geq 1$, we observe the following inclusion of events  
$$
\cap_{i=1}^k A_{\text{sp}}(h^j(\x_i)) \subseteq 
\cap_{i=1}^k \text{In}(h^j(\x_i)).  
$$
Further, given $(h^j(\x_1), \cdots, h^j(\x_k)) = (\w_1, \cdots, \w_k)$,
the events $A_{\text{sp}}(h^j(\x_i)) = A_{\text{sp}}(\w_i)$ for $1 \leq i \leq k$, are
expressed in terms of special points only in the upper half-plane $\mathbb{H}^+(j)$
and all of them are increasing. Further, the $\sigma$-field ${\cal F}_j$ does not 
have any information about the point set $V^+_j$.
Hence, an application of FKG property together with the translation invariance property of our model
give us 
$$
\P \bigl( \cap_{i=1}^k A_{\text{sp}}(h^j(\x_i)) \mid {\cal F}_j\bigr )
\geq \P \bigl( A_{\text{sp}}(\mathbf{0}) \bigr )^k > 0
$$
and complete the proof.
\qed

We are now ready to prove Proposition \ref{prop:TauExpTail}. 
We first explain our heuristics in words. We look for the occurrence of `In' event
at the first step $(h(\x_1), \cdots , h(\x_k))$. Because of Lemma \ref{lem:InEventProb_bd}, 
 the probability of occurrence of the `In' event has a strictly positive lower bound. 
 But, it is important to observe that
on the complementary event, the non-occurrence of `In' event gives
us some information about the point set $V^+_1$. This prevents us from applying 
 Lemma \ref{lem:InEventProb_bd} directly at the next step
  $(h^{2}(\x_1), \cdots, h^{2}(\x_k))$. In fact, given that the `In' event does not 
  occur at $(h(\x_1), \cdots , h(\x_k))$, we can apply the bound obtained in Lemma \ref{lem:InEventProb_bd} only after the effect of this information goes away. 
We will make these heuristics rigorous now and first present a formal proof 
of Proposition \ref{prop:TauExpTail} for $j=0$. In subsection 
\ref{subsubsec:Tau_j} we complete the proof of 
Proposition \ref{prop:TauExpTail} for general $j\geq 1$.

\noindent{\bf Proof of Proposition \ref{prop:TauExpTail} for $j =0$ : }
Given ${\cal F}_1$, at the first step $(h(\x_1), \cdots, h(\x_k))$ we test 
for occurrence of the `In' event. By Lemma \ref{lem:InEventProb_bd} we have that  
$$
\P(\text{`In' event occurs at }(h(\x_1), \cdots, h(\x_k)) \mid {\cal F}_1)\geq p_{\text{in}}.
$$
When `In' event does not occur at the first step $(h(\x_1), \cdots, h(\x_k))$, we wait for 
the joint exploration process to discover this fact, i.e., till the time 
one of the $k$ paths steps outside the respective parabolic region. To  make it rigorous 
for $\u \in \Z^2$ let $\beta(\u)$ denote the r.v. defined as
\begin{align}
\label{def:beta}
\beta(\u) := 
\min\{ n \in \N : h^{n}(\u) \notin \nabla(\u) \}.
\end{align}
Set $\beta_0 = 0$ and $\beta_1 := \min\{\beta(h(\x_i)) : 1 \leq i \leq k\}$.

We mention that the r.v. $\beta_1 $ takes the value $+ \infty$ with positive probability 
and this  implies occurrence of the `In' event. On the other hand, on the event 
$\{\tau_1 > 1\}$ the r.v. $\beta_1$ becomes finite. In fact, the two events $\{\tau_1 > 1\}$ and 
$\{\beta_1 < + \infty \}$ are equal.  Further, we observe that 
$\beta_1$ is a stopping time w.r.t. the filtration $\{{\cal F}_n : n \in \N\}$
and we have 
$$
\{\tau_1 > 1\} = \{\beta_1  < + \infty \} \in {\cal F}_{\beta_1}.
$$ 
On the event $\{\tau_1 > 1\}$, which is same as the event 
$ \{\beta_1  < + \infty \}$, the $\sigma$-field ${\cal F}_{\beta_1}$ 
 does {\it not} have any information about the point set $V^+_{\beta_1}$. 
We can test for occurrence of the `In' event again at  the
$(\beta_1 + 1)$-th step and  Lemma \ref{lem:InEventProb_bd} is applicable here. 
 Thus we have
$$
\mathbf{1}_{\{\beta_1 < \infty\}}\P(\text{In event occurs at the }(\beta_1 + 1)\text{-th step} \mid
{\cal F}_{\beta_1} ) \geq \mathbf{1}_{\{\beta_1 < \infty\}} p_{\text{in}},
$$
by Lemma \ref{lem:InEventProb_bd}. On the event $\{\beta_1 < \infty\}$, 
starting from the $(\beta_1 + 1)$-th step,
we define the r.v. $\beta_2 := \beta_1 + \min\{\beta(h^{\beta_1 + 1}(\x_i)) : 1 \leq i \leq k\}$
which is a stopping time as well. 

The event $\{\beta_2 = + \infty\}$ implies occurrence of the event
$\cap_{i=1}^k \text{In}(h^{\beta_1 + 1}(\x_i))$ and we have $\tau_2 = \beta_1 + 1$. 
Otherwise, i.e., on the event $\{\beta_2 < \infty\}$ the $\sigma$-field ${\cal F}_{\beta_2}$
does not have any information about the point set $V \cap \mathbb{H}^+(\beta_2)$ giving us that 
$$
\mathbf{1}_{\{\beta_2 < \infty\}}\P(\text{In event occurs at the }(\beta_2 + 1)\text{-th step} \mid
{\cal F}_{\beta_2} ) \geq \mathbf{1}_{\{\beta_2 < \infty\}} p_{\text{in}}.
$$
By repeating the same argument recursively we get 
that the number of $\beta_j$'s explored to find the value of $\tau_1$ is dominated by a geometric 
random variable with success probability $p_{\text{in}} > 0$. Unfortunately, 
we do not have independence for $\beta_j$'s. 
Because of Proposition \ref{cor:SubExpDecay_1}, to 
prove Proposition \ref{prop:TauExpTail} it suffices to show that 
for any $j \geq 0$ given $\beta_j < \infty$, the tail probability 
$\P( n < (\beta_{j+1} - \beta_{j})\mathbf{1}_{\{\beta_j < \infty\}} < \infty \mid {\cal F}_{\beta_j} )$
decays exponentially in $n$. 

We show this for $j = 0$. The same argument works for general $j \geq 1$. 
For $\u \in \Z^2$, we define the random variable $J_{\text{sp}}(\u)$ as the minimum 
distance from $\u$ of a vertex from the set $V^{\text{sp}}$ 
at the next level $y = \u(2) + 1$, i.e., 
$$
J_{\text{sp}}(\u) := \min\{ k \geq 0 : \text{ either }\u + (k,1) \text{ or }
\u + (-k,1) \in V_{\text{sp}}\}.
$$ 
We observe that 
\begin{align}
\label{eq:Beta_Estimate}
 \P( n < (\beta_1 - \beta_0 ) \mathbf{1}_{\{\beta_0 < \infty\}} < \infty)  =
  \P( n < \beta_1  < \infty )  =
  \sum_{l =1}^\infty \P(\beta_1 = n + l ). 
\end{align}
In order to estimate the probability $\P(\beta_1 = n + l )$, we observe that for any $1 \leq i \leq k$ 
the PH path starting from $\x_i$ exits the parabolic region $\nabla(\x_i)$
for the first time at the $n+l$-th step only if there is no point of $V$, in particular, 
no special point within $n+l$ distance of the vertex $h^{n+l-1}(\x_i)$ on the line 
$y = n + l$. Hence, (\ref{eq:Beta_Estimate}) becomes       
\begin{align*}
& \sum_{l =1}^\infty \P(\beta_1 = n + l )\\
& \leq   \sum_{l =1}^\infty \P( J_{\text{sp}}(h^{n+l-1}(\x_i)  > 
2(n + l) - 1 \text{ for some }1 \leq i )\\
&  \leq   k\sum_{l =1}^\infty (1 - p_0)^{ 2(n + l) - 1} \\
& \leq  C_0 \exp{(- C_1 n)},
\end{align*}
for some $C_0, C_1 > 0$.
This completes the proof of Proposition \ref{prop:TauExpTail} for $j = 0$.
 \qed

\subsubsection{Proof of Proposition \ref{prop:TauExpTail} for $j \geq 1$}
\label{subsubsec:Tau_j}

We now prove Proposition \ref{prop:TauExpTail} for general $j \geq 1$.
Fix any $j \geq 1$. The problem is that the occurrence of `In' event depends on the
infinite future. Given that the step $\tau_j$ has occurred, 
the $\sigma$-field ${\cal G}_j$ has some information about the random vectors 
$\{\Gamma_\w : \w(2) > \tau_j\}$.  
In order to deal with this, for any $j \geq 1$, we represent the occurrence of the $j$-th $\tau$ step 
as an intersection of  the `In' event at the last step and a ${\cal F}_{\tau_j}$ measurable event.
As an example for the marginal process $\{h^n(\x_1) : n \geq 0\}$, the 
occurrence of the event $\{(h^{\tau_1}(\x_1), \cdots , h^{\tau_j}(\x_1)) = (\v_1, \cdots, \v_j)\}$ 
is represented as an intersection of the event 
 $\text{In}(\v_j)$ and one ${\cal F}_{\v_j(2)}$ measurable event.
 For simplicity of notations we prove this for $k=1$, i.e., for the marginal process only. 
 The same argument holds for general $k \geq 1$.  
 We use this representation and show that given  
 $\{(h^{\tau_1}(\x_1), \cdots , h^{\tau_j}(\x_1)) = (\v_1, \cdots, \v_j)\}$, 
 the probability of the occurrence of the `In' event at the $\tau_j + l$-th step is 
 still bounded below by $p_{\text{in}}$ for any $l \geq 1$.  

Recall that for $\v \in \Z^2$, $\beta(\v)$ is defined as the (extended) integer valued r.v. 
$$
\beta(\v) := \inf\{ l \geq 1: h^{l}(\v) \notin \nabla(\v)\}.
$$  
\begin{lemma}
\label{lem:EventDecomposition_1}
We have the following equality of events:
\begin{align*}
 \{\tau_1 = l, h^l(\x_1) =  \v \} = \Bigl [ \{ h^l(\x_1) = \v\} \cap 
\bigl [ \cap_{j=0}^{l-1}
\bigl ( \{ \beta(h^j(\x_1)) \leq l - j \}  \bigr ) \bigr ] \Bigr ] 
\bigcap (\text{In}(\v)).
\end{align*} 
\end{lemma}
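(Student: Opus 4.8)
The plan is to prove the set equality by double inclusion, carefully tracking what information the event $\{\tau_1 = l, h^l(\x_1) = \v\}$ encodes. Recall that $\tau_1 = \inf\{ n \geq 1 : \text{In}(h^n(\x_1)) \text{ occurs}\}$ (for $k=1$), so the occurrence of $\{\tau_1 = l\}$ means precisely two things: first, the $\text{In}$ event fails at each of the steps $1, 2, \dots, l-1$; and second, the $\text{In}$ event occurs at step $l$, i.e., $\text{In}(h^l(\x_1)) = \text{In}(\v)$ occurs. The right-hand side of the claimed identity is exactly the conjunction of the event $\text{In}(\v)$ with the event that $h^l(\x_1) = \v$ and that for every $0 \le j \le l-1$ the path started from $h^j(\x_1)$ leaves $\nabla(h^j(\x_1))$ within $l-j$ further steps, i.e.\ $\beta(h^j(\x_1)) \le l-j$. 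So the crux is to show that the reformulation ``$\text{In}(h^n(\x_1))$ fails for all $1 \le n \le l-1$'' is equivalent, on the event $\{h^l(\x_1) = \v\}$, to ``$\beta(h^j(\x_1)) \le l-j$ for all $0 \le j \le l-1$''.

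First I would handle the forward inclusion $\subseteq$. On the event $\{\tau_1 = l, h^l(\x_1) = \v\}$ the $\text{In}$ event occurs at step $l$, which gives the factor $\text{In}(\v)$ directly. It remains to check, for each $0 \le j \le l-1$, that $\beta(h^j(\x_1)) \le l-j$. Fix such a $j$. Since $\tau_1 = l$ we know $\text{In}(h^j(\x_1))$ does \emph{not} occur when $1 \le j \le l-1$; when $j = 0$ we must separately observe that $\text{In}(\x_1)$ fails (this is where I would use that $\tau_1 \geq 1$ together with the hypothesis that the starting points need not lie in $V$ — in fact by definition $\tau_1 > \tau_0 = 0$, and the $j=0$ case reduces to showing $\beta(\x_1) \le l$, which follows because $h^l(\x_1) = \v$ must itself be consistent with the path having left $\nabla(\x_1)$, as argued next). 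By definition, $\text{In}(h^j(\x_1))$ not occurring means there is some step $m \geq 1$ with $h^{j+m}(\x_1) \notin \nabla(h^j(\x_1))$, i.e.\ $\beta(h^j(\x_1)) < \infty$, say $\beta(h^j(\x_1)) = m_0$. The content to extract is $m_0 \le l - j$. Here I would invoke the \textbf{nesting property} (Remark \ref{rem:Nesting}): the path visits $h^{j}(\x_1), h^{j+1}(\x_1), \dots$, and if at some step $j+i$ with $i < \beta(h^j(\x_1))$ we still have $h^{j+i}(\x_1) \in \nabla(h^j(\x_1))$; once the path exits $\nabla(h^j(\x_1))$ at step $j + m_0$, the point $h^{j+m_0}(\x_1)$ lies outside $\nabla(h^j(\x_1))$, hence (by nesting applied contrapositively along the chain) it cannot subsequently re-enter, and in particular $h^l(\x_1) = \v \in \text{In}(\v)$ forces $\beta(\v) = \infty$, i.e.\ $l - m_0 \ge$ the index at which the path settles into its final parabola. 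The clean way to see $m_0 \le l-j$ is: the step $\tau_1 = l$ is the \emph{first} step at which $\text{In}$ holds, so $\text{In}$ fails at step $j$, and since $\text{In}(\v) = \text{In}(h^l(\x_1))$ does hold, the exit from $\nabla(h^j(\x_1))$ must have already happened by step $l$ — otherwise $h^l(\x_1) = \v \in \nabla(h^j(\x_1))$ and by nesting $\nabla(\v) \subseteq \nabla(h^j(\x_1))$, and then the whole tail of the path staying in $\nabla(\v)$ would stay in $\nabla(h^j(\x_1))$, contradicting $\beta(h^j(\x_1)) < \infty$. Hence $\beta(h^j(\x_1)) \le l-j$.

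For the reverse inclusion $\supseteq$, suppose the right-hand event occurs: $h^l(\x_1) = \v$, $\text{In}(\v)$ occurs, and $\beta(h^j(\x_1)) \le l-j$ for all $0 \le j \le l-1$. The condition $\beta(h^j(\x_1)) \le l - j < \infty$ immediately gives that $\text{In}(h^j(\x_1))$ fails for every $0 \le j \le l-1$. Combined with $\text{In}(h^l(\x_1)) = \text{In}(\v)$ occurring, this says $l$ is precisely the first index $n \geq 1$ at which $\text{In}(h^n(\x_1))$ holds, i.e.\ $\tau_1 = l$; together with $h^l(\x_1) = \v$ we land in the left-hand event. The \textbf{main obstacle} is the forward direction, specifically the passage ``$\text{In}$ fails at step $j$'' $\Longrightarrow$ ``$\beta(h^j(\x_1)) \le l-j$,'' which is not purely definitional: one could a priori imagine the path exiting $\nabla(h^j(\x_1))$ only \emph{after} step $l$. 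Ruling this out is exactly where the nesting property does the work — since $\text{In}(\v)$ holds, the path from $h^l(\x_1) = \v$ onward stays in $\nabla(\v)$, so if it were still inside $\nabla(h^j(\x_1))$ at step $l$, nesting would trap it there forever, contradicting the assumed failure of $\text{In}(h^j(\x_1))$. I would present this as a short standalone lemma about when exits from nested parabolas can occur, then read off the stated event identity. The generalization to arbitrary $k \geq 1$ is verbatim, replacing the single path by the $k$-tuple and $\text{In}(\v)$ by $\cap_{i=1}^k \text{In}(\v_i)$, with $\beta_1 = \min_i \beta(h(\x_i))$ playing the role of $\beta(h(\x_1))$, as the paper already indicates.
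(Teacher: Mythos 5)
Your proposal is correct and takes essentially the same route as the paper. Both first unpack $\{\tau_1=l, h^l(\x_1)=\v\}$ into $\{h^l(\x_1)=\v,\text{In}(\v)\}\cap\bigl[\cap_j\{\beta(h^j(\x_1))<\infty\}\bigr]$ from the definition of $\tau_1$, and both then upgrade each $\beta(h^j(\x_1))<\infty$ to $\beta(h^j(\x_1))\le l-j$ via the nesting property: if $\beta(h^j(\x_1))>l-j$ then $\v\in\nabla(h^j(\x_1))$, whence $\nabla(\v)\subseteq\nabla(h^j(\x_1))$ and $\text{In}(\v)$ traps the whole tail of the path inside $\nabla(h^j(\x_1))$, forcing $\beta(h^j(\x_1))=\infty$. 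Your ``clean way'' paragraph is exactly the paper's argument, and the reverse inclusion is the same direct read-off of the definition.

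One caveat you partially noticed but did not resolve: for $j=0$, the event $\{\tau_1=l\}$ does \emph{not} imply that $\text{In}(\x_1)$ fails, since the infimum defining $\tau_1$ ranges only over $n>\tau_0=0$ and never tests step $0$. Your parenthetical justification (that $h^l(\x_1)=\v$ ``must itself be consistent with the path having left $\nabla(\x_1)$'') is not correct; nothing rules out $\text{In}(\x_1)$ holding while $\text{In}(h^1(\x_1)),\dotsc,\text{In}(h^{l-1}(\x_1))$ all fail. This is, however, an imprecision already present in the Lemma's statement and in the paper's own first displayed equality; the subsequent Corollary \ref{cor:IncrDistEquality_1} runs the analogous intersection from $n=m_{l-1}+1$, which is the correct lower index and suggests the $j=0$ term here is a typo. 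For $j\ge1$ your argument is exactly the paper's and is airtight.
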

\begin{proof}
By definition of the first $\tau$ step $\tau_1$, we have 
\begin{align*}
\{\tau_1 = l, h^l(\x_1) = \v\} = & \{h^l(\x_1) = \v, \text{In}(\v)\} 
\bigcap \bigl [ \cap_{j=0}^{l-1} ( h^j(\x_1) \text{ is not a }\tau\text{ step}) \bigr]\\ 
= & \{h^l(\x_1) = \v, \text{In}(\v)\} \bigcap \bigl[ \cap_{j=0}^{l-1}  (\beta(h^j(\x_1)) < \infty ) \bigr].
\end{align*}
In order to complete the proof, for any $0 \leq j \leq l-1$ we need to show that
 $$
 \{h^l(\x_1) = \v, \text{In}(\v), \beta(h^j(\x_1)) < \infty \} =
\{h^l(\x_1) = \v, \text{In}(\v),\beta(h^j(\x_1)) \leq l - j \}.  
 $$  
 We prove it for $j =0$ and the argument is exactly the same for general $j \geq 1$.
 On the event $\{h^l(\x_1) = \v, \text{In}(\v), \beta(\x_1) > l  \}$, we must have 
$$
h^{l}(\x_1) = \v \in \nabla(\x_1). 
$$
Otherwise, $\beta(\x_1)$ must be smaller than $l$. 
Further, on the event $\text{In}(\v) \cap \{h^j(\x_1) = \v \in \nabla(\x_1)\}$ 
the nesting property mentioned in Remark \ref{rem:Nesting} implies that
\begin{equation*}
h^{l+m}(\x_1) = h^m(\v) \in \nabla(\v) \subset \nabla(\x_1) \text{ for all }m \geq 1.
\end{equation*} 
Hence, on the event $\{h^l(\x_1) = \v, \text{In}(\v), \beta(\x_1) > l \}$, 
the r.v. $\beta(\x_1)$ must take the value $+\infty$ implying that 
$$
\{ h^l(\x_1) = \v, \text{In}(\v), l < \beta(\x_1) < +\infty \} = \emptyset .
$$
This completes the proof.
\end{proof}
\begin{remark}
\label{rem:lem_EventDecomposition_1}
We observe that in Lemma \ref{lem:EventDecomposition_1}, the event 
$$
\{ h^l(\x_1) = \v \} \cap 
\bigl [ \cap_{j=0}^{l-1}
\bigl ( \{ \beta(h^j(\x_1)) \leq l - j \}  \bigr ) \bigr ]
$$
is ${\cal F}_l$ measurable by construction. 
\end{remark}
We would like to have a similar result for $\tau_j$-th ($ j \geq 2$) step, i.e., 
consider the event
$$
\{ \tau_1 = l_1, h^{l_1}(\x_1) = \v_1, \tau_2 = l_1 + l_2, h^{l_1 + l_2}(\x_1) = \v_2 \},
$$
 the difficulty is that, both the events, $\text{In}(\v_1)$  and $\text{In}(\v_2)$,
 depend on the infinite future. In order to deal with this, for $l \in \N$ and $\v \in \Z^2$, 
we define the event $\text{In}^{(l)}(\v)$ as 
\begin{align}
\label{eq:InEvent_l}
\text{In}^{(l)}(\v) := \{h^m(\v) \in \nabla(\v) \text{ for all }1 \leq m \leq l\}.
\end{align}
Clearly, for any $l \in \N$ we have $\text{In}(\v) \subset \text{In}^{(l)}(\v)$. 
We further observe that the event $\text{In}^{(l)}(\v)$ is ${\cal F}_{\v(2) + l}$ measurable.

The nesting property as mentioned in Remark \ref{rem:Nesting} ensures us that, 
on the event $\text{In}(\v)$ for any $l \geq 1$ we have 
$ \nabla(h^l(\v)) \subset \nabla(\v) $.
This allows us to have the following equality of events
\begin{align*}
 \text{In}(\v) \cap \text{In}(h^j(\v)) 
 & = \text{In}^{(j)}(\v)\cap \text{In}(h^j(\v)) \text{ for any }j \geq 1.
\end{align*}
Using the above reasoning, we write the event 
$$
\{ \tau_1 = m_1, \tau_2 = m_2, h^{m_1}(\x_1) = \v_1, h^{m_2}(\x_1) = \v_2 \}
$$ 
as
\begin{align}
\label{eq:EqualityEvent_3}
& \Bigl [ \{ h^j(\x_1) \text{ is not a }\tau \text{ step for any } 1 \leq j < m_1, h^{m_1}(\x_1) = \v_1, 
\text{In}^{ (m_2 - m_1)}(\v_1) \} \nonumber\\
& \cap 
\{ h^j(\x_1) \text{ is not a }\tau \text{ step for any }  m_1 < j < m_2, 
h^{m_2}(\x_1) = \v_2\} \Bigr ] \bigcap \text{In}(\v_2).
\end{align}
As observed earlier, other than the event $\text{In}(\v_2)$ in Equation 
(\ref{eq:EqualityEvent_3}), rest of the events are ${\cal F}_{m_2}$ measurable.
In fact, Equation (\ref{eq:EqualityEvent_3}) can be further strengthened 
for any $j \geq 1$ as described in the following corollary. 
\begin{corollary}
\label{cor:IncrDistEquality_1}
For any $j \geq 1$ we have the following equality of events
\begin{align}
\label{eq:EventEquality_4}
\bigcap_{l=1}^j \{\tau_l = m_l, h^{m_l}(\x_1) = \v_l \} =  
& \Bigl [ \bigcap_{l=1}^j  \{ h^{m_l}(\x_1) = \v_l\}\cap \bigl \{ \cap_{n = m_{l-1} + 1}^{m_l - 1} 
( \beta(h^n(\x_1)) \leq m_l - n)\bigr \}
 \nonumber\\
&  \bigcap (\cap_{l=1}^{j-1}
\text{In}^{ (m_{(l+1)} - m_l)}(\v_l) )\Bigr ]\bigcap \text{In}(\v_j),
\end{align} 
and other than the event $\text{In}(\v_j)$, rest of the other events 
in the r.h.s. of (\ref{eq:EventEquality_4}) are ${\cal F}_{\v_l(2)} = {\cal F}_{m_l}$ measurable. 
\end{corollary}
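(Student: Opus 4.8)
The plan is to establish the identity (\ref{eq:EventEquality_4}) by induction on $j$. The base case $j=1$ is precisely Lemma \ref{lem:EventDecomposition_1} (with the convention that the empty intersection $\bigcap_{l=1}^{0}\text{In}^{(m_{l+1}-m_l)}(\v_l)$ is the whole space), and the case $j=2$ is the decomposition (\ref{eq:EqualityEvent_3}); these already display the pattern. For the inductive step I would assume (\ref{eq:EventEquality_4}) for some $j\ge 1$, write $E_j$ for the event on its right-hand side obtained by deleting the factor $\text{In}(\v_j)$, and intersect both sides with $\{\tau_{j+1}=m_{j+1},\,h^{m_{j+1}}(\x_1)=\v_{j+1}\}$. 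Since on the left-hand side for $j$ we have $\tau_j=m_j$ and $h^{m_j}(\x_1)=\v_j$, imposing in addition $\tau_{j+1}=m_{j+1}$ merely says that the `In' event fails at each of the steps $m_j+1,\dots,m_{j+1}-1$, i.e. $\beta(h^n(\x_1))<\infty$ for those $n$, and occurs at step $m_{j+1}$, i.e. (on $\{h^{m_{j+1}}(\x_1)=\v_{j+1}\}$) that $\text{In}(\v_{j+1})$ holds. Thus the left-hand side for $j+1$ becomes
\begin{align*}
& E_j\cap\text{In}(\v_j)\cap\{h^{m_{j+1}}(\x_1)=\v_{j+1}\}\\
& \qquad \cap\Bigl(\bigcap_{n=m_j+1}^{m_{j+1}-1}\{\beta(h^n(\x_1))<\infty\}\Bigr)\cap\text{In}(\v_{j+1}).
\end{align*}

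Two transformations, both exactly of the type carried out in Lemma \ref{lem:EventDecomposition_1}, then turn this into $E_{j+1}\cap\text{In}(\v_{j+1})$. First, on $\text{In}(\v_j)\cap\text{In}(\v_{j+1})\cap\{h^{m_j}(\x_1)=\v_j,\,h^{m_{j+1}}(\x_1)=\v_{j+1}\}$ and for $m_j<n<m_{j+1}$ we have $h^n(\x_1)=h^{n-m_j}(\v_j)\in\nabla(\v_j)$, so by the nesting property (Remark \ref{rem:Nesting}) $\nabla(h^n(\x_1))\subseteq\nabla(\v_j)$; moreover, if $\v_{j+1}=h^{m_{j+1}-n}(h^n(\x_1))$ happens to lie in $\nabla(h^n(\x_1))$, then $\nabla(\v_{j+1})\subseteq\nabla(h^n(\x_1))$ and $\text{In}(\v_{j+1})$ forces $h^{n+r}(\x_1)\in\nabla(h^n(\x_1))$ for every $r\ge m_{j+1}-n$; in either case $\{\beta(h^n(\x_1))<\infty\}$ coincides with $\{\beta(h^n(\x_1))\le m_{j+1}-n\}$ on this event, so the intersection over $n$ may be replaced by $\bigcap_{n=m_j+1}^{m_{j+1}-1}\{\beta(h^n(\x_1))\le m_{j+1}-n\}$. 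Second, applying the identity $\text{In}(\v)\cap\text{In}(h^r(\v))=\text{In}^{(r)}(\v)\cap\text{In}(h^r(\v))$ established above (again via nesting) with $\v=\v_j$, $r=m_{j+1}-m_j$ and $h^r(\v_j)=h^{m_{j+1}}(\x_1)=\v_{j+1}$, we may replace $\text{In}(\v_j)$ by $\text{In}^{(m_{j+1}-m_j)}(\v_j)$. Re-grouping the resulting intersections over $l=1,\dots,j+1$ then reproduces exactly $E_{j+1}\cap\text{In}(\v_{j+1})$, which is the right-hand side of (\ref{eq:EventEquality_4}) for $j+1$.

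The measurability assertion is immediate once the identity is proved: the events $\{h^{m_l}(\x_1)=\v_l\}$ and $\{\beta(h^n(\x_1))\le m_l-n\}$ (for $m_{l-1}<n<m_l$) are determined by $h^0(\x_1),\dots,h^{m_l}(\x_1)$ and hence lie in ${\cal F}_{m_l}$, while $\text{In}^{(m_{l+1}-m_l)}(\v_l)$ lies in ${\cal F}_{\v_l(2)+(m_{l+1}-m_l)}={\cal F}_{m_{l+1}}$ because $\v_l(2)=m_l$; in particular every factor of $E_j$ is ${\cal F}_{m_j}$-measurable. I expect the only genuinely delicate point to be the first of the two transformations above, i.e. upgrading the infinite-horizon condition $\beta(h^n(\x_1))<\infty$ to the finite-horizon condition $\beta(h^n(\x_1))\le m_{j+1}-n$: one must argue that the failure of the `In' event at an intermediate step forces an exit from the relevant parabolic region \emph{before} time $m_{j+1}$ rather than at some unspecified later time, and this is precisely where the nesting property is indispensable. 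Everything else is the recursive bookkeeping already carried out for $j=1$ and $j=2$.
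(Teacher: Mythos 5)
Your proof is correct and follows the route the paper leaves implicit: it is the induction that passes from the base decomposition of Lemma~\ref{lem:EventDecomposition_1} (i.e.\ $j=1$, essentially equation (\ref{eq:EqualityEvent_3}) in the case $j=2$) to general $j$, with the only nontrivial content being the two nesting-property transformations you isolate — upgrading $\{\beta(h^n(\x_1))<\infty\}$ to the finite-horizon $\{\beta(h^n(\x_1))\le m_{j+1}-n\}$, and replacing $\text{In}(\v_j)$ by $\text{In}^{(m_{j+1}-m_j)}(\v_j)$ via the identity $\text{In}(\v)\cap\text{In}(h^r(\v))=\text{In}^{(r)}(\v)\cap\text{In}(h^r(\v))$ — followed by the regrouping and the easy measurability check. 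This is exactly the argument the paper signals with ``Equation (\ref{eq:EqualityEvent_3}) can be further strengthened for any $j\geq 1$.''
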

For simplicity of notation we write 
$$
E(\v_1, \cdots , \v_j) := 
\Bigl [ \bigcap_{l=1}^j  \{ h^{m_l}(\x_1) = \v_l\}\cap \bigl \{ \cap_{n = m_{l-1} + 1}^{m_l - 1} 
( \beta(h^n(\x_1)) \leq m_l - n)\bigr \}  \bigcap (\cap_{l=1}^{j-1}
\text{In}^{( m_{l+1} - m_l)}(\v_l) )\Bigr ].
$$
Now we are ready to prove Proposition 
\ref{prop:TauExpTail} for $j \geq 1$. 

\noindent {\bf Proof of Proposition \ref{prop:TauExpTail} : }
Fix any $j \geq 1$. Given $(h^{\tau_1}(\x_1), \cdots , h^{\tau_j}(\x_1)) = (\w_1, 
\cdots , \w_j)$, for any $l \geq 1$ we obtain 
\begin{align*}
& \P\bigl( In(h^{\tau_j + l}(\x_1)) \mid (h^{\tau_1}(\x_1), \cdots , h^{\tau_j}(\x_1)) = (\w_1, 
\cdots , \w_j)\bigr) \\
= & \P\bigl( In(h^{l}(\w_j)) \mid ( E(\w_1,\cdots , \w_j)\cap \text{In}(\w_j))\bigr) \\
= & \P \bigl ( In(h^{l}(\w_j)) \cap ( E(\w_1,\cdots , \w_j)\cap \text{In}(\w_j) )\bigr )/ 
\P(E(\w_1,\cdots , \w_j)\cap \text{In}(\w_j) ) \\
= & \P \bigl ( In(h^{l}(\w_j)) \cap ( E(\w_1,\cdots , \w_j)\cap \text{In}^{(l)}(\w_j) )\bigr )/ 
\P(E(\w_1,\cdots , \w_j)\cap \text{In}(\w_j) ) \\
\geq  & \P \bigl ( A_{\text{sp}}(h^{l}(\w_j)) \cap ( E(\w_1,\cdots , \w_j)
\cap \text{In}^{(l)}(\w_j) )\bigr )/ 
\P(E(\w_1,\cdots , \w_j)\cap \text{In}(\w_j) ) \\
=  & \P(A_{\text{sp}}(h^{l}(\w_j)))\Bigl ( \P \bigl ( E(\w_1,\cdots , \w_j)
\cap \text{In}^{(l)}(\w_j) \bigr )/ 
\P(E(\w_1,\cdots , \w_j)\cap \text{In}(\w_j) ) \Bigr) \\
\geq  & \P(A_{\text{sp}}(\mathbf{0})).
\end{align*}
In the last step together with the translation invariance nature of our model, we 
use the fact that 
$$
\bigl (E(\w_1,\cdots , \w_j)\cap \text{In}(\w_j) \bigr )\subset 
\bigl ( E(\w_1,\cdots , \w_j)\cap \text{In}^{(l)}(\w_j) \bigr ).
$$ 
The penultimate step follows from the fact that the event $A_{\text{sp}}(h^{l}(\w_j))$
depends on the collection $\{\Gamma_w : \w(2) > \w_j(2) + l \}$ and 
is independent of the ${\cal F}_{\w_j(2) + l}$ measurable 
event $( E(\w_1,\cdots , \w_j)\cap \text{In}^{(l)}(\w_j) )$.

Given $(h^{\tau_1}(\x_1), \cdots , h^{\tau_j}(\x_1)) = (\w_1, 
\cdots , \w_j)$, using  a similar argument
for any $\v \in \mathbb{H}^+(\w_j)(2)$ and for any $l \geq 1$ 
we have that 
\begin{align}
\label{eq:Special_decay_TauJ}
& \P \bigl( J_{\text{sp}}(\v) > l \mid (h^{\tau_1}(\x_1), \cdots , h^{\tau_j}(\x_1)) = (\w_1, 
\cdots , \w_j) \bigr ) \nonumber \\
& = \P \bigl( J_{\text{sp}}(\v) > l \mid E(\w_1, 
\cdots , \w_j) \cap  \text{In}(\w_j) \bigr ) \nonumber \\
& = \P \bigl( J_{\text{sp}}(\v) > l \cap E(\w_1, 
\cdots , \w_j) \cap  \text{In}(\w_j) \bigr ) /\P(E(\w_1, 
\cdots , \w_j) \cap  \text{In}(\w_j))\nonumber \\
 & \leq \P \bigl( J_{\text{sp}}(\v) > l \cap E(\w_1, 
\cdots , \w_j)  \bigr ) /\P(E(\w_1, 
\cdots , \w_j) \cap  \text{In}(\w_j))\nonumber \\
& \leq \P \bigl( J_{\text{sp}}(\v) > l \cap E(\w_1, 
\cdots , \w_j)  \bigr ) /\P(E(\w_1,\cdots , \w_j) \cap  A_{\text{sp}}(\w_j))\nonumber \\
& = \P ( J_{\text{sp}}(\v) > l )\P  \bigl( E(\w_1,\cdots , \w_j)  \bigr )  
\Bigl( \P(E(\w_1,\cdots , \w_j)) \P(A_{\text{sp}}(\w_j)) \Bigr )^{-1}\nonumber \\
& = \P \bigl( J_{\text{sp}}(\v) > l \bigr ) \P(A_{\text{sp}}(\mathbf{0}))^{-1}
\leq  C_0 \exp{(-C_1 l)},
\end{align}
for some $C_0, C_1 > 0$ which do not depend on $j$. 
In the penultimate step we have used the fact that 
the event $E(\w_1, \cdots , \w_j)$ is ${\cal F}_{\w_j(2)}$ measurable and hence, independent of the events $\{J_{\text{sp}}(\v) > l\}$ and $A_{\text{sp}}(\w_j)$. The last step follows 
from the translation invariance nature of our model. 

Equation (\ref{eq:Special_decay_TauJ}) allows us to repeat the 
calculations  in (\ref{eq:BetaTail}) for general $j \geq 1$.  
The rest of the argument is exactly the same as in the case of $j=0$. 
This completes the proof.
\qed


Fix $j \geq 1$. Given the $\sigma$-field ${\cal G}_j$, for each $1 \leq i \leq k$ the PH path 
starting from $h^{\tau_j}(\x_i)$ given by $\{h^{\tau_j + n}(\x_i) : n \geq 1 \}$ must stay inside the 
region $\nabla(h^{\tau_j}(\x_i))$. Note that for an open vertex $\w \in \mathbb{H}^-(\tau_j)$, it's perturbed version $\tilde{\w}$ can affect the distribution of the joint  process starting from 
$(h^{\tau_j}(\x_1), \cdots , h^{\tau_j}(\x_k))$ only if 
$$
\tilde{\w} \in \cup_{i=1}^k \nabla(h^{\tau_j}(\x_i)) \text{ for some }1 \leq i \leq k.
$$   
This motivates us to define the set 
\begin{align}
\label{def:H_j_set}
H_{\tau_j} = H_{\tau_j}(\x_1, \cdots , \x_k) := I_{\tau_j}(\x_1, \cdots , \x_k)
 \cap \bigl ( \cup_{i=1}^k \nabla(h^{\tau_j}(\x_i))  \bigr), 
\end{align}
for $j \geq 1$, where $I_n$ is defined as in (\ref{def:InformationSet}).

\section{Renewal steps} 
\label{sec:renewal_def}

Below, we define a sequence of what we call as `renewal steps'.
Set $\gamma_0 = 0$ and for $\ell \geq 1$ define 
\begin{equation}
\label{def:RenewalStep}
\gamma_\ell = \gamma_\ell(\x_1, \cdots , \x_k) := \inf\{ j > \gamma_{\ell - 1} : 
H_{\tau_j}(\x_1, \cdots , \x_k) = \emptyset \},
\end{equation}
where $H_{\tau_j}(\x_1, \cdots , \x_k)$ is defined as in (\ref{def:H_j_set}).

We need to show that for all $\ell \geq 1$, the r.v. 
$\gamma_\ell$ is well defined in the sense that $\gamma_\ell$
is finite a.s. A stronger result will be proved in Proposition \ref{prop:Gamma_Tail}. 
For the moment, we  proceed assuming that $\gamma_\ell$ is a.s. finite for all $\ell \geq 1$.
Note that $\gamma_\ell$ denotes the total number of $\tau$ steps, i.e., number 
of `$\text{In}$' steps required for the 
$\ell$-th renewal step and the r.v. $\sigma_\ell := \tau_{\gamma_\ell}$ denotes 
the total number of steps required for the $\ell$-th occurrence of renewal event.

We observe that the condition $H_{\tau_j}(\x_1, \cdots , \x_k) = \emptyset$
implies that no open point from the lower half-plane $\mathbb{H}^-(\tau_j)$
are allowed to perturb to a vertex inside the parabolic regions $\nabla(h^{\tau_j}(\x_i))$
for any $1 \leq i \leq k$. For $\u \in \Z^2$ we define the event $\text{Out}(\u)$ as 
\begin{align}
\label{def:Out_event}
\text{Out}(\u) := \{ V^-(\u(2))\cap \nabla(\u)\} = \emptyset.
\end{align}   
For the joint PH process $\{(h^j(\x_1), \cdots , h^j(\x_k)) : j \geq 1\}$ we say that 
the `Out' event occurs at the $n$-th step if the event $\cap_{i=1}^k \text{Out}(h^n(\x_i))$ 
occurs. In other words, occurrence of a renewal event implies joint occurrence 
of `In' event and `Out' event. For an illustration we refer the reader to 
Figure \ref{fig:InEvent_3}.   

%

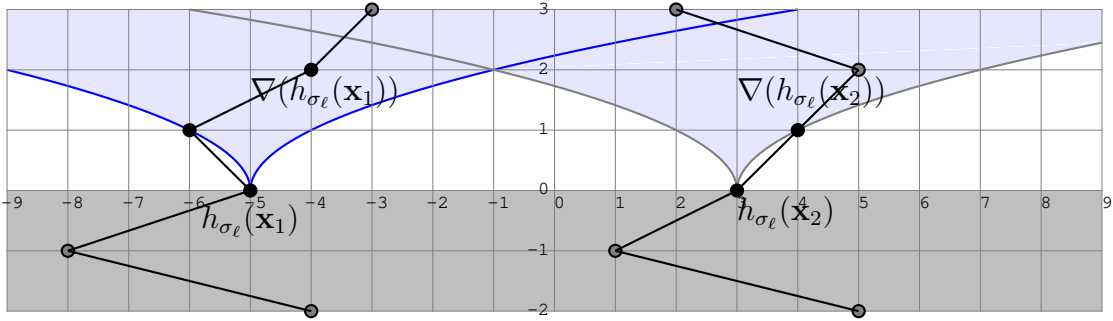
\begin{figure}
\begin{center}
\psset{unit=.8 cm}
\begin{pspicture}(-9.5,-2.5)(9.5,4.5)

\pscustom[linestyle=none,fillstyle=solid,fillcolor=lightblue]{%

      \psline(-9,2)(-9, 3) 
      \psline(-9,3)(-1,3)
      \psline(-1,3)(-1,2) 
      
  \psplot[plotpoints=2000,linecolor=blue]{-9}{-1}{ x 5 add abs sqrt }     
       
        }

\pscustom[linestyle=none,fillstyle=solid,fillcolor=lightblue]{%

    \psline(-1, 2)(-1,3) 
    \psline(-1,3)(9,3)
    
    \psline(9, 3)(!9 dup 3 sub abs sqrt) 
    \psplot[plotpoints=2000,linecolor=red]{-1}{9}{ x 3 sub abs sqrt }
      
        }

  \psplot[plotpoints=2000,linecolor=blue]{-9}{4}{ x 5 add abs sqrt }     
       
    \psplot[plotpoints=2000,linecolor=gray]{-6}{9}{ x 3 sub abs sqrt }
      
\pscustom[linestyle=none,fillstyle=solid,fillcolor=lightgray]{%

      \psline(-9,0)(9, 0) 
      \psline(-9,0)(-9,-2)
      \psline(-9,-2)(9,-2)
      \psline(9, -2)(9,0) 
       
        }
        
\psgrid[gridcolor=gray, gridwidth=0.25pt, subgriddiv=1, gridlabels=7pt](0,0)(-9,-2)(9,3.0)

\pscircle[fillcolor=gray,fillstyle=solid](-4, -2){.1}
\pscircle[fillcolor=gray,fillstyle=solid](-8, -1){.1}
\pscircle[fillcolor=black,fillstyle=solid](-5, 0){.1}
\pscircle[fillcolor=black,fillstyle=solid](-6, 1){.1}
\pscircle[fillcolor=black,fillstyle=solid](-4, 2){.1}
\pscircle[fillcolor=gray,fillstyle=solid](-3, 3){.1}
\psline(-4, -2)(-8, -1)
\psline(-8, -1)(-5, 0)
\psline(-5, 0)(-6,1)
\psline(-6, 1)(-4,2)
\psline(-4, 2)(-3,3)

\pscircle[fillcolor=gray,fillstyle=solid](5, -2){.1}
\pscircle[fillcolor=gray,fillstyle=solid](1, -1){.1}
\pscircle[fillcolor=black,fillstyle=solid](3, 0){.1}
\pscircle[fillcolor=black,fillstyle=solid](4, 1){.1}
\pscircle[fillcolor=gray,fillstyle=solid](5, 2){.1}
\pscircle[fillcolor=gray,fillstyle=solid](2, 3){.1}

 \psline(5, -2)(1, -1)
\psline(1, -1)(3, 0)
\psline(3, 0)(4,1)
\psline(4, 1)(5,2)
\psline(5, 2)(2,3)
\rput(-5,-0.5){$h_{\sigma_\ell}(\mathbf{x}_1)$}
\put(3,-0.5){$h_{\sigma_\ell}(\mathbf{x}_2)$}
\put(-5,1.5){$\nabla(h_{\sigma_\ell}(\mathbf{x}_1))$}
\put(3,1.5){$\nabla(h_{\sigma_\ell}(\mathbf{x}_2))$}
\end{pspicture}
\caption{This figure represents joint renewal for two paths. `In' event condition ensures that PH paths must stay inside the respective $\nabla(\cdot)$ regions. `Out' event condition ensures that perturbed versions of all open points in the lower half-plane (shaded with gray) must be outside the $\nabla(\cdot)$ regions.  }
\label{fig:InEvent_3}
\end{center}
\end{figure}


Note that the event $ \text{Out}(\u)$ is ${\cal F}_{\u(2)}$ measurable 
and, for any $\ell \geq 1$, the r.v. $\gamma_\ell$ is a stopping 
time w.r.t. the filtration $\{{\cal G}_j : j \geq 1\}$ defined as in (\ref{def:G_Sigma_Field}).
This allows us to define the filtration 
\begin{align}
\label{def:S_Filtration}
\{{\cal S}_\ell := {\cal G}_{\gamma_\ell} : \ell \geq 1\}.
\end{align}
For each $\ell \geq 1$, the r.v. $\sigma_\ell$ and the random vector 
$(h^{\sigma_\ell}(\x_1), \cdots, h^{\sigma_\ell}(\x_k))$ are ${\cal S}_\ell$
measurable. 
The next proposition is the main result of this section 
which states that the total number of steps between any two successive renewal steps
has strong uniform exponential tail decay. 
\begin{proposition} 
\label{prop:Sigma_Tail}
Fix any $\ell \geq 0$. For all $n \in \N$ we have 
$$
\P((\sigma_{\ell + 1} - \sigma_{\ell}) > n \mid {\cal S}_\ell)
\leq C_0 \exp{(- C_1 n)}
$$
for some $C_0, C_1 > 0$ which do not depend on $\ell$. 
\end{proposition}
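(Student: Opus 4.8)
The idea is to bound $\sigma_{\ell+1}-\sigma_\ell = \tau_{\gamma_{\ell+1}} - \tau_{\gamma_\ell}$ by decomposing it as a random sum and then invoking Corollary \ref{cor:SubExpDecay_1}. Concretely, write
$$
\sigma_{\ell+1}-\sigma_\ell = \sum_{j=\gamma_\ell + 1}^{\gamma_{\ell+1}} (\tau_j - \tau_{j-1}),
$$
so that the increment is a sum of $(\gamma_{\ell+1}-\gamma_\ell)$ many inter-`In'-step gaps $\tau_j - \tau_{j-1}$. By Proposition \ref{prop:TauExpTail}, the family $\{\tau_j - \tau_{j-1} : j \geq 1\}$ has strong uniform exponential tail decay (conditionally on ${\cal G}_{j-1}$, hence also after conditioning on ${\cal S}_\ell = {\cal G}_{\gamma_\ell}$, since $\gamma_\ell$ is a ${\cal G}$-stopping time and one can use the optional-sampling / strong Markov structure of the chain from Proposition \ref{prop:Markov}). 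So it remains to show that $N_\ell := \gamma_{\ell+1}-\gamma_\ell$, the number of `In' steps one must wait to see a renewal step, is an exponentially decaying positive-integer random variable, uniformly in $\ell$; then Corollary \ref{cor:SubExpDecay_1} gives the result.

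**The main step: geometric tail for $N_\ell$.** The key point is: given ${\cal G}_j$ for any $j$ (in particular at $j=\gamma_\ell$), the conditional probability that the next `In' step $\tau_{j+1}$ is actually a renewal step — i.e. that $H_{\tau_{j+1}}(\x_1,\dots,\x_k)=\emptyset$, equivalently that the `Out' event $\cap_{i=1}^k \text{Out}(h^{\tau_{j+1}}(\x_i))$ occurs — is bounded below by a strictly positive constant $p_{\text{out}}=p_{\text{out}}(p,\theta_x,\theta_y,k)$ not depending on $j$ or on the starting points. Granting this, $N_\ell$ is stochastically dominated by a geometric random variable with success probability $p_{\text{out}}$, uniformly in $\ell$, which is exactly the hypothesis needed for $Y$ in Corollary \ref{cor:SubExpDecay_1}. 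To establish the lower bound on $p_{\text{out}}$ one estimates $\P(\text{Out}(\u)^c \mid {\cal F}_{\u(2)})$ uniformly in $\u$: the event $\text{Out}(\u)^c$ asks that some open lattice point $\w$ with $\w(2)\le \u(2)$ perturbs into the parabolic region $\nabla(\u)$; since $\nabla(\u)$ grows only parabolically while the vertical perturbations $Y_\w$ have exponentially decaying tails, the expected number of such points is finite and in fact uniformly bounded, and a first/second moment or direct union-bound argument over the levels $\w(2) = \u(2) - r$, $r \geq 0$, gives $\P(\text{Out}(\u)^c) \le \varepsilon_0 < 1$ uniformly; combining over the $k$ translates $h^{\tau_{j+1}}(\x_i)$ and using that these `Out' events are ${\cal F}_{\tau_{j+1}}$-measurable (so the bound survives the conditioning, after accounting for the `In'-event information exactly as in the proof of Proposition \ref{prop:TauExpTail} via the $A_{\text{sp}}$/$\text{In}^{(l)}$ decomposition) yields $p_{\text{out}}>0$.

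**Assembling the pieces.** With the geometric tail for $N_\ell = \gamma_{\ell+1}-\gamma_\ell$ and the strong uniform exponential tail for $\{\tau_j - \tau_{j-1}\}$ in hand, I apply Corollary \ref{cor:SubExpDecay_1} conditionally on ${\cal S}_\ell$ — taking $X_i := \tau_{\gamma_\ell + i} - \tau_{\gamma_\ell + i - 1}$ and $Y := N_\ell$ — to conclude
$$
\P\bigl((\sigma_{\ell+1}-\sigma_\ell) > n \mid {\cal S}_\ell\bigr) = \P\Bigl(\sum_{i=1}^{N_\ell} X_i > n \,\Big|\, {\cal S}_\ell\Bigr) \leq C_0 \exp(-C_1 n),
$$
with $C_0,C_1$ depending only on $p,\theta_x,\theta_y,k$ and not on $\ell$, as required. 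Note this also retroactively justifies that $\gamma_\ell$ (hence $\sigma_\ell$) is a.s. finite for every $\ell$.

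**Expected main obstacle.** The bookkeeping around conditioning is the delicate part: $\gamma_\ell$ and $\tau_j$ are not stopping times for the natural filtration ${\cal F}_n$, and the `In' and renewal events reference the infinite future, so one must carefully propagate the extended-filtration / strong-Markov arguments (Proposition \ref{prop:Markov} and the $\text{In}^{(l)}$-truncation trick from the proof of Proposition \ref{prop:TauExpTail}) to verify that both the per-step `In' bound $p_{\text{in}}$ and the per-step `Out' bound $p_{\text{out}}$ genuinely hold after conditioning on ${\cal S}_\ell$ and on the explored configuration $(h^{\tau_{\gamma_\ell}}(\x_1),\dots,h^{\tau_{\gamma_\ell}}(\x_k), I_{\tau_{\gamma_\ell}})$. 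The probabilistic content (parabolic region vs. exponential tails) is comparatively routine; making the uniformity in $\ell$ airtight is where the work lies.
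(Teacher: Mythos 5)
Your reduction — writing $\sigma_{\ell+1} - \sigma_\ell = \sum_{j=\gamma_\ell + 1}^{\gamma_{\ell+1}} (\tau_j - \tau_{j-1})$ and invoking Corollary~\ref{cor:SubExpDecay_1} with $Y = \gamma_{\ell+1}-\gamma_\ell$ and $X_i = \tau_{\gamma_\ell+i}-\tau_{\gamma_\ell+i-1}$ — is exactly the paper's reduction of Proposition~\ref{prop:Sigma_Tail} to Proposition~\ref{prop:Gamma_Tail}. But the central claim in your ``main step,'' that $\P(\text{`Out' occurs at }\tau_{j+1} \mid {\cal G}_j) \geq p_{\text{out}}>0$ uniformly over $j$ \emph{and over the conditioning}, is false, and this is precisely the obstruction that Subsection~\ref{subsec:Renewal_Tail_Moments} is built to circumvent. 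Given ${\cal G}_j$, the information set $H_{\tau_j}$ is \emph{known}, not averaged over. Suppose $H_{\tau_j}$ contains a perturbed point $\tilde{\w}$ at $(h^{\tau_j}(\x_i)(1),\, \tau_j + M)$, i.e.\ sitting directly above the current path position, so $L_j \ge M$. The `In' event at $\tau_j$ forces $|h^{\tau_j+l}(\x_i)(1) - h^{\tau_j}(\x_i)(1)| \le l^2$ for all $l$, hence $|\tilde{\w}(1) - h^{\tau_j+l}(\x_i)(1)| \le l^2 \le (M-l)^2$ whenever $l \le M/2$, so $\tilde{\w}$ \emph{deterministically} remains inside $\nabla(h^{\tau_j+l}(\x_i))$ for all $l\le M/2$. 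Therefore $H_{\tau_{j+1}} \neq \emptyset$ unless $\tau_{j+1}-\tau_j > M/2$, an event whose conditional probability decays exponentially in $M$ by Proposition~\ref{prop:TauExpTail}. So the conditional renewal probability at the next `In' step degrades with $L_j$ rather than being bounded below, and the step-by-step geometric domination of $N_\ell$ fails. Your unconditional translation-invariance estimate $\P(\text{Out}(\u)^c)\le\varepsilon_0<1$ is correct but beside the point: conditioning on ${\cal G}_j$ can concentrate mass entirely on configurations with large $L_j$.

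The paper instead introduces the height process $\{L_j\}$ with the recursion $L_{j+1} \le \max\{L_j, \NN_{j+1}\}-1$ (Lemma~\ref{lem:L_Bound}), shows that the new-contribution r.v.\ $\NN_{j+1}$ has strong uniform exponential tail (Lemma~\ref{lem:N_SubExptail}), and that the height strictly decreases with conditional probability bounded below (Lemma~\ref{lem:L_decrease_Bound}); these three facts feed a dominating Markov chain $\{M_j\}$ and a Lyapunov-drift / geometric-ergodicity argument (Asmussen's criterion, Subsubsection~\ref{subsubsec:PropGammaTail}), which is what actually yields the exponential tail for $\gamma_{\ell+1}-\gamma_\ell$. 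None of this can be replaced by a uniform per-step success probability: the waiting time for renewal is a hitting time of a state-dependent chain, not a geometric. Separately, your reduction also takes for granted that the increments $\tau_j-\tau_{j-1}$ retain strong uniform exponential tail decay after conditioning on ${\cal S}_\ell$; the paper verifies this explicitly via Lemma~\ref{lem:ConditionalMarginalTailBound}, and a bare appeal to ``optional-sampling / strong Markov structure'' does not substitute for that verification, since the conditioning carries information about the upper half-plane through the `In' events.
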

In the next section we will prove Proposition \ref{prop:Sigma_Tail}.

\subsection{Existence of moments for steps between two successive renewals}
\label{subsec:Renewal_Tail_Moments}

Because of Proposition \ref{prop:TauExpTail} and Lemma \ref{lem:ConditionalMarginalTailBound}, 
in order to prove Proposition \ref{prop:Sigma_Tail} 
it is enough to prove strong uniform exponential tail decay type behaviour for
 number of $\tau$ steps required  between two successive renewals
 as mentioned in the next proposition.
\begin{proposition} 
\label{prop:Gamma_Tail}
Fix $\ell \geq 0$. The number of $\tau$ steps between 
$\ell + 1$-th renewal and $\ell$-th renewal is represented by
$(\gamma_{\ell + 1} - \gamma_{\ell})$, 
and for all $n \in \N$ we have 
$$
\P((\gamma_{\ell + 1} - \gamma_{\ell}) > n \mid {\cal S}_\ell)
\leq C_0 \exp{(- C_1 n)}
$$
for some $C_0, C_1 > 0$ which does not depend on $\ell$.
\end{proposition}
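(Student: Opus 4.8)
The plan is to show that, conditionally on the renewal data ${\cal S}_\ell$, the number of additional $\tau$-steps needed to reach the next renewal is stochastically dominated by a geometric random variable, and then to combine this with the exponential tail of the gaps $\tau_{j+1}-\tau_j$ (Proposition \ref{prop:TauExpTail}) via the random-sum estimate of Corollary \ref{cor:SubExpDecay_1}. The key observation is that a $\tau$-step $\tau_j$ is a renewal step precisely when $H_{\tau_j}(\x_1,\cdots,\x_k)=\emptyset$, i.e. when none of the (finitely many, by the exponential tail of the perturbation variables) explored points in the information set $I_{\tau_j}$ lands inside any of the parabolic regions $\nabla(h^{\tau_j}(\x_i))$. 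So the first task is a lower bound of the form
\begin{align*}
\P\bigl( H_{\tau_{j+1}} = \emptyset \mid {\cal G}_j \bigr) \geq q > 0
\end{align*}
for a constant $q=q(p,\theta_x,\theta_y,k)$ not depending on $j$ or on the starting points. This will be the main obstacle.

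To get this bound I would argue as follows. Given ${\cal G}_j$, the occurrence of the `In' event at step $\tau_{j+1}$ forces each path $\{h^{\tau_{j+1}+n}(\x_i):n\geq 0\}$ to stay inside $\nabla(h^{\tau_{j+1}}(\x_i))$; by the nesting property (Remark \ref{rem:Nesting}) these regions are nested whenever one tip lies in another's parabola, so the union $\cup_i \nabla(h^{\tau_{j+1}}(\x_i))$ is controlled by the tips that are `extremal' in a suitable sense. The set $I_{\tau_{j+1}}$ consists of perturbed images $\tilde\w=\w+\Lambda_\w$ of open lattice points $\w$ with $\w(2)\leq \tau_{j+1}$; for such a point to lie in $\nabla(h^{\tau_{j+1}}(\x_i))$, which starts at height $\tau_{j+1}$, the upward perturbation $Y_\w$ must be at least $\tau_{j+1}-\w(2)+1$, and this probability decays exponentially in $\tau_{j+1}-\w(2)$ by assumption \eqref{eq:ExpTailPerturbRV}. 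Summing over all $\w$ in the lower half-plane and over the width of the parabola at the relevant heights (which, on the `In'-and-bounded event, is polynomially controlled, using Corollary \ref{cor:PH_decay} to keep the horizontal excursions of the $k$ tips below $\tau_{j+1}$ under control), the expected number of points of $I_{\tau_{j+1}}$ inside $\cup_i\nabla(h^{\tau_{j+1}}(\x_i))$ is bounded by a constant; hence with probability bounded away from $0$ this number is exactly zero, which is the `Out' event. The care needed here is to decouple this estimate from the ${\cal G}_j$-conditioning: as in the proof of Proposition \ref{prop:TauExpTail}, I would use the event-decomposition from Corollary \ref{cor:IncrDistEquality_1} to write the conditioning as an ${\cal F}$-measurable part times an `In' event, so that the perturbation variables of points $\w$ in the lower half-plane with $\tilde\w$ near the tips are either already determined by ${\cal G}_j$ (finitely many, contributing a fixed positive probability that none of them is in $\nabla$, by conditioning further on $H$ being small) or independent of it.

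Once the uniform lower bound $q>0$ is in hand, the rest is routine: the number $\gamma_{\ell+1}-\gamma_\ell$ of $\tau$-steps until the next renewal is dominated by a $\mathrm{Geometric}(q)$ variable conditionally on ${\cal S}_\ell$, because at each successive $\tau$-step the probability of renewal is at least $q$ regardless of the past (here one also needs that $H$ being nonempty at a given $\tau$-step reveals only finitely many extra points, whose influence washes out after finitely many further steps — handled exactly as the $\beta_j$'s in the proof of Proposition \ref{prop:TauExpTail}). A geometric variable has exponential tail decay, so $\P((\gamma_{\ell+1}-\gamma_\ell)>n\mid {\cal S}_\ell)\leq C_0\exp(-C_1 n)$ with constants depending only on $p,\theta_x,\theta_y,k$, as claimed. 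I expect the bookkeeping around the ${\cal G}_j$-measurability of the `In'/`Out' decomposition, and the polynomial-width bound on the parabola using Corollary \ref{cor:PH_decay}, to be the technically delicate points; the geometric-domination step itself is standard.
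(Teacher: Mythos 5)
Your plan is to establish a uniform lower bound $\P\bigl(H_{\tau_{j+1}}=\emptyset\mid {\cal G}_j\bigr)\geq q>0$ and then dominate $\gamma_{\ell+1}-\gamma_\ell$ by a geometric random variable. The gap is precisely in that uniform lower bound: it does not hold, and this is why the paper does not argue via a geometric domination. Conditioning on ${\cal G}_j$ reveals the ``height'' $L_j=\sup\{\w(2)-\tau_j:\w\in H_{\tau_j}\}\vee 0$ of the information set, and on the event $\{L_j=L\}$ with $L$ large the probability that $H_{\tau_{j+1}}=\emptyset$ decays exponentially in $L$ rather than being bounded below. Indeed, a point $\tilde\w\in H_{\tau_j}$ at height $\tau_j+L$ survives into $H_{\tau_{j+1}}$ unless either $\tau_{j+1}-\tau_j\geq L$ (an event whose probability decays exponentially in $L$ by Proposition~\ref{prop:TauExpTail}) or $\tilde\w$ falls outside the translated parabolas $\nabla(h^{\tau_{j+1}}(\x_i))$, and the nesting/`In' structure gives no uniform guarantee of the latter when $\tau_{j+1}-\tau_j$ is small and $\tilde\w$ sits near the central axis. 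Your appeal to ``handled exactly as the $\beta_j$'s'' does not repair this: the $\beta_j$'s in the proof of Proposition~\ref{prop:TauExpTail} wait for the explored paths to exit a parabola, with a per-step exit probability bounded below uniformly, whereas here one waits for a non-Markovian, unbounded height variable to return to zero, and the return time is that of a process which can jump up, not a geometric.

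This is exactly the obstruction the paper's proof is built to handle. The paper introduces the height process $\{L_j\}$, derives the recursion $L_{j+1}\leq\max\{L_j,{\cal N}_{j+1}\}-1$ (Lemma~\ref{lem:L_Bound}), proves strong uniform exponential tail decay for the increments ${\cal N}_j$ (Lemma~\ref{lem:N_SubExptail}) and a uniform lower bound on the probability of a decrease (Lemma~\ref{lem:L_decrease_Bound}), and then stochastically dominates $\{L_j\}$ by a geometrically ergodic Markov chain $\{M_j\}$ whose return time to zero is controlled by a Foster--Lyapunov argument (Asmussen, Proposition 5.5, Ch.\ 1). That Lyapunov step is what turns ``negative drift plus exponentially light upward jumps'' into an exponential tail for the return time; a one-step geometric bound cannot see this. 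Your other ingredients (the decomposition into ${\cal F}$-measurable events plus an `In' event via Corollary~\ref{cor:IncrDistEquality_1}, and the use of Corollary~\ref{cor:SubExpDecay_1} to pass from $\tau$-step counts to step counts) match the paper's, but the central probabilistic engine is missing.
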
 
To prove Proposition \ref{prop:Gamma_Tail}, we define a height 
process $\{ L_j : j \geq 1 \}$ which is such that $L_j$ equals 
zero if and only if the information set 
$H_{\tau_j} $  becomes empty.  
We recall that the joint exploration process of $k$ paths starts from points $\x_1, 
\cdots, \x_k$ with $\x_1(2) = \cdots = \x_k(2) = 0$. 
We define our height process as follows.
Set $L_0 = 0$ and define $L_1$ as 
\begin{align*}
L_1 := \sup\{ \w(2) - \tau_1 : \w \in H_{\tau_1} \}\vee 0
= \inf\{n \geq 0 : \mathbb{H}^+(\tau_1 + n) \cap H_{\tau_1} = \emptyset \}. 
\end{align*} 
More generally, for $j \geq 1$ we define the height function $L_j$ as  
\begin{align*}
L_j := \sup\{ \w(2) - \tau_j : \w \in H_{\tau_j} \}\vee 0
= \inf\{n \geq 0 : \mathbb{H}^+(\tau_j + n) \cap H_{\tau_j} = \emptyset \}. 
\end{align*} 
By definition for any $j \geq 1$, the event $L_j = 0$ implies that the corresponding information
set $H_{\tau_j}$ must be empty and vice versa. 
Hence, for any $\ell \geq 1$ we also have the following equivalent representation 
of  the r.v. $\gamma_\ell$ as
\begin{align*}
\gamma_\ell := \inf\{j > \gamma_{\ell-1} : L_j = 0\}.
\end{align*} 

The height process $\{L_j : j \geq 0\}$ is non-negative and not Markov. 
Regarding evolution of this process we obtain the following properties:
\begin{itemize}
\item[(i)] We obtain an important recursion relation (see (\ref{eq:L_j_recursion_rel})) 
in Subsection \ref{subsubsec:BoundLn} regarding evolution of $\{L_j : j \geq 0\}$.
\item[(ii)] Further, we show that the amount of increase in the height process, 
i.e., the random quantity $(L_{j+1} - L_{j})\mathbf{1}_{\{ L_{j+1} > L_{j} \}}$ 
for $j \geq 0$ exhibits strong uniform exponentially decaying tail behaviour. 
\item[(iii)] Lemma \ref{lem:L_decrease_Bound} gives us a strictly positive uniform lower bound 
for the probability that $L_j$ decreases by at least one at the next step.
\end{itemize}
These three properties together allow us to construct 
a non-negative integer valued geometrically ergodic
Markov chain which stochastically dominates evolution of $\{L_j : j \geq 0\}$ 
and prove Proposition \ref{prop:Gamma_Tail}. 
Details are given in Section \ref{subsubsec:PropGammaTail}.

\subsubsection{A (stochastic) bound to control increase in $L_n$}
\label{subsubsec:BoundLn}
 Fix any $j \geq 0$. Given the $\sigma$-field ${\cal G}_j$, 
we consider evolution of the joint process between $\tau_j$-th and $\tau_{j+1}$-th 
step. Given ${\cal G}_j$, the `newly' explored vertices are perturbed versions of 
vertices in the set $\mathbb{H}^-(\tau_{j+1}) \setminus \mathbb{H}^-(\tau_{j})$, i.e., 
vertices those are explored between the $\tau_j$-th and the $\tau_{j+1}$-th steps. 
The contribution of these newly explored vertices to 
$L_{j+1}$ is denoted by a random variable ${\cal N}_{j+1}$ defined as
\begin{align}
\label{def:NewExploredVertices_Contribution}
{\cal N}_{j+1} := 1 +
\sup \{ (\tilde{\w}(2) - \tau_{j+1})\vee 0 : \w \in \mathbb{H}^-(\tau_{j+1}) \setminus 
\mathbb{H}^-(\tau_{j}), \tilde{\w} \in \cup_{i=1}^k \nabla(h^{\tau_{j+1}}(\x_i)) \}.
\end{align}
Actually, ${\cal N}_{j+1}$ represents {\it one excess} to the actual contribution
and the reason behind this modification is to ensure that the recursion relation (\ref{eq:L_j_recursion_rel}) holds with probability $1$. 
The following lemma gives a recursion relation which bounds the amount of 
increase in $L_j$. 
\begin{lemma}
\label{lem:L_Bound}  
For all $j \geq 1$ we have
\begin{equation}
\label{eq:L_j_recursion_rel}
L_{j+1} \leq \max\{ L_j, {\cal N}_{j+1}\} - 1.
\end{equation}
\end{lemma}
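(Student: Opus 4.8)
The claim is the recursion $L_{j+1} \leq \max\{L_j, \mathcal{N}_{j+1}\} - 1$ for all $j \geq 1$. The plan is to argue directly from the definitions of $L_{j+1}$, of the information set $H_{\tau_{j+1}}$, and of $\mathcal{N}_{j+1}$. Recall $L_{j+1} = \sup\{\w(2) - \tau_{j+1} : \w \in H_{\tau_{j+1}}\} \vee 0$, where by \eqref{def:H_j_set} we have $H_{\tau_{j+1}} = I_{\tau_{j+1}} \cap (\cup_{i=1}^k \nabla(h^{\tau_{j+1}}(\x_i)))$, and $I_{\tau_{j+1}}$ consists of perturbed open vertices $\tilde{\w}$ with $\w \in \mathbb{H}^-(\tau_{j+1})$ and $\tilde{\w} \in \mathbb{H}^+(\tau_{j+1})$. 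First I would fix a vertex $\tilde{\w} \in H_{\tau_{j+1}}$ achieving (or nearly achieving) the supremum and split into two cases according to whether the underlying lattice point $\w$ was explored before time $\tau_j$ (i.e. $\w \in \mathbb{H}^-(\tau_j)$) or between times $\tau_j$ and $\tau_{j+1}$ (i.e. $\w \in \mathbb{H}^-(\tau_{j+1}) \setminus \mathbb{H}^-(\tau_j)$).

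**The two cases.** In the second case, $\tilde{\w}$ contributes $\tilde{\w}(2) - \tau_{j+1} \leq \mathcal{N}_{j+1} - 1$ directly by the definition \eqref{def:NewExploredVertices_Contribution} of $\mathcal{N}_{j+1}$, since $\tilde{\w}$ lies in $\cup_i \nabla(h^{\tau_{j+1}}(\x_i))$ and $\w$ is one of the newly explored lattice points. In the first case, $\w \in \mathbb{H}^-(\tau_j)$, so $\tilde{\w} \in I_{\tau_j}$ provided $\tilde{\w} \in \mathbb{H}^+(\tau_j)$ — which holds because $\tilde{\w}(2) > \tau_{j+1} > \tau_j$. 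The key point is then that $\tilde{\w}$ must also lie in $\cup_i \nabla(h^{\tau_j}(\x_i))$: this is where the nesting property (Remark~\ref{rem:Nesting}) enters, together with the fact that the `In' event occurred at step $\tau_j$, so that $h^{\tau_{j+1}}(\x_i) \in \nabla(h^{\tau_j}(\x_i))$ and hence $\nabla(h^{\tau_{j+1}}(\x_i)) \subseteq \nabla(h^{\tau_j}(\x_i))$ for each $i$. Therefore $\tilde{\w} \in H_{\tau_j}$, which gives $\tilde{\w}(2) - \tau_j \leq L_j$, i.e. $\tilde{\w}(2) - \tau_{j+1} \leq L_j - (\tau_{j+1} - \tau_j) \leq L_j - 1$ since $\tau_{j+1} > \tau_j$ are integers with $\tau_{j+1} \geq \tau_j + 1$.

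**Combining.** Taking the supremum over all $\tilde{\w} \in H_{\tau_{j+1}}$, every contributing vertex satisfies $\tilde{\w}(2) - \tau_{j+1} \leq \max\{L_j, \mathcal{N}_{j+1}\} - 1$; and if $H_{\tau_{j+1}} = \emptyset$ then $L_{j+1} = 0$, which is also $\leq \max\{L_j, \mathcal{N}_{j+1}\} - 1$ because $\mathcal{N}_{j+1} \geq 1$ always. This yields the stated bound. I expect the only genuinely delicate point to be bookkeeping the strict/weak inequalities on the $y$-coordinates — making sure that $\tilde{\w}(2) \geq \tau_{j+1} + 1$ forces the ``$-1$'' in both cases, and that the ``$+1$'' built into the definition of $\mathcal{N}_{j+1}$ is exactly what makes the recursion hold with probability one (e.g. in the edge case where there are no newly explored vertices inside the parabolic regions, so $\mathcal{N}_{j+1} = 1$ and the second case is vacuous). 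The nesting argument in the first case is conceptually the heart of the matter but is immediate once one records that occurrence of the `In' event at $\tau_j$ places all later iterates inside $\nabla(h^{\tau_j}(\x_i))$.
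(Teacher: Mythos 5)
Your proof is correct and follows essentially the same route as the paper: you control the contribution of ``old'' vertices (those with $\w \in \mathbb{H}^-(\tau_j)$) via the nesting inclusion $\nabla(h^{\tau_{j+1}}(\x_i)) \subseteq \nabla(h^{\tau_j}(\x_i))$ coming from the `In' event at $\tau_j$, and the contribution of newly explored vertices via the definition of ${\cal N}_{j+1}$, exactly as the paper does. The paper phrases this as a classification of old vertices (those not contributing to $L_j$ cannot contribute to $L_{j+1}$; those that do lose at least one unit because the reference line moves up), while you phrase it by classifying the near-maximizer in $H_{\tau_{j+1}}$, but these are the same argument and the bookkeeping you flag — the built-in ``$+1$'' in ${\cal N}_{j+1}$ and the $\tau_{j+1} \geq \tau_j + 1$ step — matches the paper's treatment.
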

\begin{proof}
For any $\w \in \mathbb{H}^-(\tau_j)$ the corresponding random vector $\Gamma_\w$
has been explored already by the $\sigma$-field ${\cal G}_j$ and will be termed as an `older' vertex henceforth. We observe that for any `older' lattice point $\w$, 
which does not contribute to $L_j$, 
two situations can happen:   
\begin{itemize}
\item[(i)] either $\w$ is closed or
\item[(ii)] $\w$ is open but $\tilde{\w} \notin \cup_{i=1}^k \nabla(h^{\tau_j}(\x_i))$.
\end{itemize}
By the nesting property as in Remark \ref{rem:Nesting}, we have that $\nabla(h^{\tau_{j+1}}(\x_i))
\subseteq \nabla(h^{\tau_{j}}(\x_i))$ for all $1 \leq i \leq k$. 
It follows that in both situations (i) and (ii), such an older vertex $\w$, 
which does not contribute to $L_j$, cannot contribute to $L_{j+1}$ as well. 
Since at each time the PH path goes up by one step, for any older 
vertex $\w$ contributing to $L_{j}$, it's contribution to $L_{j+1}$
decreases by {\it at least} one compared to it's contribution to $L_{j}$.    

Further, the above discussion suggests that 
we can have $L_{j+1} \geq L_j$ due to contributions from newly explored 
vertices {\it only}. Hence, the recursion relation (\ref{eq:L_j_recursion_rel}) 
follows from the definition of the random variable ${\cal N}_{j+1}$ as in (\ref{def:NewExploredVertices_Contribution}).
\end{proof}

The next lemma shows that at every $\tau$ step, the probability that 
the height random variable reduces by at least one is uniformly bounded away from zero. 
\begin{lemma}
\label{lem:L_decrease_Bound}
For any $j \geq 1$ given $L_j > 0$, there exists $\tilde{p} > 0$ (which depends only on
parameters of the process and on $k$)
such that we have 
$$
\P( (L_{j+1} - L_{j}) \leq - 1 \mid {\cal G}_j) \geq \P( \NN_{j+1} = 1 \mid {\cal G}_j)
\geq \tilde{p}.
$$
\end{lemma}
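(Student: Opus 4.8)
The first inequality is essentially free: on $\{L_j>0\}\cap\{\NN_{j+1}=1\}$ we have $L_j\geq 1=\NN_{j+1}$, so $\max\{L_j,\NN_{j+1}\}=L_j$, and Lemma~\ref{lem:L_Bound} gives $L_{j+1}\leq L_j-1$; since $\{L_j>0\}\in{\cal G}_j$, taking conditional probabilities yields $\P((L_{j+1}-L_j)\leq -1\mid{\cal G}_j)\geq\P(\NN_{j+1}=1\mid{\cal G}_j)$ on $\{L_j>0\}$. So the whole content is the uniform lower bound $\P(\NN_{j+1}=1\mid{\cal G}_j)\geq\tilde p$, and the plan is to establish this as follows.

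First I would localise the conditioning. It suffices to bound, uniformly, the conditional probability of $\{\NN_{j+1}=1\}$ given ${\cal G}_j$ together with the value $m:=\tau_{j+1}-\tau_j$ and the realised trajectories $\{h^{\tau_j+l}(\x_i):1\leq l\leq m,\ 1\leq i\leq k\}$ inside the slab $S_m:=\mathbb H^-(\tau_j+m)\setminus\mathbb H^-(\tau_j)$. Applying the event-decomposition developed in Section~\ref{sec:Expreg} (Corollary~\ref{cor:IncrDistEquality_1} and the reasoning around~(\ref{eq:EqualityEvent_3}), now for the step from $\tau_j$ to $\tau_{j+1}$), this conditioning event intersected with the ${\cal G}_j$-history rewrites as an ${\cal F}_{\tau_j+m}$-measurable event (pinning down $\tau_{j+1}$, the trajectories and the perturbations of the open vertices lying on them) intersected with events of the form $\text{In}(\cdot)$ and $\text{In}^{(\cdot)}(\cdot)$ that involve only heights strictly above $\tau_j+m$. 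Since $\NN_{j+1}$ is a function of $m$, the endpoints $h^{\tau_{j+1}}(\x_i)$ and $\{\Gamma_\w:\w\in S_m\}$, it is measurable with respect to the ${\cal F}_{\tau_j+m}$-part; and exactly as in the proof of Proposition~\ref{prop:TauExpTail} for $j\geq 1$, additionally conditioning on the ``$\text{In}$ above $\tau_j+m$'' part can only increase the probability, because those events are implied by an $A_{\text{sp}}$-type event on heights above $\tau_j+m$ that is independent of everything at height $\leq\tau_j+m$. Thus it is enough to bound $\P(\NN_{j+1}=1\mid{\cal F}_{\tau_j+m}\text{-part})$ below, uniformly in $m$ and in the trajectory configuration.

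On such an event $\NN_{j+1}>1$ exactly when some open $\w\in S_m$, say with $\w(2)=\tau_j+m-s$ for some $0\leq s\leq m-1$, perturbs to $\tilde\w\in\bigcup_{i=1}^k\nabla(h^{\tau_{j+1}}(\x_i))$ with $\tilde\w(2)>\tau_j+m$; this forces $Y_\w=s+h$ and $|\tilde\w(1)-h^{\tau_{j+1}}(\x_i)(1)|\leq h^2$ for some $h\geq 1$ and some $1\leq i\leq k$. Using the exponential tails of $(X_\w,Y_\w)$ from~(\ref{eq:ExpTailPerturbRV}) (or Remark~\ref{rem:GenExpTailAssumptions}(iii)), the fact that the horizontal cross-section of $\nabla(\cdot)$ at vertical offset $h$ contains only $2h^2+1$ integers, and horizontal translation invariance, the expected number of such ``dangerous'' vertices is at most $\sum_{s\geq 0}\sum_{h\geq 1}k\,p\,(2h^2+1)\,C_0e^{-C_1(s+h)}=:\mu<\infty$, a bound that does not depend on $m$ or on the configuration. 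Only an $O(k)$-bounded family of vertices near the trajectory endpoints can be individually dangerous with probability not bounded away from $1$; every other vertex is dangerous with small probability and the sum of these probabilities is at most $\mu$. Splitting the product $\prod_{\w\in S_m}\P(\w\text{ not dangerous})$ over the two families, using independence of $\{(B_\w,\Lambda_\w):\w\in\Z^2\}$ across distinct $\w$, and the elementary bound $\prod(1-x_\w)\geq e^{-2\sum x_\w}$ valid once each $x_\w\leq 1/2$, we get $\P(\NN_{j+1}=1\mid\cdot)\geq\tilde p$ for some $\tilde p=\tilde p(p,\theta_x,\theta_y,k)>0$ independent of $m$, $j$ and the configuration. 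Averaging over the decomposition completes the proof.

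The main obstacle is the circularity: $\tau_{j+1}$, and hence the slab $S_m$ and the parabolic regions $\nabla(h^{\tau_{j+1}}(\x_i))$ defining $\NN_{j+1}$, depend on the very perturbations in $S_m$ that $\NN_{j+1}$ counts. Unravelling this is precisely what the event-decomposition machinery of Sections~\ref{sec:Expreg}--\ref{sec:renewal_def} is for; the delicate step is checking that, after the conditioning above, the perturbations of the open vertices of $S_m$ not lying on the trajectories remain distributed as the original i.i.d.\ family (or are at least stochastically controlled by it), which is what legitimises the independence used in the final product estimate.
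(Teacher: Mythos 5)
Your reduction to showing $\P(\NN_{j+1}=1\mid{\cal G}_j)\geq\tilde p$ via the recursion of Lemma~\ref{lem:L_Bound} is fine, and the decomposition into an ${\cal F}_{\tau_j+m}$-measurable part and an `In' part is the right machinery. However, your assertion that conditioning on the `In' part ``can only increase the probability'' is backwards: using $A_{\text{sp}}\subseteq\text{In}$ and the fact that $A_{\text{sp}}$ is independent of everything ${\cal F}_{\tau_j+m}$-measurable, what one actually gets is $\P(\NN_{j+1}=1\mid E'\cap\text{In})\geq\P(\NN_{j+1}=1\mid E')\,\P(A_{\text{sp}}(\mathbf 0))$, a bounded multiplicative \emph{loss}. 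That is fixable. The genuine gap is exactly the one you flag and then leave open, and it does not close for free. After you fix $\tau_{j+1}-\tau_j=m$ and the whole trajectory, the perturbations $\{\Lambda_\w:\w\in S_m\}$ are \emph{not} stochastically controlled by the unconditional i.i.d.\ law in the direction you need. Pinning down the Howard step at each intermediate level forces $V$ to be empty in an interval on that level; for a vertex $\u$ in such a forbidden interval, the conditioning pushes $\u$ to be closed, to perturb horizontally far, or to perturb \emph{upward} ($Y_\u\geq 1$). But an upward perturbation is precisely what makes $\u$ a candidate for $\NN_{j+1}>1$. So the product $\prod_\w(1-x_\w)$ with unconditional $x_\w$ is not a valid lower bound, and in addition the events $\{\w\text{ dangerous}\}$ are no longer independent across $\w$ after conditioning on the trajectory. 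One might hope to repair this with a size-bias estimate (the per-vertex inflation of the dangerous probability is bounded, and the constrained intervals at distinct levels are disjoint), but this is real work that your sketch does not carry out, and the uniformity in $m$ and in the trajectory would need a careful argument.

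The paper's proof is much lighter because it never averages over $m$. It forces $\tau_{j+1}=\tau_j+1$ outright: place a special vertex at $\w_j+(0,1)$ and shield it with $A_{\text{sp}}(\w_j+(0,1))$, and simultaneously demand that no lattice point on the single new level $y=\w_j(2)+1$ perturbs into $\nabla(\w_j+(0,1))$. The resulting event $B(\w_j)\cap A_{\text{sp}}(\w_j+(0,1))$ implies $\NN_{j+1}=1$, depends only on $\{\Gamma_\w:\w\in\mathbb{H}^+(\w_j(2))\}$, is independent of $E(\w_1,\dots,\w_j)$, and is contained in $\text{In}(\w_j)$, so the same decomposition gives the conditional lower bound with $\tilde p=\P(B(\mathbf 0))\P(A_{\text{sp}}(\mathbf 0))$ and the delicate step never arises. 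The general lesson is worth absorbing: when a uniform positive lower bound on a conditional probability is needed in this framework, look for a one-step forcing configuration rather than trying to control the expected number of bad events over an unbounded random time horizon.
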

\begin{proof}
We prove it for $k = 1$. The proof for general $k$ is similar.
For $\u \in \Z^2$ we define the event 
\begin{align*}
B(\u) = \{\u + (0,1) \in V_{\text{sp}}\}\cap \{\tilde{\v} \notin \nabla(\u + (0,1)) : 
\text{ for all }\v \in \Z^2, \v(2) = \u(2) + 1\}.
\end{align*}
 Given $h^{\tau_j}(\x_1) = \w_j$ and $L_j > 0$, we claim that
 \begin{align}
 \label{eq:Incl_1}
  (B(\w_j)\cap A_{\text{sp}}(\w_j + (0,1)))\subseteq \{(L_{j+1} - L_{j}) \leq - 1 \}.
 \end{align}
We need to justify this event inclusion.   
The occurrence of the event $B(\w_j)$ implies that the vertex $\w_j + (0,1)$ must be a special point
and consequently we must have $h(\w_j) = \w_j + (0,1)$.
Further, occurrence of the event $A_{\text{sp}}(\w_j + (0,1))$ ensures that `In'
event occurs at the next step which makes it  a $\tau$ step as well and gives us 
 $\tau_{j+1} = \tau_j + 1 $. Finally, the event $B(\w_j)$
makes sure that none of the newly explored vertices with $\w(2) = \w_j(2) + 1$
can have their perturbed version in $\nabla(\w_j  + (0,1))$. Since there 
are no new contributions, we have ${\cal N}_{j + 1} = 1$. The recursion relation (\ref{eq:L_j_recursion_rel}) justifies  
 the event inclusion as in (\ref{eq:Incl_1}) and hence, given 
 $(h^{\tau_1}(\x_1), \cdots, h^{\tau_j}(\x_1)) = (\w_1, \cdots, \w_j)$
 we have that
\begin{align*}
& \P\bigl( (L_{j+1} - L_{j}) \leq - 1 \mid 
(h^{\tau_1}(\x_1), \cdots, h^{\tau_j}(\x_1)) = (\w_1, \cdots, \w_j) \bigr)\\ 
& \geq  \P \bigl (  B(\w_j )\cap A_{\text{sp}}(\w_j + (0,1)) \mid E(\w_1, \cdots , \w_j)
\cap \text{In}(\w_j) \bigr )\\
& =  \P \bigl (  B(\w_j)\cap A_{\text{sp}}(\w_j + (0,1)) \cap \text{In}(\w_j)
 \cap E(\w_1, \cdots , \w_j) \bigr )/
\P \bigl (\text{In}(\w_j) \cap E(\w_1, \cdots , \w_j)\bigr )\\
& =  \P \bigl (  B(\w_j )\cap A_{\text{sp}}(\w_j + (0,1)) \cap E(\w_1, \cdots , \w_j) \bigr )/
\P \bigl( \text{In}(\w_j) \cap E(\w_1, \cdots , \w_j) \bigr )\\
& =  \P \bigl (  B(\w_j )\cap A_{\text{sp}}(\w_j + (0,1)) \bigr )\Bigl ( \P( E(\w_1, \cdots , \w_j)) /
\P \bigl( \text{In}(\w_j) \cap  E(\w_1, \cdots , \w_j) \bigr ) \Bigr )\\
& \geq  \P \bigl (  B(\w_j )\cap A_{\text{sp}}(\w_j + (0,1)) \bigr ).
\end{align*}
The penultimate equality follows from the observation that
$$
B(\w_j)\cap A_{\text{sp}}(\w_j + (0,1)) \subseteq \text{In}(\w_j)
$$ 
and the last equality follows from the observation that the events 
$B(\w_j)$ and $A_{\text{sp}}(\w_j + (0,1))$ depend on the collection 
$\{\Gamma_\w : \w \in \mathbb{H}^+(\w_j(2))\}$ 
and hence they are independent of $E(\w_1, \cdots , \w_j)$. 
Further, the events $B(\w_j)$ and $A_{\text{sp}}(\w_j + (0,1))$ depend on
disjoint collection of random vectors which makes them independent.
Therefore, using the translation invariance nature 
of our model we have that   
\begin{align*}
& \P\bigl( (L_{j+1} - L_{j}) \leq - 1 \mid (h^{\tau_1}(\x_1), \cdots, h^{\tau_j}(\x_1)) 
= (\w_1, \cdots, \w_j) \bigr) \\
& \geq   \P \bigl (  B(\w_j )\cap A_{\text{sp}}(\w_j + (0,1)) \bigr )\\
& =  \P (  B(\w_j ) \P(A_{\text{sp}}(\w_j + (0,1))  )\\
& = \P( B(\mathbf{0})) \P(A_{\text{sp}}(\mathbf{0})) > 0.
\end{align*}
In the last step we have used the fact that $\P(B(\mathbf{0})) > 0$
which actually follows from a simple construction. This completes the proof.
\end{proof}

Next, we show that the family of random variables  $\{{\cal N}_j : j \geq 1\}$ 
exhibits strong uniform exponentially decaying tail behaviour. 
It is important to observe that the decay constants do
not depend on the choice of starting points $\x_1, \cdots, \x_k$.  
\begin{lemma}
\label{lem:N_SubExptail}
Fix any $j \geq 1$. For all large $n$ we have 
\begin{align*}
\P({\cal N}_{j} > n \mid {\cal G}_{j-1}) \leq C_0 \exp{(-C_1 n)}, 
\end{align*}
where $C_0, C_1 > 0$ do not depend on $j$.
\end{lemma}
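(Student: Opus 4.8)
The plan is to bound $\P({\cal N}_j > n \mid {\cal G}_{j-1})$ by a first--moment estimate. It is enough to treat $k=1$; the general case only adds a constant depending on $k$ from a union bound over the $k$ parabolic regions. Since ${\cal N}_j$ is a positive integer, $\{{\cal N}_j > n\}$ is exactly the event that some lattice point $\w$ with $\tau_{j-1} < \w(2) \le \tau_j$ satisfies $B_\w = 1$, $\tilde\w \in \bigcup_i \nabla(h^{\tau_j}(\x_i))$ and $\tilde\w(2) \ge \tau_j + n$; call such a $\w$ \emph{bad}. A bad vertex has $Y_\w = \tilde\w(2) - \w(2) \ge \tau_j + n - \w(2) \ge n$, so it carries a large upward perturbation and, in particular, lands strictly above level $\tau_j$; hence it is never inspected by the joint exploration process up to step $\tau_j$, and (as $n \ge 1$) it is never a special vertex. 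By Markov's inequality it thus suffices to show that, on further conditioning on $\tau_j$ and $(h^{\tau_j}(\x_i))_i$, the conditional expected number of bad vertices is at most $C_0 e^{-C_1 n}$ for all large $n$, with constants independent of $j$.

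To detach this estimate from the future--dependent information encoded in ${\cal G}_{j-1}$ and in $\{\tau_j = m_j,\, h^{\tau_j}(\x_1) = \w_j\}$, I would reuse the decomposition already developed. By Corollary~\ref{cor:IncrDistEquality_1} the conditioning event equals $E \cap \mathrm{In}(\w_j)$ for an ${\cal F}_{m_j}$--measurable event $E$, and, writing $\mathrm{Bad}$ for the bad--vertex event (deterministic once $m_{j-1}, m_j, \w_j$ are fixed), one obtains, exactly as in the proofs of Proposition~\ref{prop:TauExpTail} and Lemma~\ref{lem:L_decrease_Bound},
$$
\P\big(\mathrm{Bad} \mid E \cap \mathrm{In}(\w_j)\big) \le \frac{\P(\mathrm{Bad} \cap E)}{\P\big(E \cap A_{\text{sp}}(\w_j)\big)} = \frac{\P(\mathrm{Bad}\mid E)}{\P\big(A_{\text{sp}}(\mathbf{0})\big)},
$$
using $A_{\text{sp}}(\w_j) \subseteq \mathrm{In}(\w_j)$, the independence of $A_{\text{sp}}(\w_j)$ (which involves only special vertices at levels $> m_j$) from the ${\cal F}_{m_j}$--measurable event $E$, and translation invariance. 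It then remains to bound $\P(\mathrm{Bad}\mid E)$. Here one reveals first the trajectory up to step $\tau_j$ together with the low-level data it depends on — this determines $E$ — while the perturbations of the ``high--landing'' open vertices in the slab $\{\tau_{j-1} < \w(2) \le m_j\}$ remain fresh, each distributed as $\Lambda$ conditioned on $\{Y > m_j - \w(2)\}$, independently. Since the bad vertices form a subset of these, $\P(\mathrm{Bad}\mid E)$ is bounded by the conditional expected number of bad vertices.

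That expectation is a routine series estimate. A bad vertex $\w$ with $\tilde\w(2) = m_j + s$ and $\w(2) = m_j - r$ (so $s \ge n$ and $r \ge 0$) must have $B_\w = 1$, $Y_\w = s + r$, and $\w(1) + X_\w$ in the integer set $[\w_j(1) - s^2,\ \w_j(1) + s^2]$, of cardinality $2s^2 + 1$. Summing over $\w(1) \in \Z$ and using that the law of $X$ preserves mass under lattice shifts, i.e. $\sum_{a \in \Z} \mathbf{1}\{a + X \in I\} = |I \cap \Z|$ for every interval $I$, the expected number of such $\w$ in the row $\{\w(2) = m_j - r\}$ landing at level $m_j+s$ is at most $p\,(2s^2+1)\,\P(Y = s + r)$: the factor $\P(Y > r)^{-1}$ produced by conditioning $\Lambda$ on $\{Y > r\}$ is exactly compensated by the $\P(Y > r)$--density of high--landing open vertices in that row. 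Summing over $s \ge n$ and $r \ge 0$ (and using $\sum_{r \ge 0}\P(Y = s + r) = \P(Y \ge s)$), the conditional expected number of bad vertices is at most
$$
p \sum_{s \ge n}(2s^2 + 1)\,\P(Y \ge s) \;\le\; C_0 \sum_{s \ge n}(2s^2 + 1)\, e^{-C_1 s} \;\le\; C_0'\, e^{-C_1' n}
$$
for $n$ large enough to absorb the polynomial factor, where the first inequality uses assumption~(iii) of Remark~\ref{rem:GenExpTailAssumptions} (or directly \eqref{eq:ExpTailPerturbRV}). For general $k$ the single interval becomes a union of $k$ intervals of total cardinality $\le k(2s^2+1)$, and the argument of the second paragraph produces a denominator $\P(A_{\text{sp}}(\mathbf{0}))^k$ via the FKG inequality of Lemma~\ref{def:FKG}; all constants depend only on $p, \theta_x, \theta_y, k$, which is the claim.

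The step I expect to require the most care is the freshness claim in the second paragraph: revealing the trajectory up to $\tau_j$ also constrains, or slightly biases, the perturbations of the \emph{low}--landing open vertices that the path happens to pass near, so strictly speaking one should first peel off these finitely many vertices — there are at most $\tau_j - \tau_{j-1}$ of them, each with exponentially small horizontal displacement by Corollary~\ref{cor:PH_decay}, and $\tau_j - \tau_{j-1}$ has exponential tails by Proposition~\ref{prop:TauExpTail} — control their negligible contribution to ${\cal N}_j$ separately, and apply the clean first--moment bound to the genuinely fresh remainder. This bookkeeping changes only the constants, not the exponential rate.
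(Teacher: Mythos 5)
Your proof takes a genuinely different route from the paper's. The paper handles the randomness of $\tau_j$ and of $h^{\tau_j}(\x_1)$ by localizing them inside a deterministic box (the events $E^1_n$ and $E^2_n$), upper--bounds $\nabla(h^{\tau_j}(\x_1))$ by a deterministic parabolic region $\nabla(\x^{n,\downarrow}_1)$, and then runs a union bound over lattice columns with the vertical overshoot random variables $T^\uparrow_m$, whose conditional exponential tails come from Lemma~\ref{lem:ConditionalMarginalTailBound}. You instead condition directly on $\tau_j=m_j$ and $h^{\tau_j}(\x_1)=\w_j$, characterize $\{\NN_j>n\}$ by a bad vertex, and compute a conditional first moment via Markov's inequality. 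Both are valid; yours is more direct and avoids the localization bookkeeping, while the paper's avoids the need to analyze how the trajectory conditioning biases the slab.

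Two small corrections to how you present the freshness step. First, the compensation is not \emph{exact}: $E$ is (given the lower--half--plane data in ${\cal G}_{j-1}$) determined by the trajectory, hence by the ``contribution structure'' of the slab --- which of the $\w$ in the slab have $B_\w=1$ and $Y_\w\le m_j-\w(2)$ --- together with the perturbations of those contributors and the $R$'s. For a slab vertex $\w$ with $r=m_j-\w(2)$, the events $\{\w\text{ contributes}\}$ and $\{\w\text{ high-landing}\}$ are disjoint, and given that $\w$ does not contribute, $\{\w\text{ HL}\}=\{B_\w=1\}$ is independent of $E$; this gives $\P(\w\text{ HL}\mid E)\le \P(\w\text{ HL}\mid \w\text{ does not contribute})=\frac{p\,\P(Y>r)}{1-p+p\,\P(Y>r)}\le \frac{p\,\P(Y>r)}{1-p}$, i.e. an elevation by the bounded factor $(1-p)^{-1}$ rather than exact equality. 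This is harmless --- it only changes $C_0$ --- but should be stated. Second, your closing worry about the \emph{low}-landing open vertices near the path is misdirected: a low-landing $\w$ has $\tilde\w(2)\le m_j<m_j+n$ and thus can never be bad, so there is nothing to ``peel off''; the genuine point requiring care is precisely the boundedness of the elevation of the high-landing density just described, which holds for the reason above.
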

In order to prove Lemma \ref{lem:N_SubExptail} we need to 
introduce some notation. For $\w \in \Z^2$ we define 
the `total' perturbation r.v. as $T_\w := |X_\w| + Y_\w$. 
The distribution of the random vector $\Lambda_\w$
ensures that the tail of the r.v. $T_\w$ decays exponentially. 
For $\w \in \Z^2$ and $m \in \Z$ we consider the 
infinite downward sequence of lattice points starting from $\w + (m,0)$
given by $\{\w + (m, -l) : l \in \N \cup \{0\}\}$. 
A vertical `overshoot' r.v., which represents 
amount of overshoot in the upper half-plane
$\mathbb{H}^+(\w(2))$, based on the collection of total 
perturbation r.v.'s $\{T_{\w + (m,-l)} : l \in \N \cup \{0\}\}$ 
attached to this infinite sequence of lattice points 
$\{\w + (m, -l) : l \in \N \cup \{0\}\}$ is defined by 
\begin{align}
\label{def:VerticalOvershoot}
T^{\uparrow}_m(\w) := \sup\{ T_{\w + (m, -l)} - l : l \in \N \cup\{0\} \}.
\end{align}
It is not difficult to show that the r.v. $T^{\uparrow}_m(\w)$ is non-negative.
Moreover, we have 
\begin{align}
\label{eq:T_Ray_ExpDecay}
\P(T^{\uparrow}_m(\w) > n) \leq \sum_{l = 0}^\infty \P( T_{\w + (m, -l)} > n + l)\leq C_0 \exp{(-C_1 n)},
\end{align}
for some $C_0, C_1 > 0$ which do not depend on $\w$ or $m$.
\begin{remark}
\label{rem:Overshoot_ExpDecay}
In order to obtain (\ref{eq:T_Ray_ExpDecay}) 
we only require uniform exponential tail for the family 
$\{ T_{\w + (m, -l)}  : l \in \N \cup\{0\} \}$ as mentioned 
in Definition \ref{def:Exp_Tail}. For $\w \in \Z^2$ and $m \in \Z$ one can also
consider left, rep.,  horizontal overshoot r.v. as defined below
\begin{align*}
T^{\rightarrow}_m(\w) & := \sup\{ T_{\w + (-l, m)} - l : l \in \N \cup\{0\} \} \text{ and }\\
T^{\leftarrow}_m(\w) & := \sup\{ T_{\w + (l, m)} - l : l \in \N \cup\{0\} \}.
\end{align*}  
The same argument as in (\ref{eq:T_Ray_ExpDecay}) gives us exponentially 
decaying tails for both these r.v.'s, $T^{\leftarrow}_m(\w)$ and $T^{\rightarrow}_m(\w)$. 
We will require these observations later.
\end{remark}
Below we prove Lemma \ref{lem:N_SubExptail} for $j = 1$ and 
later we will indicate the modification required for general $j \geq 2$.

\noindent {\bf Proof of Lemma \ref{lem:N_SubExptail} for $j = 1$:}
We prove it for $k = 1$. The proof for general $k \geq 1$ is exactly the same.
Consider the following events:
\begin{align*}
E^1_n & := \{ \tau_1 \leq n/2 \} \text{ and }\\
E^2_n & := \{ |(h^{j+1}(\x_1) - h^{j}(\x_1))(1)| \leq n \text{ for all }0 \leq j \leq n/2 \}.
\end{align*}
We obtain that 
\begin{align}
\label{eq:}
\P(L_1 \geq n) = \P(\NN_1 \geq n) \leq \P( \NN_1 \geq n, E^1_n\cap E^2_n) + 
\P((E^1_n)^c) + \P((E^2_n)^c).
\end{align}
Proposition \ref{prop:TauExpTail} gives us that $\P((E^1_n)^c)$ goes to zero as $n \to \infty$. 
Also we obtain:
\begin{align*}
& \P((E^2_n)^c) \\
& \leq \P( \cup_{j=0}^{\lfloor n/2 \rfloor } |(h^{j+1}(\x_1)-h^{j}(\x_1))(1)| > n )\\
& \leq \P \bigl( \cup_{j=0}^{\lfloor n/2 \rfloor } 
(J_{\text{sp}}(h^{j}(\x_1)) > n) \bigr  )\\
& \leq (\lfloor n/2 \rfloor + 1) \P( J_{\text{sp}}(\mathbf{0}) > n  ) \\
& \leq (\lfloor n/2 \rfloor + 1)(1 - p_0)^{n}.
\end{align*}  
We observe that on the event $E^1_n \cap E^2_n$ we must have 
$h^{\tau_1}(\x_1) \in [-n^2, n^2]\times [0, n/2]$.
Consider the projection of the starting point $\x_1$ on the line $y = - n$ 
denoted as $\x^{n,\downarrow}_1 = \x_1 + (0, - n)$.
The notation $\x^{n,\uparrow}_1$ denotes the lattice point $\x_1 + (0, \lfloor n/2 \rfloor)$. 
The nesting property as mentioned in Remark \ref{rem:Nesting} ensures that 
on the event $E^1_n \cap E^2_n$ we have 
$$
\nabla(h^{\tau_1}(\x_1)) \subseteq \nabla(\x^{n,\downarrow}_1). 
$$
For an illustration we refer the reader to Figure \ref{fig:InEvent_5}.
Therefore, on the event $E^1_n \cap E^2_n\cap \{ \NN_1 \geq n\}$, the r.v. $\NN_1$
is dominated by the maximum of the overshoot random variables 
$\{ T_m(\x^{n,\uparrow}_1) : m \in \Z \}$ when overshoot is restricted to the region
$\nabla(\x^{n,\downarrow}_1)$.  

%

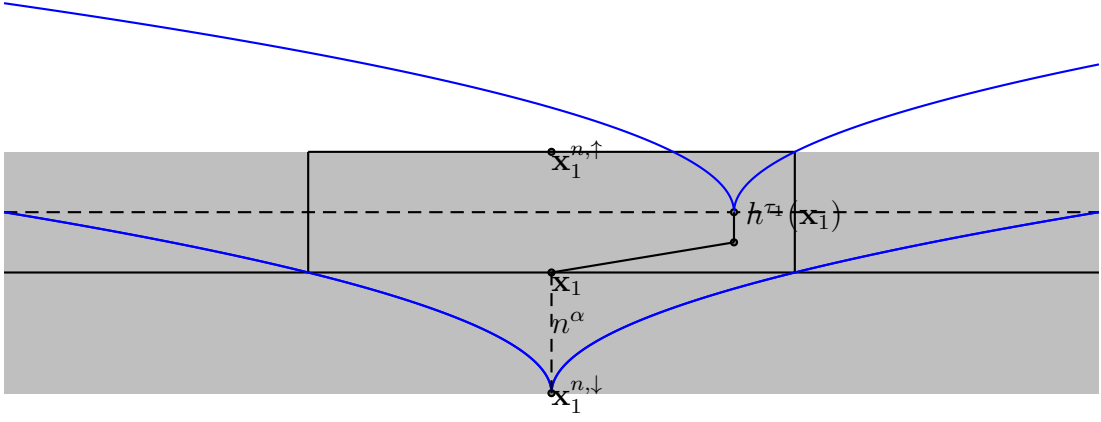
\begin{figure}
\begin{center}
\psset{unit=.8 cm}
\begin{pspicture}(-9,0)(9,4)

\pscustom[linestyle=none,linecolor = lightgray, fillstyle=solid,fillcolor=lightgray]{
\psline(-9,4)(9, 4)
\psline(9,4)(9,0)
\psline(9, 0)(-9,0)
\psline(-9, 0)(-9,4)}
%

\pscircle[fillcolor=gray,fillstyle=solid](0,0){.05}
\pscircle[fillcolor=gray,fillstyle=solid](0,2){.05}
\pscircle[fillcolor=gray,fillstyle=solid](0,4){.05}
\psplot[plotpoints=2000,linecolor=blue]{-9}{9}{ x abs sqrt }

\psline[linestyle=dashed](-9,3)(9,3)

\pscircle[fillcolor=gray,fillstyle=solid](3,2.5){.05}
\pscircle[fillcolor=gray,fillstyle=solid](3,3){.05}

\psline(-9,2)(9,2)
\psline(-4,2)(-4,4)
\psline(4,2)(4,4)
\psline(-4,4)(4,4)
\psplot[plotpoints=2000,linecolor=blue]{-9}{9}{ x abs sqrt } 

\psline(0,2)(3,2.5)
\psline(3,2.5)(3,3)

\psline[linestyle = dashed](0,1.1)(0,2)
\put(0,1){$n^\alpha$}
\psline[linestyle = dashed](0,0)(0,0.9)

\psplot[plotpoints=2000,linecolor=blue]{-9}{9}{ x 3 sub abs sqrt 3 add  } 

%

\put(0,1.7){$\mathbf{x}_1$}
\put(0,3.7){$\mathbf{x}^{n,\uparrow}_1$}
\put(0,-0.2){$\mathbf{x}^{n,\downarrow}_1$}
\put(3.2,2.8){$h^{\tau_1}(\mathbf{x}_1)$}

\end{pspicture}
\caption{ If the $\tau_1$ step occurred within the rectangular box then $\nabla(h^{\tau_1}(\x_1))$ is contained in
$\nabla(\x^{n,\downarrow}_1)$. Therefore, on this event the maximum overshoot amount due to
the total perturbation 
r.v.'s  attached to lattice points in the shaded region (which extends to the infinite lower half-plane)
dominates $L_1$ as well as the r.v. $\NN_1$.}
\label{fig:InEvent_5}
\end{center}
\end{figure}


Therefore, on the event $ E^1_n\cap E^2_n$
in order to have $\NN_1 > n$, we must have either
\begin{itemize}
\item[(i)] $T^\uparrow_m(\x^{n,\uparrow}_1) > n/2$ for some $m \in 
[- 4n^2, 4n^2]\cap \Z$ or
\item[(ii)] $T^\uparrow_m(\x^{n,\uparrow}_1) > n/2 + l$ for some $m$ 
in either $[-(2n + l)^2,-(2n + (l-1))^2]\cap \Z$ or in $[(2n + (l-1))^2,(2n + l)^2]\cap \Z$
for some $l \geq 1$.
\end{itemize}
Hence, using the translation invariance nature of our model 
and applying the union bound we obtain 
\begin{align*}
& \P(\NN_1 > n)\\
& \leq 2\P \Bigl [ \bigl( \cup_{m = 0}^{(2n)^2 }
 T^\uparrow_m(\x^{n,\uparrow}_1) > n/2 \bigr ) 
 \bigcup_{l\geq 1} \bigl ( \bigcup_{m  = (2n+l-1)^2 + 1}^{(2n + l)^2 \rfloor} 
 T^\uparrow_m(\x^{n,\uparrow}_1) > n/2 + l \bigr )\Bigr ]\\
& \leq 2\Bigl ( 4n^2\P( T^{\uparrow}_0(\mathbf{0}) > n/2 ) + 
\sum_{l=1}^\infty 2(2n+l)\P( T^{\uparrow}_0(\mathbf{0}) > n/2  + l) \Bigr).
\end{align*}
Exponential tail decay of the overshoot r.v. $T^{\uparrow}_0(\mathbf{0})$ 
as observed in (\ref{eq:T_Ray_ExpDecay}) completes the proof. We emphasize the 
fact that the choice of decay constants does not depend on $\x_1$. 
\qed

Before proving Lemma \ref{lem:N_SubExptail} for general $j \geq 1$ 
we recall that given ${\cal G}_j$, 
distribution of the random vectors in the upper half-plane $\{\Gamma_\w : 
\w \in \mathbb{H}^+(\tau_j)\}$ no longer remain  i.i.d. 
However, for $\w \in \mathbb{H}^+(\tau_j)$, irrespective of the location of $\w$
 and for any $m \in \Z$, we still have uniform exponential tail decay for the r.v. $T_m(\w)$ . 
\begin{lemma}
\label{lem:ConditionalMarginalTailBound}
Given $(h^{\tau_j}(\x_1), \cdots, h^{\tau_j}(\x_k)) = (\w_1, \cdots , \w_k)$, for any 
$\w \in \mathbb{H}^+(\w_1(2))$, $\alpha > \beta > 0$ and,  for all $n \in \N$, 
there exist $C_0, C_1 > 0$ uniformly, such that we have the following:
\begin{itemize}
\item[(i)]  $\P(T_\w > n \mid {\cal G}_j) \leq C_0 \exp{(-C_1 n)}$;
\item[(ii)] $\P(J_{\text{sp}}(\w) > n \mid {\cal G}_j) \leq  C_0 \exp{(-C_1 n)}$;
\item[(iii)] $\P \Bigl ( \max \bigl \{ T_{\w +(m,-l)} - l : 
0 \leq l \leq \w(2) - \w_1(2) \bigr \} > n \mid {\cal G}_j \Bigr ) 
\leq C_0 \exp{(-C_1 n)}$  for all $m \in \Z$; 
\item[(iv)] $\P \bigl (  \max\{ |(h^l(\w_i) - \w_i)(1)| : 1 \leq l \leq n^\beta, 
1 \leq i \leq k \} >  n^\alpha \mid {\cal G}_j \bigr ) \leq  C_0 \exp{(-C_1 n^{\alpha - \beta})}$.
\end{itemize} 
\end{lemma}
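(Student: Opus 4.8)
\noindent{\bf Proof plan for Lemma \ref{lem:ConditionalMarginalTailBound}:}

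The plan is to reduce each of (i)--(iv) to an \emph{unconditional} exponential tail bound, at the cost of a harmless multiplicative constant. The obstruction is that conditioning on ${\cal G}_j = \overline{{\cal F}}_{\tau_j}$ skews the law of the family $\{\Gamma_\w : \w(2) > \tau_j\}$ living in the open upper half-plane, since the events $\text{In}(h^{\tau_l}(\x_i))$, $l \le j$, are not ${\cal F}_{\tau_j}$-measurable (they probe the infinite future). Following Corollary \ref{cor:IncrDistEquality_1} together with the nesting property of Remark \ref{rem:Nesting}, on the event that the trajectory $(h^{\tau_1}(\x_i), \cdots, h^{\tau_j}(\x_i))_{i=1}^k$ takes a fixed value, the conditioning may be rewritten as $E \cap \bigcap_{i=1}^k \text{In}(\w_i)$, where $\w_i := h^{\tau_j}(\x_i)$ (so $\w_i(2) = \tau_j$) and $E$ is an event measurable with respect to $\sigma(\Gamma_\w : \w(2) \le \tau_j)$, all the earlier `In' events being absorbed into $E$ via the ${\cal F}$-measurable truncations $\text{In}^{(\cdot)}$. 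Hence it suffices to bound the conditional probability given $E \cap \bigcap_{i=1}^k \text{In}(\w_i)$, uniformly over such $E$ of positive probability and over the trajectory.

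\emph{The reduction.} Let $A$ be any event measurable with respect to $\sigma(\Gamma_\w : \w(2) > \tau_j)$. Since $\bigcap_{i=1}^k A_{\text{sp}}(\w_i) \subseteq \bigcap_{i=1}^k \text{In}(\w_i)$, and since $\w_i(2) = \tau_j$ forces $A_{\text{sp}}(\w_i)$ to depend only on the special vertices strictly above the line $y = \tau_j$, we get
\[
\P\bigl(A \mid E \cap \textstyle\bigcap_{i=1}^k \text{In}(\w_i)\bigr) \le \frac{\P(A \cap E)}{\P\bigl(E \cap \bigcap_{i=1}^k A_{\text{sp}}(\w_i)\bigr)} = \frac{\P(A)}{\P\bigl(\bigcap_{i=1}^k A_{\text{sp}}(\w_i)\bigr)},
\]
the last equality because $E$ is independent of both $A$ and $\bigcap_{i=1}^k A_{\text{sp}}(\w_i)$, which are measurable with respect to the complementary (strictly higher) layer. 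The events $A_{\text{sp}}(\w_i)$ are increasing in the i.i.d. Bernoulli field $\{\mathbf{1}_{\{\w \in V_{\text{sp}}\}}\}$, so the FKG inequality (Lemma \ref{def:FKG}) and translation invariance give $\P\bigl(\bigcap_{i=1}^k A_{\text{sp}}(\w_i)\bigr) \ge \P(A_{\text{sp}}(\mathbf{0}))^k =: c_k > 0$, a constant depending only on $p, \theta_x, \theta_y$ and $k$. Therefore $\P(A \mid {\cal G}_j) \le \P(A)/c_k$ almost surely, uniformly in everything.

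\emph{Applying the reduction.} Each statement now follows by choosing $A$ and invoking translation invariance together with an already available estimate. For (i), $A = \{T_\w > n\}$ with $\w(2) > \tau_j$ lies in the higher layer, and $\P(T_\w > n) = \P(T_{\mathbf{0}} > n) \le C_0 \exp{(-C_1 n)}$, since $T_{\mathbf{0}} = |X_{\mathbf{0}}| + Y_{\mathbf{0}}$ has exponentially decaying tail. For (ii), $A = \{J_{\text{sp}}(\w) > n\}$ depends only on $V_{\text{sp}}$ on the line $y = \w(2)+1 > \tau_j$, and $\P(J_{\text{sp}}(\mathbf{0}) > n) \le (1-p_0)^{2n+1}$; this is the content of (\ref{eq:Special_decay_TauJ}), now upgraded to the filtration ${\cal G}_j$ and to general $k$. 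For (iv), a union bound over $1 \le i \le k$ reduces matters to a single path, whose first $n^\beta$ perturbed Howard steps use only vertices at heights in $(\tau_j, \tau_j + n^\beta]$, hence the reduction and Corollary \ref{cor:PH_decay} yield the bound $C_0 \exp{(-C_1 n^{\alpha-\beta})}$. For (iii), a union bound over $l$ suffices: each point $\w + (m,-l)$ relevant to the maximum lies above the line $y = \w_1(2) = \tau_j$, so the reduction gives $\P(T_{\w + (m,-l)} - l > n \mid {\cal G}_j) \le c_k^{-1}\P(T_{\mathbf 0} > n+l) \le C_0 \exp{(-C_1(n+l))}$, and summing over $l \ge 0$ closes the estimate.

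\emph{Main obstacle.} The crux is the reduction itself: recognising that, although ${\cal G}_j$ carries information about the infinite future, the \emph{only} bias it induces on the open upper half-plane is carried by $\bigcap_{i=1}^k \text{In}(\w_i)$, and that this bias is neutralised by comparison with the special-vertex `shield' $\bigcap_{i=1}^k A_{\text{sp}}(\w_i)$, whose probability is bounded below by a location-independent positive constant through FKG. Everything after that is bookkeeping; in particular the constants $C_0, C_1$ come out uniform in $j$, in $\w$, in $m$ and in the trajectory, because after the reduction every probability is computed at the origin by translation invariance and $c_k$ depends only on $p_0$ and $k$.
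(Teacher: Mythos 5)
Your proof is correct and takes essentially the same approach as the paper's: both decompose the conditioning on ${\cal G}_j$ into an ${\cal F}_{\tau_j}$-measurable event $E$ intersected with the terminal `In' event via Corollary \ref{cor:IncrDistEquality_1}, drop the `In' event in the numerator, lower-bound the denominator by replacing `In' with the independent shield event $A_{\text{sp}}$, factor out $E$ by independence across the line $y=\tau_j$, and finish with translation invariance and unconditional tail bounds. Your write-up merely abstracts the paper's pointwise computation into a single clean inequality $\P(A\mid{\cal G}_j)\le\P(A)/c_k$ and makes explicit the FKG step for $k\ge 2$ that the paper leaves as ``the argument is the same.''
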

\begin{proof}
We prove Lemma \ref{lem:ConditionalMarginalTailBound} for $k = 1$. 
The argument is same for general $k \geq 1$. Given 
$(h^{\tau_1}(\x_1), \cdots, h^{\tau_j}(\x_1)) = (\w^1_1, \cdots, \w_1^j)$
for any $\v \in \mathbb{H}^+(\w_1^j(2))$  we obtain
\begin{align*}
& \P \bigl ( T_\v  > n \mid (h^{\tau_1}(\x_1), \cdots, h^{\tau_j}(\x_1)) = 
(\w^1_1, \cdots, \w_1^j)\bigr ) \\
& =  \P \bigl ( (T_\v  > n )\cap \text{In}(\w_1^j)\cap E(\w^1_1, \cdots, \w_1^j))/
\P(\text{In}(\w_1^j)\cap E(\w^1_1, \cdots, \w_1^j) \bigr ) \\
& \leq  \P( (T_\v  > n) \cap E(\w^1_1, \cdots, \w_1^j))/\P(\text{In}(\w_1^j)\cap 
E(\w^1_1, \cdots, \w_1^j)) \\
& \leq  \P( T_\v  > n )\P(E(\w^1_1, \cdots, \w_1^j))/\P(A_{\text{sp}}(\w_1^j)
\cap E(\w^1_1, \cdots, \w_1^j)) \\
& =  \P(T_\v  > n )/\P(A_{\text{sp}}(\mathbf{0})) \\
& \leq  C_0 \exp{(- C_1 n)}.
\end{align*}
Clearly, the values of the positive constants $C_0, C_1$ do not depend 
on the point $\v$ or on $(\w^1_1, \cdots , \w_1^j)$. 

The argument for (ii) is exactly the same as that of (i) and hence, we skip it.

For (iii) we first observe that given $(h^{\tau_1}(\x_1), \cdots, h^{\tau_j}(\x_1)) 
= (\w^1_1, \cdots, \w_1^j)$, the r.v. $\max \{ T_{\w +(m,-l)} - l : 
0 \leq l \leq \w(2) - \w_1^j(2)\}$ is dominated by the overshoot r.v. 
$\sup \{ T_{\w +(m,-l)} - l : l \geq 0\}$. By (i) we have that the family  
of r.v.'s $\{ T_{\w +(m,-l)} : 0 \leq l \leq \w(2) - \w_1^j(2)\}$ has uniform exponential tail decay. 
Hence, the same argument for exponential tail decay of overshoot random variable,  
as given in Remark \ref{rem:Overshoot_ExpDecay}, proves (iii). 

The proof of (iv) follows from (ii) together with application of union bound.  
\end{proof}
We now prove Lemma \ref{lem:N_SubExptail} for general $j \geq 2$.

\noindent {\bf Proof of Lemma \ref{lem:N_SubExptail} for $j \geq 2$:}
The proof for general $j \geq 2$ is very similar to that of $j =1$ and
we mention here the required modifications only. In the proof 
for $j =1$, we did not really use the fact that $\{T_\w : \w \in \Z^2\}$ is an 
i.i.d. collection. Our proof essentially uses the union bound and uniform exponential
tail decay of marginal distributions of total perturbation r.v.'s.   
Given ${\cal G}_j$, though the collection $\{ \Gamma_\w : \w \in \mathbb{H}^+(\tau_j)\}$ 
is no longer an i.i.d. collection of random vectors, 
Lemma \ref{lem:ConditionalMarginalTailBound} (i) provides uniform 
exponential tail decay for the family of total perturbation random vectors 
$\{ T_\w : \w \in \mathbb{H}^+(\tau_j)\}$. We observe that given ${\cal G}_j$,
Item (ii) of Lemma \ref{lem:N_SubExptail} ensures uniform 
exponential tail decay for the r.v. $J_{\text{sp}}(\w)$
for $\w \in \mathbb{H}^+(\tau_j)$. On the other hand, Item (iv) of Lemma \ref{lem:N_SubExptail}
gives us that the probability of the corresponding version of the event $(E^2_n)^c$
exponentially decays to $0$ as $n \to \infty$. Finally, the `Out' event condition 
ensures that the family of previously explored perturbation random vectors 
$\{ \Lambda_\w : \w \in \mathbb{H}^-(\tau_j)\}$ does not contribute 
to the r.v. $\NN_{j+1}$. Therefore, Item (iii) of Lemma \ref{lem:N_SubExptail} allows
 us to apply the same argument for general $j \geq 1$ and completes the proof.   
\qed

We are now ready to prove Proposition \ref{prop:Gamma_Tail}. 

\subsubsection{Proof of Proposition \ref{prop:Gamma_Tail}}
\label{subsubsec:PropGammaTail}

Proof of Proposition \ref{prop:Gamma_Tail} is motivated from the proof of Lemma 
2.6 of \cite{RSS16A}. We recall the family of r.v.'s $\{\NN_j : j \geq 1\}$ as in
the recursion relation (\ref{eq:L_j_recursion_rel}). 
We observe that the strong uniform exponential tail decay behaviour (see Definition
\ref{def:Exp_Tail}) of this family, as shown in Lemma \ref{lem:N_SubExptail}, allows us to assume that 
there exists an i.i.d. family of non-negative integer valued r.v.'s $\{ R_j : j \geq 1\}$
with exponential tail decay such that for all $j , m \geq 1$ we have   
\begin{align}
\label{eq:NN_dom_iid}
\P(\NN_j \geq m \mid \NN_{j-1}, \cdots, \NN_1) \leq \P(R_j \geq m) .
\end{align} 
Using such an i.i.d. family $\{ R_j : j \geq 1\}$ we construct a non-negative
integer valued Markov chain $\{M_j : j \geq 0\}$ which stochastically dominates 
the height function $\{ L_j : j \geq 0\}$. 

Set $M_0 = 0$ and for $j \geq 1$ define $M_j$ as $M_j := \max \{M_{j-1} , R_j\} - 1$. 
The i.i.d. nature of the collection $\{ R_j : j \geq 1\}$ ensure that 
the process $\{M_j : j \geq 0\}$ is a time homogeneous Markov chain. Let 
$\tau^M := \inf\{ j \geq 1 : M_j = 0\}$ denote the return time to state zero for this 
Markov chain. The recursion relation (\ref{eq:L_j_recursion_rel}) 
together with the stochastic domination as 
observed in the inequality (\ref{eq:NN_dom_iid}) ensure that $\tau^M$ stochastically 
dominates $\gamma$. Hence, in order to prove Proposition \ref{prop:Gamma_Tail}
it is enough to show that there exist $C_0, C_1 > 0$ such that 
$$
\P(\tau^M > n ) \leq C_0 \exp{(-C_1 n)}.
$$
Lemma \ref{lem:L_decrease_Bound} ensures that we have 
$$
\P(M_{j+1} \leq M_{j} - 1 \mid M_j = m) \geq \tilde{p} \text{ for all }m \geq 1.
$$ 
Therefore, it is not difficult to see that the Markov chain $M_j$ is irreducible.   
It suffices to prove that, for some $\alpha > 0$, we have $\E(\exp{(\alpha \tau M))} <\infty$.

Towards that using Proposition 5.5, Chapter 1 of Asmussen \cite{A03}, 
it suffices to show that there exist a non-negative
function $f : \N \cup \{ 0 \} \mapsto \R^+$, $n_0 \in \N$ and 
$r > 1$ such that $f(j)>\nu$ for some $\nu >0$ and $\E[f (M_1) \mid M_0 = j ] <
\infty $ for all $j \leq n_0$, while for $j > n_0$, $\E[ f(M_1) \mid M_0 = j] \leq  f(j)/r$. 
 
Taking $ f : \{ 0, 1, 2, \cdots \} \mapsto \R $ to be 
$f(i) = \exp{(\alpha i)}$, where $\alpha > 0$ is small enough so that 
$\E [\exp{(\alpha R_1)}] < \infty $ and $\exp{(-\alpha)} < 1/r$, we have
\begin{align*}
 & \E[\alpha(M_{j+1} - M_j) \mid M_j = m] \\
 & =  \exp{(-\alpha)}\P(R_1 \leq m) + 
 \exp{(-\alpha m)}\E[\mathbf{1}_{\{ R_{j+1} > m\}}\exp{(\alpha R_{j+1})}]\\
 & < (1/r) + \exp{(-\alpha m)}\E[\mathbf{1}_{\{ R_{j+1} > m\}}\exp{(\alpha R_{j+1})}]\\
 & \leq (1/r) \text{ for }m \text{ sufficiently large}.
 \end{align*}
The last inequality follows because $\E[\exp(\alpha R_1)] <\infty $ 
guarantees 
$$
\exp{(-\alpha m)}\E[\mathbf{1}_{\{ R_1 > m\}}\exp{(\alpha R_1)}] \to 0 \text{ as }m \to \infty.
$$
\qed

\section{Renewal properties for a single path}
\label{sec:Renewal_k_1}

In this section, we consider $k= 1$ and we explain renewal properties of the marginal process 
$\{h^{\sigma_\ell}(\x_1) : \ell \geq 0\}$.
 More precisely, we will show that 
 the sequence of renewal steps gives rise to a random walk process with i.i.d. 
increments. We define the sequence of successive renewal steps as 
\begin{align}
\label{def:SinglePt_Rwalk_Onepath}
\{ Y_{\ell + 1} = Y_{\ell + 1}(\x_1) := h^{\sigma_{\ell+1}}(\x_1) :  \ell \geq 0\} .
\end{align}
We observe that $\sigma_{\ell + 1}(\x_1) - \sigma_{\ell}(\x_1) = 
(Y_{\ell + 1}(\x_1) - Y_{\ell}(\x_1))(2)$ represents the number of steps elapsed, i.e.,  
the total time taken between $\ell + 1$-th and $\ell$-th renewal. 
Our next proposition explains the renewal structure observed at these random steps. 

\begin{proposition}
\label{prop:SinglePtRwalk}
The sequence $\{ (Y_{\ell+1}-Y_{\ell}) : \ell \geq 1\}$ gives a collection of 
\textit{i.i.d.} random vectors taking values in the space $\Z\times \N$ 
whose distribution does not depend on the choice of the 
starting point $\x_1$.
\end{proposition}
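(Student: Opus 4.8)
The plan is to show that, for every $\ell\ge 1$, the conditional law of the increment $Y_{\ell+1}-Y_\ell$ given $\mathcal{S}_\ell$ is a fixed probability measure $\mu$ on $\Z\times\N$ that depends neither on $\ell$ nor on $\x_1$. Granting this, the proposition follows routinely: since $\sigma_j=\tau_{\gamma_j}$ and the vectors $h^{\sigma_j}(\x_1)$ are $\mathcal{S}_j$-measurable while $\mathcal{S}_1\subseteq\mathcal{S}_2\subseteq\cdots$, the increments $Y_2-Y_1,\dots,Y_\ell-Y_{\ell-1}$ are $\mathcal{S}_\ell$-measurable, so $Y_{\ell+1}-Y_\ell$ is independent of them with law $\mu$; an induction on $\ell$ (whose base case comes from un-conditioning the $\ell=1$ statement) shows that $\{Y_{\ell+1}-Y_\ell:\ell\ge1\}$ is i.i.d.\ with common law $\mu$. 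That all the $\sigma_\ell$ are a.s.\ finite, hence the $Y_\ell$ well defined, is Proposition~\ref{prop:Gamma_Tail} together with Proposition~\ref{prop:TauExpTail}; that $\mu$ is well defined is ensured by $\text{In}(\mathbf 0)\supseteq A_{\text{sp}}(\mathbf 0)$ and $\P(A_{\text{sp}}(\mathbf 0))>0$, and $\mu$ lives on $\Z\times\N$ because $\sigma_{\ell+1}>\sigma_\ell$.

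The heart of the matter is identifying this conditional law. Fix $\ell$ and work on the event $\{\sigma_\ell=s,\ h^{\sigma_\ell}(\x_1)=\w\}$; by definition of a renewal step this entails $\text{In}(\w)$ together with the Out condition, i.e.\ the $\mathcal{F}_s$-measurable statement $I_s\cap\nabla(\w)=\emptyset$. By the Markov property of Proposition~\ref{prop:Markov}, conditionally on $(h^{\sigma_\ell}(\x_1),I_{\sigma_\ell})$ and $\mathcal{F}_s$ the future $\{h^{s+n}(\x_1)=h^n(\w,V):n\ge 0\}$ evolves as the perturbed Howard process from $\w$ driven by the point process $\Psi$ made of an independent copy of the i.i.d.\ environment above level $s$ together with the now-frozen set $I_s$. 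Two observations reduce this to a purely ``fresh'' object. First, $I_s\cap\nabla(\w)=\emptyset$ means every point of $\Psi$ inside $\nabla(\w)$ is a perturbation of an open lattice point at a level $>s$. Second, the parabola is wide enough that no point outside $\nabla(\w)$ can ever be the nearest point at the next level to a path vertex inside $\nabla(\w)$: if $\v\in\nabla(\w)$ sits at level $s+d$, then any point at level $s+d+1$ within horizontal distance $2d+1$ of $\v$ still lies in $\nabla(\w)$, while any point outside $\nabla(\w)$ at that level is at horizontal distance at least $2d+2$ from $\v$. Hence on $\{I_s\cap\nabla(\w)=\emptyset\}$ the event $\text{In}(\w)$ is equivalent to the fresh-environment event ``at every step the current path vertex has a fresh point at the next level within the admissible distance'', on which each Howard step is computed exactly as if only the fresh perturbations inside $\nabla(\w)$ were present; by the nesting property (Remark~\ref{rem:Nesting}) the same applies to all later events $\text{In}(h^m(\w))$ and to the Out conditions at the later steps, so that $\sigma_{\ell+1}$ and the displacement $Y_{\ell+1}-\w$ become measurable functionals of the fresh i.i.d.\ environment above level $s$ and the Rademacher coins, restricted to $\text{In}(\w)$.

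It remains to check that $\mathcal{S}_\ell$ carries no further relevant information. The field $\mathcal{S}_\ell$ augments $\mathcal{F}_s$ only by the $\text{In}(\cdot)$-indicators at earlier steps; but, exactly as in the proofs of Propositions~\ref{prop:TauExpTail} and~\ref{prop:Gamma_Tail}, on the renewal event at $\sigma_\ell=s$ each such indicator is already determined by $\mathcal{F}_s$, the nesting property collapsing its dependence on the infinite future into a consequence of $\text{In}(\w)$. Thus, conditionally on $\mathcal{S}_\ell$ and $\{h^{\sigma_\ell}(\x_1)=\w\}$, the renewal event reduces to the $\mathcal{F}_s$-measurable event $\{I_s\cap\nabla(\w)=\emptyset\}$ intersected with the fresh event $\text{In}(\w)$, and $Y_{\ell+1}-\w$ is a fixed functional $G$ of the fresh environment and the coins on $\text{In}(\w)$. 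Since the fresh environment is independent of $\mathcal{F}_s$ and, by translation invariance, has the law of the environment translated so that $\w$ becomes $\mathbf 0$,
\begin{equation*}
\P\bigl(Y_{\ell+1}-Y_\ell=z \mid \mathcal{S}_\ell,\ h^{\sigma_\ell}(\x_1)=\w\bigr)=\P\bigl(G=z \mid \text{In}(\mathbf 0)\bigr)=:\mu(z),
\end{equation*}
independent of $\w$, of $\ell$ and of $\x_1$; averaging over $\w$ gives $\P(Y_{\ell+1}-Y_\ell=z\mid\mathcal{S}_\ell)=\mu(z)$, as required.

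The step I expect to require the most care is this decoupling. A priori the renewal event is defined through the $\text{In}(\cdot)$ events, which look at the infinite future, and $\mathcal{S}_\ell$ itself contains future-dependent information, so it is not obvious that conditioning on $\mathcal{S}_\ell$ leaves the future with a law that is simultaneously independent of $\mathcal{S}_\ell$ and translation invariant. The two structural facts making it work are the geometric property of the regions $\nabla(\cdot)$ — a path imprisoned in such a region never interacts with anything outside it — and the Out half of the renewal condition, which removes every previously explored point from the interior of $\nabla(\w)$; the accounting that re-expresses the earlier $\text{In}$-indicators through $\mathcal{F}_s$ is the same device already used in Sections~\ref{sec:Expreg} and~\ref{sec:renewal_def}, so it can be invoked rather than redone. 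Beyond that only routine measurability verifications remain, together with the observation that conditioning on $\text{In}(\mathbf 0)$ is harmless because $\text{In}(\mathbf 0)$ and the corresponding Out condition are governed by disjoint portions of the environment.
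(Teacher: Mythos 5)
Your proof takes essentially the same route as the paper's: decompose the renewal event into an $\mathcal{F}_{\sigma_\ell}$-measurable part together with a fresh-environment part, argue that on the renewal event the post-renewal path is a functional of the fresh environment only, and invoke translation invariance to identify the common conditional law; independence then comes out of the filtration $\{\mathcal{S}_\ell\}$ exactly as in the paper.

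There is, however, a gap in the decoupling step, and your write-up makes it more explicit than the paper does. Your geometric claim is that no point outside $\nabla(\w)$ can ever be the Howard step from a vertex $\v$ inside $\nabla(\w)$, supported by the two bounds: at level $s+d+1$ everything within horizontal distance $2d+1$ of $\v$ is still in $\nabla(\w)$, while everything outside $\nabla(\w)$ is at distance $\geq 2d+2$. Those bounds are correct, but they only show that \emph{if} there is a $V$-point inside $\nabla(\w)$ within distance $2d+1$ then the step stays inside. They do not show that the nearest \emph{fresh} point inside $\nabla(\w)$ is always within $2d+1$: on $\text{In}^+(\w)$ that step length can be as large as $2d^2+2d+1$, in which case an explored point sitting just outside $\nabla(\w)$ at distance exactly $2d+2$ captures the $V$-driven path even though the $V^+$-driven path stays inside. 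In other words, $\text{In}^+(\w)\cap\text{Out}(\w)$ need not imply $\text{In}(\w)$, so the asserted equivalence of $\text{In}(\w)$ given Out with a pure fresh-environment event is not established. This is precisely the reverse inclusion in the paper's identity (\ref{eq:EqualityOldpath_NewpathEventEquality}), which underlies Corollary~\ref{cor:Rwalk_IncrDistEquality_2} and equality (\ref{eq:InformationUpperPlaneatRenewal}), and which the paper states without proof; you have inherited that step rather than closed it. A repair would seem to require strengthening the renewal condition — say, demanding that previously explored points avoid not just $\nabla(\w)$ but a wider parabolic sausage large enough to dominate the worst-case $V^+$-step size, or replacing the $\text{In}$ event in the definition of $\tau_j$ by the event $A_{\text{sp}}(\cdot)$ from (\ref{def:A_sp}), which by construction forces each step to stay within the buffer $2d+1$ — and then re-verifying the tail estimates of Section~\ref{sec:renewal_def} for the modified renewal.
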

We observe that the above proposition gives i.i.d. increments with a lag. 
Since, the starting condition is not the same as the renewal conditions, 
the first increment random vector given by $(Y_1 - \x_1)$ has a different distribution. We emphasize that  
the above proposition also implies that $\{ \sigma_{\ell + 1}(\x_1) -
 \sigma_{\ell}(\x_1) : \ell \geq 1\}$ forms an i.i.d. 
sequence whose distribution does not depend on $\x_1$. 
This provides a strong uniform exponential tail decay (see Definition \ref{def:Exp_Tail}) 
for the family  $\{ \sigma_{\ell}(\x_1) : \ell \geq 1 \}$.

Proposition \ref{prop:SinglePtRwalk} will be proved through a sequence of lemmas.
We need to introduce few notations first.
In what follows, together with the usual Howard step $h(\u) = h(\u, V)$ we will 
also consider $h(\u, V^+_{\u(2)})$, i.e., the step taken considering only the point set $V^+_{\u(2)}$.  For simplicity of notation, 
$h(\u, V^+_{\u(2)})$ will be simply denoted as $h(\u, V^+)$ and for any $j \geq 1$, 
the $j$-th step $h^j(\u, V^+_{\u(2)})$ (taken considering only the point set $V^+_{\u(2)}$)
 will also be denoted as $h^j(\u, V^+)$. In general this modified step 
 $h^j(\u, V^+_{\u(2)})$ need not be equal 
to the usual $j$-th Howard step $h^j(\u, V) = h^j(\u)$. 
For $\u \in \Z^2$ we recall the definition of the event $ \text{Out}(\u)$ 
which ensures that points from the set $V^-_{\u(2)}$
must be out of the parabolic region $\nabla(\u)$. 

On the event $\text{In}(\u) \cap \text{Out}(\u)$ we must have 
\begin{equation}
\label{eq:EqualityOldpath_Newpath}
h^j(\u, V^+_{\u(2)}) = h^j(\u, V^+) = h^j(\u, V) = h^j(\u) \in \nabla(\u)
\text{ for all }j \geq 1.
\end{equation}

This motivates us to define another event $\text{In}^+(\u)$ similar to 
the event $\text{In}(\u)$ involving the point set $V^+_{\u(2)}$ only  as
\begin{align}
\label{def:EventA_in_New}
\text{In}^+(\u) := \{ h^j(\u,V^+) \in \nabla(\u) \text{ for all }j \geq 1\}. 
\end{align}
Observation (\ref{eq:EqualityOldpath_Newpath}) allows us to have the following equality of events: 
\begin{equation}
\label{eq:EqualityOldpath_NewpathEventEquality}
\text{In}(\u) \cap \text{Out}(\u) = \text{In}^+(\u) \cap \text{Out}(\u).
\end{equation}
Similarly, the occurrence of the renewal event at the $n$-th step $h^n(\x_1)$
can be equivalently represented as the occurrence of the event 
$\text{In}^+(h^n(\x_1))\cap \text{Out}(h^n(\x_1))$.
The usefulness of the representation is that the events $\text{In}^+(\u)$  and 
$ \text{Out}(\u)$ are supported on disjoint sets of random vectors and therefore, 
these two events are independent.

In order to have a better understanding of distribution of renewal increments, 
we decouple our renewal event and express it as a joint occurrence of two {\it independent} events.   
The same argument as that of Corollary \ref{cor:IncrDistEquality_1} gives us the
next Corollary. 
\begin{corollary}
\label{cor:Rwalk_IncrDistEquality_2}
For any $j \geq 1$ we have the following equality of events
\begin{align}
\label{eq:EventEquality_4}
\bigcap_{l=1}^j \{\sigma_l = m_l, h^{m_l}(\x_1) = \v_l \} =  & \Bigl [ \bigcap_{l=1}^j  \{ h^{m_l}(\x_1) = \v_l, \text{Out}(\v_l)\}\cap \bigl \{ \cap_{n = m_{l-1} + 1}^{m_l - 1} ( \gamma(h^n(\x_1)) \leq m_l - n)
 \nonumber\\
& \cup (\text{Out}(h^n(\x_1)))^c   \bigr \} \bigcap (\cap_{l=1}^{j-1}
\text{In}^{ m_{(l+1)} - m_l} )\Bigr ]\bigcap \text{In}^+(\v_j),
\end{align} 
and the event $\text{In}^+(\v_j)$ is independent of rest of the other events 
in the r.h.s. of (\ref{eq:EventEquality_4}), all of which are 
${\cal F}_{\v_j(2)} = {\cal F}_{m_j}$ measurable. 
\end{corollary}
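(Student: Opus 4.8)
The plan is to repeat, almost verbatim, the chain of manipulations that produced Corollary~\ref{cor:IncrDistEquality_1}, now with the \emph{renewal} event $\text{In}(\u)\cap\text{Out}(\u)$ playing the role that the plain event $\text{In}(\u)$ played there. First I would unpack the left hand side straight from the definition of the renewal steps: on $\bigcap_{l=1}^{j}\{\sigma_l=m_l,\,h^{m_l}(\x_1)=\v_l\}$ one has, for each $1\le l\le j$, the identity $h^{m_l}(\x_1)=\v_l$ together with the occurrence of $\text{In}(\v_l)\cap\text{Out}(\v_l)$, while for each $n$ with $m_{l-1}<n<m_l$ step $n$ is \emph{not} a renewal step, i.e.\ $(\text{In}(h^n(\x_1)))^c\cup(\text{Out}(h^n(\x_1)))^c$ occurs. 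Here $\gamma(\cdot)$ denotes the first-exit time from $\nabla(\cdot)$ of the PH path, the renewal-context analogue of $\beta(\cdot)$ from Lemma~\ref{lem:EventDecomposition_1}, so that $(\text{In}(h^n(\x_1)))^c=\{\gamma(h^n(\x_1))<\infty\}$.

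Next I would process the intermediate steps. The branch $(\text{Out}(h^n(\x_1)))^c$ is $\mathcal{F}_n$-measurable (recall $\text{Out}(\u)\in\mathcal{F}_{\u(2)}$ and $h^n(\x_1)$ is $\mathcal{F}_n$-measurable), involves no look into the infinite future, and is kept verbatim. For the branch $\{\gamma(h^n(\x_1))<\infty\}$ I would invoke the nesting property of Remark~\ref{rem:Nesting} exactly as in Lemma~\ref{lem:EventDecomposition_1}: on the event that $m_l$ is a renewal step with $h^{m_l}(\x_1)=\v_l$ and $\text{In}(\v_l)$ holds, either the path from $h^n(\x_1)$ has already left $\nabla(h^n(\x_1))$ within $m_l-n$ steps, or else $\v_l=h^{m_l-n}(h^n(\x_1))\in\nabla(h^n(\x_1))$ and then nesting forces $h^m(\v_l)\in\nabla(\v_l)\subseteq\nabla(h^n(\x_1))$ for all $m\ge1$, i.e.\ $\gamma(h^n(\x_1))=\infty$; hence $\{\gamma(h^n(\x_1))<\infty\}$ may be replaced by $\{\gamma(h^n(\x_1))\le m_l-n\}$, producing the unions of the statement. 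By the same nesting cascade, as in Equation~(\ref{eq:EqualityEvent_3}), I would then replace each full event $\text{In}(\v_l)$, $l<j$, by its truncation $\text{In}^{(m_{l+1}-m_l)}(\v_l)$, keeping $\text{In}(\v_j)$ untruncated and all the $\text{Out}(\v_l)$ as they are. Finally I would apply Equation~(\ref{eq:EqualityOldpath_NewpathEventEquality}) to the last pair, $\text{In}(\v_j)\cap\text{Out}(\v_j)=\text{In}^+(\v_j)\cap\text{Out}(\v_j)$, which isolates $\text{In}^+(\v_j)$ and leaves $\text{Out}(\v_j)$ inside the bracket, producing exactly the asserted equality of events.

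For the measurability and independence claim I would reason as follows. By its definition~(\ref{def:EventA_in_New}), $\text{In}^+(\v_j)$ is a function of the modified steps $h^m(\v_j,V^+)$, hence of $\{\Gamma_\w:\w(2)>\v_j(2)\}$, which is independent of $\mathcal{F}_{\v_j(2)}$. It remains to check that every other factor on the right hand side is $\mathcal{F}_{\v_j(2)}=\mathcal{F}_{m_j}$-measurable: the events $\{h^{m_l}(\x_1)=\v_l\}$ and $\{\gamma(h^n(\x_1))\le m_l-n\}$ with $m_{l-1}<n<m_l$ are read off the joint exploration process up to level $m_l\le m_j$; $\text{Out}(\v_l)$ lies in $\mathcal{F}_{\v_l(2)}=\mathcal{F}_{m_l}$; and $\text{In}^{(m_{l+1}-m_l)}(\v_l)$ lies in $\mathcal{F}_{\v_l(2)+(m_{l+1}-m_l)}=\mathcal{F}_{m_{l+1}}$. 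Since $m_1<\cdots<m_j$, all of these lie in $\mathcal{F}_{m_j}$, and the asserted independence follows.

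\textbf{Main obstacle.} The delicate point is the treatment of the intermediate steps. First, one must split ``no renewal at $n$'' into the $\text{In}$-failure and $\text{Out}$-failure branches and apply the nesting/bounded-horizon rewriting \emph{only} to the former, since $(\text{Out}(h^n(\x_1)))^c$ already has finite range and needs no truncation. Second, there is an apparent circularity: the bounded-horizon rewriting of $\{\gamma(h^n(\x_1))<\infty\}$ in the block $m_{l-1}<n<m_l$ uses the \emph{full} event $\text{In}(\v_l)$, yet in the final expression $\text{In}(\v_l)$ has been shrunk to $\text{In}^{(m_{l+1}-m_l)}(\v_l)$ for $l<j$; this is resolved exactly as in the passage from Lemma~\ref{lem:EventDecomposition_1} to Corollary~\ref{cor:IncrDistEquality_1}, by noting that the full events $\text{In}(\v_1),\dots,\text{In}(\v_{j-1})$ are, via the nesting cascade, recovered from the truncated ones together with the single full event $\text{In}(\v_j)$, so they may be used freely in the intermediate derivation.
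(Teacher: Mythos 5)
Your proposal is correct and follows exactly the route the paper intends: the paper provides no separate proof, stating only that ``the same argument as that of Corollary~\ref{cor:IncrDistEquality_1}'' applies, and your argument is precisely that argument transplanted to the renewal setting, with the new ingredient — using $\text{In}(\v_j)\cap\text{Out}(\v_j)=\text{In}^+(\v_j)\cap\text{Out}(\v_j)$ from (\ref{eq:EqualityOldpath_NewpathEventEquality}) to isolate a genuinely $\mathcal{F}_{m_j}$-independent factor — correctly identified. Your ``main obstacle'' discussion also resolves the right subtleties (splitting ``not a renewal'' into the $\text{In}$-failure and $\text{Out}$-failure branches, truncating only the former, and recovering the full $\text{In}(\v_l)$ events from the truncated ones via the nesting cascade together with $\text{In}^+(\v_j)\cap\text{Out}(\v_j)$), and your reading of the (inconsistently denoted) $\gamma(\cdot)$ in the display as the exit time $\beta(\cdot)$ is the intended one.
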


Corollary \ref{cor:Rwalk_IncrDistEquality_2} allows us 
to obtain the following distributional equality:
\begin{align}
\label{eq:InformationUpperPlaneatRenewal}
\{\Lambda_\w : \w \in \mathbb{H}^+(\v_\ell(2))\} \mid (h^{\sigma_\ell}(\x_1) = \v_\ell)
& \stackrel{d}{=} \{\Lambda_\w : \w \in \mathbb{H}^+(\v_\ell(2))\} \mid \text{In}^+(\v_\ell) \nonumber \\ 
& \stackrel{d}{=} \{\Lambda_\w : \w \in \mathbb{H}^+(0)\} \mid \text{In}^+(\mathbf{0}).
\end{align}
Equation \ref{eq:EqualityOldpath_NewpathEventEquality}
together with Corollary \ref{cor:Rwalk_IncrDistEquality_2} give us the first equality. 
The last equality follows from the translation invariance nature of our model. 
Equation (\ref{eq:InformationUpperPlaneatRenewal}) allows us to obtain 
the next proposition which proves equality of increment distributions
between successive renewal steps for the marginal process $\{h^j(\x_1) : j \geq 1\}$. 

\begin{proposition}
\label{prop:HalfPlaneDistEquality}
Fix any $\ell \geq 1$ and we have the following equality of distributions: 
\begin{align*}
 \{ (h^{\sigma_{\ell} + m}(\x_1) - \v_\ell ) : m \geq 1 \} \mid ( 
 h^{\sigma_{\ell}}(\x_1) = \v_{\ell})
 \stackrel{d}{=} \{ h^{m}(\mathbf{0}, V^+) : m \geq 1 \} \mid (\text{In}^+(\mathbf{0})).
\end{align*}
\end{proposition}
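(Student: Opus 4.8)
The plan is to reduce the left-hand side to the point set $V^+_{\v_\ell(2)}$ restricted to the upper half-plane and then apply the distributional identity \eqref{eq:InformationUpperPlaneatRenewal} together with translation invariance. First I would recall that the renewal event at the $\sigma_\ell$-th step can be equivalently written, by \eqref{eq:EqualityOldpath_NewpathEventEquality} and Corollary \ref{cor:Rwalk_IncrDistEquality_2}, as $\text{In}^+(\v_\ell)\cap \text{Out}(\v_\ell)$ intersected with an $\mathcal F_{\v_\ell(2)}$-measurable event; in particular the event $\{h^{\sigma_\ell}(\x_1)=\v_\ell\}$ carries the information that $\text{Out}(\v_\ell)$ holds, i.e.\ no point of $V^-_{\v_\ell(2)}$ lies inside $\nabla(\v_\ell)$. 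On this event, the observation \eqref{eq:EqualityOldpath_Newpath} gives $h^{\sigma_\ell+m}(\x_1)=h^m(\v_\ell,V)=h^m(\v_\ell,V^+)$ for all $m\ge 1$, so the future of the process from the $\sigma_\ell$-th step depends only on the point set $V^+_{\v_\ell(2)}$ in the upper half-plane $\mathbb{H}^+(\v_\ell(2))$.

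Next I would make this rigorous by conditioning. Write the event $\{h^{\sigma_\ell}(\x_1)=\v_\ell\}$ as the disjoint union over $(m_1,\dots,m_\ell)$ and $(\v_1,\dots,\v_{\ell-1})$ of the events appearing in Corollary \ref{cor:Rwalk_IncrDistEquality_2}; on each such piece the event factors as an $\mathcal F_{\v_\ell(2)}$-measurable event intersected with $\text{In}^+(\v_\ell)$, and $\text{In}^+(\v_\ell)$ is supported on $\{\Lambda_\w,B_\w,R_\w : \w\in\mathbb{H}^+(\v_\ell(2))\}$, hence independent of the $\mathcal F_{\v_\ell(2)}$-measurable part. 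Therefore the conditional law of $\{\Lambda_\w,B_\w,R_\w : \w\in\mathbb{H}^+(\v_\ell(2))\}$ given $\{h^{\sigma_\ell}(\x_1)=\v_\ell\}$ equals its conditional law given $\text{In}^+(\v_\ell)$ alone, which is precisely \eqref{eq:InformationUpperPlaneatRenewal}. Since $h^m(\v_\ell,V^+)-\v_\ell$ is a deterministic (measurable) function of $\{\Lambda_\w,B_\w,R_\w : \w\in\mathbb{H}^+(\v_\ell(2))\}$ — built by iterating the PH step using only upper half-plane points, shifted by $\v_\ell$ — applying this function to both sides of the distributional identity and then using translation invariance of the i.i.d.\ field $\{\Gamma_\w\}$ to move the reference point from $\v_\ell$ to $\mathbf 0$ yields
$$
\{ (h^{\sigma_{\ell}+m}(\x_1)-\v_\ell) : m\ge 1\}\mid (h^{\sigma_\ell}(\x_1)=\v_\ell)
\stackrel{d}{=} \{ h^m(\mathbf 0,V^+) : m\ge 1\}\mid \text{In}^+(\mathbf 0),
$$
as claimed.

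I expect the main obstacle to be the careful bookkeeping in the conditioning step: one must argue that conditioning on $\{h^{\sigma_\ell}(\x_1)=\v_\ell\}$ carries \emph{no} extra information about the upper half-plane configuration beyond $\text{In}^+(\v_\ell)$. The event $\{h^{\sigma_\ell}(\x_1)=\v_\ell\}$ a priori also records which intermediate steps $h^n(\x_1)$ were or were not $\tau$-steps and renewal steps, and these involve points in $\mathbb{H}^+$ at intermediate heights; the decomposition in Corollary \ref{cor:Rwalk_IncrDistEquality_2} is exactly what is needed to show all of that information is absorbed either into the $\mathcal F_{\v_\ell(2)}$-measurable factor or into the truncated events $\text{In}^{(m_{l+1}-m_l)}(\v_l)$, leaving $\text{In}^+(\v_\ell)$ as the only upper-half-plane constraint at height $\v_\ell(2)$ — and then the independence of $\text{In}^+(\v_\ell)$ and $\text{Out}(\v_\ell)$ (disjoint supports) closes the loop. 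The remaining steps — the measurability of $h^m(\cdot,V^+)$ and the translation invariance — are routine.
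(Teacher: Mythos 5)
Your proposal is correct and follows essentially the same route as the paper: conditioning via the decomposition in Corollary \ref{cor:Rwalk_IncrDistEquality_2}, using (\ref{eq:EqualityOldpath_Newpath}) to replace $h^m(\v_\ell,V)$ by $h^m(\v_\ell,V^+)$ on the renewal event, invoking (\ref{eq:InformationUpperPlaneatRenewal}) to reduce the conditioning to $\text{In}^+(\v_\ell)$, and finishing by translation invariance. The paper organizes these same ingredients as a chain of equalities of finite-dimensional conditional probabilities, whereas you phrase the argument at the level of the conditional law of the upper-half-plane configuration and then push it through the measurable map $h^m(\cdot,V^+)$; the two formulations are interchangeable.
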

\begin{proof}
For any $k \geq 1$, fix $n_1, \cdots, n_k \geq 1$. Given that
 $h^{\sigma_{\ell}}(\x_1) = \v_{\ell}$, we consider joint 
distribution of the random vector 
$(h^{\sigma_{\ell} + n_1}(\x_1), \cdots, h^{\sigma_{\ell} + n_k}(\x_1))$. 
Let ${\cal B}$ be an arbitrary Borel set in an appropriate space. It suffices to show that
\begin{align}
\label{eq:EqDist_1}
 & \P \Bigl ( \bigl( h^{\sigma_{\ell} + n_1}(\x_1) - \v_\ell, \cdots, 
 h^{\sigma_{\ell} + n_k}(\x_1) - \v_\ell ) \in {\cal B} 
 \mid (h^{\sigma_{\ell}}(\x_1) = \v_{\ell}) \Bigr ) \nonumber \\
&  =    \P \bigl ( \bigl(  h^{n_1}(\v_{\ell}, V^+) - \v_\ell, \cdots, 
 h^{n_k}(\v_{\ell}, V^+)- \v_\ell) \in {\cal B} 
 \mid \text{In}^+(\v_{\ell}) \bigr )\nonumber \\
 & =    \P \bigl ( \bigl(  h^{n_1}(\mathbf{0}, V^+), \cdots, h^{n_k}(\mathbf{0}, V^+)) \in {\cal B} 
 \mid \text{In}^+(\mathbf{0}) \bigr ). 
\end{align}
The last equality follows from the translation invariance nature of our model. 
For the first equality in (\ref{eq:EqDist_1}), we observe that 
\begin{align*}
& \P \Bigl ( \bigl( h^{\sigma_\ell + n_1}(\x_1) - \v_\ell, \cdots, h^{\sigma_\ell + n_k}(\x_1) 
- \v_\ell \bigr ) \in {\cal B} 
 \mid ( h^{\sigma_\ell}(\x_1) = \v_\ell) \Bigr )  \\
 & =   \P \Bigl ( \bigl( h^{n_1}(\v_\ell) - \v_\ell, \cdots, h^{n_k}(\v_\ell) - \v_\ell \bigr ) 
  \in {\cal B}  \mid ( h^{\sigma_\ell}(\x_1) = \v_\ell) \Bigr )  \\
 & =  \P \Bigl ( \bigl( h^{ n_1}(\v_\ell, V^+) - \v_\ell, \cdots, h^{n_k}(\v_\ell, V^+) - \v_\ell \bigr ) \in {\cal B} \mid (h^{\sigma_\ell}(\x_1) = \v_\ell) \Bigr )  \\
 & =   \P \Bigl ( \bigl( h^{ n_1}(\v_\ell, V^+) - \v_\ell, \cdots, h^{n_k}(\v_\ell, V^+) - \v_\ell \bigr ) \in {\cal B}  \mid \text{In}^+(\v_\ell) \Bigr ).
\end{align*} 
The last equality follows from (\ref{eq:InformationUpperPlaneatRenewal}). 
This completes the proof. 
\end{proof}

We are now ready to prove Proposition \ref{prop:SinglePtRwalk}. 

\noindent {\bf Proof of Proposition \ref{prop:SinglePtRwalk} :} Proposition \ref{prop:HalfPlaneDistEquality} gives us that the distribution of the increment random vector
$(Y_{\ell + 1} - Y_{\ell})$ for any $\ell \geq 1$ can be reconstructed 
 as follows. Consider an i.i.d. copy of $V$ as $V^{\text{ind}}$. 
 As $V^+_0$, the set $V^{\text{ind},+}_0$ is defined similarly. 
 Conditional to  the event $\text{In}^+({\bf 0})$ w.r.t. the point set 
 $V^{\text{ind}}$, start a PH path from ${\bf 0}$ 
 using  the point set $V^{\text{ind},+}_0$  only
 until occurrence of the next renewal event. 
Let $\mathbf{Z}_0$ be the position of the above path at the next renewal step. 
Then, Proposition \ref{prop:HalfPlaneDistEquality} confirms that for any $\ell \geq 1$ 
we must have 
\begin{equation}
\label{eqn:IncrementDistrn}
(Y_{\ell + 1} - Y_\ell) \stackrel{d}{=} \mathbf{Z}_0 .
\end{equation}  
This proves that the increment random vectors are identically distributed. Next we show that 
the increments are independent as well. 

Recall that the random vector $Y_\ell = h^{\sigma_\ell}(\x_1)$ 
is ${\cal S}_{\ell}$ measurable where ${\cal S}_{\ell}$ is defined as 
in $(\ref{def:S_Filtration})$. Fix $ m \geq 1 $ and Borel 
subsets $ B_2, \dotsc, B_{m+1} $ of $ \Z\times \N$. Let
$ I_{\ell+1} ( B_{\ell+1} ) $ be the indicator random variable of the event 
$ (Y_{\ell+1} - Y_{\ell})  \in B_{\ell } $. Then, we have
\begin{align*}
& \P \bigl(  (Y_{\ell + 1} - Y_{\ell }) \in B_{\ell + 1} \text{ for } \ell =  1, \dotsc, m \bigr )
= \E (  \prod_{ \ell = 1}^{m } I_{\ell+1} ( B_{\ell+1} )) \\
&  = \E \Bigl( \E  \bigl( \prod_{ \ell = 1}^{m } I_{\ell+1} ( B_{\ell+1} )
\mid {\cal S}_{m} \bigl) \Bigr) = \E \Bigl( \prod_{ \ell = 1}^{m-1 } I_{\ell+1} ( B_{\ell+1} )
\E  \bigl(  I_{m+1} ( B_{m+1} ) \mid {\cal S}_{m} \bigl) \Bigr)
\end{align*}
as the random variables $ I_{\ell+1} ( B_{\ell+1} )  $ are measurable w.r.t. $ {\cal S}_{m} $
for $ \ell =  1, \dotsc, m-1 $. 

By the earlier discussion, we have that the conditional distribution of
 $ Y_{m+1} - Y_m$ given $ {\cal S}_{m}$ is given by $\mathbf{Z}_0 $. Therefore, we have
\begin{align*}
& \P (  (Y_{\ell+1} - Y_{\ell}) \in B_{\ell+1 } \text{ for } \ell =  1, \dotsc, m )\\
& =  \E \Bigl( \prod_{ \ell = 1}^{m-1 } I_{\ell+1} ( B_{\ell+1} )
\E  \bigl(  I_{m+1} ( B_{m+1} ) \mid {\cal S}_{m} \bigl) \Bigr) \\
& = \P ( (\mathbf{Z}_0 \in B_{m+1} ) \E \Bigl( \prod_{ \ell = 1}^{m - 1 } I_{\ell+1} ( B_{\ell+1} ) \Bigr).
\end{align*}
Now, induction on $m$ completes the proof.
\qed

The next lemma proves that the distribution of the increment random variable
 $(Y_2(1)- Y_1(1))$ is symmetric about zero.   
\begin{lemma}
\label{lem:SinglePtRwalkMeanZero}
For all $m \in \N$, we have  
$$
\P(Y_2(1)- Y_1(1) = m) = \P( Y_2(1)- Y_1(1) = -m).
$$
\end{lemma}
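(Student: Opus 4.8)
The plan is to exploit the reflection symmetry of the model about the $y$-axis. Let $\Phi : \R^2 \to \R^2$ denote the reflection $\Phi(x,y) = (-x,y)$. By Proposition \ref{prop:SinglePtRwalk} and the identity (\ref{eqn:IncrementDistrn}), it is enough to prove that $\mathbf{Z}_0(1) \stackrel{d}{=} -\mathbf{Z}_0(1)$, where $\mathbf{Z}_0$ is the position at the first renewal of the PH path started from $\mathbf{0}$, built using $V^{\text{ind},+}_0$, conditioned on $\text{In}^+(\mathbf{0})$.

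First I would build an equivariant coupling. Starting from the driving i.i.d. field $\{\Gamma_\u = (B_\u, R_\u, \Lambda_\u) : \u \in \Z^2\}$ with $\Lambda_\u = (X_\u, Y_\u)$, define a reflected field by $\Gamma'_{\Phi(\u)} := (B_\u, -R_\u, (-X_\u, Y_\u))$. Since $R_\u$ is Rademacher and $X_\u$ has a symmetric law by (\ref{eq:ExpTailPerturbRV}) (cf. assumption (ii) of Remark \ref{rem:GenExpTailAssumptions}), while $B_\u$ and $Y_\u$ are left untouched, the field $\{\Gamma'_\u\}$ is again an i.i.d. copy with the \emph{same} law as $\{\Gamma_\u\}$. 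Denoting by $V'$ and $(V')^+_l$ the sets built from $\{\Gamma'_\u\}$ via (\ref{def:Vertex_Set_V}) and (\ref{def:V_Subsets}), one reads off directly that $V' = \Phi(V)$ and $(V')^+_l = \Phi(V^+_l)$.

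Next I would check that the whole renewal construction is $\Phi$-equivariant. Reflection preserves horizontal distances, so $J(\Phi(\u), V') = J(\u, V)$; comparing the three cases in the definition of the PH step $h$ and using that the tie-break variable has been negated, one gets $h(\Phi(\u), V') = \Phi(h(\u, V))$, and likewise with $V$ replaced by $V^+_{\u(2)}$, so that $h^j(\Phi(\u), (V')^+) = \Phi(h^j(\u, V^+))$ for every $j \ge 1$. Each parabolic region $\nabla(\v)$ is symmetric about the vertical line through $\v$, hence $\Phi(\nabla(\v)) = \nabla(\Phi(\v))$; consequently the events $\text{In}(\cdot)$, $\text{In}^+(\cdot)$, $\text{Out}(\cdot)$, the information sets $H_{\tau_j}$, and thus the auxiliary times $\tau_j$ and the renewal times $\sigma_\ell$, all transform equivariantly. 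In particular, since $\Phi(\mathbf{0}) = \mathbf{0}$ the conditioning event $\text{In}^+(\mathbf{0})$ is $\Phi$-invariant, and on this event $h^{\sigma_1}(\mathbf{0}, (V')^+) = \Phi\bigl(h^{\sigma_1}(\mathbf{0}, V^+)\bigr)$ with the same first renewal index $\sigma_1$.

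Finally, because $\{\Gamma'_\u\} \stackrel{d}{=} \{\Gamma_\u\}$ and $\text{In}^+(\mathbf{0})$ has the same positive probability in both, the conditional law of $h^{\sigma_1}(\mathbf{0}, (V')^+)$ given $\text{In}^+(\mathbf{0})$ coincides with that of $h^{\sigma_1}(\mathbf{0}, V^+)$ given $\text{In}^+(\mathbf{0})$; that is, $\mathbf{Z}_0 \stackrel{d}{=} \Phi(\mathbf{Z}_0) = (-\mathbf{Z}_0(1), \mathbf{Z}_0(2))$, whence $\mathbf{Z}_0(1) \stackrel{d}{=} -\mathbf{Z}_0(1)$, and the claim follows from (\ref{eqn:IncrementDistrn}). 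The one delicate point is the bookkeeping in this penultimate step: one must make sure that the renewal time $\sigma_1$ and the renewal position are genuinely functions of the driving field that are equivariant under $\Phi$ — i.e. that every ingredient entering the definition of the $\tau_j$'s, the $H_{\tau_j}$'s, and the conditioning on $\text{In}^+$ depends on the configuration only through the reflection-symmetric parabolic regions and the tie-break variables, which is exactly where the negation $R_\u \mapsto -R_\u$ is used.
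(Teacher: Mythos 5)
Your proof is correct, and the core idea --- a reflection-symmetry argument, showing that the PH dynamics and the whole renewal apparatus are equivariant under simultaneously negating $X_\u$ and $R_\u$ --- is the same as the one in the paper's appendix. The organizational difference is worth noting: you first invoke Proposition \ref{prop:SinglePtRwalk} and (\ref{eqn:IncrementDistrn}) to reduce the claim to the symmetry of $\mathbf{Z}_0(1)$, and then apply a \emph{global} reflection $\Phi$ of the entire driving field about the $y$-axis, so that $V' = \Phi(V)$, $(V')^\pm_l = \Phi(V^\pm_l)$, and all of $\text{In}$, $\text{In}^+$, $\text{Out}$, $H_{\tau_j}$, $\tau_j$, $\sigma_\ell$ transform cleanly. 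The paper instead works directly with the process conditioned on $h^{\sigma_j}(\x_1)=\v_j$ and reflects \emph{only the upper half-plane} $\mathbb{H}^+(\v_j(2))$ about the line $x=\v_j(1)$, leaving the lower half-plane field untouched; this is necessary in their setup because the conditioning pins down the lower half-plane configuration, and they then verify by hand that the next renewal step gets reflected as well. Your route is a bit tidier because the reduction to $\mathbf{Z}_0$ removes the asymmetry between the two half-planes and lets you reflect the whole field at once; the paper's route avoids re-appealing to the representation (\ref{eqn:IncrementDistrn}) but pays for it with extra bookkeeping (the two-case definition of $\Gamma'_\w$ depending on whether $\w$ lies above or below level $\v_j(2)$, and the explicit check that $\text{Out}(\v_{j+1})$ transforms correctly). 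Both are valid; the final delicate point you flag --- that $\sigma_1$ and the renewal position are measurable, $\Phi$-equivariant functionals of the field --- is exactly what the paper's explicit verification amounts to.
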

Given $h^{\sigma_1}(\x_1) = \v_1$, proof of Lemma \ref{lem:SinglePtRwalkMeanZero} 
follows from the observation that distribution of the point set $V^+_{\v_1(2)} $ 
 remains invariant with respect to reflection about the line $x = \v_1(1)$.
Details of this argument is given in the appendix section. 

For any $\ell \geq 1$, the `In' event condition at renewal step ensures that 
$$
|Y_{\ell + 1}(1) - Y_{\ell}(1)| \leq (Y_{\ell + 1}(2) - Y_{\ell}(2))^2
\text{ with probability }1.
$$
 Hence, Proposition \ref{prop:Sigma_Tail} and Lemma 
\ref{lem:SinglePtRwalkMeanZero} readily give us Corollary \ref{cor:SingleRW_Moments}. 
\begin{corollary}
\label{cor:SingleRW_Moments}
The increment r.v. $(Y_{2}(1) - Y_{1}(1))$ has moments of all 
orders and it's mean is zero. 
\end{corollary}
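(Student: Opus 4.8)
The plan is to combine the deterministic geometric constraint imposed by the `In' event at a renewal step with the exponential tail estimate of Proposition \ref{prop:Sigma_Tail} to obtain finiteness of all moments, and then to use the symmetry established in Lemma \ref{lem:SinglePtRwalkMeanZero} to conclude that the mean vanishes.

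First I would observe that, since $Y_\ell = h^{\sigma_\ell}(\x_1)$, the second-coordinate increment is exactly $Y_{\ell+1}(2) - Y_\ell(2) = \sigma_{\ell+1} - \sigma_\ell$, the number of steps between the $\ell$-th and the $(\ell+1)$-th renewal. Applying Proposition \ref{prop:Sigma_Tail} with $\ell = 1$ and taking expectations over ${\cal S}_1$ yields the unconditional bound $\P(\sigma_2 - \sigma_1 > n) \leq C_0 \exp(-C_1 n)$ for all $n \in \N$. Hence the nonnegative integer-valued random variable $\sigma_2 - \sigma_1$ has finite moments of every order.

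Next, the `In'-event condition recorded just before the statement of the corollary gives, with probability one, $|Y_2(1) - Y_1(1)| \leq (Y_2(2) - Y_1(2))^2 = (\sigma_2 - \sigma_1)^2$. Consequently, for every $p \geq 1$,
\[
\E\bigl[ |Y_2(1) - Y_1(1)|^p \bigr] \leq \E\bigl[ (\sigma_2 - \sigma_1)^{2p} \bigr] < \infty,
\]
so $(Y_2(1) - Y_1(1))$ has moments of all orders; in particular it is integrable, so its expectation is well defined.

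Finally, Lemma \ref{lem:SinglePtRwalkMeanZero} asserts that $\P(Y_2(1) - Y_1(1) = m) = \P(Y_2(1) - Y_1(1) = -m)$ for all $m$, i.e. the law of $Y_2(1) - Y_1(1)$ is symmetric about $0$; combined with the integrability just established, this forces $\E[Y_2(1) - Y_1(1)] = 0$. There is no genuine obstacle in this argument; the only point that needs a word of care is that integrability must be secured (via the quadratic domination by $(\sigma_2 - \sigma_1)^2$ together with the exponential tail) before symmetry is invoked, since a symmetric law need not have a well-defined mean.
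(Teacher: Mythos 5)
Your proposal is correct and follows essentially the same route as the paper: the quadratic `In'-event bound $|Y_2(1)-Y_1(1)| \leq (\sigma_2-\sigma_1)^2$, the exponential tail from Proposition \ref{prop:Sigma_Tail} for all moments, and Lemma \ref{lem:SinglePtRwalkMeanZero} for symmetry and hence a zero mean. Your explicit remark that integrability must be secured before symmetry is invoked is a worthwhile clarification of a point the paper leaves implicit.
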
 


\section{Properties for the joint process at renewal steps}
\label{sec:JtRenewal}

In this section we consider renewal steps for the joint process of PH paths 
starting from two vertices $\x_1$ and $\x_2$ with $\x_1(2) = \x_2(2)=0$. 
The main objective of this section is to show that 
if vertices $h^{\sigma_{\ell}}(\x_1)$ and $h^{\sigma_{\ell}}(\x_2)$ 
are far away, then the successive increment behaves like a mean zero random walk
 on an event with high probability. 
This statement has been made 
precise in Proposition \ref{prop:ZprocessRwalkProperties}.

W.l.o.g. we assume that $\x_1 (1) < \x_2(1)$. Non-crossing property of 
PH paths ensures that we have 
$h^m(\x_1) (1) \leq h^m(\x_2)(1)$ for all $m \geq 1$. For $\ell \geq 0$
we define 
\begin{equation}
\label{eq:Z_process}
Z_\ell =  Z_\ell(\x_1, \x_2) := h^{\sigma_{\ell}}(\x_2)(1) - h^{\sigma_{\ell}}(\x_1)(1).
\end{equation}
Given ${\cal G}_\ell$, let us now focus on the case where $Z_{\ell}$ is large, and 
we will show that there exists an event $F_\ell$ which occurs with {\it high probability} such that 
on this event, the increment r.v. $Z_{\ell+1}-Z_{\ell}$ is symmetric 
about zero. Details are given below in Proposition \ref{prop:ZprocessRwalkProperties} 
which obtains some additional properties of the increment r.v.'s. 
This result will be crucially used to obtain the tail decay of the coalescing time for PH paths. 
For details see Section \ref{sec:Tail_CoalTime}.

The next result says that, far from the origin, 
the process $\{Z_\ell : \ell \geq 0\}$ behaves like a mean zero 
random walk satisfying certain moment bounds. 

\begin{prop}
\label{prop:ZprocessRwalkProperties}
Fix $\x_1,\x_2\in \Z^2$ with $\x_1(1) < \x_2(1)$
 and consider the joint process of perturbed Howard 
paths starting from these two points till the $\ell$-th (joint) renewal step. 
Given the $\sigma$-field ${\cal G}_{\ell}$, 
there exist positive constants $M_0, C_0, C_1, C_2$ and $C_3$ 
and an event $F_\ell$ such that:
\begin{itemize}
\item[(i)] On the event $Z_\ell > M_0 $ we have 
$$
\P(F^c_\ell \mid {\cal G}_{\ell} ) \leq C_3/(Z_\ell)^3\text{ and }
\E \big[ (Z_{\ell +1} - Z_{\ell}) \mathbf{1}_{F_\ell} \mid {\cal G}_{\ell} \big] = 0.
$$
\item[(ii)] On the event $\{Z_\ell \leq  M_0 \}$ we have
 $$
 \E \big[ ( Z_{\ell +1} - Z_{\ell} )  \mid {\cal G}_\ell \big] \leq C_0 .
 $$

\item[(iii)] For any $\ell\geq 0$ and $m>0$, there exists $c_m>0$ such that, 
on the event $Z_\ell \leq m$,
\begin{equation*}
\P \big( Z_{\ell+1} = 0 \mid {\cal G}_\ell \big) \geq c_m ~.
\end{equation*}
\item[(iv)] On the event $Z_\ell > M_0 $, we have
\begin{equation*}
\E \bigl[ ( Z_{\ell+1} - Z_{\ell} )^2 \mid {\cal  G}_\ell \bigr] \geq C_1 \; \text{ and } \;
\E \bigl[ |Z_{\ell+1} - Z_{\ell} |^3 \mid {\cal  G}_\ell  \bigr] \leq C_2 ~.
\end{equation*}
\end{itemize}
\end{prop}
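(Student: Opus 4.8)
The plan is to reduce everything to the single-path renewal analysis of Section \ref{sec:Renewal_k_1} applied to the two paths $\pi^{\x_1}$ and $\pi^{\x_2}$ separately, exploiting the fact that when $Z_\ell$ is large, the two parabolic regions $\nabla(h^{\sigma_\ell}(\x_1))$ and $\nabla(h^{\sigma_\ell}(\x_2))$ are far apart and the respective future evolutions are, on a high-probability event, governed by disjoint collections of perturbation random vectors. First I would describe the event $F_\ell$: given $\mathcal{G}_\ell$, write $\v_i = h^{\sigma_\ell}(\x_i)$ for $i=1,2$, and let $F_\ell$ be the event that between renewal steps $\ell$ and $\ell+1$, both paths stay inside a horizontal band of width $o(Z_\ell)$ around their starting columns --- concretely, that $\max\{|h^{\sigma_\ell+m}(\x_i)(1) - \v_i(1)| : 1 \le m \le \sigma_{\ell+1}-\sigma_\ell,\ i=1,2\} < Z_\ell/3$, and moreover that the relevant perturbation vectors feeding into $\nabla(\v_1)$ and $\nabla(\v_2)$ come from disjoint half-line families. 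On $F_\ell$, the increments $(Z_{\ell+1}-Z_\ell)$ decompose as $(h^{\sigma_{\ell+1}}(\x_2)(1) - \v_2(1)) - (h^{\sigma_{\ell+1}}(\x_1)(1) - \v_1(1))$ with the two bracketed quantities conditionally independent and each distributed (by Proposition \ref{prop:HalfPlaneDistEquality} and Lemma \ref{lem:SinglePtRwalkMeanZero}) as the symmetric single-path renewal increment $\mathbf{Z}_0(1)$.

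For part (i), the bound $\P(F_\ell^c \mid \mathcal{G}_\ell) \le C_3/(Z_\ell)^3$ follows from Corollary \ref{cor:SingleRW_Moments} (the single-path increment $(Y_2(1)-Y_1(1))$ has all moments), via a third-moment Markov bound: the event that path $i$ wanders more than $Z_\ell/3$ from its column before the next renewal, or that $\sigma_{\ell+1}-\sigma_\ell$ is so large as to force an overlap, has probability $O((Z_\ell)^{-3})$ using the exponential tail of $\sigma_{\ell+1}-\sigma_\ell$ from Proposition \ref{prop:Sigma_Tail} together with Corollary \ref{cor:PH_decay}/Lemma \ref{lem:ConditionalMarginalTailBound}(iv). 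On $F_\ell$ the two increments are conditionally independent and each symmetric about zero with conditional mean $0$, so $\E[(Z_{\ell+1}-Z_\ell)\mathbf{1}_{F_\ell} \mid \mathcal{G}_\ell] = 0$ as claimed. Part (ii) is a crude deterministic-type bound: on $\{Z_\ell \le M_0\}$ the quantity $(Z_{\ell+1}-Z_\ell)$ is dominated in absolute value by the sum of the two single-path displacement magnitudes plus $M_0$, and each single-path displacement has a finite first moment uniformly (Proposition \ref{prop:Sigma_Tail} gives exponential tails on $\sigma_{\ell+1}-\sigma_\ell$, and the `In' condition gives $|h^{\sigma_{\ell+1}}(\x_i)(1) - \v_i(1)| \le (\sigma_{\ell+1}-\sigma_\ell)^2$), so the expectation is bounded by a universal constant $C_0$. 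Part (iii) is a continuation-of-the-existing-argument: on $\{Z_\ell \le m\}$, arrange via a finite configuration of special points (as in Lemma \ref{lem:L_decrease_Bound} and Lemma \ref{lem:InEventProb_bd}) that both paths are forced into the same column at the next step and that a joint renewal occurs there; this has probability at least some $c_m>0$ depending only on $m$ and the process parameters, giving $Z_{\ell+1}=0$. Part (iv): the lower bound $\E[(Z_{\ell+1}-Z_\ell)^2 \mid \mathcal{G}_\ell] \ge C_1$ on $\{Z_\ell > M_0\}$ follows because, with conditional probability bounded below, one of the two paths (say via a special-point shield construction) moves by at least one unit while the renewal occurs, so the conditional second moment is bounded below by a positive constant not depending on $\ell$; the upper bound $\E[|Z_{\ell+1}-Z_\ell|^3 \mid \mathcal{G}_\ell] \le C_2$ follows from the triangle inequality and the uniform third-moment bounds on the individual single-path increments (Corollary \ref{cor:SingleRW_Moments}, with the conditional versions supplied by Lemma \ref{lem:ConditionalMarginalTailBound}).

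The main obstacle I expect is making rigorous the claim that on $F_\ell$ the two post-renewal evolutions are \emph{exactly} conditionally independent and each \emph{exactly} distributed as the single-path renewal increment --- in particular that conditioning on the joint renewal event $\{\sigma_\ell = \cdots,\ h^{\sigma_\ell}(\x_1)=\v_1,\ h^{\sigma_\ell}(\x_2)=\v_2\}$ does not introduce a subtle dependence between the two futures or distort the marginal laws. The resolution is to use the event-decomposition machinery of Corollary \ref{cor:IncrDistEquality_1}/Corollary \ref{cor:Rwalk_IncrDistEquality_2}: the joint renewal event is the intersection of an $\mathcal{F}_{\sigma_\ell}$-measurable event with $\text{In}^+(\v_1) \cap \text{In}^+(\v_2) \cap \text{Out}(\v_1) \cap \text{Out}(\v_2)$, and on $F_\ell$ the `$\text{In}^+$' and `$\text{Out}$' pieces for the two paths are supported on disjoint families of perturbation vectors lying in $\mathbb{H}^+(\sigma_\ell)$, hence independent of $\mathcal{F}_{\sigma_\ell}$ and of each other. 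One then carries out, separately for each path, the identical computation to that in the proof of Proposition \ref{prop:HalfPlaneDistEquality}, and the symmetry of each increment comes from Lemma \ref{lem:SinglePtRwalkMeanZero}. The bookkeeping of which perturbation vectors can reach $\nabla(\v_1) \cup \nabla(\v_2)$ --- and verifying that on $F_\ell$ these two sets of vectors are genuinely disjoint when $Z_\ell > M_0$ for $M_0$ chosen large relative to the band width --- is the delicate step, but it is of the same flavour as the overshoot estimates already established in Lemma \ref{lem:N_SubExptail} and Remark \ref{rem:Overshoot_ExpDecay}.
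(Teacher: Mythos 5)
Your proof attempt for part (i) rests on a claim that does not hold. You assert that on $F_\ell$ the two bracketed quantities $(h^{\sigma_{\ell+1}}(\x_2)(1) - \v_2(1))$ and $(h^{\sigma_{\ell+1}}(\x_1)(1) - \v_1(1))$ are ``conditionally independent and each distributed ... as the symmetric single-path renewal increment $\mathbf{Z}_0(1)$.'' Both halves of this claim are false. The two quantities share the common random time $\sigma_{\ell+1} = \sigma_{\ell+1}(\x_1,\x_2)$, which is the \emph{joint} renewal time: it is the first $\tau$-step at which the `In' and `Out' conditions hold simultaneously for \emph{both} paths. Evaluating the two marginal displacements at this common stopping time makes them dependent (they are coupled through $\sigma_{\ell+1}$), and moreover $\sigma_{\ell+1}$ is typically strictly later than either path's own next single-path renewal, so the marginal laws are \emph{not} those of $\mathbf{Z}_0(1)$. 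Proposition \ref{prop:HalfPlaneDistEquality} and Lemma \ref{lem:SinglePtRwalkMeanZero}, which you invoke, are statements about the single-path renewal structure with $\sigma_\ell$ being the single-path renewal time, and they do not transfer directly to the two-path process evaluated at the joint renewal time. Consequently the conclusion $\E[(Z_{\ell+1}-Z_\ell)\mathbf{1}_{F_\ell} \mid \mathcal{G}_\ell]=0$ does not follow from your argument. Your ``main obstacle'' paragraph anticipates a difficulty but does not repair it: disjointness of the supporting families of perturbation vectors is not enough; the coupling through the joint stopping time remains.

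The paper's proof takes a fundamentally different route: a measure-preserving swap. One places two large disjoint rectangles centred at $\w_1=h^{\sigma_\ell}(\x_1)$ and $\w_2=h^{\sigma_\ell}(\x_2)$ and interchanges the perturbation random vectors attached to the lattice points inside them, producing a new point process $V'$. On the event $F_\ell$ (which is assembled from five sub-events $F^1_\ell,\dots,F^5_\ell$ designed to force both paths to use only vectors from their own rectangles, and to force the next joint renewal for $V'$ to occur at the same number of steps but at the swapped positions), this swap interchanges the two paths' trajectory shapes, so $Z_{\ell+1}-Z_\ell$ becomes $-(Z_{\ell+1}-Z_\ell)$ while $\sigma_{\ell+1}$ is unchanged. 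Because the swap is $\mathcal{G}_\ell$-measurable and preserves the conditional law of the random field, the conditional expectation of $(Z_{\ell+1}-Z_\ell)\mathbf{1}_{F_\ell}$ equals its own negation and is therefore $0$. An alternative that is closer in spirit to what you seem to want would be to reflect path $1$'s region alone about the column $x=\v_1(1)$ (as in the appendix proof of Lemma \ref{lem:SinglePtRwalkMeanZero}), show that this reflection fixes $\sigma_{\ell+1}$ and $F_\ell$ and negates the first bracket, and similarly for path $2$; that would give $\E[(Z_{\ell+1}-Z_\ell)\mathbf{1}_{F_\ell} \mid \mathcal{G}_\ell]=0$ by linearity without any independence claim. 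But either way you need a symmetry argument that acts while holding the joint stopping time and the event $F_\ell$ fixed; the independence-of-increments decomposition you propose is not available. Your arguments for parts (ii)--(iv) are essentially the same as the paper's and are fine.
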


\begin{proof}
The required event $F_\ell$ will be defined as an intersection of several other events. 
We first describe the heuristics. Consider two disjoint large rectangles 
centred around the points $h^{\sigma_\ell}(\x_1)$ and $h^{\sigma_\ell}(\x_2)$. 
Firstly we will define two events in such a way so that their intersection ensures that 
the next (joint) renewal happens inside these rectangles and till the next  renewal step,  
both the paths use perturbed open points from these two rectangles {\it only}. 
We then consider a `new' set of perturbed open points where perturbed open points 
within the two rectangular regions are interchanged and perturbed open points outside 
these two regions remain unchanged. Since both the (original) 
paths use perturbed open points from these two rectangles only till the next renewal step,
 trajectories of the concerned paths constructed  using `new' (transformed) point process   
 gets interchanged till the next renewal step (w.r.t. the original point process). 
 We need to define additional events to
ensure that the transformed step actually gives the next (joint) renewal step
w.r.t. the transformed point process.   

We need to introduce some notations. For simplicity of notations we first set  
$$
l_x = \lfloor  Z_{\ell}/40 \rfloor \text{ and } l_y = \lfloor \sqrt{Z_{\ell}/40}\rfloor .
$$
Given $h^{\sigma_\ell}(\x_i) = \w_i$ for $i = 1,2$, we define the 
following three rectangular regions both centred at $\w_i$:
\begin{align*}
{\cal R}^1_i & := \w_i \oplus [- l_x , l_x] \times [0, l_y] \text{ and } \\
{\cal R}^2_i & := \w_i \oplus [- 9 l_x , 9 l_x] \times [0, 3 l_y]\\.
{\cal R}^3_i & := \w_i \oplus [- 18 l_x , 18 l_x] \times [0, 3 l_y]. 
\end{align*}
Clearly, we have ${\cal R}^1_i \subseteq {\cal R}^2_i  \subseteq {\cal R}^3_i$ and the outer 
rectangles ${\cal R}^3_1$ and ${\cal R}^3_2$ are disjoint.   
As $(\w_1, \w_2)$ gives a (joint) renewal step, the `In' event condition ensures that
for $i = 1,2$ we have     
\begin{align*}
h^{ m}(\w_i) & = h^{ m}(\w_i, V) \in {\cal R}^1_i \text{ for all }1 \leq m \leq  
l_y \text{ as well as }\\
h^{ m}(\w_i) &  = h^{ m}(\w_i, V)  \in {\cal R}^2_i \text{ for all }1 \leq m \leq  
3 l_y.
\end{align*}
The required event $F_\ell$ will be defined as intersection of several events. 
For an illustration of the event $F_\ell$ we refer the reader to Figure \ref{fig:InEvent_6}.  
Towards that we first define the following two events :
\begin{align*}
F^1_\ell & := \{\sigma_{ \ell + 1}(\x_1, \x_2) - \sigma_{ \ell}(\x_1, \x_2) \leq l_y \} \text{ and }\\
F^2_\ell & := \{ \tilde{\w} \notin {\cal R}^2_1 \cup {\cal R}^2_2 \text{ for all }
\w \in \mathbb{H}^+(\w_1(2)) \setminus ( \cup_{i =1}^2 {\cal R}^3_i )\}.
\end{align*}
We observe that on the event $F^1_\ell\cap F^2_\ell$, at 
the next joint renewal step, i.e., the $\sigma_{\ell + 1}$-th step, 
the PH path starting from $\w_i$ stays  inside the  innermost rectangle 
${\cal R}^1_i$  for $i = 1,2$. Moreover, till 
the $ l_y$-th step, which includes the next renewal step, 
$i$-th PH path uses perturbed open vertices from the outer rectangle ${\cal R}^3_i$ only.

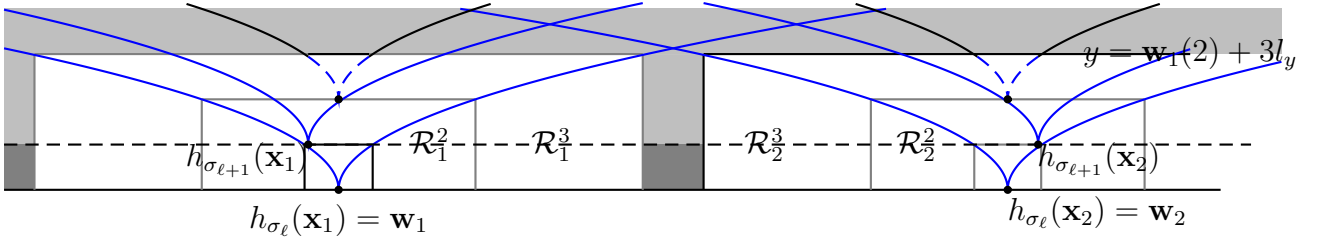
\begin{figure}
\begin{center}
\psset{unit=.8 cm}
\begin{pspicture}(-10,-2)(9,2.5)

\pscustom[linestyle=none,linecolor = lightgray, fillstyle=solid,fillcolor=lightgray]{
\psline(-10.5,0)(-10.5, 2.25)
\psline(-10.5,2.25)(-10,2.25)
\psline(-10,2.25)(-10,0)
\psline(-10,0)(-10.5,0)}

\pscustom[linestyle=none,linecolor = lightgray, fillstyle=solid,fillcolor=lightgray]{
\psline(0,0)(0, 2.25)
\psline(0,2.25)(1,2.25)
\psline(1,2.25)(1,0)
\psline(1,0)(0.5,0)}
    
\pscustom[linestyle=none,linecolor = lightgray, fillstyle=solid,fillcolor=lightgray]{
\psline(-10.5,2.25)(10.5, 2.25)
\psline(10.5,2.25)(10.5,3)
\psline(10.5,3)(-10.5,3)
\psline(-10.5,3)(-10.5,2.25)}

\pscustom[linestyle=none,linecolor = gray, fillstyle=solid,fillcolor=gray]{
\psline(-10.5,0)(-10.5, 0.75)
\psline(-10.5,0.75)(-10,0.75)
\psline(-10,0.75)(-10,0)
\psline(-10,0)(-10.5,0)}

\pscustom[linestyle=none,linecolor = gray, fillstyle=solid,fillcolor=gray]{
\psline(0,0)(0, 0.75)
\psline(1,0.75)(1,0.75)
\psline(1,0.75)(1,0)
\psline(1,0)(0,0)}

 \psline(-10.5, 0)(9.5,0)
     \psline(-5.56,.75)(-4.44, .75)
      \psline(-5.56,.75)(-5.56,0)
      \psline(-4.44,.75)(-4.44,0)

     \psline[linecolor = gray](-7.25,1.5)(-2.75, 1.5)
      \psline[linecolor = gray](-7.25,1.5)(-7.25,0)
      \psline[linecolor = gray](-2.75,1.5)(-2.75,0)

     \psline[linecolor = gray](-10,2.25)(0, 2.25)
      \psline[linecolor = gray](-10,2.25)(-10,0)
      \psline[linecolor = gray](0,2.25)(0,0)

\psline[linecolor = gray](5.45,.75)(6.55, .75)
      \psline[linecolor = gray](5.45,.75)(5.45,0)
      \psline[linecolor = gray](6.55,.75)(6.55,0)

  \psline[linecolor = gray](3.75,1.5)(8.25, 1.5)
      \psline[linecolor = gray](3.75,1.5)(3.75,0)
      \psline[linecolor = gray](8.25,1.5)(8.25,0)

\psline(1,2.25)(9, 2.25)
      \psline(1,2.25)(1,0)
    
\psplot[plotpoints=2000,linecolor=blue]{-10.5}{4}{ x 5 add abs sqrt }
\psplot[plotpoints=2000,linecolor=blue]{-10.5}{0}{ x 5.5 add abs sqrt .75 add }
\psplot[plotpoints=2000,linecolor=blue]{-3}{10.5}{ x 6 sub abs sqrt }  
\psplot[plotpoints=2000,linecolor=blue]{1}{9}{ x 6.5 sub abs sqrt .75 add }

\psplot[plotpoints=2000,linecolor=blue, linestyle = dashed]{-5.5}{-4.5}{ x 5 add abs sqrt 1.5 add}
\psplot[plotpoints=2000]{-7.5}{-5.5}{ x 5 add abs sqrt 1.5 add}
\psplot[plotpoints=2000]{-4.5}{-2.5}{ x 5 add abs sqrt 1.5 add}

\psplot[plotpoints=2000,linecolor=blue, linestyle = dashed]{5.5}{6.5}{ x 6 sub abs sqrt 1.5  add}
\psplot[plotpoints=2000]{3.5}{5.5}{ x 6 sub abs sqrt 1.5  add}
\psplot[plotpoints=2000]{6.5}{8.5}{ x 6 sub abs sqrt 1.5  add}

\pscircle[fillcolor=black,fillstyle=solid](-5, 0){.05}
\pscircle[fillcolor=black,fillstyle=solid](-5.5, .75){.05}
\pscircle[fillcolor=black,fillstyle=solid](6, 0){.05}
\pscircle[fillcolor=black,fillstyle=solid](6.5, 0.75){.05}

\pscircle[fillcolor=black,fillstyle=solid](-5, 1.5){.05}
\pscircle[fillcolor=black,fillstyle=solid](6, 1.5){.05}

\psline[linestyle = dashed](-10.5, 0.75)(10, 0.75)

\psline(-5.5, 2.25)(-4.5, 2.25)

\rput(-5,-0.5){$h_{\sigma_\ell}(\mathbf{x}_1) =\w_1$}

\rput(-3.5,0.75){${\cal R}^2_1$}
\rput(-1.5,0.75){${\cal R}^3_1$}
\rput(2,0.75){${\cal R}^3_2$}
\rput(4.5,0.75){${\cal R}^2_2$}

\rput(-6.5,0.5){$h_{\sigma_{\ell+1}}(\mathbf{x}_1) $}
\put(6,-0.5){$h_{\sigma_\ell}(\mathbf{x}_2) = \w_2$}
\rput(7.5,0.5){$h_{\sigma_{\ell+1}}(\mathbf{x}_2)$}
\rput(9,2.25){$y = \w_1(2) + 3l_y$}
\end{pspicture}
\caption{This figure represents occurrence of the $F_\ell$ event. 
Points from the shaded (light as well as deep) region are not allowed to perturb 
to the middle rectangles ${\cal R}^2_1$ and ${\cal R}^2_2$. Points from the 
 deeply shaded region are not allowed to perturb to either of the regions $\nabla(\w_1)$ 
 (which contains $\nabla(h^{\sigma_{\ell + 1}}(\x_1))$) and 
$\nabla(\w_2)$ (which contains $\nabla(h^{\sigma_{\ell + 1}}(\x_2))$). The occurrence of the events
$A^{(l_y)}_{\text{sp}}(\w_1 + (0,2l_y))$ and $ A^{(l_y)}_{\text{sp}}(\w_2 + (0,2l_y))$
create a shield using special points in the upper half-plane $\mathbb{H}^+(\w_1(2) + 3l_y)$. 
While crossing the top boundary of the outer rectangle on the line $y =\w_1(2) + 3l_y$, 
if the PH paths stay inside the respective intervals (marked in black) 
then the so-called shields prevent the PH paths to cross respective parabolic curves 
(marked in black). This ensures the occurrence of the `In' event 
for the transformed point process.}
\label{fig:InEvent_6}
\end{center}
\end{figure}

We interchange the perturbation random vectors inside these bigger rectangles 
${\cal R}^3_1$ and ${\cal R}^3_2$ to obtain a new point process. Consider 
the PH paths starting from $\w_1$ and $\w_2$ constructed using the resultant point process. 
We make it precise as follows. For $\w \in \Z^2$ we define the transformation $\overline{\w}$ as 
\begin{align*}
\overline{\w} := 
\begin{cases}
\w & \text{ if }\w \notin {\cal R}^3_1\cup {\cal R}^3_2 \\
\w_1 + (s,t)& \text{ if }\w \in {\cal R}^3_2 \text{ with } \w = \w_2 + (s,t)\\
\w_2 + (s,t) & \text{ if }\w \in {\cal R}^3_1 \text{ with } \w = \w_1 + (s,t).
\end{cases}
\end{align*}
The collection $\{ (B_{\overline{\w}}, R_{\overline{\w}}, 
\Lambda_{\overline{\w}} = ( X_{\overline{\w}}, Y_{\overline{\w}})) 
: \w \in \Z^2 \}$ gives rise to the point process 
$$
V^\prime := \{ \w  + ( X_{\overline{\w}}, Y_{\overline{\w}}): 
\w \in \Z^2, B_{\overline{\w}} = 1 \}.
$$ 
It is not difficult to observe that, on the lower half-plane $\mathbb{H}^-(\w_1(2))$, 
both $V$ and $V^\prime$ give us the same collection.
Considering evolution w.r.t. to the transformed
point set $V^\prime$ (where only the perturbed open vertices from the outer 
rectangles are interchanged),  for all $1 \leq m \leq l_y$ we have 
\begin{align}
\label{eq:Symmetry_1}
h^{ m}(\w_1, V^\prime) & = \w_1 + (h^{ m}(\w_2, V) - \w_2) \in \nabla(\w_1)\cap {\cal R}^1_1
\text{ and } \nonumber \\
h^{ m}(\w_2, V^\prime) & = \w_2 + (h^{ m}(\w_1, V) - \w_1) \in \nabla(\w_2)\cap {\cal R}^1_2.
\end{align}
Equation (\ref{eq:Symmetry_1}) further ensures that 
 on the event $F^1_\ell \cap F^2_\ell$  we have
\begin{align}
\label{eq:Symmetry_2}
\Bigl [ \sum_{i = 1}^2 \bigl ( h^{ m}(\w_i, V^\prime) - \w_i \bigr ) (1) \Bigr ]
= - \Bigl [ \sum_{i = 1}^2 \bigl ( h^{ m}(\w_i, V) - \w_i \bigr ) (1) \Bigr ] ,
\end{align}
for all  $1 \leq m \leq l_y$.
Ideally, Equation (\ref{eq:Symmetry_2}) should have given us 
$$
\E \bigl [ (Z_{\ell + 1} - Z_\ell)\mathbf{1}_{F^1_\ell \cap F^2_\ell } \mid {\cal G}_\ell
 \bigr ] =0.
$$  
Unfortunately, we need to work harder.
The issue is that starting from $\w_1$ and $\w_2$, if the  $m_0 (\leq l_y)$-th step gives 
the next, i.e., the $(\ell + 1)$-th joint renewal step 
w.r.t. the point process $V$, the corresponding step w.r.t. $V^\prime$ given by 
$(h^{m_0}(\w_1, V^\prime), h^{m_0}(\w_2, V^\prime))$ need {\it not} give 
the next  renewal step w.r.t. the transformed point process $V^\prime$. 
To ensure that the step $(h^{m_0}(\w_1, V^\prime), h^{m_0}(\w_2, V^\prime))$ is 
the next  renewal step w.r.t. $V^\prime$, we need to consider three more events. Let $\nabla_i := \nabla(\w_i)$ denote the parabolic region 
centred at $\w_i$  and let $F^3_\ell$ denote the event that
\begin{align*}
F^3_\ell := \{ & \tilde{\w} \notin \nabla_1 \cup \nabla_2 \text{ for all }\w 
\text{ with }\w(2) \in [\w_1(2) + 1, \w_1(2) + l_y ] \\
& \text{ and }
\w \notin {\cal R}^3_1 \cup {\cal R}^3_2 \}.
\end{align*}
In Figure \ref{fig:InEvent_6} the event $F^3_\ell$ means that the open 
points from the gray shaded (light or deep) region are not allowed to perturb 
to the middle rectangular boxes ${\cal R}^2_1$ and ${\cal R}^2_2$.  
The nesting property (as mentioned in Remark \ref{rem:Nesting}) together with (\ref{eq:Symmetry_1})
ensure that on the event $F^1_\ell \cap F^2_\ell$ we have (refer to Figure \ref{fig:InEvent_6})
$$
\nabla( h^{m_0}(\w_i, V^\prime) ) \subset \nabla_i \text{ for } i =1,2.
$$ 
Hence, we  observe that the `Out' event condition is 
automatically satisfied for $V^\prime$ perturbed version of all lattice points in the 
lower half-plane $\mathbb{H}^-(\w_1(2))$ as well as for lattice points in the region 
$({\cal R}^3_1\cup {\cal R}^3_2) \cap \mathbb{H}^-(\w_1(2) + m_0)$. In addition to this, 
the event $F^3_\ell$ ensures that the same holds for all lattice points in the set  
$\bigl [ \mathbb{H}^-(\w_1(2) + l_y)\setminus \mathbb{H}^-(\w_1(2)) \bigr ] 
\setminus ({\cal R}^3_1\cup {\cal R}^3_2)$ (shown as a gray region in Figure 
\ref{fig:InEvent_6}) which contains all lattice points in the set 
$[\mathbb{H}^-(m_0)\setminus \mathbb{H}^-(\w_1(2))] 
\setminus ({\cal R}^3_1\cup {\cal R}^3_2)$.  This ensures that on the event $\cap_{j=1}^3 F^j_\ell$,
the `Out' event occurs at $(h^{m_0}(\w_1, V^\prime), h^{m_0}(\w_2, V^\prime))$ w.r.t. 
the point set $V^\prime$.

To complete the proof, we need to ensure that the occurrence of the `In' event w.r.t. 
$V^\prime$ at $(h^{m_0}(\w_1, V^\prime), h^{m_0}(\w_2, V^\prime))$. 
It is important to observe that the event $\cap_{i = 1}^3 F^i_\ell$ ensures occurrence of the `In'
event at $h^{m_0}(\w_i, V^\prime)$ w.r.t. $V^\prime$ till the time  
the concerned  PH paths cross the line $y = \w_1(2) + 3 l_y$. We note 
that on the event $F^1_\ell \cap F^2_\ell $ we have 
\begin{align}
\label{eq:InEvent_1}
\nabla \bigl( \w_i + (0,2l_y)\bigr ) \subseteq \nabla \bigl( h^{m_0}(\w_i, V^\prime)\bigr) 
\text{ for all }i = 1,2.
\end{align}
We recall the definition of the event $A_{\text{sp}} = A_{\text{sp}}(\mathbf{0})$ as in 
(\ref{def:A_sp}) and for $l \in \N$ we define 
\begin{align*}
A_{\text{sp}}^{(l)}(\mathbf{0}) := \bigcap_{m=l}^\infty  \{ I^R_m\cap V^{\text{sp}}\neq \emptyset \}\cap \{ I^L_m\cap V^{\text{sp}}\neq \emptyset \}. 
\end{align*} 
For $\w \in \Z^2$ the notation $A_{\text{sp}}^{(l)}(\w)$ denotes the translated version of 
the event $A_{\text{sp}}^{(l)}(\mathbf{0})$ translated at $\w$. 
Finally, (\ref{eq:InEvent_1}) motivates us to construct our 
last two events $F^4_\ell $ and $F^5_\ell $ respectively as 
\begin{align*}
F^4_\ell & := \cap_{i=1}^2 A_{\text{sp}}^{(l_y)} \bigl( \w_i + (0,2l_y) \bigr ) \text{ and }\\
F^5_\ell & := \cap_{i=1}^2 \{ h^{3l_y}(\w_i)(1) - \w_i(1) \in [-l_x , l_x] \}.
\end{align*}
We observe that on the event $\cap_{i=1}^5 F^i_\ell$, at the $3l_y$-th step 
the $V^\prime$ driven PH paths don't deviate too much and this ensures that $h^{3l_y}(\w_i, V^\prime) \in \nabla(\w_i + (0, 2l_y))$ (marked as a black interval in Figure \ref{fig:InEvent_6}). 
The other event $F^4_\ell$ makes a shield of special points over the 
time interval $[3l_y, \infty)$. As we have 
$h^{3l_y}(\w_i, V^\prime) \in \nabla(\w_i + (0, 2l_y))$, the truncated shield
created by the event $F^4_\ell$ ensures occurrence
of a truncated version of the `In' event at
$\w_1 + (0,2l_y)$ and $\w_2 + (0,2l_y))$ over the time interval $[\w_1(2) + 3l_y, \infty)$. 
Finally, the nesting property as observed in (\ref{eq:InEvent_1}) makes sure that 
on the event $\cap_{i=1}^5 F^i_\ell$, the `In' event occurs at 
points $h^{m_0}(\w_1, V^\prime)$ and $h^{m_0}(\w_2, V^\prime)$ w.r.t. $V^\prime$ as well. 
We refer to Figure \ref{fig:InEvent_6} for an illustration of the event 
$F_\ell$ defined as
\begin{equation}
\label{eq:EventFell}
F_\ell := \cap_{i =1}^5 F^i_\ell
\end{equation}
Hence, on the event $F_\ell$  the next (joint) renewal with respect to 
$V^\prime$ occurs at $h^{m_0}(\w_1, V^\prime)$ and $h^{m_0}(\w_2, V^\prime)$.   
Lemma \ref{lem:F_ell_Decay} shows that the probability of $(F_\ell)^c$ 
satisfies the required tail bound and thereby completes the proof of item $(i)$.   

Item $(ii)$ in Proposition \ref{prop:ZprocessRwalkProperties}
follows readily from the fact that
\begin{align*}
\E[(Z_{\ell + 1} - Z_\ell) \mid {\cal G}_\ell ] & \leq 
\E[\sum_{i=1}^2|(h^{\sigma_{\ell + 1}}(\x_i) - h^{\sigma_{\ell}}(\x_i))(1)| \mid {\cal G}_\ell ] \\
& = \E[|(h^{\sigma_{\ell + 1}}(\x_1) - h^{\sigma_{\ell}}(\x_1))(1)| \mid {\cal G}_\ell ] 
+ \E[|(h^{\sigma_{\ell + 1}}(\x_2) - h^{\sigma_{\ell}}(\x_2))(1)| \mid {\cal G}_\ell ] \\
& \leq 2 \E((\sigma_{\ell + 1} - \sigma_\ell)^2 \mid {\cal G}_\ell) < \infty.
\end{align*}
Finiteness of the expectation follows from Proposition \ref{prop:Sigma_Tail} and 
the penultimate inequality follows from the `In' event condition at the renewal step.

For Item $(iii)$, it is not difficult to convince oneself that the  
conditional probability $\P(Z_{\ell + 1} = 0 \mid {\cal G}_\ell)$ is strictly positive (suitable configurations are easy to build).

It then remains to check Item $(iv)$. We observe that
\begin{align*}
\E \bigr [ (Z_{\ell + 1} - Z_\ell)^2 \mid {\cal G}_\ell \bigl]  =
\E \bigr [ (Z_{\ell + 1} - Z_\ell)^2\mathbf{1}_{ \{ |Z_{\ell + 1} - Z_\ell |^2 \geq 1 \}}
 \mid {\cal G}_\ell \bigl] \geq \P((Z_{\ell + 1} - Z_\ell )^2 \geq 1 \mid {\cal G}_\ell) ~.
\end{align*}
Again on the event $\{Z_\ell > M_0\}$, a simple construction gives us 
that the probability $\P((Z_{\ell + 1} - Z_\ell )^2 \geq 1 \mid {\cal G}_\ell)$ 
is  strictly positive. For the third moment, we have an uniform bound 
\begin{align*}
\E[(Z_{\ell + 1} - Z_\ell)^3 \mid {\cal  G}_\ell ] 
\leq 8 \E[ (\sigma_{\ell + 1} - \sigma_{\ell})^6 \mid {\cal G}_\ell ] \leq 8 C^\prime,
\end{align*}
by Proposition \ref{prop:Sigma_Tail}.
\end{proof}
The next lemma shows that as for large $Z_\ell$, the event $F_\ell$ has required tail decay.
\begin{lemma}
\label{lem:F_ell_Decay}
Consider the event $F_\ell$ defined as in (\ref{eq:EventFell}). Then for all 
large $Z_\ell$ there exists $C^\prime > 0$, which does not depend on $Z_\ell$, 
such that we have
$ \P( F^c_\ell \mid {\cal G}_\ell)  \leq C^\prime/( Z_\ell )^3 $.
\end{lemma}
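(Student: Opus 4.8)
The plan is to bound $\P(F^c_\ell\mid{\cal G}_\ell)$ by a union bound over the five failure events $(F^i_\ell)^c$, $i=1,\dots,5$, from the definition (\ref{eq:EventFell}) of $F_\ell$, and to show that each summand is $O(Z_\ell^{-3})$; in fact all five will be exponentially small in $l_y=\lfloor\sqrt{Z_\ell/40}\rfloor$, far below $Z_\ell^{-3}$ for $Z_\ell$ large. The first reduction is to remove the conditioning on ${\cal G}_\ell$. Conditionally on $(h^{\sigma_\ell}(\x_1),h^{\sigma_\ell}(\x_2))=(\w_1,\w_2)$ with $\w_1(2)=\w_2(2)=\sigma_\ell$, the information carried by ${\cal G}_\ell$ about $\mathbb{H}^+(\sigma_\ell)$ reduces, by the joint analogue of Corollary \ref{cor:Rwalk_IncrDistEquality_2} and Proposition \ref{prop:HalfPlaneDistEquality} (splitting the renewal event into the independent pieces $\text{In}^+(\w_i)$ and $\text{Out}(\w_i)$, the latter being ${\cal F}_{\sigma_\ell}$-measurable), to conditioning on $\text{In}^+(\w_1)\cap\text{In}^+(\w_2)$. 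Hence for any event $A$ measurable with respect to $\{\Gamma_\w:\w(2)>\sigma_\ell\}$,
$$
\P(A\mid{\cal G}_\ell)\le\frac{\P\bigl(A\cap\text{In}^+(\w_1)\cap\text{In}^+(\w_2)\bigr)}{\P\bigl(\text{In}^+(\w_1)\cap\text{In}^+(\w_2)\bigr)}\le\frac{\P(A)}{\P(A_{\text{sp}}(\mathbf 0))^{2}},
$$
since $A_{\text{sp}}(\w_i)\subseteq\text{In}^+(\w_i)$, the two shield events are increasing in the special-point configuration, and FKG (Lemma \ref{def:FKG}) with translation invariance give $\P(\text{In}^+(\w_1)\cap\text{In}^+(\w_2))\ge\P(A_{\text{sp}}(\mathbf 0))^{2}>0$. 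As each $(F^i_\ell)^c$ is measurable w.r.t. $\{\Gamma_\w:\w(2)>\sigma_\ell\}$, it suffices to estimate each $\P\bigl((F^i_\ell)^c\bigr)$ unconditionally, for a translated copy of the model, at the cost of the fixed factor $\P(A_{\text{sp}}(\mathbf 0))^{-2}$.

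For $F^1_\ell$, Proposition \ref{prop:Sigma_Tail} gives $\P\bigl((F^1_\ell)^c\bigr)=\P(\sigma_{\ell+1}-\sigma_\ell>l_y)\le C_0\exp(-C_1 l_y)$. For $F^2_\ell$ and $F^3_\ell$: if $\w\notin{\cal R}^3_1\cup{\cal R}^3_2$ and $\tilde\w\in{\cal R}^2_1\cup{\cal R}^2_2$ (resp.\ $\tilde\w\in\nabla_1\cup\nabla_2$ with $\w(2)\in[\sigma_\ell+1,\sigma_\ell+l_y]$), then, since ${\cal R}^2_i$ sits at horizontal distance at least $9l_x$ inside ${\cal R}^3_i$ and, on the relevant height range, $\nabla_i$ has half-width $O(l_y^2)=O(l_x)$ while ${\cal R}^3_i$ has half-width $18l_x$, the total perturbation $T_\w=|X_\w|+Y_\w$ must exceed $c\,l_x$ (resp.\ $c\sqrt{l_x}$) plus the excess distance of $\w$ from ${\cal R}^3_i$; summing the bound $\P(T_\w>\,\cdot\,)\le C_0\exp(-C_1\,\cdot\,)$ (the overshoot estimate (\ref{eq:T_Ray_ExpDecay}) or Lemma \ref{lem:ConditionalMarginalTailBound}(i)) over the $O(l_y)$ relevant heights and over all horizontal positions (a convergent geometric series in the distance) gives $\P\bigl((F^2_\ell)^c\bigr)\vee\P\bigl((F^3_\ell)^c\bigr)\le C_0\,l_y\exp(-C_1\sqrt{l_x})$. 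For $F^4_\ell$, the definition of $A^{(l_y)}_{\text{sp}}$ and the i.i.d.\ $\mathrm{Bernoulli}(p_0)$ structure of $\{\mathbf 1_{\{\w\in V_{\text{sp}}\}}\}$ yield
$$
\P\bigl((A^{(l_y)}_{\text{sp}}(\mathbf 0))^c\bigr)=1-\prod_{m\ge l_y}\bigl(1-(1-p_0)^{2m-1}\bigr)^2\le\sum_{m\ge l_y}2(1-p_0)^{2m-1}\le C_0\exp(-C_1 l_y),
$$
so $\P\bigl((F^4_\ell)^c\bigr)\le 2C_0\exp(-C_1 l_y)$. Finally, on the renewal event $h^m(\w_i)=h^m(\w_i,V^+)$ for all $m\ge1$ by (\ref{eq:EqualityOldpath_Newpath}), so $(F^5_\ell)^c$ concerns only the marginal PH path; by Corollary \ref{cor:PH_decay} (equivalently Lemma \ref{lem:ConditionalMarginalTailBound}(iv)), and since $l_x$ is of order $l_y^2$ while only $3l_y$ steps are taken, $\P\bigl(|h^{3l_y}(\mathbf 0)(1)|>l_x\bigr)\le C_0\exp(-C_1 l_y)$.

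Combining the five bounds with the reduction of the first paragraph and using $l_x=\lfloor Z_\ell/40\rfloor$, $l_y=\lfloor\sqrt{Z_\ell/40}\rfloor$, we get $\P(F^c_\ell\mid{\cal G}_\ell)\le C_0\exp(-C_1\sqrt{Z_\ell})\le C'/(Z_\ell)^3$ once $Z_\ell$ is large. The main obstacle is not any individual tail estimate — each is a routine perturbation-tail or random-walk computation — but making the very first step rigorous: ${\cal G}_\ell$ already encodes, through the $\text{In}$ events, non-trivial information about the upper half-plane, so the events $F^2_\ell,F^3_\ell$ and especially $F^4_\ell$ (which lives in the far future $\mathbb{H}^+(\sigma_\ell+3l_y)$) must first be decoupled from this conditioning via the $\text{In}^+/\text{Out}$ machinery of Section \ref{sec:Renewal_k_1} before the unconditional counting may be invoked.
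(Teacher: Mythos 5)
Your proof is correct and follows essentially the same route as the paper: a union bound over the five constituent events, the $\text{In}^+/\text{Out}$ decoupling (with FKG and translation invariance) to remove the ${\cal G}_\ell$-conditioning, and then elementary perturbation/overshoot tail estimates for each unconditional piece, which all decay like $\exp(-C\sqrt{Z_\ell})$. The only real difference is one of presentation — you perform the decoupling once, uniformly, in a first paragraph, whereas the paper does it explicitly for $F^4_\ell$ via (\ref{eq:Bd_In_Truncated}) and for the remaining events invokes Lemma \ref{lem:ConditionalMarginalTailBound}, which already encodes that reduction — and a minor inaccuracy in the $F^3_\ell$ distance estimate (the horizontal clearance outside the boxes is of order $l_x$, not $\sqrt{l_x}$), which is harmless since it only makes the bound stronger.
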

\begin{proof}
It suffices to show that for all $1 \leq i \leq 5$ we have  
$$
\P( (F^i_\ell)^c \mid {\cal G}_\ell)  \leq C_0/( Z_\ell )^3,
$$ 
for some $C_0 > 0$. In this paper we proved similar arguments multiple times and we present only 
a sketch here. The required bound for $\P( (F^1_\ell)^c \mid {\cal G}_\ell)$ follows 
from Proposition \ref{prop:Sigma_Tail}. Regarding  the probability 
$\P( (F^2_\ell)^c \mid {\cal G}_\ell)$ we observe that there are $3 l_y$ many horizontal
lines of lattice points and the event $(F^1_\ell)^c $ may occur due to large amount of $x$ coordinate perturbation from one of the vertices on these horizontal lines only. 
We recall the horizontal right overshoot r.v.  and  horizontal left overshoot r.v.
as defined in Remark \ref{rem:Overshoot_ExpDecay}. Given the $\sigma$-field ${\cal G}_\ell$, 
the same argument as in Lemma \ref{lem:ConditionalMarginalTailBound} 
gives exponential tail decay for these horizontal overshoot r.v.'s. 
This allows us to obtain the required tail decay estimate
for $\P( (F^2_\ell)^c \mid {\cal G}_\ell)$.

Considering the event $F^3_\ell$, we observe that outside the boxes ${\cal R}^1_1$ and 
${\cal R}^1_2$, boundaries of the parabolic regions, $\nabla_1$
and $\nabla_2$, both are at least at a height of $2l_y$ from the line $x = l_y$.
Hence, similar argument as in Lemma \ref{lem:N_SubExptail} gives us the required estimate. 
Regarding the event $F^4_\ell$, we need to bound the probability of complement of 
truncated version of the event $A_{\text{sp}}(\cdot)$. We observe
that for any $ i = 1,2$ we have
\begin{align}
\label{eq:Bd_In_Truncated}
& \P(A_{\text{sp}}^{l_y}(\w_i + (0, 2l_y))^c \mid {\cal G}_\ell) \nonumber\\
 & =  \P(A_{\text{sp}}^{l_y}(\w_i + (0, 2l_y))^c \mid \text{In}^+(\w_1)\cap \text{In}^+(\w_2) )\nonumber\\
 & =   \P(A_{\text{sp}}^{l_y}(\w_i + (0, 2l_y))^c \cap \text{In}^+(\w_1)\cap \text{In}^+(\w_2) )/
\P(\text{In}^+(\w_1)\cap \text{In}^+(\w_2))\nonumber\\
& \leq  \P(A_{\text{sp}}^{l_y}(\w_i + (0, 2l_y))^c )/
\P(\text{In}^+(\w_1)\cap \text{In}^+(\w_2))\nonumber\\
& \leq  \P(A_{\text{sp}}^{l_y}(\w_i + (0, 2l_y))^c )/
\P \bigl( A^{\text{sp}}(\w_1)\cap A^{\text{sp}}(\w_2) \bigr )\nonumber\\
& \leq  \P(A_{\text{sp}}^{l_y}(\w_i + (0, 2l_y))^c )/
\P \bigl(  A^{\text{sp}}(\mathbf{0})\bigr)^2 \nonumber \\
& =  C_0\P(A_{\text{sp}}^{l_y}(\w_i + (0, 2l_y))^c )
\end{align} 
The first equality in (\ref{eq:Bd_In_Truncated}) follows from the fact 
that the occurrence of the event $A_{\text{sp}}^{l_y}(\w_i + (0, 2l_y))$
depends on the collection of random vectors $\{\Gamma_\w : \w \in \mathbb{H}^+(\w_1(2))\}$.
In order to bound the probability $\P(A_{\text{sp}}^{l_y}(\w_i + (0, 2l_y))^c )$,
we observe that there must be an interval, $I^R_m(\w_i + (0, 2l_y))$ or 
$I^R_m(\w_i + (0, 2l_y))$ for some $m > l_y$ which does not contain any special
vertices. Therefore, from (\ref{eq:Bd_In_Truncated}) we obtain that 
$$
\P(A_{\text{sp}}^{l_y}(\w_i + (0, 2l_y))^c \mid {\cal G}_\ell)
\leq  C^\prime_0 \exp{(- C^\prime_1 l_y)} ,
$$
for some $C^\prime_0, C^\prime_1 > 0$. This gives us the required 
tail estimate for $\P((F^4_\ell)^c)$.

Finally,  Item (iv) of Lemma \ref{lem:ConditionalMarginalTailBound} 
ensures that $\P((F^5_\ell)^c)$ decays sub-exponentially in $Z_\ell$ 
and thereby completes the proof of Lemma \ref{lem:F_ell_Decay}. 
\end{proof}

\begin{remark}
\label{rem:RenewalTimeIID_domination}
Proposition \ref{prop:Sigma_Tail} gives us that for $k= 2$, the family
$\{ \sigma_{\ell + 1}(\x_1, \x_2) - \sigma_\ell(\x_1, \x_2) : \ell \geq 1\}$ gives 
a collection of non-negative integer valued random variables with strong uniform exponential decay
(see Definition \ref{def:Exp_Tail}) such that the decay constants do not 
depend on the choice of the starting points $\x_1, \x_2$.  
\end{remark}

\section{Tail distribution for the coalescence time of two PH paths and the proof of Theorem \ref{thm:PerturbedHoward_Tree}}
\label{sec:Tail_CoalTime}

In this section we start with two points $\x_1,\x_2$  in $\Z^{2}$ such that $\x_1(1)<\x_2(1)$ and $\x_1(2)=\x_2(2)=0$. A key result for proving the convergence of the PH 
network to the BW, lies in a precise estimate for the tail distribution of the 
coalescence time of two PH paths:
 \begin{equation}
 \label{CoalTime:x1x2}
 T(\x_1,\x_2) := \inf \{ t \geq 0 : \pi^{\x_1}(t) = \pi^{\x_2}(t) \}
 \end{equation}
 where $\pi^{\x_i}=(\pi^{\x_i}(t))_{t\geq 0}$ denotes the parametrization of the path $\pi^{\x_i}$. 
  In this section we prove the following theorem on tail decay of coalescing 
  time $T(\x_1, \x_2)$ of two perturbed Howard  paths $\pi^{\x_1}$ and $\pi^{\x_2}$.

\begin{theorem}
\label{thm:CoalescingTimetail}
For the above mentioned choice of $\x_1, \x_2$, 
there exists a constant $C_{0}>0$ which does not depend on $\x_1,\x_2$ such that,
 for any $t>0$,
$$
\P(T(\x_1,\x_2) > t ) \leq  \frac{C_{0}(\x_2(1)-\x_1(1))}{\sqrt{t}}  .
$$
\end{theorem}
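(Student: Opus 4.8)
The plan is to reduce the bound on $T(\x_1,\x_2)$ to a discrete hitting-time estimate for the difference process at the joint renewal steps. Write $d:=\x_2(1)-\x_1(1)\ge 1$ and recall from Section~\ref{sec:JtRenewal} the joint renewal steps $\sigma_\ell$ and the process $Z_\ell=h^{\sigma_\ell}(\x_2)(1)-h^{\sigma_\ell}(\x_1)(1)\ge 0$, so that $Z_0=d$. If $Z_\ell=0$ then $h^{\sigma_\ell}(\x_1)=h^{\sigma_\ell}(\x_2)$ (both lie on the line $y=\sigma_\ell$), so the two PH paths have already coalesced; hence $T(\x_1,\x_2)\le \sigma_\nu$ where $\nu:=\inf\{\ell\ge 1:Z_\ell=0\}$. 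For a small $\delta>0$ to be fixed and $n:=\lfloor\delta t\rfloor$, I would split
\[
\P\big(T(\x_1,\x_2)>t\big)\le \P(\sigma_\nu>t)\le \P(\nu>n)+\P(\sigma_n>t),
\]
using that $\ell\mapsto\sigma_\ell$ is non-decreasing.

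The term $\P(\sigma_n>t)$ is the easy one: by Remark~\ref{rem:RenewalTimeIID_domination} the inter-renewal times $\sigma_\ell-\sigma_{\ell-1}$ have strong uniform exponential tail decay with constants not depending on $\x_1,\x_2$, so (as in the proof of Corollary~\ref{cor:SubExpDecay_1}) $\sigma_n$ is stochastically dominated by a sum of $n$ i.i.d.\ exponentially decaying random variables, which has finite exponential moments. A Chernoff bound then gives $\P(\sigma_n>t)\le C_0\exp(-C_1 t)$ once $\delta$ is chosen below the reciprocal of the mean of the dominating variable; fix such a $\delta$.

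The heart of the argument is the diffusive estimate $\P\big(\nu>m\mid{\cal G}_1\big)\le C\,Z_1/\sqrt m$ for $m\ge 1$, with $C$ independent of the starting points. Here I would use Proposition~\ref{prop:ZprocessRwalkProperties}: on $\{Z_\ell>M_0\}$ the increment is mean-zero on the event $F_\ell$ with $\P(F_\ell^c\mid{\cal G}_\ell)\le C_3 Z_\ell^{-3}$, so Hölder's inequality together with the third-moment bound of Proposition~\ref{prop:ZprocessRwalkProperties}(iv) gives $|\E[Z_{\ell+1}-Z_\ell\mid{\cal G}_\ell]|\le C Z_\ell^{-2}$ there, while $C_1\le\E[(Z_{\ell+1}-Z_\ell)^2\mid{\cal G}_\ell]\le C$ on the same event; moreover the `In' condition at renewal steps forces $|Z_{\ell+1}-Z_\ell|\le 2(\sigma_{\ell+1}-\sigma_\ell)^2$, so the increments have uniformly bounded moments of every order. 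These bounds show that $\{Z_\ell^2\}$ has conditional drift at least $C_1/2$ once $Z_\ell$ exceeds a suitable threshold $M_1\ge M_0$ and at least $-C$ everywhere, while Proposition~\ref{prop:ZprocessRwalkProperties}(iii) supplies a uniformly positive chance of reaching $0$ from $\{Z_\ell\le M_1\}$; a standard excursion/optional-stopping argument for processes with these features (in the spirit of the fluctuation estimates of \cite{RSS16A}) yields the displayed bound. Finally, since $\sigma_1$ and the horizontal one-step increments of each PH path have exponential tails, Corollary~\ref{cor:SubExpDecay_1} gives $\E|Z_1-d|\le C$, hence $\E Z_1\le (1+C)d$, and taking expectations, $\P(\nu>n)\le C' d/\sqrt{n}$.

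Combining the two bounds with $n=\lfloor\delta t\rfloor$,
\[
\P\big(T(\x_1,\x_2)>t\big)\le \frac{C' d}{\sqrt{\delta t}}+C_0 e^{-C_1 t}\le \frac{C_0\,d}{\sqrt t},
\]
valid for all $t>0$ (the range $t\le 1/\delta$ being trivial since then $C_0 d/\sqrt t\ge 1$), the last step using $d\ge 1$ to absorb the exponential term after enlarging $C_0$. I expect the diffusive hitting estimate for $\{Z_\ell\}$ to be the main obstacle: the delicate points are to check that the $O(Z_\ell^{-2})$ upward drift is negligible for a $\sqrt{\cdot}$-supermartingale comparison, to treat the boundary layer $\{Z_\ell\le M_1\}$ where $\{Z_\ell^2-c\ell\}$ need not be a submartingale (handled via the positive hitting probability), and to accommodate the merely polynomially integrable increments — all of which are controlled by the moment bounds recorded in Proposition~\ref{prop:ZprocessRwalkProperties}.
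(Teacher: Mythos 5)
Your proposal follows essentially the same route as the paper. The paper also (a) observes that $T(\x_1,\x_2)$ is dominated by $T_\nu=\sigma_\nu$ with $\nu=\inf\{\ell\ge 1:Z_\ell=0\}$, (b) splits $\P(T_\nu>t)\le \P\bigl(\sum_{\ell\le \lfloor ct\rfloor+1}W_\ell\ge t\bigr)+\P(\nu>ct)$ for i.i.d.\ dominating inter-renewal times $W_\ell$, and (c) controls $\P(\nu>ct)$ via a discrete diffusive hitting estimate for $\{Z_\ell\}$. The one substantive difference is step (c): rather than re-deriving the hitting estimate from the moment/drift properties of $\{Z_\ell\}$ as you sketch, the paper simply observes that Proposition \ref{prop:ZprocessRwalkProperties} delivers precisely the four hypotheses of Corollary \ref{corol:LaplaceCoalescingTimeTailNew3} (which is Corollary 5.6 of \cite{CSST20}), and invokes that corollary directly to get $\P(\nu>n\mid Z_0=d)\le (C_4+C_5 d)/\sqrt{n}$. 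Your outline of that corollary's proof (the $O(Z_\ell^{-2})$ drift via H\"older on $F_\ell^c$, the uniform second-moment lower bound, the boundary layer handled by the positive hitting probability, the Lyapunov/optional-stopping comparison) is directionally right and identifies the genuine obstacles, but in the paper this is outsourced to the cited reference rather than reproved. A second, cosmetic difference: for the renewal-sum term you invoke a Chernoff bound, whereas the paper uses a Chebyshev/variance bound giving $O(1/t)$, which is already $O(1/\sqrt t)$ and suffices; either works. Finally, your detour through $\E Z_1$ is unnecessary — one can apply the hitting estimate at $Z_0=\x_2(1)-\x_1(1)$ directly, as the paper does.
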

In order to prove Theorem \ref{thm:CoalescingTimetail} we follow a robust technique 
developed by Coupier et. al. in \cite{CSST20}. This technique is applicable for a general class
of processes which need not be Markov but behave like mean zero random 
walks away from origin and satisfy certain moment bounds (see Corollary \ref{corol:LaplaceCoalescingTimeTailNew3}). 
Proposition \ref{prop:ZprocessRwalkProperties} ensures that the difference 
between two PH paths observed at renewal steps satisfy these properties and allows us 
to apply this technique to get a suitable tail decay in terms of number of (joint)
{\it renewal} steps. With some additional work, we obtain the tail estimate 
for coalescing time in terms of total number of steps.

For completeness we first quote the following corollary taken from \cite{CSST20} regarding tail decay 
of the coalescing time for a suitable class of processes. 

\begin{corol}[Corollary 5.6 of \cite{CSST20}]
\label{corol:LaplaceCoalescingTimeTailNew3}
Let $\{Y_t : t \geq 0\}$ be a $\{{\cal G}_t : t \geq 0\}$ adapted stochastic process taking values in $\R_+$. Let $\nu^Y := \inf\{t \geq 1 : Y_t = 0\}$ be the first hitting time to $0$. Suppose for any $t \geq 0$ there exist positive constants $ M_0, C_0, C_1, C_2, C_3 $ such that:
\begin{itemize}
\item[(i)] There exists an event $F_t$ such that, on the event $\{Y_t> M_0\}$, we have $\P(F^c_t \mid {\cal G}_t) \leq C_0 / Y^3_t$ and
\begin{equation*}
\E \big[ (Y_{t+1}-Y_t) \mathbf{1}_{F_t} \mid {\cal G}_t\big] = 0 ~.
\end{equation*}
\item[(ii)] For any $t\geq 0$, on the event $\{Y_t \leq M_0\}$,
\begin{equation*}
\E \big[ (Y_{t+1}-Y_t)  \mid {\cal G}_t\big] \leq C_1 ~.
\end{equation*}
\item[(iii)] For any $t\geq 0$ and $ m > 0 $, there exists $c_m > 0 $ such that, on the event $\{Y_t \in (0, m]\}$,
\begin{equation*}
\P \big( Y_{t+1} = 0 \mid {\cal G}_t\big) \geq c_m ~.
\end{equation*}
\item[(iv)] For any $t \geq 0$, on the event $\{Y_t> M_0\}$, we have
\begin{equation*}
\E \bigl[ (Y_{t+1} - Y_t)^2 \mid {\cal  G}_t \bigr] \geq C_2 \; \text{ and } \; \E \bigl[ |Y_{t+1} - Y_t|^3 \mid {\cal  G}_t \bigr] \leq C_3 ~.
\end{equation*}
\end{itemize}
Then, $\nu^Y<\infty $ almost surely. Further, there exist positive constants $C_4, C_5$ such that for any $y>0$ and any integer $n$,
\begin{equation*}
\P( \nu^Y > n \mid Y_0 = y) \leq \frac{C_4 + C_5 y}{\sqrt{n}} ~.
\end{equation*}
\end{corol}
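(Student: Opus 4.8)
The statement is a self-contained lemma on one-dimensional processes that behave like mean-zero random walks away from the origin; the plan below indicates the route I would take (it is carried out in full in \cite{CSST20}). First observe that the quantitative tail bound already forces $\nu^Y<\infty$ almost surely (let $n\to\infty$), so it suffices to establish the tail estimate. Fix a level $A>M_0$ and split $\nu^Y$ into the descent time $\sigma_A:=\inf\{t\ge 0:Y_t\le A\}$ and the time consumed afterwards before absorption at $0$. The second part is handled directly from hypotheses (ii) and (iii): by (iii) with $m=A$, at every step with $Y_t\in(0,A]$ there is probability at least $c_A$ of hitting $0$ at the next step, so the number of visits of $Y$ to $(0,A]$ before $\nu^Y$ is stochastically dominated by a geometric random variable; hypothesis (ii) controls how far $Y$ can climb after each failed absorption, and each subsequent descent back to $(0,A]$ is then governed by the same estimate proved for $\sigma_A$. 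Thus the core is the descent bound $\P(\sigma_A>n\mid Y_0=y)\le C(1\vee y)/\sqrt n$.

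For the descent bound I would argue as follows. Write $D_t=Y_{t+1}-Y_t$. On $\{Y_t>M_0\}$, hypothesis (i) gives $\E[D_t\mathbf 1_{F_t}\mid{\cal G}_t]=0$, and Hölder's inequality combined with (i) and (iv) yields $\E[|D_t|\mathbf 1_{F_t^c}\mid{\cal G}_t]\le C_3^{1/3}(C_0/Y_t^3)^{2/3}=O(Y_t^{-2})$, hence $|\E[D_t\mid{\cal G}_t]|=O(Y_t^{-2})$: up to a tiny correction, $Y_{t\wedge\sigma_A}$ is a martingale, and by (iv) its conditional increment variance is trapped in $[C_2,C_3^{2/3}]$ while $Y_t>M_0$. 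One then transfers to this stopped martingale the Brownian fact that a Brownian motion started at $y$ avoids a fixed level for time $s$ with probability $\asymp\min(1,y/\sqrt s)$ — for instance through a Skorokhod-type embedding realising $Y_{t\wedge\sigma_A}$ as a Brownian motion run on a clock growing linearly (at rate $\asymp C_2$) — obtaining $\P(\sigma_A>n\mid Y_0=y)\le C(1\vee y)/\sqrt n$. The cumulative effect of the ``bad'' events $F_t^c$ contributes only $\sum_t O(Y_t^{-2})$, which is negligible on the diffusive scale; this is exactly where the strong decay rate $C_0/Y_t^3$ in (i) is used.

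Combining the two parts by a union bound over the geometrically many excursions — arranged so that the union of the individual $1/\sqrt n$-type tails stays $O(1/\sqrt n)$ rather than $O(\log n/\sqrt n)$ — gives the claimed bound $(C_4+C_5 y)/\sqrt n$. I expect the main obstacle to be obtaining the sharp exponent $1/2$: one must simultaneously control how high the excursions of $Y$ climb (so the martingale-to-Brownian comparison does not deteriorate) and the time cost of those excursions, and balance both against the relevant probabilities — a naive use of Doob's maximal inequality together with a concave Lyapunov function such as $\sqrt{Y_t}$ only produces a weaker power of $n$, and it is here that the robust technique of \cite{CSST20} is needed. A secondary technical point is keeping the event $F_t^c$, on which the mean-zero property fails, uniformly negligible throughout; the four hypotheses are calibrated precisely so that this can be done.
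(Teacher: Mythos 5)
This statement is not proved in the paper at all: it is imported verbatim as Corollary 5.6 of \cite{CSST20}, with an explicit pointer to that reference and no argument supplied. There is therefore no in-paper proof to compare your proposal against, and since you likewise defer the full execution to \cite{CSST20}, your treatment is consistent with how the paper uses the result.

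As a sketch, your outline is the standard Lamperti-type route and the parts you do carry out are correct: the H\"older estimate $\E[|D_t|\mathbf 1_{F_t^c}\mid{\cal G}_t]\le C_3^{1/3}(C_0/Y_t^3)^{2/3}=O(Y_t^{-2})$ (exponents $3$ and $3/2$) is exactly the right way to combine (i) and (iv) into an almost-martingale statement, and the variance sandwich $[C_2,C_3^{2/3}]$ is correct. The two places where your write-up is genuinely incomplete, and which you yourself flag, are (a) the transfer of the Brownian first-passage estimate to the stopped process with the sharp exponent $1/2$ while absorbing the cumulative $\sum_t O(Y_t^{-2})$ drift, and (b) the summation over the geometrically many excursions without degrading the bound to $O(\log n/\sqrt n)$ --- for (b) one also needs to control the starting heights of successive excursions, for which hypothesis (ii) alone (a first-moment bound on the overshoot) is rather thin and the actual argument in \cite{CSST20} works harder. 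Neither gap is a wrong step, but as written the proposal is an annotated citation rather than a proof; in the context of this paper, where the corollary is itself only cited, that is the same status the result already has.
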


By Proposition \ref{prop:ZprocessRwalkProperties}, the four hypotheses $(i)$-$(iv)$ of Corollary \ref{corol:LaplaceCoalescingTimeTailNew3} are satisfied by the process $\{Z_\ell : \ell \geq 0\}$ defined in (\ref{eq:Z_process}) and this gives us the required decay estimate in terms of number of 
(joint) renewal steps before coalescing. Using this estimate, we proceed to establish Theorem \ref{thm:CoalescingTimetail}.
%
%

\begin{proof}[Proof of Theorem \ref{thm:CoalescingTimetail}]
It is easy to observe that $h^{\sigma_{\ell}}(\x_1) = h^{\sigma_{\ell}}(\x_2)$
 implies that $ h^{m}(\x_1) = h^{m}(\x_2)$ for some $m$ such that $m\leq  \sigma_{\ell}$.
In other words
$$
T_\nu := \min\{ \sigma_{\ell} : h^{\sigma_{\ell}}(\x_1) = h^{\sigma_{\ell}}(\x_2)\}
$$
dominates the actual coalescing time $T(\x_1, \x_2)$ of the two paths. 
Consider an i.i.d. sequence $\{ W_\ell : \ell \geq 1\}$ with exponentially decaying tail 
such that for each $\ell \geq 1$ the conditional distribution of 
$(\sigma_{\ell + 1}(\x_1, \x_2) -\sigma_{\ell}(\x_1, \x_2)) \mid {\cal S}_{\ell}$
is stochastically dominated by $W_{\ell+1}$. Since the family $\{(\sigma_{\ell + 1}(\x_1, \x_2) -
\sigma_{\ell}(\x_1, \x_2)) : \ell \geq 1\}$ has strong uniform exponential tail decay 
(see Remark \ref{rem:RenewalTimeIID_domination}), we can always construct 
such an  i.i.d. sequence $\{ W_\ell : \ell \geq 1\}$.
Choose $c = 1/\E(2W_1)$ and we have,
\begin{align*}
\P(T_{\nu} > t)  & \leq \P \Big( \sum_{\ell = 1}^{\lfloor ct \rfloor + 1} W_{\ell} \geq t \Big) + \P ( \nu > ct ) \\
& \leq \P \Big( \sum_{\ell = 1}^{\lfloor ct \rfloor + 1} (W_{\ell} - \E(W_1)) \geq t(1- c\E(W_1)) \Big) + \frac{C_{0}}{\sqrt{ct}}  (\x_2(1)-\x_1(1)) \\
& \leq \frac{\text{Var} \Big( \sum_{\ell = 1}^{\lfloor ct \rfloor + 1} W_{\ell} \Big)}
 { (t(1- c\E(W_1))^2} + \frac{C_{0}}{\sqrt{ct}} (\x_2(1)-\x_1(1))\\
& \leq \frac{(\lfloor ct \rfloor + 1)\text{Var}(W_1)}{(t/2 )^2} + 
\frac{C_{0}}{\sqrt{ct}} (\x_2(1)-\x_1(1)) \\
& \leq \frac{C_{1}}{\sqrt{t}} (\x_2(1)-\x_1(1)),
\end{align*}
for a suitable choice of constant $C_1 > 0$. This completes the proof.
\end{proof} 

We observe that Theorem \ref{thm:CoalescingTimetail} is applicable 
for PH paths starting from $\x_1, \x_2 \in \Z^2$ with $\x_1(2) = \x_2(2)$ and with no information
about the  collection of random vectors $\{\Gamma_\w : \w \in \mathbb{H}^-(\x_1(2))\}$.
On the other hand, while applying this result we need to estimate coalescing time of two PH paths 
after they come close enough. By that time we gather enough information about 
a large set of explored  random vectors. Therefore, we need to develop a modified version of Theorem 
\ref{thm:CoalescingTimetail} which would be applicable in such situations. 
Before ending this section, we state such a version of Theorem \ref{thm:CoalescingTimetail}
that would be more useful while proving finite dimensional convergence to coalescing Brownian motions.   
In order to do that we need to introduce some notations.

Fix any $\beta \in (0, 1/2)$ and lattice points 
 $\w_1, \w_2 \in \Z^2$ with $\w_1(2) = \w_2(2) = 0$
and $\w_1(1) < \w_2(1)$. Let ${\cal R} \subset \mathbb{H}^-(0)$ be a bounded region.
Proposition \ref{prop:CoalescingTimetail_General} gives us that given any
realisations of the collection of random vectors $\{ \Gamma_\w : \w \in {\cal R}_n\}$
satisfying that $\tilde{\w} \in \mathbb{H}^-(n^\beta)$ for all $\w \in {\cal R}$, 
the same tail estimate for coalescing time of PH paths
as in Theorem \ref{thm:CoalescingTimetail} holds. Proposition \ref{prop:CoalescingTimetail_General} 
would be more useful for proving convergence to coalescing Brownian motions. 
\begin{proposition}  
\label{prop:CoalescingTimetail_General}
For $\w_1, \w_2$ chosen as above, there exists $C_0 > 0$ (not depending on $\w_1, \w_2$)
such that for all large $n$ we have 
\begin{align*}
\P \Bigl( T(\w_1, \w_2) >  n \mid \bigl ( \{ \Gamma_\w : \w \in {\cal R}\} & \text{ such that } 
\tilde{\w} \in \mathbb{H}^-(n^\beta) \text{ for all }\w \in {\cal R} \bigr ) \Bigr )\\  
& \leq \frac{C_0(\w_2(1) - \w_1(1))}{\sqrt{n}}.
\end{align*}
\end{proposition}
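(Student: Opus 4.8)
The plan is to reduce Proposition~\ref{prop:CoalescingTimetail_General} to Theorem~\ref{thm:CoalescingTimetail} by showing that the extra conditioning on the finite collection $\{\Gamma_\w : \w \in {\cal R}\}$ does not materially affect the coalescing-time estimate, provided we wait until the two PH paths have climbed above the level where this conditioning has any influence. The key observation is that ${\cal R}$ is a fixed bounded region in $\mathbb{H}^-(0)$, and the event we condition on forces every $\tilde\w$ with $\w \in {\cal R}$ to lie below the line $y = n^\beta$. Hence above level $n^\beta$ the conditioning on $\{\Gamma_\w : \w \in {\cal R}\}$ carries \emph{no} information about the point set $V \cap \mathbb{H}^+(n^\beta)$: the perturbed images of open lattice points in ${\cal R}$ are all in $\mathbb{H}^-(n^\beta)$, and the remaining randomness above level $n^\beta$ (coming from lattice points in $\mathbb{H}^-(n^\beta)\setminus {\cal R}$ that perturb upward, plus all of $\mathbb{H}^+(n^\beta)$) is independent of $\{\Gamma_\w : \w \in {\cal R}\}$.

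First I would run the two PH paths from $\w_1, \w_2$ up to level $n^\beta$. On the conditioning event, the positions $h^{n^\beta}(\w_1)$ and $h^{n^\beta}(\w_2)$ are determined by $\{\Gamma_\w : \w \in {\cal R}\}$ together with the vectors attached to lattice points strictly between levels $0$ and $n^\beta$ that are not in ${\cal R}$; but in any case the horizontal spread is controlled: by Corollary~\ref{cor:PH_decay} (or item (iv) of Lemma~\ref{lem:ConditionalMarginalTailBound}), the event that $\max\{|h^j(\w_i)(1) - \w_i(1)| : 1 \le j \le n^\beta,\ i = 1,2\}$ exceeds $n^{\alpha}$, for a suitable $\alpha \in (\beta, 1/2)$, has probability at most $C_0\exp(-C_1 n^{\alpha - \beta})$, which is far smaller than $n^{-1/2}$. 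So outside an exponentially small bad event, both paths at level $n^\beta$ are within horizontal distance $2n^\alpha$ of their starting points, hence $|h^{n^\beta}(\w_2)(1) - h^{n^\beta}(\w_1)(1)| \le (\w_2(1)-\w_1(1)) + 4n^\alpha \le C n^\alpha$. Importantly, the paths have not yet coalesced only if they are still distinct, and in any case $T(\w_1,\w_2) \le n$ is implied by their coalescing before time $n$.

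Next, I would restart the coalescing-time analysis from level $n^\beta$. Set $\mathbf{y}_i := h^{n^\beta}(\w_i)$ for $i = 1,2$; these have equal $y$-coordinate $n^\beta$, and by the above the quantity $\mathbf{y}_2(1) - \mathbf{y}_1(1)$ is (on the good event) at most $C n^\alpha$. Crucially, the $\sigma$-field generated by $\{\Gamma_\w : \w \in {\cal R}\}$ together with all vectors at levels $\le n^\beta$ contains \emph{no} information about $V \cap \mathbb{H}^+(n^\beta)$ beyond the information set $I_{n^\beta}$, and by the hypothesis that $\tilde\w \in \mathbb{H}^-(n^\beta)$ for all $\w \in {\cal R}$, the extra conditioning adds nothing to this information set that would not already be present when starting two generic PH paths from level $n^\beta$. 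Therefore Theorem~\ref{thm:CoalescingTimetail} applies to the pair $(\mathbf{y}_1,\mathbf{y}_2)$ — more precisely, the renewal machinery of Sections~\ref{sec:Expreg}--\ref{sec:JtRenewal}, and in particular Proposition~\ref{prop:ZprocessRwalkProperties} and Corollary~\ref{corol:LaplaceCoalescingTimeTailNew3}, go through verbatim conditionally on this enlarged $\sigma$-field, because all the renewal estimates only used uniform exponential tail bounds that hold conditionally (Lemma~\ref{lem:ConditionalMarginalTailBound}). This yields
$$
\P\bigl( T(\mathbf{y}_1,\mathbf{y}_2) > n \mid {\cal F} \bigr) \le \frac{C_0(\mathbf{y}_2(1) - \mathbf{y}_1(1))}{\sqrt{n}} \le \frac{C_0 C n^\alpha}{\sqrt n},
$$
on the good event, where ${\cal F}$ denotes the conditioning $\sigma$-field. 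Since $T(\w_1,\w_2) \le T(\mathbf{y}_1,\mathbf{y}_2) + n^\beta$ and $n^\beta \le n$, combining this with the exponentially small bad-event probability gives a bound of order $n^{\alpha - 1/2}$; but this is \emph{not} quite the claimed $(\w_2(1)-\w_1(1))/\sqrt n$, so one more idea is needed.

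The fix — and this is the step I expect to be the main obstacle — is to \emph{not} let the paths spread out unconditionally, but instead to observe that the separation process behaves like a mean-zero random walk. The correct approach is to track $Z_\ell = h^{\sigma_\ell}(\w_2)(1) - h^{\sigma_\ell}(\w_1)(1)$ from the very beginning (from level $0$), conditionally on ${\cal F}$. The conditioning on $\{\Gamma_\w : \w \in {\cal R}\}$ with all images below level $n^\beta$ means that once \emph{one} joint renewal step occurs above level $n^\beta$ — which happens within $O(\log n)$ renewal steps, hence within $O(n^\beta)$ time steps with overwhelming probability by Proposition~\ref{prop:Sigma_Tail} — the process is ${\cal F}$-conditionally in exactly the situation of Proposition~\ref{prop:ZprocessRwalkProperties} with no residual conditioning, and moreover $Z$ at that renewal step is, by the non-crossing property and the `In'-event spread bound, still of order $\w_2(1)-\w_1(1)$ up to an additive $O(n^{2\beta})$ term (the paths can only have separated by the squared height of the renewal parabola). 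Feeding $y = O(\w_2(1) - \w_1(1)) + O(n^{2\beta})$ into Corollary~\ref{corol:LaplaceCoalescingTimeTailNew3} and choosing $\beta$ small enough that $n^{2\beta} = o(\sqrt n)$ absorbed into constants, together with handling the at most $O(n^\beta)$ initial time steps trivially, yields the stated bound $C_0(\w_2(1)-\w_1(1))/\sqrt n$ for all large $n$. The delicate point throughout is checking that the conditional renewal estimates (strong uniform exponential tail of $\sigma_{\ell+1}-\sigma_\ell$, the moment bounds in Proposition~\ref{prop:ZprocessRwalkProperties}) survive conditioning on ${\cal F}$; this is exactly what Lemma~\ref{lem:ConditionalMarginalTailBound} was designed to guarantee, so the argument reduces to verifying that ${\cal F}$ is of the form covered there — namely it reveals only finitely many perturbation vectors, all with images below the level from which we restart the analysis.
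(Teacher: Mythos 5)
Your proposal follows essentially the same route as the paper's. Both arguments: (1) wait until the paths climb above level $n^\beta$, working on the event that the first joint renewal happens within $O(n^\beta)$ steps and the horizontal spread up to that point is bounded by $n^\alpha$ for some $\alpha$ slightly bigger than $\beta$ (the paper's events $B^1_n$, $B^2_n$); (2) observe that, on this good event, the coalescing time started from the renewal position is independent of the conditioning $\sigma$-field, since the shielding events $A_{\text{sp}}(\cdot)$ live in $\mathbb{H}^+(0)$ and the conditioning on $\tilde\w \in \mathbb{H}^-(n^\beta)$ only makes the `Out' events above level $n^\beta$ more likely; and (3) apply Theorem \ref{thm:CoalescingTimetail} from the renewal point, absorbing the additive $O(n^\beta)$ in time and $O(n^\alpha)$ in space. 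Your ``first attempt'' is exactly this; your ``fix'' is not actually needed and in fact trades the paper's sharper spread bound $n^\alpha$ (from item (iv) of Lemma \ref{lem:ConditionalMarginalTailBound}, valid for any $\alpha > \beta$) for the cruder parabolic bound $n^{2\beta}$, which is larger when $\alpha$ is chosen close to $\beta$. Both you and the paper end with a bound of the form $C_0(\w_2(1)-\w_1(1) + O(n^\gamma))/\sqrt n$ with $n^\gamma = o(\sqrt n)$ and then pass to $C^\prime_0 (\w_2(1)-\w_1(1))/\sqrt n$; as you correctly flag, this absorption step implicitly needs $\w_2(1)-\w_1(1) \gtrsim n^\gamma$, a regime assumption that holds wherever the proposition is later invoked (starting separations there are of order $n^\alpha$ or $\sqrt n$). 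The concern you raise about the additive term is therefore legitimate, but the paper handles it the same way you do; it is best viewed as a minor imprecision in the proposition's statement rather than a defect in your argument.
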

\begin{proof}
Choose $\alpha \in (0, 1/2)$ such that $\alpha > \beta$ and we 
define the events $B^1_n$ and $B^2_n$ respectively as 
\begin{align*}
B^1_n & := \{ \sigma_1 = \sigma_1(\w_1, \w_2) < 2 n^\beta \} \text{ and }\\
B^2_n & := \cap_{i=1}^2 \bigl \{ \max \{|(h^{j}(\w_i) - \w_i)(1)| : 1 \leq j
 \leq \lfloor 2n^\beta \rfloor\}   \leq n^\alpha \bigr \}. 
\end{align*}
The event $B^1_n$ says that first (joint) renewal step $\sigma_1(\w_1, \w_2)$ 
occurs within the next $\lfloor 2n^\beta \rfloor$ many steps. 
Recall that we have achieved strong uniform exponential tail decay for the family $\{
\tau_{j+1}(\w_1, \w_2) - \tau_j(\w_1, \w_2) : j \geq 0\}$ through repeated occurrence of 
$A_{\text{sp}}(\quad)$ events and these events depend only on the collection
 $\{\Gamma_\w : \w \in \mathbb{H}^+(0)\}$. Further, given any realisation 
of the collection $\{ \Gamma_\w : \w \in {\cal R}\}$ satisfying 
$\tilde{\w} \in \mathbb{H}^-(n^\beta)$  for all $\w \in {\cal R}$, 
probability of the occurrence of `Out' event at a $\tau_j$ step with $\tau_j \geq n^\beta$ 
becomes higher than having no information 
on $\{ \Gamma_\w : \w \in {\cal R}\}$. Therefore, 
given any realisation of the collection $\{ \Gamma_\w : \w \in {\cal R}\}$ satisfying 
$\tilde{\w} \in \mathbb{H}^-(n^\beta)$  for all $\w \in {\cal R}$,
the argument of Proposition \ref{prop:Sigma_Tail} still ensures that 
$\P((B^1_n)^c)$ goes to zero sub-exponentially.
On the other hand, the proof of Item (iv) of Lemma \ref{lem:ConditionalMarginalTailBound} 
uses `special' points in the upper half-plane $\mathbb{H}^+(0)$. Hence, 
given any realisation of the collection $\{ \Gamma_\w : \w \in {\cal R}_n\}$, 
the probability $\P((B^2_n)^c)$ goes to zero sub-exponentially fast too. 

Let $\sigma_\ell$ be the first joint renewal step after crossing the line $y = n^\beta$. 
On the event $B^1_n \cap B^2_n$, the coalescing time 
$T(\w_1, \w_2)$ is dominated by $T(h^{\sigma_\ell}(\w_1), h^{\sigma_\ell}(\w_2)) + 2n^\beta$. 
We further observe that the coalescing time $T(h^{\sigma_\ell}(\w_1), h^{\sigma_\ell}(\w_2))$
depends on the collection of random vectors $
\{ \Gamma_\w : \w \in \mathbb{H}^+(\sigma_\ell)\}$ which is contained in 
the collection $\{ \Gamma_\w : \w \in \mathbb{H}^+(n^\beta)\}$.
Therefore, for all large $n$ we obtain 
\begin{align*}
& \P \Bigl( T(\w_1, \w_2) >  n \mid \bigl ( \{ \Gamma_\w : \w \in {\cal R}\} \text{ such that } 
\tilde{\w} \in \mathbb{H}^-(n^\beta) \text{ for all }\w \in {\cal R} \bigr ) \Bigr )\\
& \leq \P \Bigl( (T(\w_1, \w_2) >  n)\cap B^1_n \cap B^2_n 
\mid \bigl ( \{ \Gamma_\w : \w \in {\cal R}\} \text{ such that } 
\tilde{\w} \in \mathbb{H}^-(n^\beta) \text{ for all }\w \in {\cal R} \bigr ) \Bigr ) \\
& \qquad \qquad \qquad + \P((B^1_n)^c) + \P((B^2_n)^c)\\
& \leq \P \Bigl( (T(h^{\sigma_1}(\w_1), h^{\sigma_1}(\w_2)) + 2n^\beta > n)\cap B^1_n \cap B^2_n 
\mid \bigl ( \{ \Gamma_\w : \w \in {\cal R}\} \text{ such that }\\ 
& \qquad \qquad \tilde{\w} \in \mathbb{H}^-(n^\beta) \text{ for all }\w \in {\cal R} \bigr ) \Bigr ) 
 +  \P((B^1_n)^c) + \P((B^2_n)^c)\\
 & \leq \P \Bigl( (T(h^{\sigma_1}(\w_1), h^{\sigma_1}(\w_2))  > n/2)\cap B^1_n \cap B^2_n 
\mid \bigl ( \{ \Gamma_\w : \w \in {\cal R}\} \text{ such that }\\ 
& \qquad \qquad \tilde{\w} \in \mathbb{H}^-(n^\beta) \text{ for all }\w \in {\cal R} \bigr ) \Bigr ) 
 +  \P((B^1_n)^c) + \P((B^2_n)^c)\\
& = \P \Bigl( (T(h^{\sigma_1}(\w_1), h^{\sigma_1}(\w_2)) > n/2)\cap B^1_n \cap B^2_n \Bigr ) + \P((B^1_n)^c) + \P((B^2_n)^c)\\
& \leq C_0 (\w_2(1) - \w_1(1) + 2n^\alpha)/\sqrt{n} \\
& \leq C^\prime_0 (\w_2(1) - \w_1(1))/\sqrt{n}, 
\end{align*} 
for some $C^\prime_0 > 0$. 
This completes the proof.  
\end{proof}
We are ready to prove Theorem \ref{thm:PerturbedHoward_Tree} now.

\noindent {\bf Proof of Theorem \ref{thm:PerturbedHoward_Tree}:}
We prove that the PH network $G$ is a.s. connected. We prove the second part of the
Theorem \ref{thm:PerturbedHoward_Tree}, i.e., there is no bi-infinite path a.s. later 
in Remark \ref{rem:Biinfinite}.

Theorem \ref{thm:CoalescingTimetail} gives us that the coalescing time 
of two PH paths starting from $\w_1, \w_2 \in \Z^2$ with $\w_1(2) =\w_2(2)$  s finite 
a.s. Therefore, we have that
\begin{align}
\label{eq:ConnectedSameLevel}
\P \bigl [  \bigcap_{\w_1, \w_2 \in \Z^2, \w_1(2) = \w_2(2)} \{\text{
the paths }\pi^{\w_1}, \pi^{\w_2} \text{ coalesce eventually}\}\bigr ] = 1.
\end{align}  
Because of the non-crossing nature of our model, it 
is straightforward to observe that (\ref{eq:ConnectedSameLevel}) proves
the first part of Theorem \ref{thm:PerturbedHoward_Tree}.
\qed 

\section{Convergence to the Brownian web}
\label{sec:cvBW}

This section is devoted to the proof of Theorem \ref{thm:PerturbedHoward_Bweb},
 i.e,  the collection of scaled PH paths converges to the 
Brownian web (BW). In fact we prove a stronger version of the theorem in the sense that
we construct a dual process and show that under diffusive scaling the original process together 
with the dual process jointly converge to the BW and its dual.
Towards this we will apply a robust technique that was developed in \cite{CSST20}
to study convergence to the BW for non-crossing path models.
We recall here that the PH paths are non-crossing in the sense of (\ref{eq:NonCrossing}).

We recall that the BW and its dual denoted 
by $({\mathcal W},\widehat{{\mathcal W}})$ is a $({\mathcal H}\times \widehat{{\mathcal H}}, {\mathcal B}_{{\mathcal H}\times \widehat{{\mathcal H}}})$-valued random variable such that:
\begin{itemize}
\item[$(i)$] $\widehat{{\mathcal W}}$ is distributed as $-{\mathcal W}$, the BW rotated $180^0$ about the origin;
\item[$(ii)$] ${\mathcal W}$ and $\widehat{{\mathcal W}}$ uniquely determine each other
in the sense that the paths of ${\cal W}$ a.s. do not cross with (backward) paths in 
$\widehat{{\cal W}}$. See Theorem 2.4 of Schertzer et al. \cite{SSS19}.
 The interaction between paths in ${\mathcal W}$ and $\widehat{\mathcal W}$ 
 is that of Skorohod reflection (see \cite{STW00}).
\end{itemize}

It is time to specify a dual graph $\widehat{G}$ for the PH network $G$.
The construction of the dual graph is not unique and our construction is 
inspired from that of \cite{RSS16B}. For our dual graph $\widehat{G}$, the dual vertices 
are precisely the mid-points between two consecutive vertices in $V$ 
on each horizontal line $y = m$ for $m \in \Z$. It is further ensured that each 
dual vertex has a unique offspring dual vertex in the negative direction of the y-axis.
Before giving a formal definition, we direct the attention of the reader to Figure \ref{GRSDual}.

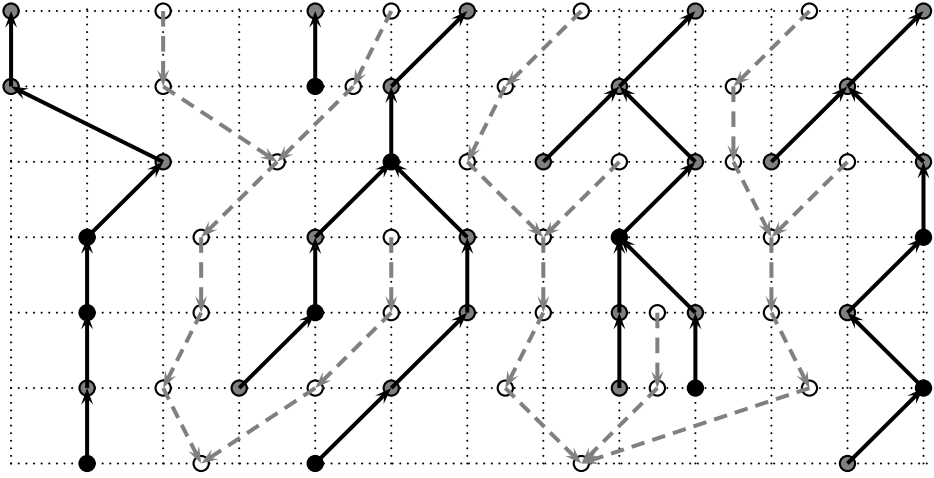
\begin{figure}[htb]
\begin{center}
\begin{pspicture}(0,0)(12,6)
\psgrid[subgriddiv=0,griddots=10,gridlabels=0]
\pscircle[fillcolor=black,fillstyle=solid](1,0){.1}
\pscircle[fillcolor=gray,fillstyle=solid](1,1){.1}
\pscircle[fillcolor=black,fillstyle=solid](1,2){.1}
\pscircle[fillcolor=black,fillstyle=solid](1,3){.1}
\pscircle[fillcolor=gray,fillstyle=solid](2,4){.1}
\pscircle[fillcolor=gray,fillstyle=solid](0,5){.1}
\pscircle[fillcolor=gray,fillstyle=solid](0,6){.1}

\psline[linewidth=1.5pt]{->}(1,0)(1,1)
\psline[linewidth=1.5pt]{->}(1,1)(1,2)
\psline[linewidth=1.5pt]{->}(1,2)(1,3)
\psline[linewidth=1.5pt]{->}(1,3)(2,4)
\psline[linewidth=1.5pt]{->}(2,4)(0,5)
\psline[linewidth=1.5pt]{->}(0,5)(0,6)

\pscircle[fillcolor=gray,fillstyle=solid](3,1){.1}
\pscircle[fillcolor=black,fillstyle=solid](4,2){.1}
\pscircle[fillcolor=gray,fillstyle=solid](4,3){.1}
\psline[linewidth=1.5pt]{->}(3,1)(4,2)
\psline[linewidth=1.5pt]{->}(4,2)(4,3)
\psline[linewidth=1.5pt]{->}(4,3)(5,4)

\pscircle[fillcolor=black,fillstyle=solid](4,5){.1}
\pscircle[fillcolor=gray,fillstyle=solid](4,6){.1}
\psline[linewidth=1.5pt]{->}(4,5)(4,6)

\pscircle[fillcolor=black,fillstyle=solid](4,0){.1}
\pscircle[fillcolor=gray,fillstyle=solid](5,1){.1}
\pscircle[fillcolor=gray,fillstyle=solid](6,2){.1}
\pscircle[fillcolor=gray,fillstyle=solid](6,3){.1}
\pscircle[fillcolor=black,fillstyle=solid](5,4){.1}
\pscircle[fillcolor=gray,fillstyle=solid](5,5){.1}
\pscircle[fillcolor=gray,fillstyle=solid](6,6){.1}

\psline[linewidth=1.5pt]{->}(4,0)(5,1)
\psline[linewidth=1.5pt]{->}(5,1)(6,2)
\psline[linewidth=1.5pt]{->}(6,2)(6,3)
\psline[linewidth=1.5pt]{->}(6,3)(5,4)
\psline[linewidth=1.5pt]{->}(5,4)(5,5)
\psline[linewidth=1.5pt]{->}(5,5)(6,6)

\pscircle[fillcolor=black,fillstyle=solid](9,1){.1}
\pscircle[fillcolor=gray,fillstyle=solid](9,2){.1}
\pscircle[fillcolor=black,fillstyle=solid](8,3){.1}
\pscircle[fillcolor=gray,fillstyle=solid](9,4){.1}
\pscircle[fillcolor=gray,fillstyle=solid](8,5){.1}
\pscircle[fillcolor=gray,fillstyle=solid](9,6){.1}

\pscircle[fillcolor=gray,fillstyle=solid](7,4){.1}
\pscircle[fillcolor=gray,fillstyle=solid](8,1){.1}
\pscircle[fillcolor=gray,fillstyle=solid](8,2){.1}
\psline[linewidth=1.5pt]{->}(8,1)(8,2)
\psline[linewidth=1.5pt]{->}(8,2)(8,3)
\psline[linewidth=1.5pt]{->}(7,4)(8,5)
\psline[linewidth=1.5pt]{->}(9,1)(9,2)
\psline[linewidth=1.5pt]{->}(9,2)(8,3)
\psline[linewidth=1.5pt]{->}(8,3)(9,4)
\psline[linewidth=1.5pt]{->}(9,4)(8,5)
\psline[linewidth=1.5pt]{->}(8,5)(9,6)

\pscircle[fillcolor=gray,fillstyle=solid](11,0){.1}
\pscircle[fillcolor=black,fillstyle=solid](12,1){.1}
\pscircle[fillcolor=gray,fillstyle=solid](11,2){.1}
\pscircle[fillcolor=black,fillstyle=solid](12,3){.1}
\pscircle[fillcolor=gray,fillstyle=solid](12,4){.1}
\pscircle[fillcolor=gray,fillstyle=solid](11,5){.1}
\pscircle[fillcolor=gray,fillstyle=solid](12,6){.1}
\pscircle[fillcolor=gray,fillstyle=solid](10,4){.1}
\psline[linewidth=1.5pt]{->}(10,4)(11,5)
\psline[linewidth=1.5pt]{->}(11,0)(12,1)
\psline[linewidth=1.5pt]{->}(12,1)(11,2)
\psline[linewidth=1.5pt]{->}(11,2)(12,3)
\psline[linewidth=1.5pt]{->}(12,3)(12,4)
\psline[linewidth=1.5pt]{->}(12,4)(11,5)
\psline[linewidth=1.5pt]{->}(11,5)(12,6)

\pscircle[fillcolor=white,fillstyle=solid](2.5,0){.1}
\pscircle[fillcolor=white,fillstyle=solid](7.5,0){.1}
\pscircle[fillcolor=white,fillstyle=solid](2,1){.1}
\pscircle[fillcolor=white,fillstyle=solid](4,1){.1}
\pscircle[fillcolor=white,fillstyle=solid](6.5,1){.1}
\pscircle[fillcolor=white,fillstyle=solid](8.5,1){.1}
\pscircle[fillcolor=white,fillstyle=solid](10.5,1){.1}

\pscircle[fillcolor=white,fillstyle=solid](2.5,2){.1}
\pscircle[fillcolor=white,fillstyle=solid](5,2){.1}
\pscircle[fillcolor=white,fillstyle=solid](7,2){.1}
\pscircle[fillcolor=white,fillstyle=solid](8.5,2){.1}
\pscircle[fillcolor=white,fillstyle=solid](10,2){.1}

\pscircle[fillcolor=white,fillstyle=solid](2.5,3){.1}
\pscircle[fillcolor=white,fillstyle=solid](5,3){.1}
\pscircle[fillcolor=white,fillstyle=solid](7,3){.1}
\pscircle[fillcolor=white,fillstyle=solid](10,3){.1}

\pscircle[fillcolor=white,fillstyle=solid](3.5,4){.1}
\pscircle[fillcolor=white,fillstyle=solid](6,4){.1}
\pscircle[fillcolor=white,fillstyle=solid](8,4){.1}
\pscircle[fillcolor=white,fillstyle=solid](9.5,4){.1}
\pscircle[fillcolor=white,fillstyle=solid](11,4){.1}

\pscircle[fillcolor=white,fillstyle=solid](2,5){.1}
\pscircle[fillcolor=white,fillstyle=solid](4.5,5){.1}
\pscircle[fillcolor=white,fillstyle=solid](6.5,5){.1}
\pscircle[fillcolor=white,fillstyle=solid](9.5,5){.1}

\pscircle[fillcolor=white,fillstyle=solid](2,6){.1}
\pscircle[fillcolor=white,fillstyle=solid](5,6){.1}
\pscircle[fillcolor=white,fillstyle=solid](7.5,6){.1}
\pscircle[fillcolor=white,fillstyle=solid](10.5,6){.1}

\psline[linecolor=gray,linewidth=1.5pt,linestyle=dashed]{->}(2,6)(2,5)
\psline[linecolor=gray,linewidth=1.5pt,linestyle=dashed]{->}(2,5)(3.5,4)
\psline[linecolor=gray,linewidth=1.5pt,linestyle=dashed]{->}(3.5,4)(2.5,3)
\psline[linecolor=gray,linewidth=1.5pt,linestyle=dashed]{->}(2.5,3)(2.5,2)
\psline[linecolor=gray,linewidth=1.5pt,linestyle=dashed]{->}(2.5,2)(2,1)
\psline[linecolor=gray,linewidth=1.5pt,linestyle=dashed]{->}(2,1)(2.5,0)

\psline[linecolor=gray,linewidth=1.5pt,linestyle=dashed]{->}(5,6)(4.5,5)
\psline[linecolor=gray,linewidth=1.5pt,linestyle=dashed]{->}(4.5,5)(3.5,4)

\psline[linecolor=gray,linewidth=1.5pt,linestyle=dashed]{->}(5,3)(5,2)
\psline[linecolor=gray,linewidth=1.5pt,linestyle=dashed]{->}(5,2)(4,1)
\psline[linecolor=gray,linewidth=1.5pt,linestyle=dashed]{->}(4,1)(2.5,0)

\psline[linecolor=gray,linewidth=1.5pt,linestyle=dashed]{->}(7.5,6)(6.5,5)
\psline[linecolor=gray,linewidth=1.5pt,linestyle=dashed]{->}(6.5,5)(6,4)
\psline[linecolor=gray,linewidth=1.5pt,linestyle=dashed]{->}(6,4)(7,3)
\psline[linecolor=gray,linewidth=1.5pt,linestyle=dashed]{->}(7,3)(7,2)
\psline[linecolor=gray,linewidth=1.5pt,linestyle=dashed]{->}(7,2)(6.5,1)
\psline[linecolor=gray,linewidth=1.5pt,linestyle=dashed]{->}(6.5,1)(7.5,0)

\psline[linecolor=gray,linewidth=1.5pt,linestyle=dashed]{->}(8,4)(7,3)
\psline[linecolor=gray,linewidth=1.5pt,linestyle=dashed]{->}(8.5,2)(8.5,1)
\psline[linecolor=gray,linewidth=1.5pt,linestyle=dashed]{->}(8.5,1)(7.5,0)

\psline[linecolor=gray,linewidth=1.5pt,linestyle=dashed]{->}(10.5,6)(9.5,5)
\psline[linecolor=gray,linewidth=1.5pt,linestyle=dashed]{->}(9.5,5)(9.5,4)
\psline[linecolor=gray,linewidth=1.5pt,linestyle=dashed]{->}(9.5,4)(10,3)
\psline[linecolor=gray,linewidth=1.5pt,linestyle=dashed]{->}(10,3)(10,2)
\psline[linecolor=gray,linewidth=1.5pt,linestyle=dashed]{->}(10,2)(10.5,1)
\psline[linecolor=gray,linewidth=1.5pt,linestyle=dashed]{->}(10.5,1)(7.5,0)
\psline[linecolor=gray,linewidth=1.5pt,linestyle=dashed]{->}(11,4)(10,3)
\end{pspicture}
\caption{The shaded (black or gray) points are points of $V$ and the black arrows represent
the PH paths. Black points are special points whereas gray points general points of $V$, i.e.,
they are perturbed versions of some other open lattice points.
 The white circled points are the points of the dual process 
and the gray (dashed) paths are the dual paths.}
\label{GRSDual}
\end{center}
\end{figure}

For $ (x,t)  \in \Z^2$, we define,
\begin{equation}
\label{DualIncrement}
\begin{split}
J^{+}_{(x,t)} & := \inf\{k: k \geq 1, ( x + k, t ) \in V\} \\
J^{-}_{(x,t)} & := \inf \{k: k\geq 1, ( x -k, t) \in V\}.
\end{split}
\end{equation}
Next, we define $r(x,t):= ( x + J^{+}_{(x,t)},t)$ and $l(x,t):= ( x - J^{+}_{(x,t)},t)$, 
as  the first open point to the right ({\em open right neighbour}) and  
the first open point to the left ({\em open left neighbour})
of $(x,t)$ at the same level $y = t$ respectively. 
For $(x,t)\in V$, let $\widehat{r}(x,t) := (x + J^{+}_{(x,t)}/2,t)$ 
and $\widehat{l}(x,t) := (x -J^{-}_{(x,t)}/2,t)$ respectively denote 
 the right dual neighbour and the left dual neighbour of $(x,t)$ in the dual vertex set.
Finally, the dual vertex set is given by
\begin{equation*}
\widehat{V} := \{\widehat{r}(x,t),\widehat{l}(x,t):(x,t) \in V \}.
\end{equation*}
For a vertex $ (u,s)\in \widehat{V} $, let $ (v,s-1) \in \widehat{V} $ be such that the straight line segment 
joining $ (u,s)$ and $(v,s-1)$ does not cross any edge in $G$. The dual edges are edges joining all 
such $ (u,s)$ and $ (v,s-1) $.  Formally, for $(u,s)\in \widehat{V}$, we define
\begin{equation}
\label{eqn:def_al_ar}
\begin{split}
a^{l} (u,s) & := \sup \{ z : (z,s-1)\in V, h(z,s-1)(1) < u \} \\
a^{r} (u,s) & := \inf\{z : (z,s-1) \in V, h(z,s-1)(1) > u \}
\end{split}
\end{equation}
and set $ \widehat{h} ( u, s) := ( (a^{l} (u,s)+a^{r} (u,s))/2, s- 1) $. 
Note that $ ( a^{r} (u,s), s-1) $ and
$ (a^{l} (u,s), s-1) $ are the nearest vertices in $V$ to the right and left respectively
of the dual vertex $ \widehat{h} ( u, s)  $.
Finally the edge set of the dual graph $\widehat{G} := (\widehat{V}, \widehat{E})$
is given by
\begin{equation*}
\widehat{E} := \{  \langle (u,s), \widehat{h}(u,s) \rangle : (u,s) \in \widehat{V}\}.
\end{equation*}
Clearly, each dual vertex has exactly one outgoing edge which goes in the downward direction. Hence, the dual graph $\widehat{G}:= (\widehat{V},\widehat{E})$ does not contain any cycle or loop. This forest $\widehat{G}$ is entirely determined from $G$ without any extra randomness. 

The dual (or backward) path $\widehat{\pi}^{(y,s)}\in\widehat{\Pi}$ starting at $(y,s)$ is
constructed  by linearly joining the successive $\widehat{h}(\cdot)$ steps. 
Thus, $\widehat{{\cal X}}:=\{\widehat{\pi}^{(y,s)} : (y,s) \in \widehat{V}\}$ denotes the collection of all dual paths obtained from $\widehat{G}$.

Let us recall that ${\cal X}_n={\cal X}_n(\gamma,\sigma)$ for $\gamma,\sigma>0$ and $n\geq 1$, is the collection of $n$-th order diffusively scaled paths. In the same way, we define $\widehat{{\cal X}}_n=\widehat{{\cal X}}_n(\gamma,\sigma)$ as the collection of diffusively scaled dual paths. 
For any dual path $\widehat{\pi}$ with starting time $
\sigma_{\widehat{\pi}}$, the scaled dual path $\widehat{\pi}_n(\gamma,\sigma) : [-\infty , \sigma_{\widehat{\pi}}/n\gamma] \to [-\infty, \infty]$ is given by
\begin{equation}
\label{defi:ScaledDualPath}
\widehat{\pi}_n(\gamma,\sigma) (t) := \widehat{\pi}(n\gamma t)/\sqrt{n }\sigma ~.
\end{equation}
For each $n\geq 1$, the closure of $\widehat{{\cal X}}_n$ in $(\widehat{\Pi},d_{\widehat{\Pi}})$
denoted as $\overline{\widehat{{\cal X}}}_n$ is a 
$(\widehat{{\cal H}}, {\cal B}_{\widehat{{\cal H}}})$-valued random variable.
We are now ready to state our result regarding joint convergence for PH model:
\begin{theorem}
\label{thm:BW_Joint}
There exist $\sigma=\sigma(p, \theta_x, \theta_y)>0$ and 
$\gamma=\gamma((p, \theta_x, \theta_y))>0$ such that the sequence
$$
\big\{ \big( \overline{{\cal X}}_n(\gamma,\sigma) , \overline{\widehat{\cal X}}_n(\gamma,\sigma) \big) : \, n \geq 1 \big\}
$$
converges in distribution to $({\cal W}, \widehat{{\cal W}})$ as $({\cal H}\times \widehat{{\cal H}},{\cal B}_{{\cal H}\times \widehat{{\cal H}}})$-valued random variables as $n\rightarrow\infty$.
\end{theorem}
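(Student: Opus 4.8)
The plan is to verify, for the pair $(\overline{{\cal X}}_n(\gamma,\sigma),\overline{\widehat{\cal X}}_n(\gamma,\sigma))$, the hypotheses of the general convergence criterion for systems of non-crossing paths together with their duals developed in \cite{CSST20} (which itself refines the framework of \cite{FINR04}). The ingredients required are: (a) a diffusive invariance principle for a single PH path, uniform in its starting point, which simultaneously fixes the constants $\sigma=\sigma(p,\theta_x,\theta_y)$ and $\gamma=\gamma(p,\theta_x,\theta_y)$ and supplies the modulus estimate behind tightness; (b) convergence of the finite dimensional distributions of the scaled forward paths to coalescing Brownian motions; (c) the polynomial tail bound on coalescence times, already available in Theorem~\ref{thm:CoalescingTimetail} and, in its conditional form, in Proposition~\ref{prop:CoalescingTimetail_General}; (d) the drift and moment estimates for the inter-path distance process, already established in Proposition~\ref{prop:ZprocessRwalkProperties} (these are exactly the hypotheses of Corollary~\ref{corol:LaplaceCoalescingTimeTailNew3}); and (e) for the dual, the non-crossing relation between forward and dual paths, built into the construction of $\widehat G$, together with asymptotic density of the dual vertex set $\widehat V$. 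Items (c) and (d) are in hand, so the work is to carry out (a), (b), (e) and to assemble the characterization.

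For (a): by Proposition~\ref{prop:SinglePtRwalk} the renewal skeleton $\{h^{\sigma_\ell}(\x_1):\ell\ge 0\}$ is, after its first step, a random walk with i.i.d. increments in $\Z\times\N$; Lemma~\ref{lem:SinglePtRwalkMeanZero} and Corollary~\ref{cor:SingleRW_Moments} give a spatial increment that is symmetric about $0$ with finite moments of every order, and Proposition~\ref{prop:Sigma_Tail} gives exponential tails for the renewal-time increments $\sigma_{\ell+1}-\sigma_\ell$. Donsker's theorem for this random walk, together with the renewal law of large numbers relating the renewal count $\ell$ to the step index $n$, shows that $h^{\lfloor n t\rfloor}(\x_1)(1)/\sqrt n$ converges to a Brownian motion; the \textrm{In} event forces $|h^{m}(\x_1)(1)-h^{\sigma_\ell}(\x_1)(1)|\le (m-\sigma_\ell)^2$ between consecutive renewal steps, so after diffusive scaling the excursions between renewals are negligible and the linearly interpolated path converges in $(\Pi,d_\Pi)$ to a Brownian motion. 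This identifies $\sigma$ as the asymptotic variance per unit scaled time and $\gamma$ as the corresponding time-change constant, and translation invariance makes the estimate uniform in $\x_1$. Since PH paths are non-crossing and Theorem~\ref{thm:CoalescingTimetail} controls the number of distinct scaled paths present in a bounded window (coalescence), this uniform single-path modulus bound yields the tightness criterion of \cite{CSST20} for $\overline{{\cal X}}_n(\gamma,\sigma)$.

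For (b): given finitely many deterministic points, relabel them with non-decreasing time coordinates and run the joint exploration with the renewal steps of Section~\ref{sec:renewal_def}. Each path converges marginally to a Brownian motion by (a); for a pair $(\x_1,\x_2)$ the difference-at-renewals process $\{Z_\ell\}$ of Proposition~\ref{prop:ZprocessRwalkProperties} is a mean-zero random walk away from the diagonal and returns to $0$ with uniformly positive probability near it, so, combined with Theorem~\ref{thm:CoalescingTimetail} and Proposition~\ref{prop:CoalescingTimetail_General} (which absorbs the information carried by paths that started at lower time levels, confined below level $n^\beta$), the scaled pair converges to a pair of coalescing Brownian motions with the correct coalescence rate. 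Induction on the number of paths, using that after a renewal the exploration below the current level is irrelevant, extends this to any finite collection. Together with tightness, non-crossing, and the density of a countable deterministic set, every subsequential limit of $\overline{{\cal X}}_n(\gamma,\sigma)$ is then identified with $\mathcal W$ via Theorem~\ref{theorem:Bwebcharacterisation}, the ``no extra paths'' input being supplied by the dual. I expect step (b) to be the main obstacle: once lower paths have been run, the $\sigma$-field retains information about perturbed open vertices in the upper half-plane, destroying exact independence, and threading Proposition~\ref{prop:CoalescingTimetail_General} and the renewal construction (whose \textrm{In}/\textrm{Out} events must be re-examined under this conditioning) through a clean inductive proof that the whole scaled family coalesces like Brownian motions is the delicate part.

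For (e) and the conclusion: the dual graph $\widehat G=(\widehat V,\widehat E)$ constructed before the statement has mutually non-crossing paths which, by the definition of $\widehat h$ via $a^l,a^r$, cross no forward edge; hence $(\overline{{\cal X}}_n,\overline{\widehat{\cal X}}_n)$ is jointly tight, the dual modulus bound following either from the squeeze between two forward paths or from a single-path analysis identical to (a). Since $V$ has positive density on every line $y=m$, its consecutive points have $O(1)$ spacing, so the dual vertices (their midpoints) do too; after diffusive scaling $\widehat{\cal X}_n$ is asymptotically dense, and in any limit there is, almost surely, a dual path from every deterministic point of $\R^2$. Fixing a joint subsequential limit $(Z,\widehat Z)$, the previous paragraph gives $Z\stackrel d=\mathcal W$; $\widehat Z$ is then a compact family of mutually non-crossing backward paths that do not cross the paths of $Z$ and contains a path from a.s.\ every point, so by the characterization of the dual web (Theorem~2.4 of \cite{SSS19}; the interaction being that of Skorohod reflection, cf.\ \cite{STW00}) one gets $\widehat Z=\widehat{\mathcal W}$ and $(Z,\widehat Z)\stackrel d=(\mathcal W,\widehat{\mathcal W})$, independently of the subsequence. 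This proves Theorem~\ref{thm:BW_Joint}, and Theorem~\ref{thm:PerturbedHoward_Bweb} is its forward marginal. The only remaining subtlety here is that the limiting dual family be neither too sparse nor too rich—density of $\widehat V$ rules out the former, and non-crossing with $\mathcal W$ together with the uniqueness of $\widehat{\mathcal W}$ rules out the latter.
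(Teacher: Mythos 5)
The architecture of your argument follows the paper closely: single-path Donsker via the renewal skeleton, finite-dimensional convergence via the coalescence map $f_n^{(\alpha)}$ together with the tube construction and Proposition~\ref{prop:CoalescingTimetail_General}, and then passage to the joint limit using the dual from the framework of \cite{CSST20}. But there is one genuine gap, and it is exactly at the step you yourself flag in paragraph~(b) as ``supplied by the dual'' without saying how.

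Your final paragraph asserts that ``the previous paragraph gives $Z\stackrel{d}{=}\mathcal W$,'' but paragraph~(b) only establishes $(I_1)$, i.e., that a subsequential limit $Z$ \emph{contains} coalescing Brownian motions from a dense countable set, hence contains a copy of $\mathcal W$. It does not rule out extra paths in $Z$. Non-crossing of $Z$ against $\widehat Z$ alone cannot rule them out either: a spurious limit path could be sandwiched between Brownian web paths, touching dual paths for positive Lebesgue measure time without ever crossing them. This is precisely what Criterion~$(B_2)$, or the wedge condition of \cite{SSS19}, or, in the paper's version, condition~$(iv)$ of Theorem~\ref{thm:JtConvBwebGenPath}, are designed to exclude. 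The paper verifies that condition in Subsection~\ref{subsubsec:LastConditionVerification} by reducing it to the counting event $B_n^\epsilon(\delta,m)$ and proving Lemma~\ref{lem:coalescenceDSF}, which again rests on the coalescence-time bound of Proposition~\ref{prop:CoalescingTimetail_General}. Your proposal never addresses this; the closing sentence, that ``non-crossing with $\mathcal W$ together with the uniqueness of $\widehat{\mathcal W}$ rules out the latter,'' both assumes $Z=\mathcal W$ (which is what one needs to prove) and speaks only to $\widehat Z$ being too rich, not to $Z$. The appeal to Theorem~2.4 of \cite{SSS19} likewise presupposes that the forward family is already $\mathcal W$. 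To close the gap you would need to show, for any joint subsequential limit $(\mathcal Z,\widehat{\mathcal Z})$, that almost surely no $\pi\in\mathcal Z$, $\widehat\pi\in\widehat{\mathcal Z}$ satisfy $\int_{\sigma_\pi}^{\sigma_{\widehat\pi}}\mathbf 1_{\pi(t)=\widehat\pi(t)}\,dt>0$; this is a separate estimate, not a consequence of $(I_1)$, tightness, and non-crossing.

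Everything else in your outline aligns with the paper, with one smaller caveat worth noting: in part~(b) you propose to relabel the $k$ starting points by non-decreasing time, but the paper's induction is on the number of points at a \emph{fixed} time level, handling the $k$-th path via the $\alpha$-coalescence map and the independence afforded by disjoint tubes (the event $E_n$) until proximity occurs; the conditional estimate Proposition~\ref{prop:CoalescingTimetail_General} then absorbs the explored information. You identify the right ingredients there, and your concern about the non-Markov filtration $\{\mathcal F_n\}$ retaining information about the upper half-plane is well placed and is precisely what the information set $I_n$, the renewal (Out) condition, and Corollary~\ref{cor:Rwalk_IncrDistEquality_2} are built to control.
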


The convergence criteria to the BW  for  non-crossing path models are provided by Fontes et al. \cite{FINR04}. Schertzer et al. \cite{SSS19} provides a very complete overview on this topic.
Let $\Xi\subset\Pi$. For $t>0$ and $t_0,a,b\in\R$ with $a<b$, let $\eta_\Xi(t_0,t;a,b)$ denote 
the counting random variable defined as
\begin{equation}
\label{def:EtaPtSet}
\eta_{\Xi}(t_0,t;a,b) := \# \big\{ \pi(t_0+t) :\, \pi \in \Xi , \, \sigma_{\pi}\leq t_0 \text{ and } \pi(t_0) \in [a,b] \big\}.
\end{equation}
In other words, $\eta_{\Xi}(t_0,t;a,b)$ considers all paths in $\Xi$, born before $t_0$, 
that intersect $[a,b]$ at time $t_0$ and counts the number of different positions these paths occupy at time $t_0+t$. In Theorem 2.2 of \cite{FINR04}, Fontes et al. provided the following convergence criteria.

\begin{theorem}[Theorem 2.2 of \cite{FINR04}]
\label{thm:BwebConvergenceNoncrossing1}
Let $\{ \Xi_n : n \in \N\}$ be a sequence of $({\mathcal H},B_{{\mathcal H}})$ valued random variables with non-crossing paths. Assume that the following conditions hold:
\begin{itemize}
\item[$(I_1)$] Fix a deterministic countable dense set ${\mathcal D}$  of $\R^2$. For each $\x \in {\mathcal D}$, there exists $\pi_n^{\x} \in \Xi_n$ such that for any finite set of points $\x^1, \dotsc, \x^k \in {\mathcal D}$, as $n \to \infty$, we have
$(\pi^{\x^1}_n, \dotsc, \pi^{\x^k}_n)$ converges in distribution to $(W^{\x^1}, \dotsc, W^{\x^k} )$, where $(W^{\x^1}, \dotsc, W^{\x^k} )$ denotes coalescing Brownian motions starting from the points $\x_1, \ldots, \x_k$.
\item[$(B_1)$] For all $t>0$, $\limsup_{n\rightarrow \infty}\sup_{(a,t_0)\in\R^2}\P(\eta_{\Xi_n}(t_0,t;a,a+\epsilon)\geq 2)\rightarrow 0$  as $\epsilon\downarrow 0$.
\item[$(B_2)$] For all $t>0$, $\frac{1}{\epsilon}\limsup_{n\rightarrow\infty}\sup_{(a,t_0)\in \R^2}\P(\eta_{\Xi_n}(t_0,t;a,a+\epsilon)\geq 3)\rightarrow 0$ as $\epsilon\downarrow 0$.
\end{itemize}
Then $\Xi_n$ converges in distribution to the standard Brownian web ${\mathcal W}$ as $n \to \infty$.
\end{theorem}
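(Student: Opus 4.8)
The plan is the standard two-step route to the Brownian web for non-crossing path families: (1) show that $\{\Xi_n\}$ is a tight sequence of $({\cal H},{\cal B}_{\cal H})$-valued random variables, and (2) show that every subsequential weak limit ${\cal Z}$ coincides in law with the standard Brownian web ${\cal W}$, using the characterisation of Theorem \ref{theorem:Bwebcharacterisation}. Since the limit law is then the same along every subsequence, $\Xi_n$ will converge in distribution to ${\cal W}$. Throughout one works, along the relevant subsequence, under a Skorokhod coupling on which $\Xi_n\to{\cal Z}$ almost surely in $d_{\cal H}$.

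For tightness I would use that a closed $K\subseteq\Pi$ is compact exactly when the paths of $K$ are uniformly equicontinuous for $\rho$ (the starting-time coordinates automatically lie in the compact interval $[-\infty,\infty]$), which reduces tightness to a uniform-in-$n$ bound, vanishing as $\delta\downarrow 0$, on the probability that some path of $\Xi_n$ oscillates by more than $\varepsilon$ over a time window of length $\delta$ inside a fixed $[-T,T]$. The key point is that for a \emph{non-crossing} family such an oscillation forces, at a suitable space-time location, several paths issued from a short spatial interval at one time to occupy distinct positions a fixed amount of time later; the oscillation probability is therefore dominated by $\sup_{(a,t_0)}\P(\eta_{\Xi_n}(t_0,t;a,a+\varepsilon')\ge 2)$ for appropriate $t,\varepsilon'$, which hypothesis $(B_1)$ makes uniformly small. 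This is exactly the tightness criterion of \cite{FINR04} in the non-crossing setting.

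Granting tightness, let ${\cal Z}$ be a subsequential limit. Passing to a further subsequence and combining $(I_1)$ with Skorokhod's representation, I would arrange that $\pi_n^{\x}\to W^{\x}$ a.s.\ for every $\x$ in the fixed countable dense set ${\cal D}$, where $(W^{\x})_{\x\in{\cal D}}$ is a system of coalescing Brownian motions. Since $\pi_n^{\x}\in\Xi_n$ and $\Xi_n\to{\cal Z}$, each $W^{\x}$ lies in the compact set ${\cal Z}$, so ${\cal W}':=\overline{\{W^{\x}:\x\in{\cal D}\}}\subseteq{\cal Z}$, and by property $(c)$ of Theorem \ref{theorem:Bwebcharacterisation} this closure is itself distributed as the standard Brownian web. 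Next, from $(B_1)$ one gets that ${\cal Z}$ has a.s.\ a \emph{unique} path from each deterministic point: two ${\cal Z}$-paths from the same deterministic $\x$ that separate by a fixed time $t$ would yield $\eta_{\cal Z}(\x(2),t;a,a+\varepsilon)\ge 2$ for every arbitrarily short interval around $\x(1)$, and since $\eta_{\cal Z}(\cdot)\le\liminf_n\eta_{\Xi_n}(\cdot)$ under the coupling (distinct limiting positions lift to distinct approximating ones, for generic endpoints) this is incompatible with $(B_1)$. In particular that unique path must be the corresponding path of ${\cal W}'$.

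It then remains to upgrade ${\cal W}'\subseteq{\cal Z}$ to ${\cal Z}={\cal W}'$, i.e.\ to show that ${\cal Z}$ has no path outside ${\cal W}'$; given that, ${\cal Z}$ is a copy of the Brownian web and the proof is complete. This is where $(B_2)$ enters, and it is the main obstacle. Intuitively the two counting hypotheses are calibrated to the web itself, for which $\P(\eta_{\cal W}(t_0,t;a,a+\varepsilon)\ge 2)$ is of order $\varepsilon$ while $\P(\eta_{\cal W}(t_0,t;a,a+\varepsilon)\ge 3)$ is $o(\varepsilon)$; an extra path in ${\cal Z}$ would --- because ${\cal W}'$ already contains paths from a dense set of starting points and the whole family is non-crossing --- have to run alongside the ${\cal W}'$-web and branch away from it, and localising near such a branching produces triple-path configurations from a short interval at a deterministic time with probability of order $\varepsilon$ rather than $o(\varepsilon)$, contradicting $(B_2)$ via the same $\eta_{\cal Z}\le\liminf_n\eta_{\Xi_n}$ comparison. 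Carrying this argument out rigorously --- quantifying how non-crossing paths separate and bounding the density of such separations against the $\varepsilon^{-1}$-normalised count in $(B_2)$ --- is the delicate heart of the proof, following \cite{FINR04}; once it is done, every subsequential limit has the law of ${\cal W}$, and hence $\Xi_n$ converges in distribution to ${\cal W}$.
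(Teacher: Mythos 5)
This statement is quoted verbatim from Fontes et al.\ (Theorem 2.2 of \cite{FINR04}); the paper offers no proof of it, so there is no internal argument to compare against, only the standard proof in the literature. Your overall strategy --- tightness, then identification of every subsequential limit with $\mathcal{W}$ by first embedding a copy of the web via $(I_1)$ and property $(c)$ of Theorem \ref{theorem:Bwebcharacterisation}, then excluding extra paths via the counting conditions --- is exactly the route of \cite{FINR04}, so at the level of architecture the proposal is sound.

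Two points, however. First, your tightness mechanism is wrong as stated: a single path of $\Xi_n$ oscillating by $\varepsilon$ over a short window contributes only \emph{one} path, hence only one position, to the count $\eta_{\Xi_n}(t_0,t;a,a+\varepsilon)$, so such an oscillation does not produce $\eta\ge 2$ and $(B_1)$ cannot dominate the oscillation probability in the way you claim. The correct argument (Proposition B.2 of \cite{FINR04}, or Proposition 6.4 of \cite{SSS19}, both invoked by the present paper) derives tightness from $(I_1)$ alone: a rapid crossing of a small rectangle by some path of the non-crossing family traps the finitely many paths started at dense-set points placed at the rectangle's corners, and the limiting coalescing Brownian motions make that event uniformly unlikely. (Indeed $(B_1)$ is redundant given $(I_1)$ for non-crossing families, as the paper notes.) Second, the genuinely hard step --- showing that $(B_2)$ forces $\mathcal{Z}=\mathcal{W}'$, i.e.\ that a hypothetical extra path in the limit would generate, near a deterministic space-time point, three distinct positions emanating from an interval of length $\varepsilon$ with probability of order $\varepsilon$ rather than $o(\varepsilon)$ --- is only gestured at and explicitly deferred to \cite{FINR04}. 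That localisation-and-counting argument, together with the semicontinuity comparison $\eta_{\mathcal{Z}}\le\liminf_n\eta_{\Xi_n}$ (which itself needs care, since positions can merge in the limit), is the substantive content of the theorem, so as written the proposal is an accurate roadmap rather than a proof.
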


Let us first mention that for a sequence of $({\cal H}, {\cal B}_{{\cal H}})$-valued random variables $\{\Xi_n: n \in \N\}$ with non-crossing paths, Criterion $(I_1)$ implies 
tightness (see Proposition B.2 in the Appendix of \cite{FINR04} or Proposition 6.4 in \cite{SSS19}) and hence sub-sequential limit(s) always exists. Moreover, Criterion $(B_1)$ has been shown to be redundant with $(I_1)$ for non-crossing path models (see Theorem 6.5 of \cite{SSS19}). Actually Condition $(I_1)$
implies that subsequential limit contains coalescing Brownian motions 
starting from all points with rational coordinates 
and hence contain a copy of the standard BW ${\mathcal W}$.
Through Condition $(B_2)$, we ensure that the limiting random variable does not 
have extra paths other than the BW.

Criterion $(B_2)$ is often verified by applying an FKG type correlation inequality
 together with an estimate on   the distribution of the coalescence time between two paths. However, 
FKG is a strong property and difficult to apply for models  with complicate dependencies.
We will follow a more robust technique developed in \cite{CSST20} and applicable only for non-crossing path models. The following theorem is taken from \cite{CSST20} to 
obtain joint convergence for the PH and its dual to the BW and its dual.
\begin{theorem}[Theorem 6.3 of \cite{CSST20}]
\label{thm:JtConvBwebGenPath}
Let $\{(\Xi_n, \widehat{\Xi}_n) : n\geq 1\}$ be a sequence of $({\cal H}\times\widehat{{\cal H}}, {\cal B}_{{\cal H}\times\widehat{{\cal H}}})$-valued random variables with non-crossing paths only, satisfying the following assumptions:
\begin{itemize}
\item[(i)] For each $n \geq 1$, paths in $\Xi_n$ do not cross (backward) paths in $\widehat{\Xi}_n$ almost surely, i.e., there does not exist any $\pi\in\Xi_n$, $\widehat{\pi}\in\widehat{\Xi}_n$ and $t_1,t_2\in (\sigma_{\pi},\sigma_{\widehat{\pi}})$ such that $(\widehat{\pi}(t_1) - \pi(t_1))(\widehat{\pi}(t_2) - \pi(t_2))<0$ almost surely.
\item[(ii)] $\{\Xi_n : n \in \N\}$ satisfies $(I_1)$.
\item[(iii)]  $\{(\widehat{\pi}_n(\sigma_{\widehat{\pi}_n}),\sigma_{\widehat{\pi}_n}) : \widehat{\pi}_n \in \widehat{\Xi}_n \}$, the collection  of starting points of all the backward paths in $\widehat{\Xi}_n$, as $n \to \infty$, becomes dense in $\R^2$.
\item[(iv)] For any sub sequential limit $({\cal Z},\widehat{{\cal Z}})$ of $\{(\Xi_n,\widehat{\Xi}_n) : n \in \N\}$, paths of ${\cal Z}$ do not spend positive Lebesgue measure time together with paths of $\widehat{{\cal Z}}$, i.e.,  almost surely there is no $\pi\in{\cal Z}$ and $\widehat{\pi}\in\widehat{{\cal Z}}$ such that $\int_{\sigma_\pi}^{\sigma_{\widehat{\pi}}} \mathbf{1}_{\pi(t)=\widehat{\pi}(t)} dt > 0$.
\end{itemize}
Then $({\cal X}_n,\widehat{{\cal X}}_n)$ converges in distribution 
$({\cal W}, \widehat{{\cal W}})$ as $n \to \infty$.
\end{theorem}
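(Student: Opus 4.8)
The plan is to follow the by-now standard route for proving joint convergence to the Brownian web and its dual: establish tightness of the pair $\{(\Xi_n,\widehat{\Xi}_n):n\geq 1\}$ in $\mathcal{H}\times\widehat{\mathcal{H}}$, extract an arbitrary weak subsequential limit $(\mathcal{Z},\widehat{\mathcal{Z}})$, and then identify it as $(\mathcal{W},\widehat{\mathcal{W}})$ by invoking the characterisations of the Brownian web and its dual as maximal coalescing non-crossing families. The structural fact used everywhere is that the non-crossing relation — between two forward paths, between two backward paths, and between a forward and a backward path — is closed under locally uniform limits: if $\pi_n\to\pi$ and $\widehat{\pi}_n\to\widehat{\pi}$ in $d_\Pi$, $d_{\widehat\Pi}$ and none of the pairs $(\pi_n,\widehat{\pi}_n)$ cross, then a strict sign change $(\widehat{\pi}(t_1)-\pi(t_1))(\widehat{\pi}(t_2)-\pi(t_2))<0$ for the limit would, by locally uniform convergence, already hold for large $n$. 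Hence every non-crossing constraint present in $(\Xi_n,\widehat{\Xi}_n)$ is inherited by $(\mathcal{Z},\widehat{\mathcal{Z}})$.

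First I would prove tightness. For $\{\Xi_n\}$ this is immediate from assumption (ii): for a sequence of $(\mathcal{H},\mathcal{B}_\mathcal{H})$-valued random variables with non-crossing paths, criterion $(I_1)$ implies tightness (Proposition B.2 of \cite{FINR04}, Proposition 6.4 of \cite{SSS19}). For $\{\widehat{\Xi}_n\}$ there is no dual version of $(I_1)$, so tightness comes from a trapping argument. By $(I_1)$, forward paths of $\Xi_n$ from a deterministic countable dense set converge to coalescing Brownian motions, so with high probability there are, at any space–time location and any scale, forward paths of $\Xi_n$ passing close on either side at a slightly earlier time; by assumption (i) a backward path of $\widehat{\Xi}_n$ is squeezed between two such consecutive forward paths and inherits their equicontinuity, which gives the compactness control needed for tightness of $\{\widehat{\Xi}_n\}$. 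Thus $\{(\Xi_n,\widehat{\Xi}_n)\}$ is tight and subsequential limits exist.

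Next I would identify a subsequential limit $(\mathcal{Z},\widehat{\mathcal{Z}})$. By assumption (ii) together with the machinery of \cite{FINR04} (see also Theorem 6.5 of \cite{SSS19}), $\mathcal{Z}$ contains, in a suitable coupling, a copy of the standard Brownian web, so $\mathcal{W}\subseteq\mathcal{Z}$ almost surely; let $\widehat{\mathcal{W}}$ denote the dual Brownian web determined by this copy of $\mathcal{W}$. Passing (i) to the limit, no path of $\mathcal{Z}$ crosses any path of $\widehat{\mathcal{Z}}$, hence in particular no path of $\widehat{\mathcal{Z}}$ crosses a path of $\mathcal{W}$; by the characterisation of $\widehat{\mathcal{W}}$ as the maximal closed collection of backward paths not crossing the forward web (\cite{STW00}, and Theorem 2.4 of \cite{SSS19}), this yields $\widehat{\mathcal{Z}}\subseteq\widehat{\mathcal{W}}$. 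Conversely, assumption (iii) makes the starting points of backward paths of $\widehat{\Xi}_n$ dense in $\mathbb{R}^2$, so — using compactness of $\widehat{\mathcal{Z}}$ and a.s. uniqueness of the dual path from a deterministic point — $\widehat{\mathcal{Z}}$ contains a backward path from every point of a countable dense set, whence $\widehat{\mathcal{W}}\subseteq\widehat{\mathcal{Z}}$. Therefore $\widehat{\mathcal{Z}}=\widehat{\mathcal{W}}$.

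Finally, the step I expect to be the main obstacle is upgrading $\mathcal{W}\subseteq\mathcal{Z}$ to $\mathcal{Z}=\mathcal{W}$. We have that $\mathcal{Z}$ is a compact family of non-crossing forward paths containing $\mathcal{W}$, no path of $\mathcal{Z}$ crosses any path of $\widehat{\mathcal{Z}}=\widehat{\mathcal{W}}$, and — by assumption (iv) — no path of $\mathcal{Z}$ spends a positive Lebesgue measure of time on a path of $\widehat{\mathcal{W}}$. These are exactly the hypotheses under which the Brownian web is the unique maximal such family: an extra path $\pi\in\mathcal{Z}\setminus\mathcal{W}$ would have to pass through wedges of $\widehat{\mathcal{W}}$, and since it cannot cross the bounding dual paths it would be Skorohod-reflected off — hence coincide for positive Lebesgue time with — some path of $\widehat{\mathcal{W}}$, contradicting (iv). So $\mathcal{Z}=\mathcal{W}$, and $(\mathcal{Z},\widehat{\mathcal{Z}})$ is the standard coupling in which the forward and dual webs determine each other, i.e. $(\mathcal{Z},\widehat{\mathcal{Z}})\stackrel{d}{=}(\mathcal{W},\widehat{\mathcal{W}})$. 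Since every subsequential limit of the tight sequence $\{(\Xi_n,\widehat{\Xi}_n)\}$ has this distribution, the whole sequence converges in distribution to $(\mathcal{W},\widehat{\mathcal{W}})$, which is the claim.
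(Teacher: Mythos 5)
First, a point of order: the paper does not prove this statement at all --- it is quoted verbatim as Theorem 6.3 of \cite{CSST20} and used as a black box (the authors only verify its hypotheses in Section 7). So there is no in-paper proof to compare against; your proposal has to be measured against the argument in \cite{CSST20}, whose architecture it does follow: tightness of the forward family from $(I_1)$, tightness of the dual family by trapping backward paths between forward ones, identification $\widehat{\mathcal{Z}}=\widehat{\mathcal{W}}$ via the limit of the non-crossing constraint together with density of the dual starting points, and finally elimination of extra forward paths using condition (iv) in place of the wedge condition of Theorem 6.6 of \cite{SSS19}.

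The one genuine gap is in that last step. You assert that an extra path $\pi\in\mathcal{Z}\setminus\mathcal{W}$ entering a wedge $W(\hat r,\hat l)$ of $\widehat{\mathcal{W}}$ from outside ``cannot cross the bounding dual paths'' and is therefore ``Skorohod-reflected off --- hence coincides for positive Lebesgue time with'' a dual path. That implication is false as stated: non-crossing only forces $\hat r\leq\pi\leq\hat l$ on the common time interval, so $\pi$ can enter the wedge by passing exactly through its bottom point $(z,T)$, touching the coalesced dual path at the single instant $T$ --- a Lebesgue-null coincidence that does not contradict (iv). The correct mechanism requires returning to the prelimit: approximate $(\pi,\hat r,\hat l)$ by $(\pi_n,\hat r_n,\hat l_n)$ with $\pi_n\in\Xi_n$, $\hat r_n,\hat l_n\in\widehat{\Xi}_n$; non-crossing of $\pi_n$ with each of $\hat r_n,\hat l_n$, plus strict ordering at a time inside the wedge, forces $\hat r_n\leq\pi_n\leq\hat l_n$ on all of $[\sigma_{\pi_n},T]$, and since $\hat r_n$ and $\hat l_n$ converge to paths that have already coalesced on $(-\infty,T]$, the squeeze gives $\pi=\hat r=\hat l$ on the whole interval $[\sigma_\pi,T]$, which has positive length because entering from outside without crossing forces $\sigma_\pi<T$. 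That is what contradicts (iv). Without this squeezing-below-the-bottom-point argument the identification $\mathcal{Z}=\mathcal{W}$ is incomplete; with it, the rest of your outline is sound.
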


It is useful to mention here that there are several other approaches to replace Criterion $(B_2)$. 
Long before, Criterion $(E)$ was proposed by Newman et al \cite{NRS05} which is applicable even for  models with crossing paths as well. Schertzer et al. \cite{SSS19} provided  a new criterion in Theorem 6.6 replacing $(B_2)$, called the \textit{wedge condition}.  Theorem \ref{thm:JtConvBwebGenPath} 
 appears as a slight generalization of Theorem 6.6 of \cite{SSS19} 
 by considering the joint convergence to the BW and it's dual. It replaces the wedge condition by the fact that no limiting primal and dual paths can spend positive Lebesgue time together.  
The next subsection is devoted to verification of the conditions of Theorem \ref{thm:JtConvBwebGenPath}
for the diffusively scaled PH and its dual  $\{(\overline{{\cal X}}_n,\overline{\widehat{{\cal X}}}_n) : n \in \N \}$. 
 
Before proceeding further, we make the following remark regarding existence of bi-infinite path for 
the PH network and this completes the proof of Theorem \ref{thm:PerturbedHoward_Tree}. 
We comment here that it is possible to obtain the same result by following a Burton-Keane argument.
\begin{remark}
\label{rem:Biinfinite}
From the construction of the dual graph it is evident that 
the PH network has a bi-infinite path if and only if the dual graph is not connected. 
If there are scaled dual paths which do not coalesce but converge 
to coalescing Brownian motions then there must be
scaled forward paths entrapped between these scaled dual paths. 
Further, joint convergence to the double Brownian web $(\WW, \widehat{\WW})$ 
forces that there must be a limiting forward Brownian path approximating this sequence of entrapped forward scaled paths and this limiting forward Brownian path must spend positive Lebesgue measure time together with a backward (dual) Brownian path. 
This leads to a contradiction and gives us that there is no bi-infinite path in the PH network 
a.s.
\end{remark}

\subsection{Verification of conditions of Theorem \ref{thm:JtConvBwebGenPath}}
\label{sebsec:DSFBwebFinalVerification}

In this section, we show that the sequence of diffusively scaled path families 
$\{(\overline{{\cal X}}_n,\overline{\widehat{{\cal X}}}_n) : n \geq 1\}$ obtained 
from the PH network and its dual satisfies the conditions in Theorem \ref{thm:JtConvBwebGenPath}.

Conditions $(i)$ and $(iii)$ of Theorem \ref{thm:JtConvBwebGenPath} hold by construction. Indeed, paths of ${\cal X}$ do not cross (backward) paths of $\widehat{{\cal X}}$ with probability $1$ and 
the same holds for $\overline{{\cal X}}_n$ and $\overline{\widehat{{\cal X}}}_n$ for any $n \geq 1$. Clearly the set of all starting points of the scaled backward paths
 in $\widehat{\Xi}_n$ becomes dense in $\R^2$ as $n \to \infty$.
We prove Condition (ii) in the following subsection.

\subsubsection{Verification of Condition $(I_1)$}
\label{subsubsec:I_1Verification}
 
In this section we show that the condition $(ii)$ 
holds for the sequence $\{\overline{{\cal X}}_n : n\geq 1\}$, 
i.e., Criterion $(I_1)$ of Theorem \ref{thm:BwebConvergenceNoncrossing1} holds.
 
We first focus on a single path, $\pi^{\mathbf{0}}$ starting at the origin.
The main ingredient here is the construction of i.i.d. pieces through (marginal) renewal steps.    
As shown in Proposition \ref{prop:SinglePtRwalk}, the sequence of renewal steps
 $\{ h^{\sigma_\ell}(\mathbf{0})(1) : \ell \geq 1\}$ breaks down the path
 $\pi^{\mathbf{0}}$ into independent pieces. Let us scale the 
 PH path $\pi^{\mathbf{0}}$  starting from $\mathbf{0}$
 into $\pi^{\mathbf{0}}_n$ as in (\ref{eq:PathScale}) with the following choices of $\sigma$
 and $\gamma$
\begin{align}
\label{def:Sigma_Gamma}
\sigma^2 & := \text{Var}(Y_2(1) - Y_1(1)) =  \text{Var} \bigl( (h^{\sigma_2}(\mathbf{0}) - 
h^{\sigma_1}(\mathbf{0}))(1)\bigr) \text{ and } \nonumber \\ 
\gamma & := \E(Y_2(2) - Y_1(2)) = \E \bigl ( h^{\sigma_2}(\mathbf{0})(2) -
 h^{\sigma_1}(\mathbf{0})(2) \bigr).
\end{align}
From now on, the diffusively scaled sequence $\{\overline{{\cal X}}_n : n\geq 1\}$ is considered w.r.t. these parameters, but for ease of writing, we drop $(\gamma,\sigma)$ from our notation. Proposition \ref{prop:SinglePtRwalk} together with Corollary \ref{cor:SingleRW_Moments} 
 allow us to apply Donsker's invariance principle 
 to show that $\pi^{\mathbf{0}}_n$ converges in distribution in $(\Pi,d_{\Pi})$ to $B^{\mathbf{0}}$, a standard Brownian motion starting at $\mathbf{0}$.

The above argument proves Criterion $(I_1)$ for $k=1$. To prove Criterion $(I_1)$ 
for general $k \geq 1$ we follow the method of induction. We proceed to prove it for $k \geq 2$
assuming it is true for $k -1$. The strategy that we adopt is to show that until the time 
when the $k$-th PH path comes close to one of the other $(k-1)$ PH paths, 
it can be approximated by an independent path with the same distribution as itself, 
and after that time, it quickly coalesces with the path which is close to it
and both of them converge to the same Brownian motion. Proposition 
\ref{prop:CoalescingTimetail_General}, which has been derived from 
Theorem \ref{thm:CoalescingTimetail}, allows us to show that 
when two PH paths come close enough, after that time they coalesce quickly. 
This strategy was first developed by Ferrari et al in \cite{FFW05} 
to deal with dependent paths with bounded range interactions, and later 
modified in a series of papers (\cite{CFD09}, \cite{RSS16A}, \cite{CSST20})
 to deal with long range interactions. Since, the essential idea of the proof here 
 is the same as that of \cite{CSST20}, we do not provide the full details and 
 refer the reader to Section 6.2.1 of \cite{CSST20}.
Still, because of complicate dependencies of our model, 
we need to make some adjustments.

Thus we obtain that, for $\v \in \R^2$ and for any sequence 
$\v^n \in \Z^2$ such that $(\v^n(1)/\sqrt{n}\sigma, 
\v^n(2)/n\gamma) \to \v$ as $n \to \infty$, the scaled PH path $\pi^{\v^n}_n$
converges in distribution to $B^\v$, Brownian motion starting from $\v$. 
In fact, non-crossing property of paths imply that for sequences $\v^n $ and $\w_n$ in $\Z^2$ 
such that $\v^n(2) = \w^n(2) = 0$, $\w^n(1) < 0 < \v^n(1)$ with 
$(\v^n(1) - \w^n(1))/\sqrt{n} \to 0$, 
the couple $(\pi^{\w^n}_n , \pi^{\v^n}_n )$ converges in distribution to
$(B^{\mathbf{0}},B^{\mathbf{0}})$, i.e., to the same Brownian motion starting from origin.
This is also implied by the estimated
on the coalescing time that we have established in Theorem \ref{thm:CoalescingTimetail}.
This completes the proof of Criterion $(I_1)$ for $k=1$.

We now define a sequence of subsets of $ \Pi^k$ where the $k$ th path {\em comes
close to one of the $k-1$ paths}. We fix $ \alpha \in (0,1/2)$ for the rest of this section.
For $n \geq 1$, define
\begin{align}
 \label{Analpha}
A_n^\alpha = & \Bigl\{  (\pi_1,\dotsc,\pi_k) \in \Pi^k :  \pi_i \text{'s  satisfy the following conditions } \nonumber \\
& a)  \quad \pi_i(\sigma_{\pi_i}) \neq \pi_j(\sigma_{\pi_j}) \text{ for all }
1 \leq i < j \leq k;  \nonumber \\
& b) \quad t^{k}_n : = \inf\{t\geq 0: |\pi_i(t)-\pi_k(t)|\leq 3n^{\alpha - 1/2} \text{  for some }
1 \leq i\leq k-1\} < \infty \Bigr\}.
\end{align}
Next we define the `{\em $\alpha$-coalescence map}'
$f_n^{(\alpha)}:  \Pi^k \to \Pi^k$, which is actually a modification of the coalescence map
$f_n$ introduced in Ferrari {\it et al.} \cite{FFW05}, as follows:
\begin{equation*}
f_n^{(\alpha)} (\pi_1,\dotsc,\pi_{k}) :=
\begin{cases}
(\pi_1,\dotsc,\pi_{k-1},\overline{\pi}_{k}) & \text{for }(\pi_1,\dotsc,\pi_{k}) \in A_n^\alpha \\
(\pi_1,\dotsc,\pi_{k}) & \text{ otherwise }
\end{cases}
\end{equation*}
with
\begin{align*}
\overline{\pi}_{k}(t) :=
\begin{cases}
 \pi_{k}(t) & \text{ for } t \leq t^{k}_n\\
 \pi_{k}(t^{k}_n) +
n ( t - t^{k}_n) \bigl[ \pi_{i}( t^{k}_n
+ \frac{1}{n} ) -  \pi_{k}( t^{k}_n) \bigr] &  \text{ for } t^{k}_n < t \leq t^{k}_n
+ \frac{1}{n} \\
 \overline{\pi}_{i}(t) & \text{ for } t >  t^{k}_n + \frac{1}{n}
\end{cases}
\end{align*}
where $i$ is the index such that  $|\pi_{i}( t^{k}_n) - \pi_{k}( t^{k}_n)| 
\leq 3 n^{\alpha - 1/2}$ and
$|\pi_{j}( t^{k}_n) - \pi_{k}( t^{k}_n)| > 3 n^{\alpha - 1/2}$ for all $ 1 \leq j < i$.

Fix any set of $k$ points $\y_1, \cdots , \y_k$ in $\R^2$ and let 
$(W^{\y_1}, \dotsc, W^{\y_k})$ denote coalescing Brownian motions starting from 
these points. Let $\{(\y^n_1, \cdots , \y^n_k)
: n \in \N\}$ be such that for all $1 \leq i \leq k$ we have
$$
(\y^n_i(1)/\sqrt{n}\sigma, \y^n_i(1)/n\gamma) \to \y_i \text{ as } n \to \infty.
$$  
Let $\pi^i$ denote the PH path starting from $\y^n_i$ and $\pi^i_n$
represents the $n$-th diffusively scaled version.
The following proposition completes the verification of condition $ (I_1)$.
\begin{prop}
\label{prop:fddbweb2}
We have, as $ n \to \infty $,
 \begin{align*}
(a)~ & f^{(\alpha)}_n (\pi^{1}_n, \dotsc, \pi^{k-1}_n, \pi^{k}_n)
\Rightarrow  (W^{\y_1}, \dotsc, W^{\y_k}); \\
(b)~ &  (\pi^{1}_n, \dotsc, \pi^{k-1}_n, \pi^{k}_n)
\Rightarrow  (W^{\y_1}, \dotsc, W^{\y_k}).
 \end{align*}
\end{prop}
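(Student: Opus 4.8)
The plan is to prove Proposition \ref{prop:fddbweb2} by induction on $k$; the base case $k=1$ is already done, since by Proposition \ref{prop:SinglePtRwalk} and Corollary \ref{cor:SingleRW_Moments} the renewal increments of a single path are i.i.d., mean zero, and have moments of all orders, so Donsker's invariance principle gives $\pi^{\mathbf{0}}_n\Rightarrow B^{\mathbf{0}}$. Assume the statement for $k-1$ paths. Following the strategy of \cite{FFW05} (as adapted to long-range interactions in \cite{RSS16A} and \cite{CSST20}), I would first establish part $(a)$ for an \emph{auxiliary} system in which the $k$-th path is driven by an independent environment, then transfer it to the true system by a decoupling argument, and finally deduce $(b)$ from $(a)$.

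\emph{Decoupling the last path.} Let $\tilde\pi^k$ be a PH path started from $\y^n_k$ and built from an i.i.d.\ environment independent of the one driving $\pi^1,\dots,\pi^{k-1}$. Combining the induction hypothesis with the $k=1$ case and independence, $(\pi^1_n,\dots,\pi^{k-1}_n,\tilde\pi^k_n)\Rightarrow(W^{\y_1},\dots,W^{\y_{k-1}},\tilde W^{\y_k})$, where $\tilde W^{\y_k}$ is a Brownian motion independent of the coalescing family $(W^{\y_1},\dots,W^{\y_{k-1}})$. The decisive claim is that on a high-probability event $\pi^k$ coincides with $\tilde\pi^k$ up to the close-approach time $t^k_n$ of (\ref{Analpha}). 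Here the renewal machinery of Sections \ref{sec:renewal_def} and \ref{sec:Renewal_k_1} is used: between consecutive renewal epochs each path explores only the environment inside the nested parabolic regions $\nabla(\cdot)$ around its current renewal location, and as long as the $k$-th path stays at unscaled distance of order $n^\alpha$ or more from the others, Lemma \ref{lem:ConditionalMarginalTailBound}(iv) and Corollary \ref{cor:PH_decay} ensure that these parabolic regions are pairwise disjoint except on an event of super-polynomially small probability, uniformly over the $O(n)$ renewal epochs occurring before $t^k_n$. Thus the environment consumed by $\pi^k$ before $t^k_n$ may be replaced by the independent copy, and $f^{(\alpha)}_n(\pi^1_n,\dots,\pi^{k-1}_n,\pi^k_n)$ and $f^{(\alpha)}_n(\pi^1_n,\dots,\pi^{k-1}_n,\tilde\pi^k_n)$ agree with probability tending to $1$.

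\emph{Parts $(a)$ and $(b)$.} For the auxiliary system the limit is handled by continuity of the gluing operation at the limiting law: since the scaled paths converge to non-degenerate Brownian motions, the first time the last scaled path enters the shrinking tube $\{z:|z-\pi^i_n(\cdot)|\le 3n^{\alpha-1/2}\}$ around one of the others, together with the forced interpolation of length $1/n$, converges to the first hitting time of $\tilde W^{\y_k}$ to $\{W^{\y_1},\dots,W^{\y_{k-1}}\}$ followed by instantaneous gluing, and by the strong Markov property this yields exactly the coalescing family $(W^{\y_1},\dots,W^{\y_k})$ — this is carried out as in Section 6.2.1 of \cite{CSST20}. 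With the decoupling step this proves $(a)$. For $(b)$, on $(A^\alpha_n)^c$ the two sides coincide, while on $A^\alpha_n$ one has $|\pi^k_n(t)-\overline\pi^k_n(t)|\le|\pi^k_n(t)-\pi^i_n(t)|$ for $t\ge t^k_n+1/n$; by Proposition \ref{prop:CoalescingTimetail_General} (a consequence of Theorem \ref{thm:CoalescingTimetail}), applied from the configuration revealed at time $t^k_n$, two PH paths at unscaled distance $O(n^\alpha)$ coalesce within a further $\epsilon n$ steps with probability at least $1-O(n^{\alpha-1/2}/\sqrt{\epsilon})$, and over $\epsilon n$ steps the non-crossing paths drift apart by only $O(\sqrt{\epsilon})$ in scaled units (by Proposition \ref{prop:Sigma_Tail} and a maximal inequality). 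Letting $n\to\infty$ and then $\epsilon\downarrow 0$ gives $d_{\Pi}(\pi^k_n,\overline\pi^k_n)\to 0$ in probability, so $(b)$ follows from $(a)$.

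\emph{Main obstacle.} The hard part is the decoupling step: showing rigorously that, up to the random time $t^k_n$, the $k$-th path is genuinely independent of the other $k-1$ paths. This requires localizing each path's dependence on the environment inside its parabolic regions via the renewal decomposition, uniform control of the horizontal excursions between renewals (Lemma \ref{lem:ConditionalMarginalTailBound}, Corollary \ref{cor:PH_decay}), and a union bound over the polynomially many renewal epochs before $t^k_n$ — so that each bad event must be super-polynomially unlikely for the union bound to close. A secondary nuisance is that $t^k_n$ is defined through a tube of shrinking width $3n^{\alpha-1/2}$ rather than through the paths themselves, and that at time $t^k_n$ a large but controllable portion of the environment has already been revealed; it is precisely for this reason that the \emph{conditional} form of Proposition \ref{prop:CoalescingTimetail_General} is the tool used in the last step.
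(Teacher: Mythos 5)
Your high-level strategy matches the paper's: induction on $k$, reduce to the case where the $k$-th path is driven by an independent environment, apply the $\alpha$-coalescence map, and deduce $(b)$ from $(a)$ via the coalescence-time estimate of Proposition \ref{prop:CoalescingTimetail_General}. Part $(b)$ is handled essentially as the paper does it. The discrepancy, and the place where your argument has a real gap, is the decoupling step.

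The paper does not localize via the renewal structure at all. It defines two tubes around each \emph{unscaled} path, $\mathbb{T}^\alpha_{n,i}$ of width $n^\alpha$ and $\mathbb{T}^\beta_{n,i}$ of width $n^\beta$ with $0<\beta<\alpha<1/2$, and a single global event
$E_n := \bigcap_{i=1}^k \{\text{no } \w\in\Z^2\setminus\mathbb{T}^\alpha_{n,i} \text{ has } \tilde\w\in\mathbb{T}^\beta_{n,i}\}$.
Exponential tails of $\Lambda_\w$ give $\P(E_n)\to 1$. Because each PH step, together with the lattice points whose perturbations determine that step, is confined (with high probability) to the $n^\beta$-tube, the event $E_n$ forces each path to be measurable with respect to the environment inside its own $n^\alpha$-tube. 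Since $t^k_n$ keeps the paths at unscaled distance $\geq 3n^\alpha$, these tubes are disjoint up to $t\wedge t^k_n$, and the $k$-th path can be swapped for an independent copy. This is a direct, one-shot localization.

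Your version instead tries to localize via the parabolic regions $\nabla(h^{\sigma_\ell}(\x_i))$. As written it does not quite work, for two reasons. First, $\nabla(\cdot)$ is unbounded and its width at height $y$ is $2y^2$, so the regions around the $k$-th path and an $i$-th path at unscaled distance $O(n^\alpha)$ already overlap after $O(n^{\alpha/2})$ further levels; they are certainly not ``pairwise disjoint'' over the time horizon $n\gamma t$. You would need to truncate each $\nabla(h^{\sigma_\ell}(\x_i))$ at the next renewal time $\sigma_{\ell+1}$, which your write-up does not do. Second, and more importantly, the claim that ``each path explores only the environment inside the nested parabolic regions'' is not accurate: the points of $V$ that lie inside $\nabla(h^{\sigma_\ell}(\x_i))$ are perturbed versions of lattice points which may lie far outside $\nabla$, and determining the nearest open neighbour at the next level requires knowing the $\Gamma_\w$ of those distant lattice points. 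Bounding this spread is precisely the content of the paper's event $E_n$ (and of the overshoot estimates in Lemma \ref{lem:ConditionalMarginalTailBound}); without an explicit event of this type your union bound over renewal epochs is bounding the wrong quantity. If you add the truncation at $\sigma_{\ell+1}$ and supplement with a tube/overshoot event à la $E_n$, your route can be repaired, but at that point you have essentially re-derived the paper's tubes, with the renewal decomposition as an unnecessary extra layer.
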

%
\begin{proof} 
For $\pi \in \Pi$ and $t > \sigma_\pi$ let $\pi_{[\sigma_\pi, t]} = \pi_{(-\infty, t]}$
denotes the restriction of $\pi$ over the time interval $[\sigma_\pi, t]$.
For $(\pi_1, \cdots , \pi_k) \in \Pi^k$ and for $t > 
\max\{\sigma_{\pi_i} : 1\leq i \leq k\}$ the notation $(\pi_1, \cdots , \pi_k)_{(-\infty, t]}$
denotes the tuple of respective restrictions of individual paths. Fix $t > \max\{\y_i(2) : 1\leq i \leq k\}$ and we observe that for Item (a) it is enough to show that 
 \begin{align}
 \label{eq:K_tuple_Weak_Conv}
 f^{(\alpha)}_n ((\pi^{1}_n, \dotsc, \pi^{k-1}_n, \pi^{k}_n)_{(-\infty, t]})
\Rightarrow  (W^{\y_1}, \dotsc, W^{\y_k})_{(-\infty, t]}.
 \end{align}
  Further, in order to prove (\ref{eq:K_tuple_Weak_Conv})
  it suffices to show that 
  \begin{align}
 \label{eq:K_tuple_Weak_Conv_1}
 \E \bigl [ g \bigl ( f^{(\alpha)}_n ((\pi^{1}_n, \dotsc, \pi^{k-1}_n, \pi^{k}_n)_{(-\infty, t]})
  \bigr ) \bigr ]
\Rightarrow  \E \bigl [ g \bigl ( (W^{\y_1}, \dotsc, W^{\y_k})_{(-\infty, t]} \bigr ) \bigr ],
 \end{align}
 for bounded continuous function $g$.
 We need to introduce some notations. 
 Our motivation is to show that with high probability a PH path uses perturbed 
 versions of `close' enough lattice points only and therefore, as long as two PH paths 
 are far away, they evolve independently. 
 
 Fix $ \beta \in (0,\alpha)$ where $\alpha$ is as in (\ref{eq:}). We consider 
 `Tubes' of width $n^\alpha$ and $n^\beta$ around each of the $k$ (unscaled) PH paths 
 as  defined below: 
 \begin{align*}
 \mathbb{T}^\alpha_{n,i} & := \{ (y,s) \in \Z^2 : ||(y,s) - h^m(\y^n_i)||_1 \leq n^\alpha
 \text{ for some } 0 \leq m  \leq \lfloor n\gamma t - \y^n_i(2) \rfloor\}\text{ and }\\
 \mathbb{T}^\beta_{n,i} & := \{ (y,s) \in \Z^2 : ||(y,s) - h^m(\y^n_i)||_1 \leq n^\beta
 \text{ for some } 0 \leq m  \leq \lfloor n\gamma t - \y^n_i(2) \rfloor\} 
\end{align*}  
  where $t$ is as in (\ref{eq:K_tuple_Weak_Conv}).
  Let the event $E_n$ is defined as bellow
  \begin{align*}
 E_n := \bigcap_{i=1}^k & 
 \bigl\{ \text{There does not exist any } \w \text{ in }\Z^2 \setminus 
 \mathbb{T}^\alpha_{n,i} \text{ with }
 \tilde{\w} \in \mathbb{T}^\beta_{n,i} \bigr \}. 
\end{align*}
Exponential tail decay of perturbation random vectors gives us 
that $\P(E_n)$ converges to $1$ as $n \to \infty$.
Therefore, we can modify (\ref{eq:K_tuple_Weak_Conv}) as 
\begin{align}
\label{eq:K_tuple_Weak_Conv_1}
\E \bigl [ g \bigl ( f^{(\alpha)}_n ((\pi^{1}_n, \dotsc, \pi^{k-1}_n, \pi^{k}_n)_{(-\infty, t]})
 \bigr )\mathbf{1}_{E_n} \bigr ] 
 \to \E \bigl [ g \bigl ( (W^{\y_1}, \dotsc, W^{\y_k})_{(-\infty, t]} \bigr ) \bigr ]
 \text{ as }n \to \infty.
\end{align}
For each $n \geq 1$ let $\overline{\pi}^k_n$ denote a scaled PH path 
which has the same marginal distribution as that of 
$\pi^k_n$ and evolves independently of $(\pi^1_n, \cdots, \pi^{k-1}_n)$. A simple 
construction using another i.i.d. collection of random vectors $\{\Gamma^{\text{ind}}_w : 
\w \in \Z^2 \}$ independent of the collection $\{\Gamma_w : \w \in \Z^2 \}$ ensures that 
such a random path $\overline{\pi}^k_n$ exists. 

We observe that on the event $E_n$, till time $t \wedge t^{k}_n$, 
the path $\pi^k_n$ is supported on disjoint sets 
of random vectors and hence, it's evolution is independent of all of the $k-1$ paths
$\pi^1_n, \cdots, \pi^{k-1}_n$. Therefore, we have  
$$
\E \bigl [ g \bigl ( f^{(\alpha)}_n ((\pi^{1}_n, \dotsc, \pi^{k-1}_n, \pi^{k}_n)_{(-\infty, t]})
 \bigr )\mathbf{1}_{E_n} \bigr ]  
 \stackrel{d}{=}
 \E \bigl [ g \bigl ( f^{(\alpha)}_n ((\pi^{1}_n, \dotsc, \pi^{k-1}_n, 
 \overline{\pi}^{k}_n)_{(-\infty, t]}) \bigr )\mathbf{1}_{E_n} \bigr ].
$$  
The same argument as in Item (a) of Proposition 5.6 in \cite{RSS16A} gives us 
$$
\E \bigl [ g \bigl ( f^{(\alpha)}_n ((\pi^{1}_n, \dotsc, \pi^{k-1}_n, 
 \overline{\pi}^{k-1}_n)_{(-\infty, t]}) \bigr ) \bigr ] \to 
 \E \bigl [ g \bigl ( (W^{\y_1}, \dotsc, W^{\y_k})_{(-\infty, t]} \bigr ) \bigr ]
 \text{ as }n \to \infty.
$$ 
As $\P(E_n)$ converges to $1$ as $n \to \infty$, 
Equation (\ref{eq:K_tuple_Weak_Conv_1}) follows from the
observation that 
$$
\lim_{n \to \infty}
\E \bigl [ g \bigl ( f^{(\alpha)}_n (\pi^{1}_n, \dotsc, \pi^{k-1}_n, 
 \overline{\pi}^{k}_n)_{(-\infty, t]} ) \bigr ) \mathbf{1}_{E_n} \bigr ] =  
 \E \bigl [ g \bigl ( f^{(\alpha)}_n ((\pi^{1}_n, \dotsc, \pi^{k-1}_n, 
 \overline{\pi}^{k}_n)_{(-\infty, t]}) \bigr ) \mathbf{1}_{E_n} \bigr ].
$$
This proves Item (a).   

For (b) we first consider the situation $t^{k}_n \geq t$. 
On the event $t^{k}_n \geq t$, Item (b) follows trivially as we have 
$$
(\pi^{1}_n, \dotsc, \pi^{k-1}_n,  \pi^{k}_n)_{(-\infty, t]} =
f^{(\alpha)}_n (\pi^{1}_n, \dotsc, \pi^{k-1}_n, 
 \pi^{k}_n)_{(-\infty, t]} \text{ a.s.}
$$
Next, we consider the situation $ t > t^{k}_n$. W.l.o.g. we assume 
that at time $t^{k}_n$, paths $\pi^i_n$ and $\pi^k_n$
come close enough. We need to show that after time $t^{k}_n$, these two paths 
quickly coalesce. Proposition \ref{prop:CoalescingTimetail_General} 
would help us to achieve this. 
Towards that we define the event
  \begin{align*}
 F_n := \bigcap_{i=1}^k & 
 \bigl\{ \tilde{\w} \in \mathbb{H}^-(n\gamma t^{k}_n + n^\beta) \text{ for all }
 \w \in \mathbb{T}^\beta_{n,i} \cap \mathbb{H}^-(n\gamma t^{k}_n) \bigr \}. 
\end{align*}
We observe that $\P(F_n)$ converges to $1$ as $n \to \infty$. Finally,
on the event $F_n$, Proposition \ref{prop:CoalescingTimetail_General} gives us the 
required tail estimate for coalescing time of $\pi^k_n$ and $\pi^i_n$ and completes the proof.       
\end{proof}
 
\subsubsection{Verification of condition (iv)}
\label{subsubsec:LastConditionVerification}

To show condition $(iv)$, we mainly follow Section 6.2.2 of \cite{CSST20} 
and  again,  the coalescence time estimate given in Proposition \ref{prop:CoalescingTimetail_General} 
serves as a key ingredient. Let $({\cal Z}, \widehat{\cal Z})$ be any sub-sequential 
limit of $\{(\overline{{\cal X}}_n, \widehat{\overline{{\cal X}}}_n): n \geq 1\}$. 
By Skorohod's representation theorem we may assume that 
we are working on a probability space such that the convergence happens almost surely. 
With slight abuse of notation we continue to denote that subsequence also  
by $\{(\overline{{\cal X}}_n, \widehat{\overline{{\cal X}}}_n): n\geq 1\}$.

We have to prove that, with probability $1$, paths in ${\cal Z}$ do not spend positive Lebesgue measure time together with the dual paths in $\widehat{\cal Z}$. This means that for any $\delta>0$ and any integer $m\geq 1$, the probability of the event
\begin{equation*}
A(\delta, m) := \left\lbrace
\begin{array}{c}
\text{$\exists$ paths $\pi\in {\cal Z}, \widehat{\pi}\in \widehat{\cal Z}$ and $t_0\in\R$ s.t. $-m<\sigma_{\pi}<t_0<t_0 + \delta<\sigma_{\widehat{\pi}}<m$} \\
\text{and $-m<\pi(t) = \widehat{\pi}(t)<m$ for all $t\in [t_0, t_0+\delta]$}
\end{array}
\right\rbrace
\end{equation*}
has to be $0$.

To show that $\P(A(\delta,m))=0$, we introduce a generic event $B^{\epsilon}_n(\delta, m)$ defined as follows. Given an integer $m\geq 1$ and $\delta,\epsilon>0$,
\begin{align*}
B^{\epsilon}_n(\delta, m) := \bigl \{ & 
\exists \text{ paths }\pi_1^n,\pi_2^n,\pi_3^n  \in {\cal X}_n \text{ s.t. }\sigma_{\pi_1^n},\sigma_{\pi_2^n} = 0, \sigma_{\pi_3^n} \leq \delta \text{ and }
\pi_1^n(0),\pi_1^n(\delta)\in [-m,m] \\
& \text{with }|\pi_1^n(0)-\pi_2^n(0)|<\epsilon \text{ but }\pi_1^n(\delta) \not= \pi_2^n(\delta) \\
& \text{and with }|\pi_1^n(\delta)-\pi_3^n(\delta)|<\epsilon \text{ but }
\pi_1^n(2\delta) \not= \pi_3^n(2\delta) \bigr \} 
\end{align*}
The event $B^{\epsilon}_n(\delta, m)$ means that there exists a path $\pi_1^n$ localized in $[-m,m]$ at time $0$ as well as at time $\delta$ which is approached (within distance $\epsilon$) by two path $\pi_2^n$ and $\pi_3^n$ respectively at times $0$ and $\delta$ while still being different from them respectively at time $\delta$ and $2\delta$. 

It was shown in Section 6.2.2 of \cite{CSST20} that to show $\P(A(\delta, m) = 0)$ it suffices 
to prove the following lemma.
\begin{lemma}
\label{lem:coalescenceDSF}
For any integer $m\geq 1$, real numbers $\epsilon,\delta>0$, there exists a constant $C_0(\delta,m)>0$ (only depending on $\delta$ and $m$) such that for all large $n$,
\begin{equation*}
\P( B^{\epsilon}_n (\delta, m) ) \leq C_0(\delta, m) \, \epsilon ~.
\end{equation*}
\end{lemma}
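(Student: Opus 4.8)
The plan is to follow the reduction of Section 6.2.2 of \cite{CSST20} recalled above, the only genuinely new inputs being the two coalescing‑time estimates already proved, namely Theorem \ref{thm:CoalescingTimetail} and its conditional refinement Proposition \ref{prop:CoalescingTimetail_General}. First I would unscale: with $\sigma,\gamma$ as in (\ref{def:Sigma_Gamma}), an occurrence of $B^{\epsilon}_n(\delta,m)$ produces perturbed Howard paths $\pi_1,\pi_2$ born on the line $y=0$ and $\pi_3$ born on some line $y=s\le n\gamma\delta$, such that $\pi_1$ meets the strip $[-m\sqrt n\sigma,m\sqrt n\sigma]$ on the lines $y=0$ and $y=\lfloor n\gamma\delta\rfloor$, such that $\pi_1$ and $\pi_2$ are within horizontal distance $\epsilon\sqrt n\sigma$ on the line $y=0$ but do not coalesce before height $n\gamma\delta$, and such that $\pi_1$ and $\pi_3$ are within horizontal distance $\epsilon\sqrt n\sigma$ on the line $y=\lfloor n\gamma\delta\rfloor$ but do not coalesce before height $2n\gamma\delta$. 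Thus $B^{\epsilon}_n(\delta,m)$ is the conjunction of a ``near‑miss'' around $\pi_1(0)$ on the line $y=0$ and a ``near‑miss'' around $\pi_1(\lfloor n\gamma\delta\rfloor)$ on the line $y=\lfloor n\gamma\delta\rfloor$. The first involves only fresh randomness and is governed by Theorem \ref{thm:CoalescingTimetail}; the second takes place after the half‑plane below $y=\lfloor n\gamma\delta\rfloor$ has been explored, and is governed by the conditional bound of Proposition \ref{prop:CoalescingTimetail_General}.

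Next I would cover $[-m\sqrt n\sigma,m\sqrt n\sigma]$ by $N_\epsilon:=\lceil 2m/\epsilon\rceil$ intervals $J_1,\dots,J_{N_\epsilon}$ of length $\epsilon\sqrt n\sigma$, and let $J_j^{*}$ be the interval of length $3\epsilon\sqrt n\sigma$ concentric with $J_j$. If the first near‑miss occurs with $\pi_1(0)\in J_j$, the non‑crossing property forces the two PH paths issued from the leftmost and rightmost vertices of $V$ on the line $y=0$ inside $J_j^{*}$ to stay apart up to height $n\gamma\delta$; call this event $U_j$. Since $V\subset\Z^2$ and these vertices lie on a common horizontal line, Theorem \ref{thm:CoalescingTimetail} gives $\P(U_j)\le C_0\,3\epsilon\sqrt n\sigma/\sqrt{n\gamma\delta}=:C(\delta)\epsilon$, a bound independent of $n$. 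Letting $\Pi_j$ be the (random) set of distinct positions on the line $y=\lfloor n\gamma\delta\rfloor$ of the PH paths issued from $V\cap J_j^{*}$, a \emph{telescoping} application of Theorem \ref{thm:CoalescingTimetail} over consecutive such vertices yields $\E[\#\Pi_j-1]\le C_0\,3\epsilon\sqrt n\sigma/\sqrt{n\gamma\delta}=C(\delta)\epsilon$, hence $\E[\#\Pi_j\,\mathbf 1_{U_j}]\le 2C(\delta)\epsilon$ (because $\#\Pi_j=1$ off $U_j$). On $U_j$ one has $\pi_1(\lfloor n\gamma\delta\rfloor)\in\Pi_j$, and for \emph{any fixed} point $y$ on the line $y=\lfloor n\gamma\delta\rfloor$ the corresponding second near‑miss around $y$ is, again by non‑crossing, dominated by the event that the extreme PH paths issued from the vertices of $V$ within $\epsilon\sqrt n\sigma$ of $y$ on that line fail to coalesce before height $2n\gamma\delta$; Proposition \ref{prop:CoalescingTimetail_General} (after translating that line to the origin) bounds the probability of this, conditionally on the data explored up to height $\lfloor n\gamma\delta\rfloor$, by $C(\delta)\epsilon$. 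The decisive point is that this second factor carries \emph{no} $1/\epsilon$ loss, because once that data is frozen the point $\pi_1(\lfloor n\gamma\delta\rfloor)$ is determined; summing over $j$ only and conditioning,
\[
\P\bigl(B^{\epsilon}_n(\delta,m)\bigr)\ \le\ \P(G_n^{c})\ +\ C(\delta)\epsilon\sum_{j=1}^{N_\epsilon}\E\bigl[\mathbf 1_{U_j}\,\#\Pi_j\bigr]\ \le\ \P(G_n^{c})\ +\ 2C(\delta)^{2}\epsilon^{2}N_\epsilon ,
\]
and $2C(\delta)^{2}\epsilon^{2}N_\epsilon\le 4m\,C(\delta)^{2}\epsilon$, which is the asserted order.

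Here $G_n$ is the event that (for a fixed $\beta\in(0,1/2)$) the overshoot set $I_{\lfloor n\gamma\delta\rfloor}$ of (\ref{def:InformationSet}), i.e.\ the perturbed images of lattice points of $\mathbb H^{-}(\lfloor n\gamma\delta\rfloor)$ landing above that line, is confined to heights at most $n\gamma\delta+(n\gamma\delta)^{\beta}$ and to a $\sqrt n$‑width horizontal strip around $[-m\sqrt n\sigma,m\sqrt n\sigma]$, and that all PH paths under consideration remain in such a strip up to height $2n\gamma\delta$; by the exponential tails of the perturbation variables (cf.\ (\ref{eq:T_Ray_ExpDecay})) together with Corollary \ref{cor:PH_decay}, $\P(G_n^{c})$ decays like $\exp(-c\,n^{\kappa})$ for some $c,\kappa>0$, so for $n$ large (depending on $\epsilon$) it is at most $\epsilon$ and is absorbed into $C_0(\delta,m)$. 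I expect the genuine obstacle to be precisely the justification of the conditional bound for the second near‑miss: one must verify that on $G_n$ the configuration above the line $y=\lfloor n\gamma\delta\rfloor$, once that line is translated to the origin, is of exactly the type handled by Proposition \ref{prop:CoalescingTimetail_General} — a bounded explored region all of whose perturbed images lie in an $\mathbb H^{-}(n^{\beta})$‑type slab — so that the estimate $C(\delta)\epsilon$ applies uniformly over the (determined) point $\pi_1(\lfloor n\gamma\delta\rfloor)$. The telescoping control of $\#\Pi_j$, the bookkeeping with $G_n$, and the passage from $\P(B^{\epsilon}_n(\delta,m))\le C_0(\delta,m)\epsilon$ to $\P(A(\delta,m))=0$ — and hence to Condition $(iv)$ of Theorem \ref{thm:JtConvBwebGenPath} — via Section 6.2.2 of \cite{CSST20} are then routine.
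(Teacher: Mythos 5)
Your proposal is correct and rests on exactly the same two pillars as the paper's proof --- Theorem \ref{thm:CoalescingTimetail} for the cost of a near-miss at the initial line, and Proposition \ref{prop:CoalescingTimetail_General} for the conditional cost of the second near-miss above $y=\lfloor n\gamma\delta\rfloor$, together with a high-probability event confining the perturbation overshoot --- but the bookkeeping is organized differently. The paper unscales to $D^\epsilon_n$ and then decomposes directly over the lattice ``break point'' $l$ on the line $y=0$ (the index where $\pi^{(l,0)}$ and $\pi^{(l+1,0)}$ still differ at height $\lfloor n\gamma\delta\rfloor$) and the landing position $k=\pi^{(l,0)}(\lfloor n\gamma\delta\rfloor)$: since the events $\{\pi^{(l,0)}(\lfloor n\gamma\delta\rfloor)=k\neq\pi^{(l+1,0)}(\lfloor n\gamma\delta\rfloor)\}$ are pairwise disjoint in $k$, the sum over $k$ collapses to $\P(\pi^{(l,0)}(\cdot)\neq\pi^{(l+1,0)}(\cdot))\lesssim 1/\sqrt{n\delta}$, and the $O(\sqrt n)$ values of $l$ give $O(1)$ before multiplying by the $O(\epsilon)$ from the second near-miss. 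You instead tile $[-m\sqrt n\sigma,m\sqrt n\sigma]$ by $O(1/\epsilon)$ intervals $J_j$ of width $\epsilon\sqrt n\sigma$ and control, for each $j$, the expected number of surviving positions $\#\Pi_j$ at height $\lfloor n\gamma\delta\rfloor$ via the telescoping identity $\#\Pi_j-1=\sum_l\mathbf 1\{\pi^{(l,0)}(\cdot)\neq\pi^{(l+1,0)}(\cdot)\}$ over lattice $l\in J_j^*$, getting $O(\epsilon)\cdot O(\epsilon)\cdot O(1/\epsilon)=O(\epsilon)$. The two bookkeepings are equivalent in substance (both are discretizations of the standard FFW/NRS-style $(B_2)$ argument), and the paper's direct disjointness-in-$k$ decomposition avoids the extra ``$\#\Pi_j\ge 2$ on $U_j$'' step and the explicit $\#\Pi_j$ moment bound. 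Two small points worth tightening in your version: (1) when you invoke Theorem \ref{thm:CoalescingTimetail} for $U_j$, start the two dominating paths from the deterministic lattice endpoints of $J_j^*$ rather than from the random extreme vertices of $V$ in $J_j^*$ (the Howard step $h(\cdot,V)$ is defined for all lattice points, and non-crossing then gives the domination cleanly); and (2) the good event you call $G_n$ is exactly what the paper uses (implicitly in its ``on the said event we can apply Proposition \ref{prop:CoalescingTimetail_General}'' step), and your remark that $\P(G_n^c)$ decays exponentially and can be absorbed for $n$ large is the correct justification --- the resulting $n$-threshold depending on $\epsilon$ is harmless for the downstream use $\P(A(\delta,m))=0$, since one lets $n\to\infty$ before $\epsilon\to 0$.
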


For the proof of Lemma \ref{lem:coalescenceDSF} we essentially follow \cite{CSST20}. Note 
that the discrete nature of the perturbed point process $V$ requires some modifications. 
We need to deal with non-Markovian nature of the PH paths.

\begin{proof}[Proof of Lemma \ref{lem:coalescenceDSF}]
We define the event $ D^{\epsilon}_n $ as the unscaled version 
of the event $ B^{\epsilon}_n $ in the following way: 
\begin{align*}
D^{\epsilon}_n  & := \bigl\{ \text{there exist }x, y, z \in \Z 
\text{ such that }  x \in [ - m \sqrt{n} \sigma,  m \sqrt{n} \sigma ], 
| x - y | < \sqrt{n} \epsilon \sigma \text{ and } \\ 
& \pi^{(x,0)}(\lfloor n \gamma \delta \rfloor) 
 \neq \pi^{(y,0)}(\lfloor n \gamma \delta \rfloor),
  | \pi^{(x,0)}(\lfloor n \gamma \delta \rfloor) - z | < \sqrt{n} \epsilon \sigma,
 \pi^{(x,0)}(2\lfloor n\gamma \delta \rfloor) \neq 
\pi^{(z,0)}(2\lfloor n \gamma \delta \rfloor)\bigr\}.
\end{align*}
For $ \omega \in D^{\epsilon}_n$, suppose $ x , y $ are as in the definition above and assume that 
$ x < y $. Set $ l  = \max \{ x + j : \pi^{(x,0)}(\lfloor n\gamma \delta \rfloor)  = 
\pi^{(x+j,0)}(\lfloor n \gamma \delta \rfloor) \}$.  
Clearly, $ -  m \sqrt{n} \sigma  \leq  x \leq l < y \leq  ( m+\epsilon) \sqrt{n} \sigma $
 and $ \pi^{(x,0)}(\lfloor n \gamma \delta \rfloor)
 = \pi^{(l,0)}(\lfloor n \gamma \delta \rfloor)
  < \pi^{(l+1,0)}(\lfloor n \gamma \delta \rfloor)
  \leq \pi^{(y, 0)}(\lfloor n \gamma \delta \rfloor) $.
    Assume that $ \pi^{(x, -\delta)}(\lfloor n \gamma \delta \rfloor) = k$ for some $k \in \Z $.
    Then, $z$ in the definition above satisfies 
    $ z \in (k - \sqrt{n} \epsilon \sigma, k + \sqrt{n} \epsilon \sigma)$ and 
    $ \pi^{( k,\lfloor n \gamma \delta \rfloor)}(2\lfloor n \gamma \delta \rfloor) \neq 
\pi^{(z, \lfloor n \gamma \delta \rfloor)}(2\lfloor n \gamma \delta \rfloor)$. So, 
by non-crossing property of paths,  it must be the case that 
$$
\pi^{( k -  \lfloor \sqrt{n} \epsilon \sigma  \rfloor- 1,\lfloor n \gamma \delta \rfloor)}(2\lfloor n\gamma \delta \rfloor) \neq \pi^{(k +  \lfloor \sqrt{n} \epsilon \sigma  \rfloor+ 1, 
\lfloor n \gamma \delta \rfloor)}(2\lfloor n \gamma \delta \rfloor)
$$
 Thus, we must have $ \omega \in H^{(L)} (n, \delta, \epsilon) $ where  for $ l \in \Z $, 
\begin{align*}
  H_{l, k}^{(L)} (n, \epsilon) := & \bigl\{ \pi^{(l, 0)}(\lfloor n \gamma \delta \rfloor) = 
  k \neq \pi^{(l+1,0)}(\lfloor n \gamma \delta \rfloor) \text{ and } \\
& \pi^{( k -  \lfloor \sqrt{n} \epsilon \sigma  \rfloor- 1,\lfloor n \gamma \delta \rfloor)}(2\lfloor n \gamma \delta \rfloor) \neq \pi^{(k +  \lfloor \sqrt{n} \epsilon \sigma  \rfloor+ 1,
 \lfloor n \gamma \delta \rfloor)}(2\lfloor n \gamma \delta \rfloor)  \bigr\}; \\
H^{(L)} (n, \delta, \epsilon) := &  \cup_{ l =  - \lfloor  2 m \sqrt{n} \sigma \rfloor  }
^{ \lfloor  2 m \sqrt{n} \sigma \rfloor} \cup_{k \in \Z } H_{l, k}^{(L)} (n, \delta, \epsilon)  .  
\end{align*}

Similarly for $ \omega \in D^{\epsilon}_n$ such that  $ x > y $, set $ r  = \min \{ x - j : \pi^{(x, -\delta)}(\lfloor n \gamma \delta \rfloor)   =  \pi^{(x-j, -\delta)}(\lfloor n \gamma \delta \rfloor)
   \} $. 
As earlier, $ \omega \in H^{(R)} (n, \delta, \epsilon) $ where 
for $ r \in \Z $, 
\begin{align*}
H_{r, k}^{(R)} (n, \delta, \epsilon) := & \bigl\{ \pi^{(r, 0)}(\lfloor n \gamma \delta \rfloor)
= k \neq \pi^{(r-1, 0)}(\lfloor n \gamma \delta \rfloor) \text{ and } \\
& \pi^{( k -  \lfloor \sqrt{n} \epsilon \sigma  \rfloor- 1,
\lfloor n \gamma \delta \rfloor )}(2\lfloor n \gamma \delta \rfloor) \neq 
\pi^{(k +  \lfloor \sqrt{n} \epsilon \sigma  \rfloor+ 1, \lfloor n \gamma \delta \rfloor )}
(2\lfloor n \gamma \delta \rfloor)   \bigr\}; \\
H^{(R)} (n, \delta, \epsilon) := & \cup_{ r =  - \lfloor 2 m \sqrt{n} \sigma \rfloor  }
^{ \lfloor  2 m \sqrt{n} \sigma \rfloor}  \cup_{k \in \Z } H_{r, k}^{(R)} (n, \delta, \epsilon) .  
\end{align*}

Thus, $ D^{\epsilon}_n \subseteq H^{(L)} (n, \delta, \epsilon) \cup H^{(R)} (n, \delta, \epsilon) $.

The argument is very similar to Proposition \ref{prop:fddbweb2} and we only give 
a sketch here. We consider tubes of widths $n^\alpha$ and $n^\beta$ around the 
PH path $\pi^{(l,0)}$ and $\pi^{(l+1,0)}$ over the time interval $[0, \lfloor n\gamma\delta \rfloor]$. 
We observe that the probability of the event 
that these two paths do not use random vectors associated to lattice points 
outside these `$n^\alpha$' tubes to evolve and 
perturbed versions of all the lattice points inside these two tubes 
are confined to the lower half-plane $\mathbb{H}^-(\lfloor n\gamma\delta \rfloor + n^\beta)$ 
converges to $1$ as $n \to \infty$. Therefore for all large $n$, 
on the said event we can apply Proposition \ref{prop:CoalescingTimetail_General} to obtain
\begin{align*}
 & \P \{ \pi^{( k -  \lfloor \sqrt{n} \epsilon \sigma  \rfloor - 1,
 \lfloor n \gamma \delta \rfloor )}(2\lfloor n \gamma \delta \rfloor) \neq 
\pi^{(k +  \lfloor \sqrt{n} \epsilon \sigma \rfloor + 1, 
\lfloor n \gamma \delta \rfloor )}(2\lfloor n \gamma \delta \rfloor) 
\mid \pi^{(l, 0)}(\lfloor n \gamma \delta \rfloor) = k 
\neq \pi^{(l+1,0)}(\lfloor n \gamma \delta \rfloor) \} \\
& \leq  \frac{C_2 (2\lfloor \sqrt{n} \sigma \epsilon \rfloor + 3) }
{\sqrt{ \lfloor n \gamma \delta \rfloor}}
  \leq C_3 (\delta) \epsilon 
\end{align*}
where $ C_2, C_3 (\delta)  > 0 $ are constants. Hence, 
\begin{align*}
 \P ( H_{k}^{(L),1} (n, \delta, \epsilon)) \leq C_3 (\delta) \epsilon  ~  
 \P  \{  \pi^{(l,0)}(\lfloor n \gamma \delta \rfloor) = k \neq \pi^{(l+1, 0)}
 (\lfloor n \gamma \delta \rfloor)  \} . 
\end{align*}

Now, the events $ \{ \pi^{(l,0)}(\lfloor n \gamma \delta \rfloor) = k 
\neq \pi^{(l+1, 0)}(\lfloor n \gamma \delta \rfloor)  \} $ 
are disjoint for distinct values of $ k $. Hence, 
\begin{align*}
\P   \bigl( \cup_{k \in \Z } H_{ k}^{(L),1} (n, \delta, \epsilon) 
\bigr) & \leq \sum_{ k \in \Z} \P  \bigl( H_{k}^{(L),1} (n, \delta, \epsilon) \bigr) \\
& \leq C_3 (\delta) \epsilon  \sum_{ k \in \Z}  \P  
\{  \pi^{(l,0)}(\lfloor n \gamma \delta \rfloor) = k \neq \pi^{(l+1,0)}
(\lfloor n \gamma \delta \rfloor) \}  \\
& = C_3 (\delta) \epsilon ~  \P  \{  \pi^{(l,0)}(\lfloor n \gamma \delta \rfloor) 
\neq \pi^{(l+1,0)}(\lfloor n \gamma \delta \rfloor) \}.
\end{align*}
The above argument also holds for $  \cup_{k \in \Z } H_{r, k}^{(R)} (n, \delta, \epsilon)$. 
Thus, combining the above terms and applying Proposition \ref{prop:CoalescingTimetail_General}
\begin{align*}
\P ( D^{\epsilon}_n \cap F_n(k) ) & \leq  \P   \bigl( H^{(L)} (n, \delta, \epsilon)  \bigr)  +  \P   \bigl( H^{(R)} (n, \delta, \epsilon)  \bigr) \\
 & \leq  \sum_{ l =  - \lfloor 2 m \sqrt{n} \sigma \rfloor  }^{ \lfloor  2 m \sqrt{n} \sigma \rfloor} 
\P   \bigl( \cup_{k \in \Z } H_{ k}^{(L)} (n, \delta, \epsilon) \bigr) +  
\sum_{ r =  - \lfloor 2 m \sqrt{n} \sigma_0 \rfloor  }^{ \lfloor  2 m \sqrt{n} \sigma \rfloor} 
\P   \bigl( \cup_{k \in \Z } H_{r, k}^{(R)} (n, \delta, \epsilon) \bigr)   \\
& \leq 16 m \sqrt{n} \sigma  C_3 ( \delta ) \epsilon C_2 / \sqrt{\lfloor n \gamma \delta \rfloor }
 \leq C_1 ( \delta, m ) \epsilon 
\end{align*}
for a proper choice of $ C_1 ( \delta, m) $. 
This completes the proof.
\end{proof} 

\section{Appendix}

In this section we present proofs of some basic results that we have used before.

\noindent {\bf Proof of Lemma \ref{lem:SinglePtRwalkMeanZero}:} 
Fix $j \geq 1$. Given $h^{\sigma_j}(\x_1) = \v_j$,
we recall that the path $\{ h^n(\v_j) : n \geq 1\}$ uses 
the random vectors $\{\Lambda_\w : \w \in \mathbb{H}^+(\v_j(2))\}$ only and always
 stays within the region $\nabla(\v_j)$. 
 
The main idea is to show that distribution of the point set $V^+_{\v_j(2)} $ 
 remains invariant with respect to reflection
about the line $x = \v_j(1)$. For $\w = \v_j + (s,t) \in \mathbb{H}^+(\v_j(2))$ we define
its reflected copy reflected about the line $X = \v_j(1)$ as 
$$
\overline{\w} := \v_j + (-s, t).
$$ 

Now, using the collection $\{\Gamma_\w = (B_\w, R_\w, \Lambda_\w): \w \in \Z^2 \}$, 
we define a new collection $\{ \Gamma^\prime_\w := B_{\w}^\prime, R_{\w}^\prime,
 \Lambda^\prime_\w : \w \in \Z^2\}$ given by 
\begin{align*}
\bigl (  B_{\w}^\prime, R_{\w}^\prime,
 \Lambda^\prime_\v := (X^\prime_{\w}, Y^\prime_{\w}) \bigr ):= 
\begin{cases}
\bigl( B_{\overline{\w}}, - R_{\overline{\w}},  
 (- X_{\overline{\w}}, Y_{\overline{\w}})\bigr ) 
& \text{ if }\w \in \mathbb{H}^+(\v_j(2))\\
\bigl( B_{\w},  R_{\w},  
 (X_{\w}, Y_{\w})\bigr ) 
& \text{ if }\w \in \mathbb{H}^-(\v_j(2)).
\end{cases}
\end{align*}
The resultant point process generated from this collection 
is defined as 
$$
V^\prime := \{ \w + \Lambda^\prime_\w : B_\w^\prime = 1\}.
$$ 
We observe that the newly created point process $V^\prime$ on $\mathbb{H}^+(\v_j(2))$
gives a reflected copy of the set $V^+_{\v_j(2)}$
reflected about the line $x = \v_j(1)$. 
More precisely, a point $\w$ belongs to $ (V^\prime)^+_{\v_j(2)}$ if and only 
if $\overline{\w} \in V^+_{\v_j(2)}$. 

It is also important to observe that for $\w \in \mathbb{H}^+(\v_j(2))$ 
we have $R_{\w}^\prime = - R_{\overline{\w}}$. This ensures that in case of `tie'
(w.r.t. nearest member from $V^\prime$ at the next level) the choice of outgoing edge gets reversed 
appropriately. This way the newly constructed collection 
$\{\Gamma^\prime_\w : \w \in \Z^2\}$ ensures that 
\begin{align}
\label{eq:SymmetryRWIncr}
\overline{h^n}(\v_j, V) = \overline{h^n}(\v_j, V^+_{\v_j(2)}) = h^n(\v_j, (V^\prime)^+_{\v_j(2)})
\text{ for all }n \geq 1.
\end{align}
The first equality follows as $\v_j$ is a renewal step.
 As the region $\nabla(\v_j)$ is symmetric about the line $X = \v_j(1)$, 
 (\ref{eq:SymmetryRWIncr}) ensures that 
the new point process satisfies the renewal conditions as well: 
\begin{itemize}
\item[(i)] $h^n(\v_j, V^\prime) = h^n(\v_j, (V^\prime)^+_{\v_j(2)}) \in \nabla(\v_j)$ for all $n\geq 1$.
\item[(ii)]The event $\text{Out}(\v_j)$ occurs w.r.t. the point set $V^\prime$ also as 
$(V^\prime)^+_{\v_j(2)} = V^-_{\v_j(2)}$.
\end{itemize}
Hence, given $h^{\sigma_j}(\x_1) = \v_j$, 
distribution of the process $\{h^n(\v_j) : n \geq 1\}$ starting from $\v_j$ remains the same 
when it progresses using the collection $\{ \Lambda^\prime_\v : \v \in \mathbb{H}^+(\v_j(2))\}$. 
 
Let $\v_{j+1} = \v_j + (s_0, t_0)$ be the point of the next renewal w.r.t. the point process $V$.
 It suffices to show that the point $\overline{\v}_{j+1} $
 is the position of the next renewal with respect to the point set $V^\prime$ as well.
 Now, (\ref{eq:SymmetryRWIncr}) ensures that we have $
h^{t_0}(\v_j, V^\prime) = \overline{\v}_{j+1}$. 
From (\ref{eq:SymmetryRWIncr}) we also obtain that for any 
$ 1  \leq n_1 \leq n_2 $ 
\begin{align}
\label{eq:SymmetryRWIncr_1}
\bigl( h^{n_2}(\v_j, V) - h^{n_1}(\v_j, V) \bigr )(1) = 
- \bigl( h^{n_2}(\v_j, V^\prime ) - h^{n_1}(\v_j, V^\prime ) \bigr )(1).
\end{align}
By taking $n_1 = t_0$ and $n_2 \geq t_0 + 1$ in Equation (\ref{eq:SymmetryRWIncr_1})
we obtain 
$$
h^{n}(\overline{\v}_{j+1}, V^\prime) = h^{n}(\overline{\v}_{j+1}, (V^\prime)^+_{\v_{j+1}(2)}) \in \nabla( \overline{\v}_{j+1} )
 \text{ for all 
} n \geq 1.
$$
This implies occurrence of the event $\text{In}^+(\overline{\v}_{j+1})$ w.r.t. the point process 
$V^\prime$.  Given that the event $\text{Out}(\v_{j})$ has occurred, 
occurrence of the event $\text{Out}(\overline{\v}_{j+1})$
 w.r.t. the point set $V^\prime$ depends only on the random vectors 
 $\{\Lambda^\prime_\w : \w(2) \in [\v_j(2) + 1, \v_{j+1}(2)]\}$. 
We show that for $ \w \in \Z^2$ with $\w(2) \in [\v_j(2) + 1, \v_{j+1}(2)]$,
 we have 
 $$
 \w + (X_\w, Y_\w) \in \nabla(\v_{j+1}) \text{
if and only if }\overline{\w} + (-X_{\overline{\w}}, Y_{\overline{\w}}) 
\in \nabla(\overline{\v}_{j+1}).
$$
This follows from the fact that
\begin{align*}
(\w - \v_{j+1})(1) + X_\w & = -(\overline{\w} - \overline{\v}_{j+1})(1) - X_{\overline{\w}} 
\text{ and }\\
(\w - \v_{j+1})(2) + Y_\w & = (\overline{\w} - \overline{\v}_{j+1})(2) + Y_{\overline{\w}}  .
\end{align*}
Hence, occurrence of the event  
$\text{Out}(\v_{j+1})$ w.r.t. the point process $V$ implies and implied by 
occurrence of the event $\text{Out}(\overline{\v}_{j+1})$ 
w.r.t. the point process $V^\prime$.

Essentially, we proved that for any $n \geq 1$, the $n$-th step $h^n(\v_j) = \x$
 is a renewal step (w.r.t. the point process $V$) if and only if
 the corresponding step w.r.t. $V^\prime$ is given by 
$h^n(\v_j, V^\prime) = \overline{\x}$ 
is a renewal step as well. This proves that starting from $\v_j$ the step  
$$ 
h^{t_0}(\v_j, V^\prime ) = \overline{\v}_{j+1}
$$ 
gives the next renewal step w.r.t. $V^\prime$.
 This completes the proof. 
\qed

\end{document}